\newcommand{\relint}{\operatorname{relint}}
\newcommand{\Moe}{\mathrm{B}}
\newcommand{\A}{\mathsf{A}}
\newcommand{\Kc}{\mathsf{K}}
\newcommand{\Proj}{\mathsf{P}}
\newcommand{\diag}{\operatorname{diag}}
\renewcommand{\calP}{\mathscr{P}}
\renewcommand{\calK}{\mathscr{K}}
\renewcommand{\calL}{\mathscr{L}}
\newcommand{\V}{\mathsf{V}}
\newcommand{\svol}{\mu_\omega}
\renewcommand{\epsilon}{\varepsilon}
\title{An Intersection Product for the Polytope Algebra}
\author{Thomas Wannerer}
\address{Friedrich-Schiller-Universit\"at Jena, Fakult\"at f\"ur Mathematik und Informatik, Institut f\"ur Mathematik, Ernst-Abbe-Platz 2, 07743 Jena, Germany}
\email{thomas.wannerer@uni-jena.de}
\begin{document}

\maketitle
\begin{abstract}
We introduce a new multiplication for the polytope algebra, defined via  the intersection of polytopes. After establishing the foundational properties of this intersection product, we investigate finite-dimensional subalgebras that arise naturally from this construction. 
These subalgebras can be regarded as volumetric analogues of the graded M\"obius algebra, which appears in the context of the Dowling--Wilson conjecture. We conjecture that they also  satisfy the injective hard Lefschetz property and the Hodge--Riemann relations, and we prove these in  degree one.
\end{abstract}

\section{Introduction}
 
 The polytope algebra, introduced in the late 1980s by McMullen, is a remarkable object at the crossroads of convex geometry, combinatorics, and algebraic geometry  \cite{McMullen:PolytopeAlgebra,FultonSturmsfels:Intersection,Brion:Polytope}. 
 In a landmark paper, McMullen \cite{McMullen:SimplePolytopes} used this framework to show that the number of  faces of  simple polytopes 
 is characterized by a short list of properties. The sufficiency of these properties was already known from an  ingenious construction due to  Billera and Lee \cite{BilleraLee:sufficiency}, while their necessity  was deduced by Stanley \cite{Stanley:Number} from deep results in algebraic geometry. The contribution of McMullen was to provide a  convex geometric proof. The proofs of  Stanley and McMullen have  served as prototypes and sources of inspiration for recent decisive progress in algebraic combinatorics, see, e.g., \cite{AHK:Hodge, Huh:ICM18,Huh:ICM22,BHMPW:Singular}. 

The polytope algebra of an $n$-dimensional real vector space $V$ $$\Pi_*(V)=\bigoplus_{k=0}^n \Pi_k(V)$$ can be defined as the group of polytopal chains, i.e.  integer linear combinations 
$$ \sum_{i=1}^m a_i \mathbf{1}_{P_i}, \quad a_1,\ldots, a_m \in \ZZ,$$
of indicator functions of polytopes in $V$, modulo translations. In particular, if $[P]:=[\mathbf{1}_P]$ denotes the class of a polytope, then $[x+P]=[P]$ for all translations $x\in V$.  The multiplication in the polytope algebra is uniquely determined by the equation
\begin{equation}\label{eq:McMullen}[P] * [Q] = [P+Q],\end{equation}
where $P+Q$ is the Minkowski sum.

In convex geometry, intersection is often viewed as an operation dual to Minkowski addition.
At the level of polytopal chains, intersection behaves in a straightforward way: $\mathbf 1_{P\cap Q}= \mathbf 1 _P \cdot \mathbf 1 _Q$. However, because  intersection depends on the relative position of $P$ and $Q$, it does not descend to a well-defined operation on the polytope algebra. One  way to address this defect is to average over all relative positions of $P$ and $Q$. Setting aside questions about the existence of the following integral, one is led to consider the definition
\begin{equation}\label{eq:prelim}[P]\cdot [Q]= \int_{V} [P\cap(x+Q)]\, dx.\end{equation}
The starting point for this work was the realization that this definition does, in fact, extend to a new multiplicative structure on the polytope algebra---one that captures non-trivial geometric and combinatorial phenomena, as illustrated by the results discussed below.

\subsection{Main results}
One, albeit minor, issue with the preliminary definition \eqref{eq:prelim} is the absence of a canonical choice of a Lebesgue measure $dx$ on $V$. To address this,  we denote by $\Dens(V)$ the  one-dimensional vector space   of densities on $V$,  consisting of all (including negative) translation-invariant Radon measures on $V$. Moreover, it will be convenient to introduce the following notation.  
Throughout this paper, all tensor products are over $\ZZ$. 

\begin{definition}We define  $\Pi^{k}(V)=\Pi_{n-k}(V)\otimes \Dens(V)$  for every integer $k\in\{0,\ldots, n\}$ and  
	$$ \Pi^*(V)= \bigoplus_{k=0}^n \Pi^k(V).$$
\end{definition}

Our first theorem asserts that the integral in \eqref{eq:prelim} is indeed well defined and briefly summarizes the main properties of the resulting multiplication, which we call the intersection  product in the polytope algebra. For more precise statements, the reader is directed to Theorems~\ref{thm:Alesker-prod} and \ref{thm:Poincare}.
 
\begin{theorem}
There exists a multiplication, uniquely determined by the equation
\begin{equation}\label{eq:def-Alesker} ([P]\otimes \mu)\cdot ([P']\otimes \mu')= \int_{V} [P\cap (x+ P')]\otimes \mu'\, d\mu (x), \end{equation}
that endows $\Pi^*(V)$ with the structure of a unital graded commutative algebra satisfying Poincar\'e duality. 
\end{theorem}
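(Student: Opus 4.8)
The plan is to establish the theorem in three stages: first make sense of the integral in \eqref{eq:def-Alesker}, then verify it descends to the polytope algebra and is bilinear, and finally check the algebraic axioms (associativity, commutativity, unit, Poincar\'e duality).

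\textbf{Step 1: Defining the integral.} The first task is to show that for fixed polytopes $P, P'$, the assignment $x \mapsto [P \cap (x+P')] \in \Pi_*(V)$ is, after pairing against a density, an integrable $\Pi_*(V)$-valued function, and that the integral lands in the correct graded piece. Since $\Pi_*(V)$ is generated by polytope classes but is far from finite-dimensional, one cannot integrate naively; instead I would exploit the fact that the combinatorial type of $P \cap (x+P')$ is locally constant on the complement of a finite union of affine hyperplanes (the walls where some face of $x+P'$ meets some face of $P$ in a degenerate way). On each of the finitely many full-dimensional cells of this arrangement the map $x \mapsto P \cap (x+P')$ varies by translation and continuous deformation of vertices, so the class $[P\cap(x+P')]$ is a polynomial — or better, by McMullen's polynomiality/homogeneous-decomposition results for the polytope algebra, one can write $[P\cap(x+P')] = \sum_j r_j(x)$ where $r_j(x)$ lies in $\Pi_j$ and depends polynomially on $x$ within a cell. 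Integrating a compactly supported piecewise-polynomial function against $d\mu$ is then unproblematic. The key point to check is that $[P\cap(x+P')]$ vanishes outside a bounded set (clear, since the intersection is empty once $x$ is large) and that the top-degree-in-$x$ behavior is controlled so the result sits in $\Pi_{n-k}(V)\otimes\Dens(V)$ as claimed by the grading $\Pi^k \cdot \Pi^{k'} \subseteq \Pi^{k+k'}$; this last containment should follow from a dimension/degree count, since intersecting a class of ``codimension'' $k$ with one of codimension $k'$ and integrating over an $n$-dimensional family drops dimension by $k'$ and the integration recovers $n-k-k'$.

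\textbf{Step 2: Well-definedness and bilinearity.} Here I must show the integral is insensitive to the translation-equivalence relation defining $\Pi_*(V)$ and to the choice of polytopal-chain representative. Translation invariance in the second variable is built in: replacing $P'$ by $y+P'$ merely reparametrizes the integral $x \mapsto x+y$, which preserves $d\mu$. Translation invariance in the first variable, $P \rightsquigarrow z+P$, follows similarly after the substitution $x \mapsto x+z$ combined with the translation-invariance of classes in $\Pi_*(V)$. For bilinearity and independence of representative, the cleanest route is to use the valuation property: $\mathbf 1_{P\cap(x+Q)}$ is a valuation in $P$ and in $Q$ separately, and McMullen's theory tells us that $[\,\cdot\,]$ is the universal translation-invariant valuation, so inclusion–exclusion identities among indicator functions descend to identities among classes; integrating these identities termwise (justified by Step 1) gives bi-additivity. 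I would phrase this by first defining the product on the dense subgroup spanned by polytope classes via \eqref{eq:def-Alesker}, checking the valuation/additivity relations are respected, and then invoking the universal property to extend.

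\textbf{Step 3: Algebra axioms and Poincar\'e duality.} Commutativity $[P\cap(x+P')] \leftrightarrow [P'\cap(x+P)]$ follows from the substitution $x \mapsto -x$ together with $[P\cap(x+P')] = [(-x)+(P\cap(x+P'))] = [(-x+P)\cap P'] = [P'\cap((-x)+P)]$. Associativity is the routine but bookkeeping-heavy identity obtained from Fubini on $V\times V$: both $([P]\cdot[Q])\cdot[R]$ and $[P]\cdot([Q]\cdot[R])$ unfold to $\iint [P\cap(x+Q)\cap(x+y+R)]\,d\mu(x)\,d\mu'(y)$ up to a change of variables, using associativity of intersection of indicators. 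The unit is $[\text{point}]\otimes\mu_0$ for a suitable normalization — here the role of the density factor is exactly to absorb the Lebesgue measure appearing when one intersects with a half-space-like object — and one checks $([P]\otimes\mu)\cdot(\text{unit})$ collapses the integral to evaluation at a point. Poincar\'e duality is then deferred to Theorem~\ref{thm:Poincare}; at this level it amounts to exhibiting the degree-$n$ piece $\Pi^n(V) = \Pi_0(V)\otimes\Dens(V) \cong \Dens(V)$ as a ``fundamental class'' and showing the induced pairing $\Pi^k(V)\times\Pi^{n-k}(V)\to\Pi^n(V)$ is perfect, which should reduce via the homogeneous decomposition of $\Pi_*(V)$ to McMullen's separation results.

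\textbf{Main obstacle.} I expect the genuine difficulty to lie in Step 1 — making the integral rigorous. One must integrate a function valued in the infinite-dimensional group $\Pi_*(V)$, and the ``piecewise polynomial'' structure, while intuitively clear, needs to be pinned down: specifically, one needs that within each chamber of the hyperplane arrangement the class $[P\cap(x+P')]$ depends on $x$ in a way that is manifestly integrable (e.g. a finite sum of products of the form $(\text{polynomial in }x)\cdot[\text{fixed polytope depending affinely on }x]$), and one needs uniform control as $x$ approaches the walls so that no boundary contributions are lost. Verifying the degree bound $\Pi^k\cdot\Pi^{k'}\subseteq\Pi^{k+k'}$ — equivalently that the product of a degree-$k$ and degree-$k'$ class is concentrated in degree $k+k'$ — will likely require the sharpest input from McMullen's structure theory and is the place where the density twist in the definition of $\Pi^k$ earns its keep.
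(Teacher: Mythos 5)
Your overall architecture (make the integral rigorous, descend via the universal/valuation property, then check the axioms) matches the paper, but two points are genuinely off. The most concrete error is the unit: you propose $[\text{point}]\otimes\mu_0$, but the class of a point lies in $\Pi_0(V)\otimes\Dens(V)=\Pi^n(V)$, the \emph{top} degree for the intersection product, and computing $([\{0\}]\otimes\mu)\cdot([P']\otimes\mu')=\int_V[\{0\}\cap(x+P')]\otimes\mu'\,d\mu(x)=\mu(-P')\,[\{0\}]\otimes\mu'$ shows it acts as projection onto the top degree (an Euler-characteristic/volume pairing), not as the identity. The actual unit is the degree-zero element $e_V=[P]_n\otimes\mu$ with $\mu(P)=1$ (the ``volume'' element, obtained as $\lim_{\lambda\to\infty}\lambda^{-n}[\lambda P]\otimes\mu$), for which $([P]_n\otimes\mu)\cdot([P']\otimes\mu')=\mu(P)\,[P']\otimes\mu'$. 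Relatedly, your ``dimension/degree count'' for $\Pi^k\cdot\Pi^{k'}\subseteq\Pi^{k+k'}$ is not a proof; the clean argument is dilation equivariance $\Delta(\lambda)x\cdot\Delta(\lambda)y=\Delta(\lambda)(x\cdot y)$ (immediate from the defining formula by a change of variables) combined with the fact that $\Pi^k(V)$ is the $\lambda^{-k}$-eigenspace of $\Delta(\lambda)$.

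On Step 1 you take a heavier route than necessary. The paper does not need (and does not prove) that $[P\cap(x+P')]$ is piecewise polynomial in $x$; the key observation is that all conormal cones of $P\cap(x+P')$ are generated by a fixed finite set of functionals (sums of conormals of $P$ and $P'$), so under the normal cycle embedding the integrand takes values in a single finite-dimensional subspace $S_E$, where measurability plus essential boundedness (continuity on each chamber of the arrangement, via Hausdorff convergence of intersections in general position) suffices to integrate. Your polynomiality-on-chambers claim is plausible but would need its own proof, and it is not where the density twist ``earns its keep'' --- that happens only in making the unit and the degree-$n$ evaluation canonical. Finally, for Poincar\'e duality the missing ingredient in your sketch is the bridge lemma $(x\cdot y)_0=(x*\Delta(-1)y)_n$ relating the intersection product to McMullen's convolution, which is what lets one import McMullen's nondegeneracy theorem for the latter.
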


The intersection product is  compatible with an operation $$f^*\colon \Pi^*(V)\to \Pi^*(W)$$ that we call the pullback along a linear map  $f\colon W\to V$. If $f$ is injective, then the pullback of $[P]\otimes \mu$ 
 can be interpreted as an average of the fibers of $P$ under the canonical projection $V\to V/f(W)$. An analogous construction---based on a different addition---leads to the notion of the  fiber polytope, introduced by Billera and Sturmfels \cite{BilleraSturmfels:FiberPolytopes}. We elaborate on this connection in Remark~\ref{rmk:fiber} below.  
  It turns out that  the pullback can be defined for all linear maps, not just injective ones, but the definition in the general case is somewhat less straightforward.

The following theorem summarizes the main properties of the pullback.
\begin{theorem} 
	The pullback along a linear map $f\colon W\to V$ satisfies the following properties: 
	\begin{enuma}
		\item It is is a morphism of algebras when $ \Pi^*(V)$ and $\Pi^*(W)$ are equipped with the intersection product.
		\item The pullback is compatible with the grading, 
		$$f^*(\Pi^k(V))\subseteq \Pi^{k}(W).$$
		\item If $g\colon U\to W$ is another linear map, then $(f\circ g)^*= g^*\circ f^*$. 
	\end{enuma}	
\end{theorem}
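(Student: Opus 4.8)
The plan is to check all three properties on the generating classes $[P]\otimes\mu$ --- this is legitimate because $f^*$, the intersection product, and composition of linear maps are all additive in the relevant arguments --- and to organize the proof around the factorization of an arbitrary linear map $f\colon W\to V$ into the surjection $q\colon W\to f(W)$ followed by the inclusion $\iota\colon f(W)\hookrightarrow V$. The functoriality statement~(c) concerns the maps alone, so I would establish it first; once it is known, writing $f^*=q^*\circ\iota^*$ shows that (a) and (b) for a general $f$ follow from the injective and surjective cases. The heart of the proof is therefore: the injective case of (a) and (b); the surjective case; and (c), which splits into four subcases according to the injectivity or surjectivity of the two composed maps.

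For injective $f$, I would work from the fiber-averaging description of $f^*([P]\otimes\mu)$ as the integral over $v\in V/f(W)$ of the slice classes $[P\cap(f(W)+v)]$, weighted by the density that $\mu$ induces on the quotient. Statement~(b) then follows from a dimension count --- a generic slice of an $(n-k)$-dimensional polytope by the $m$-dimensional subspace $f(W)$ has dimension $m-k$, so $\Pi_{n-k}(V)\otimes\Dens(V)$ lands in $\Pi_{m-k}(W)\otimes\Dens(W)$ --- or, more robustly, from the fact that the combined dilation operator (the dilation on $\Pi_*$ tensored with its action on densities) acts on $\Pi^k(V)$ by $\lambda^{-k}$ regardless of $\dim V$ and commutes with $f^*$. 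Statement~(a) in this case is the main computation: expanding both sides via \eqref{eq:def-Alesker}, one has to show that slicing the integrand $[P\cap(x+P')]$ by translates of $f(W)$ and integrating over $x$ agrees with first forming the two separate fiber-averages over $V/f(W)$ and then taking their intersection product in $\Pi^*(W)$. The crux is an elementary set-theoretic identity expressing $\bigl(P\cap(x+P')\bigr)\cap(f(W)+v)$ through a slice of $P$ and a translated slice of $P'$; granting it, the equality follows by a Fubini interchange of the two integrations after a change of variables on $V$ subordinate to the exact sequence $0\to f(W)\to V\to V/f(W)\to 0$ and the induced splitting of $d\mu$.

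For a surjection $q\colon W\to U$ with kernel $K$, the pullback should be the ``cylinder over $K$'' operation, which on a generating class uses the canonical isomorphism $\Dens(W)\cong\Dens(U)\otimes\Dens(K)$ to produce the missing density. Here (b) is immediate from the dimension shift by $\dim K$, and (a) is comparatively easy, because $\bigl(A\times K\bigr)\cap\bigl(x+(B\times K)\bigr)$ splits as a product over $U$ and $K$, so the intersection product on the image of $q^*$ is controlled by that on $\Pi^*(U)$. Among the subcases of~(c): two injections compose by the tower property for the disintegration of Lebesgue measure along an iterated quotient; two surjections compose by additivity of kernels together with the compatible splitting of the density identifications; and an injection followed by a surjection is, after refactoring the composite through its image, essentially the definition of $f^*$.

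I expect the genuine obstacle to be the remaining subcase of~(c): a surjection following an injection. Reducing to a subspace $U\subseteq W$ and a quotient $W\to W/K$, the composite $U\to W/K$ has image $(U+K)/K$ and must be refactored as $U\to U/(U\cap K)\cong(U+K)/K\hookrightarrow W/K$; the identity $g^*\circ f^*=(f\circ g)^*$ then says precisely that, for the Cartesian square of inclusions $U\cap K\hookrightarrow U$, $U\cap K\hookrightarrow K$, $U\hookrightarrow U+K$, $K\hookrightarrow U+K$, slicing by $U$ commutes with the cylinder-over-$K$ construction. I would prove this directly on polytope classes by choosing a linear complement of $U\cap K$ in $W$ compatible with both $U$ and $K$, making slices and cylinders explicit, and then verifying that the four density normalizations attached to the square match; it is this density bookkeeping, more than the underlying geometry, that needs the most care.
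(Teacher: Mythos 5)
Your overall strategy---verify everything on the generators $[P]\otimes\mu$, reduce a general linear map to its injective and surjective parts, handle injections by fiber-averaging over $V/f(W)$ with a Fubini/disintegration argument, handle surjections by a cylinder over the kernel, and obtain the grading statement from equivariance under dilation---is essentially the route the paper takes (Theorem~\ref{thm:pullback} for injections, Section~\ref{sec:pull-gen} for the general case). The main organizational difference is that the paper first develops the exterior product $\boxtimes$ together with its compatibility with pullbacks along injections and with the intersection product (Theorem~\ref{thm:ext}), and then defines the general pullback as $f^*(x)=e_{\ker f}\boxtimes (f|_X)^*x$ for a chosen complement $X$ of $\ker f$; this packages your ``the intersection splits as a product over $U$ and $K$'' computations once and for all, and it turns parts (a) and (c) in the general case into short formal manipulations rather than a four-way case analysis.

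There is, however, one genuine gap: the pullback along a surjection is not well defined until you prove it is. Your cylinder-over-$K$ operation sends $[P]\otimes\nu$ with $P\subseteq U$ to, in effect, $e_K\boxtimes\bigl([s(P)]\otimes\nu_X\bigr)$ for a chosen linear section $s\colon U\to W$ of $q$, i.e.\ a chosen complement to $K=\ker q$. Two sections differ by a shear, and the lifted polytopes $s(P)$ and $s'(P)$ are \emph{not} translates of one another, so their classes in $\Pi_*(W)$ genuinely differ; only after forming the exterior product with $e_K$ do the resulting elements agree, and this is a theorem, not a formality. The paper devotes the first half of the proof in Section~\ref{sec:pull-gen} to exactly this point, computing the normal cycle of $e_{\ker f}\boxtimes(f|_X)^*x$ factor by factor and invoking Lemma~\ref{lemma:cartesian} to see that the density factor is also independent of the complement. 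Without this step none of (a)--(c) is even meaningful for non-injective $f$, so it must appear explicitly in your argument (your ``density bookkeeping'' for the mixed subcase of (c) is a closely related but distinct issue). A minor further point: your description of the two mixed subcases of (c) uses wording that makes them sound identical (``an injection followed by a surjection'' versus ``a surjection following an injection''); the intended distinction---surjection-then-injection is immediate from the canonical factorization, whereas injection-then-surjection is the base-change square requiring the careful density comparison---is correct and matches where the paper's effort actually goes, but you should state it unambiguously.
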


\begin{remark}
For continuous translation-invariant valuations, Alesker introduced in \cite{Alesker:Fourier} a notion of pushforward along linear maps that can be regarded as dual to the pullback in the polytope algebra;  see Remarks~\ref{rem:pullback} and \ref{rmk:pullback2} below. There is another notable  analogy with the theory of valuations on convex bodies. Bernig and Faifman \cite{BernigFaifman:Polytopealgebra} show that, in a precise sense, McMullen's multiplication \eqref{eq:McMullen} is closely related to the convolution of smooth translation-invariant valuations, introduced by Bernig and Fu \cite{BernigFu:Convolution}. We expect a similar relationship between intersection and Alesker product.  Remark~\ref{rmk:Alesker-product} below  offers an initial insight into this relationship.

It seems that the only significant formal difference between these two settings is  the existence of the Fourier transform  within Alesker's theory of smooth translation-invariant valuations.
A key property of this operation is that it  intertwines the Alesker product and the Bernig--Fu convolution. 
In light of the recent  explicit description of the Fourier transform obtained by  Faifman and Wannerer \cite{FaifmanWannerer:Fourier},  it appears unlikely that this powerful operation exists also within  the context of the polytope algebra.
\end{remark}

If we fix a positive density $\vol$  on $V$, as we will do from now on,  we  abbreviate our notation to
$\alpha [P] := [P]\otimes \alpha \vol \in \Pi^*(V)$ for $\alpha\in \RR$. Moreover,  when our discussion involves the notion of positivity, such as in \eqref{eq:HR-inequality} below,
we will tacitly identify $\Pi^n(V)\cong \RR$ via $\alpha[\{0\}]\mapsto \alpha$.
\begin{definition}
	For each polytope $P\subseteq V$, we define 
	$\ell_P:= [P]_{n-1}\in \Pi^1(V).$ 
\end{definition}
The Euler--Verdier involution $\sigma$ is an algebra automorphism of   $\Pi^*(V)$  and  compatible with the pullback. It differs from McMullen's Euler map by the sign $(-1)^n$.   For every polytope $P$, $\sigma(\ell_P) = - \ell_{-P}$.

The intersection product in the polytope algebra  is capable of expressing deep geometric and combinatorial properties, as the theorems that follow will demonstrate. Our first result is an Alexandrov--Fenchel-type inequality.  For a more elementary formulation in terms of mixed volumes, the reader is directed to Theorem~\ref{thm:AF}.

\begin{theorem}\label{mthm:AF} Let $C_1,\ldots, C_{n-2}$ be  centrally symmetric polytopes in $V$ and define $\ell_{\mathbf C}=\ell_{C_1} \cdots \ell_{C_{n-2}}$. Then, for every $x\in \Pi^1(V)$ and every  polytope $Q\subseteq V$, the following inequality holds: 
	\begin{equation}\label{eq:HR-inequality} \left(\sigma(x)\cdot \ell_Q \cdot  \ell_{\mathbf C}  \right)^2 \geq \left( \sigma(x)\cdot x \cdot  \ell_{\mathbf C} \right) \left( \sigma(\ell_Q)\cdot \ell_Q \cdot  \ell_{\mathbf C} \right).\end{equation}
	
\end{theorem}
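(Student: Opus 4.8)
\medskip

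\noindent The plan is to deduce \eqref{eq:HR-inequality} from the classical Alexandrov--Fenchel inequality by way of the translation of the intersection product into mixed volumes (Theorem~\ref{thm:AF}). The key object is the symmetric bilinear form
$$B(a,b):=\sigma(a)\cdot b\cdot\ell_{\mathbf C},\qquad a,b\in\Pi^1(V),$$
with values in $\Pi^n(V)\cong\RR$; symmetry of $B$ already relies on the central symmetry of the $C_i$, which gives $\sigma(\ell_{\mathbf C})=(-1)^n\ell_{\mathbf C}$ and, together with $\sigma|_{\Pi^n(V)}=(-1)^n\,\mathrm{id}$ and commutativity of the product, forces $B(a,b)=B(b,a)$. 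I would first establish: (a) $B(\ell_Q,\ell_Q)\le 0$ for every polytope $Q\subseteq V$; and (b) the restriction of $B$ to every finite-dimensional subspace of $\Pi^1(V)$ has at most one negative eigenvalue. Given (a) and (b), the theorem follows by elementary linear algebra: if $B(\ell_Q,\ell_Q)=0$ the right-hand side of \eqref{eq:HR-inequality} vanishes; otherwise set $W=\mathrm{span}(x,\ell_Q)$ and note that the $2\times 2$ Gram matrix of $B|_W$ is neither positive definite (its $(\ell_Q,\ell_Q)$-entry is negative) nor negative definite (by (b)), hence has nonpositive determinant, which is precisely \eqref{eq:HR-inequality}.

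\smallskip

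Both (a) and (b) are to be read off from Theorem~\ref{thm:AF}. Iterating \eqref{eq:def-Alesker} and taking the degree-$n$ component, the $\Pi^n(V)$-part of $([P_1]\otimes\vol)\cdots([P_n]\otimes\vol)$ equals the volume of the convex body
$$\{(x_2,\dots,x_n)\in V^{n-1}:P_1\cap(x_2+P_2)\cap\dots\cap(x_n+P_n)\neq\emptyset\},$$
which is the image of $P_1\times\dots\times P_n$ under $(p_1,\dots,p_n)\mapsto(p_1-p_2,\dots,p_1-p_n)$, i.e.\ the Minkowski sum in $V^{n-1}$ of the diagonal copy of $P_1$ and the coordinate copies of $-P_2,\dots,-P_n$. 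Comparing the degree of homogeneity in each $P_i$ on the two sides of \eqref{eq:def-Alesker} isolates $\ell_{P_1}\cdots\ell_{P_n}$, under $\Pi^n(V)\cong\RR$, as an explicit multinomial coefficient times a single mixed volume in $V^{n-1}$ in which each $P_i$ (suitably embedded) occurs with multiplicity $n-1$. Substituting $\sigma(\ell_K)=-\ell_{-K}$ and reflecting all bodies through the origin of $V^{n-1}$ then expresses $B(\ell_K,\ell_L)$ as $-c_n$ times a mixed volume in $V^{n-1}$ whose arguments are $n-1$ copies of the diagonal $K$, $n-1$ copies of $L$, and $n-1$ copies of each $C_i$, with $c_n>0$. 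Nonnegativity of mixed volumes yields (a). As a consistency check, for $n=2$ this reads $\ell_K\cdot\ell_L=2\,\mathrm{V}(K,-L)$ and $B(\ell_K,\ell_L)=-2\,\mathrm{V}(K,L)$, so \eqref{eq:HR-inequality} becomes the Minkowski inequality $\mathrm{V}(K,L)^2\ge\mathrm{V}(K,K)\,\mathrm{V}(L,L)$.

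\smallskip

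For (b), I would use that the classes $\ell_P$, $P$ a polytope, span $\Pi^1(V)$, so it suffices to show that the symmetric matrices $[B(\ell_{P_i},\ell_{P_j})]$ have at most one negative eigenvalue; equivalently, that the mixed-volume form obtained above has at most one positive eigenvalue. This is an Alexandrov--Fenchel statement in its Lorentzian-signature form, and I expect that this is exactly where the central symmetry of $C_1,\dots,C_{n-2}$ is indispensable: it is what places the fixed part of the form (each $C_i$ with multiplicity $n-1$) into the cone where Alexandrov--Fenchel is available --- without it the relevant matrix need not even be symmetric. This last step is the main obstacle. To carry it out one must pin down precisely which mixed volume in $V^{n-1}$ is produced and then either reduce it to a classical Alexandrov--Fenchel form by a volume-preserving change of variables, exploiting that the diagonal and the coordinate subspaces supporting the bodies are pairwise complementary, or appeal to the theory of Lorentzian (completely log-concave) polynomials, which is tailored to the repeated bodies appearing here. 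Once (b) is in hand, the linear-algebra argument from the first paragraph concludes the proof.
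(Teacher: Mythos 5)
Your reduction of the inequality to two properties of the form $B(a,b)=\sigma(a)\cdot b\cdot\ell_{\mathbf C}$ --- nonpositivity on the classes $\ell_Q$ and the ``at most one negative eigenvalue'' (Lorentzian) property --- is sound linear algebra, and your identification of $\ell_{P_1}\cdots\ell_{P_n}$ with a mixed volume in $V^{n-1}$ in which each $P_i$ appears with multiplicity $n-1$ is exactly the paper's Proposition~\ref{prop:prop-wtV}(a) (the higher-rank mixed volume $\wt\V$). The problem is that your step (b) \emph{is} the theorem, and neither of the two strategies you sketch for it works as described. The classical Alexandrov--Fenchel inequality, applied inside $\Delta^\perp\subseteq (V^n)$, controls the sequence $k\mapsto \V(A[k],B[2n-2-k],\mathbf M)$ with $A=\pi\circ\iota_1 K$ and $B=\pi\circ\iota_2(-L)$; chaining its log-concavity gives $\V(A[n-1],B[n-1],\mathbf M)^2\geq \V(A[2n-2],\mathbf M)\,\V(B[2n-2],\mathbf M)$, but the right-hand side vanishes identically for $n>2$ because $A$ spans an at most $n$-dimensional subspace and appears with multiplicity $2n-2>n$. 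So classical AF in the ambient space degenerates to the trivial inequality $\wt\V(K,-L,\mathbf C)^2\geq 0$; there is no ``volume-preserving change of variables'' that turns $\wt\V(K,-K,\mathbf C)$ (where $K$ and $-K$ sit in two \emph{different} coordinate copies of $V$) into a classical AF term. Likewise, the Lorentzian-polynomial formalism applies to the volume polynomial in the Minkowski/support-function variables, whereas the form $B$ is bilinear in the \emph{surface area measures} $(S_K,S_L)$ (cf.\ Proposition~\ref{prop:prop-wtV}(c)), and its Lorentzian signature is not a formal consequence of that theory.

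The paper does not prove this step from scratch either: the inequality $\wt\V(K,-L,\mathbf C)^2\geq \wt\V(K,-K,\mathbf C)\,\wt\V(L,-L,\mathbf C)$ is imported, for centrally symmetric $L$, from \cite{KotrbatyWannerer:MixedHR}, where it is deduced from classical Alexandrov--Fenchel by means of the Fourier transform of smooth translation-invariant valuations --- this is the genuinely nontrivial input, and it is also where the central symmetry of the $C_i$ is used. The remaining work in the paper is the extension to arbitrary $L$ via the Blaschke symmetrization $L'=L\#(-L)$ together with the three equivalent formulations of Lemma~\ref{lemma:equivalent-forms}, and the passage from polytope classes to arbitrary $x\in\Pi^1(V)$ via signed surface area measures (Lemma~\ref{lemma:existence-signed}); the latter is routine, but the former is not automatic and your proposal does not address it. In short: the scaffolding is correct and parallel to the paper's, but the analytic core --- the higher-rank Alexandrov--Fenchel inequality --- is missing, and the routes you propose to it do not close the gap.
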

Notice that inequality  \eqref{eq:HR-inequality} can be interpreted as a statement about the symmetric bilinear form $$q_{\mathbf C}(x,y)=  \sigma(x)\cdot y \cdot  \ell_{\mathbf C}, \quad x,y\in \Pi^1(V).$$
Indeed, since $\sigma$ is an algebra automorphism and the polytopes $C_i$ are centrally symmetric, for all $x,y\in \Pi^1(V)$
$$  \sigma(x)\cdot y \cdot  \ell_{\mathbf C}=  \sigma(y)\cdot x \cdot  \ell_{\mathbf C}.$$

\medskip
 For every  simple polytope with nonempty interior, McMullen \cite{McMullen:SimplePolytopes} defined $\mathsf{M}_*(P)\subseteq \Pi_*(V)$ as the $\ZZ$-span of all weak Minkowski summands of $P$.  This subset is, in fact, a finite-dimensional subalgebra with truly remarkable algebraic  properties. These imply restrictions for the dimensions of the graded components $\mathsf{M}_k(P)$, which in turn imposes necessary conditions on the  number of faces of $P$.

Also $\Pi^*(V)$ contains natural subalgebras, and these  too are connected to combinatorics. 
\begin{definition}\label{def:A} Let $E$ be a finite set of lines in $V^*$ that pass through the origin and are not contained in a single hyperplane.
	\begin{enuma}
		\item  We denote by $\A^*(E)=\bigoplus_{k=0}^n \A^k(E)$ the subalgebra of $\Pi^*(V)$ generated by $\ell_P$ for all polytopes $P$ such that each facet  conormal of $P$ belongs to a line in $E$. 	 
		\item We denote by $\A^*_+(E)$ the subalgebra generated only by those elements $\ell_P \in \A^*(E)$ such that $P$ is centrally symmetric.
		\item  $\Kc(E)\subseteq \A^1_+(E)$ is the open convex cone   of all $\ell_P\in \A^1_+(E)$ with the property that each line in $E$ contains a facet conormal of $P$. 
	\end{enuma}
	
\end{definition}

It is not difficult to see that  $\A^*(E)$ and $\A^*_+(E)$ are finite-dimensional algebras for the intersection product. 
Moreover, the dimension of $\A^k_+(E)$ admits a straightforward combinatorial interpretation. 
Indeed, let $\calL_k(E)$ denote the set of $k$-dimensional linear subspaces that can be obtained as sums $L_1+\cdots + L_k$ of the lines in $E$.  Then, 
$$ \dim \A^k_+(E)  = |\calL_k(E)|.$$

The Dowling--Wilson conjecture \cite{DowlingWilson:Slimmest} asserts that for every nonnegative integer $k\leq n/2$
$$ |\calL_k(E)|\leq |\calL_{n-k}(E)|.$$
After several partial  results, starting from the 1940s with papers by de~Bruijn--Erd\H{o}s \cite{deBruijnErdos} and Motzkin \cite{Motzkin:Lines}, this conjecture was finally resolved in the affirmative by Huh--Wang \cite{HuhWang:Enumeration}. Shortly afterwards, the matroid version of the  Dowling--Wilson conjecture was established in the landmark paper \cite{BHMPW:Singular}. In both papers, the strategy  is to establish an injective hard Lefschetz theorem  for the graded  M\"obius algebra.

Poincar\'e duality, the hard Lefschetz theorem, and the Hodge--Riemann relations are fundamental properties on the cohomology ring of a compact K\"ahler manifold. This  algebraic structure---colloquially described as a K\"ahler package---has been observed to arise in different areas of mathematics. In algebraic combinatorics in particular,  the central importance of this concept has recently become apparent, see, e.g., \cite{AHK:Hodge,HuhWang:Enumeration,Huh:ICM18,Huh:ICM22,BHMPW:Singular}.

As we will discuss in detail in Section~\ref{sec:finite-dim}, the graded M\"obius algebra of $E$ and  $\A^*_+(E)$ are closely related. 
Based on Theorem~\ref{mthm:AF} and   analogous results for smooth translation-invariant valuations on convex bodies \cite{BKW:Hodge,KotrbatyWannerer:MixedHR} and the graded M\"obius algebra \cite{HuhWang:Enumeration,BHMPW:Singular}, we propose the following conjecture. If true, it would directly imply the Dowling--Wilson conjecture.

\begin{conjecture}\label{conj:Kahler}
	Let  $k$ be a nonnegative integer satisfying  $ k\leq n/2$. Suppose that  $\ell_{C_0},\ldots, \ell_{C_{n-2k}}\in \Kc(E)$, and define $\ell_{\mathbf C}= \ell_{C_1}\cdots \ell_{C_{n-2k}}$.
	Then, the following statements hold:
	\begin{enuma}
		\item \label{item:HL}\emph{Injective hard Lefschetz property.} The linear map $$ \mathrm \A^k(E)\to \A^{n-2k}(E),\quad  
		x\mapsto x\cdot \ell_{\mathbf C},$$
		is injective.
		
		\item \label{item:HR}\emph{Hodge--Riemann relations.}	If $x\in \A^k(E)$ satisfies $x\cdot  \ell_{C_0} \cdot \ell_{\mathbf C}=0$, then 
		\begin{equation}\label{eq:HR-ineq}  \sigma(x) \cdot  x \cdot    \ell_{\mathbf C} \geq 0\end{equation}
		Moreover, equality holds if and only if $x=0$. 
	\end{enuma}
\end{conjecture}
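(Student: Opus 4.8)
\emph{We describe a proof of the case $k=1$} (for $k\ge 2$ the statement lies deeper and would require the full inductive Lefschetz--Hodge--Riemann machinery, of which Theorem~\ref{mthm:AF} supplies only the degree-one input). The plan is to distill from Theorem~\ref{mthm:AF} the complete signature of the symmetric bilinear form $q_{\mathbf C}(x,y)=\sigma(x)\cdot y\cdot\ell_{\mathbf C}$ on the finite-dimensional space $\A^1(E)=\operatorname{span}\{\,\ell_P: P\text{ an }E\text{-polytope}\,\}$, and to read both assertions off from it. First I would verify that $q_{\mathbf C}(\ell_{C_0},\ell_{C_0})<0$: since $C_0\in\Kc(E)$ and $E$ spans $V^*$, the body $C_0$ is full-dimensional, and being centrally symmetric it satisfies $\sigma(\ell_{C_0})=-\ell_{C_0}$, so $q_{\mathbf C}(\ell_{C_0},\ell_{C_0})=-\,\ell_{C_0}^{\,2}\cdot\ell_{C_1}\cdots\ell_{C_{n-2}}$; by the mixed-volume form of Theorem~\ref{thm:AF} this is a positive multiple of $-V(C_0,C_0,C_1,\dots,C_{n-2})<0$. (This is a routine sign check, and is the only place the normalization $\alpha[\{0\}]\mapsto\alpha$ and the precise definition of $\sigma$ intervene.) Applying Theorem~\ref{mthm:AF} with $Q=C_0$ to an arbitrary $x\in\A^1(E)$ and using $\sigma(\ell_{C_0})=-\ell_{C_0}$ gives $q_{\mathbf C}(x,\ell_{C_0})^2\ge q_{\mathbf C}(x,x)\,q_{\mathbf C}(\ell_{C_0},\ell_{C_0})$, and since the right factor is negative, $q_{\mathbf C}(x,\ell_{C_0})=0$ forces $q_{\mathbf C}(x,x)\ge 0$. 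Because $\sigma^2=\operatorname{id}$ and the product is commutative, $q_{\mathbf C}(x,x)=q_{\mathbf C}(\sigma(x),\sigma(x))$ and $x\cdot\ell_{C_0}\cdot\ell_{\mathbf C}=q_{\mathbf C}(\sigma(x),\ell_{C_0})$, so the primitive subspace $\{x: x\cdot\ell_{C_0}\cdot\ell_{\mathbf C}=0\}$ is $\sigma(\ell_{C_0}^{\perp})$ and $q_{\mathbf C}\ge 0$ on it --- this is \eqref{eq:HR-ineq}. A second application of Theorem~\ref{mthm:AF} (to an $x$ in a hypothetical second negative direction, intersected with the hyperplane $\ell_{C_0}^{\perp}$) shows $q_{\mathbf C}$ has exactly one negative eigenvalue, so writing $\A^1(E)=\RR\,\ell_{C_0}\oplus H$ for the $q_{\mathbf C}$-orthogonal splitting, $q_{\mathbf C}$ is negative on $\RR\,\ell_{C_0}$ and positive semidefinite on $H$.

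Both remaining statements now follow once $q_{\mathbf C}$ is shown to be \emph{nondegenerate} on $\A^1(E)$. Since $-E=E$, the involution $\sigma$ preserves $\A^*(E)$, so if $x\in\A^1(E)$ lies in the kernel of multiplication by $\ell_{\mathbf C}$ then $q_{\mathbf C}(y,x)=\sigma(y)\cdot(x\cdot\ell_{\mathbf C})=0$ for all $y\in\A^1(E)$, whence $x\in\operatorname{rad}(q_{\mathbf C})=\{0\}$; this gives the injective hard Lefschetz property. For the equality case, if $x$ is primitive with $q_{\mathbf C}(x,x)=0$ then $\sigma(x)\in H$ with $q_{\mathbf C}(\sigma(x),\sigma(x))=0$, so Cauchy--Schwarz for the positive semidefinite form $q_{\mathbf C}|_H$ gives $\sigma(x)\perp H$, and as $\sigma(x)\perp\ell_{C_0}$ by the choice of $H$, we get $\sigma(x)\in\operatorname{rad}(q_{\mathbf C})=\{0\}$, so $x=0$; this gives the equality clause of the Hodge--Riemann relations.

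The crux, and the step I expect to be the main obstacle, is the nondegeneracy of $q_{\mathbf C}$ on $\A^1(E)$ --- something the reverse Cauchy--Schwarz inequality of Theorem~\ref{mthm:AF} cannot by itself detect. I would attack it through an explicit combinatorial model: using that $\A^*(E)$ is generated in degree one and that $\dim\A^1_+(E)=|\calL_1(E)|=|E|$, identify $\A^1(E)$ with a space of virtual $E$-polytopes (support functions piecewise linear with respect to a complete simplicial fan having a generator of each line of $E$ among its rays, modulo linear functions), on which, by Theorem~\ref{thm:AF}, $q_{\mathbf C}(\ell_P,\ell_Q)$ becomes a fixed multiple of the mixed volume $V(-P,Q,C_1,\dots,C_{n-2})$. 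Nondegeneracy of this pairing should then follow either by realizing $\A^*(E)$ as, or relating it to, the Chow ring of a complete toric variety attached to such a fan --- hard Lefschetz there supplying it, once one verifies that the model is compatible with the \emph{intersection} product rather than McMullen's product --- or by a direct computation showing the Gram matrix of $q_{\mathbf C}$ in a combinatorially adapted basis of $\A^1(E)$ is invertible. A perturbation argument --- nondegeneracy is an open condition and, by the signature statement above, its failure is the vanishing of a single polynomial on the connected cone $\Kc(E)^{\,n-1}$ of admissible tuples --- reduces the claim to a single favorable tuple, but producing such a base case still requires input of the above kind. I expect the construction of the correct model, compatible with the intersection product, to be the genuinely delicate point.
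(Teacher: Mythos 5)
Your first two paragraphs are sound and match the paper's strategy for the inequality: the Hodge--Riemann inequality for $k=1$ is exactly Theorem~\ref{mthm:AF} with $Q=C_0$, and your reduction of both the injectivity and the equality clause to the nondegeneracy of $q_{\mathbf C}$ on $\A^1(E)$ is logically correct (given the signature statement, the two are equivalent). The problem is that the nondegeneracy --- which you yourself identify as the crux --- is never proved. The toric Chow ring model, the Gram matrix computation, and the perturbation argument are all left as programs rather than arguments, and the perturbation reduction still requires a base case you do not supply. So the proposal establishes part \ref{item:HR} only up to the weak inequality and leaves both the equality characterization and part \ref{item:HL} open.

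The paper closes this gap by a route you do not consider. First, it does not try to prove nondegeneracy inside the finite-dimensional algebra $\A^*(E)$ (where Poincar\'e duality fails); instead, Lemma~\ref{lemma:HLHR} runs your Cauchy--Schwarz argument for the form $q_{\mathbf C}$ on all of $\Pi^1(V)$ --- legitimate because Theorem~\ref{thm:AF} holds for arbitrary convex bodies --- and then invokes Poincar\'e duality of the \emph{full} polytope algebra (Theorem~\ref{thm:Poincare}) to conclude that the equality conditions force $x\cdot\ell_{\mathbf C}=0$ in $\Pi^{n-1}(V)$, a much stronger statement than lying in the radical of $q_{\mathbf C}|_{\A^1(E)}$. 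Second, it shows $x\cdot\ell_{\mathbf C}=0$ implies $x=0$ for $x\in\A^1(E)$ by induction on $n$: writing $\ell_{C_{n-2}}=\sum_{L\in E}\alpha_L x_L$ with $\alpha_L>0$ and using Proposition~\ref{prop:mult-res} (multiplication by $x_L$ is pullback to the hyperplane $L^\perp$), the vanishing of $\sigma(x)\cdot x\cdot\ell_{\mathbf C}$ becomes a sum of nonnegative terms, so each restriction $j_L^*x$ satisfies the equality conditions in $L^\perp$ and vanishes by induction; finally Proposition~\ref{prop:res}, whose three-dimensional base case rests on Motzkin's lemma about points in the real projective plane, shows that an element of $\A^1(E)$ with all these restrictions zero is itself zero. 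This restriction-and-induction mechanism, together with the passage through the infinite-dimensional algebra where duality is available, is the missing ingredient in your proposal.
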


\begin{remark} 
At first glance, the restriction to centrally symmetric polytopes in Conjecture~\ref{conj:Kahler}\ref{item:HL}, concerning the injective hard Lefschetz property, may seem unnecessary or even unnatural. However, in the absence of central symmetry, there is no reason to expect the Hodge--Riemann form in \ref{item:HR} to be symmetric. A formulation of the Hodge–Riemann relations for general reference polytopes may well exist, but this lies in uncharted territory, and we currently have no evidence supporting such a generalization. Out of caution, we have therefore stated the conjecture only for centrally symmetric polytopes. The role played by central symmetry in the Alexandrov–Fenchel inequality (Theorem~\ref{mthm:AF}) is likewise unclear at present.
\end{remark}

For $k=0$, the  above conjecture  reduces  to the statement
$$\ell_{\mathbf C} =  \ell_{C_1} \cdots \ell_{C_n}
>0,$$
which we will prove in Proposition~\ref{prop:prop-wtV}. 
With considerably more effort, based on Theorem~\ref{mthm:AF} and a  characterization of the equality case in \eqref{eq:HR-ineq},  we are able to show the following. 

\begin{theorem}\label{mthm:1}
Conjecture~\ref{conj:Kahler} holds for $k=1$. 
\end{theorem}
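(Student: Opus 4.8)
The plan is to derive both parts of Conjecture~\ref{conj:Kahler} for $k=1$ from a single statement about the symmetric bilinear form $q_{\mathbf C}(x,y)=\sigma(x)\cdot y\cdot\ell_{\mathbf C}$ on $\A^1(E)\subseteq\Pi^1(V)$, where $\ell_{\mathbf C}=\ell_{C_1}\cdots\ell_{C_{n-2}}$: namely, that $q_{\mathbf C}(\ell_{C_0},\ell_{C_0})<0$ and that $q_{\mathbf C}$ is positive definite on the $q_{\mathbf C}$-orthogonal complement of $\ell_{C_0}$ in $\A^1(E)$. Granting this, $q_{\mathbf C}$ has signature $(\dim\A^1(E)-1,1)$ on $\A^1(E)$ and is in particular nondegenerate there. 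Since $q_{\mathbf C}(x,\ell_{C_0})=\sigma(\ell_{C_0})\cdot x\cdot\ell_{\mathbf C}=-x\cdot\ell_{C_0}\cdot\ell_{\mathbf C}$, the hypothesis of part \ref{item:HR} says precisely that $x$ lies in $\ell_{C_0}^{\perp}$, so \ref{item:HR} is exactly the asserted definiteness. For part \ref{item:HL}, if $x\in\A^1(E)$ satisfies $x\cdot\ell_{\mathbf C}=0$, then $q_{\mathbf C}(y,x)=\sigma(y)\cdot(x\cdot\ell_{\mathbf C})=0$ for every $y\in\A^1(E)$, so $x$ lies in the radical of $q_{\mathbf C}$, which is trivial; hence $x\mapsto x\cdot\ell_{\mathbf C}$ is injective.

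Two of the three ingredients are essentially free. First, since $C_0$ is centrally symmetric we have $\sigma(\ell_{C_0})=-\ell_{C_0}$, so $q_{\mathbf C}(\ell_{C_0},\ell_{C_0})=-\ell_{C_0}\cdot\ell_{C_0}\cdot\ell_{C_1}\cdots\ell_{C_{n-2}}<0$, this being a product of $n$ elements of $\Kc(E)$ and hence strictly positive by Proposition~\ref{prop:prop-wtV}. Second, $q_{\mathbf C}$ is positive \emph{semi}definite on $\ell_{C_0}^{\perp}$: applying Theorem~\ref{mthm:AF} with $Q=C_0$ to an element $x\in\A^1(E)$ with $q_{\mathbf C}(x,\ell_{C_0})=0$, the left-hand side of \eqref{eq:HR-inequality} vanishes while the second factor on the right equals $q_{\mathbf C}(\ell_{C_0},\ell_{C_0})<0$, forcing $q_{\mathbf C}(x,x)\ge0$. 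This already establishes the inequality in \eqref{eq:HR-ineq}.

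The remaining point is that $q_{\mathbf C}$ has no nonzero null vector in $\ell_{C_0}^{\perp}$. Suppose $x\in\A^1(E)$ with $q_{\mathbf C}(x,\ell_{C_0})=0$ and $q_{\mathbf C}(x,x)=0$. Then both sides of \eqref{eq:HR-inequality} vanish (again with $Q=C_0$), so equality holds, and since $C_0,C_1,\dots,C_{n-2}$ all lie in $\Kc(E)$, the characterization of the equality case of \eqref{eq:HR-inequality} should yield $x=c\,\ell_{C_0}$ for some $c\in\RR$. Substituting into $0=q_{\mathbf C}(x,\ell_{C_0})=-c\,\ell_{C_0}\cdot\ell_{C_0}\cdot\ell_{C_1}\cdots\ell_{C_{n-2}}$ and invoking positivity once more forces $c=0$, hence $x=0$. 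This reduces Theorem~\ref{mthm:1} to the equality characterization of the Alexandrov--Fenchel-type inequality \eqref{eq:HR-inequality} when all reference bodies lie in $\Kc(E)$.

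That equality characterization is where I expect the real difficulty to lie, and the plan there is an induction on $n=\dim V$. The base case $n=2$ is the classical equality case of the Minkowski (isoperimetric-type) inequality for planar convex bodies. For the inductive step one restricts to a supporting hyperplane of $V$ in a direction $u$ spanning a line of $E$, via the pullback along the corresponding linear inclusion $H\hookrightarrow V$: by the established properties of the pullback this is a grading-preserving algebra morphism onto the subalgebra $\A^*(E')$ attached to the restricted line configuration $E'$, and it relates $\ell_{C_i}$ to the face of $C_i$ in direction $u$. Combining this with the expansion of the top product as a weighted sum over the facets of one of the $C_i$ rewrites $q_{\mathbf C}$ in terms of $(n-1)$-dimensional forms of the same type, to which the induction hypothesis should apply. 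The hypothesis $\ell_{C_i}\in\Kc(E)$ is what guarantees --- after a routine reduction to generic $C_i$ by density --- that the faces appearing are ``full'' for $E'$, and also what lets the ``homothety modulo the $C_i$-directions'' conclusion obtained facet by facet be reassembled into $x=c\,\ell_{C_0}$. The main obstacle is that the induction must simultaneously certify that the radical of $q_{\mathbf C}$ is trivial in the $\Kc(E)$ case at every stage --- otherwise equality in \eqref{eq:HR-inequality} yields only proportionality modulo the radical --- and carrying this bookkeeping through the restriction, keeping track of how $E$, the cone $\Kc(E)$, and the faces of the $C_i$ transform, is the technical heart of the argument.
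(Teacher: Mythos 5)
Your reduction is exactly the paper's: the inequality in \eqref{eq:HR-ineq} for $k=1$ is a special case of Theorem~\ref{mthm:AF}, the negativity $\sigma(\ell_{C_0})\cdot\ell_{C_0}\cdot\ell_{\mathbf C}<0$ follows from the positivity of products of elements of $\Kc(E)$ (Proposition~\ref{prop:prop-wtV}), and both the injective hard Lefschetz property and the definiteness in \ref{item:HR} reduce to a characterization of equality in the Alexandrov--Fenchel-type inequality for elements of $\A^1(E)$. Your first three paragraphs are correct. The gap is in the fourth: the inductive equality characterization is not ``routine bookkeeping,'' and two essential ingredients are missing. First, before restricting you must upgrade the pair of conditions $q_{\mathbf C}(x,\ell_{C_0})=0$ and $q_{\mathbf C}(x,x)=0$ to the single condition $x\cdot\ell_{\mathbf C}=0$ in $\Pi^{n-1}(V)$; the paper does this (Lemma~\ref{lemma:HLHR}) by combining Cauchy--Schwarz for the semidefinite form on the primitive part with Poincar\'e duality in the full algebra $\Pi^*(V)$ (Theorem~\ref{thm:Poincare}). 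Only then, after expanding $\ell_{C_{n-2}}=\sum_{L\in E}\alpha_L x_L$ and using Proposition~\ref{prop:mult-res} to rewrite $\sigma(x)\cdot x\cdot\ell_{\mathbf C}$ as a positive combination of $(n-1)$-dimensional Hodge--Riemann forms evaluated at the restrictions $j_L^*x$, does each restriction also satisfy the degeneracy condition needed to invoke the inductive hypothesis.

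Second, and more seriously, the step you describe as reassembling the facet-by-facet conclusions $j_L^*x=0$, $L\in E$, into $x=0$ is a genuine theorem rather than a consequence of $\ell_{C_i}\in\Kc(E)$. It is false for $n=2$ (any $\sigma$-invariant $x$ has vanishing restrictions to lines), and for $n\geq 3$ it requires a combinatorial input: the paper's Proposition~\ref{prop:res} proves it by a separate induction whose base case $n=3$ rests on Motzkin's lemma that any finite non-collinear set of points in the real projective plane spans a line containing exactly two of them. Without an argument of this type your induction does not close. A smaller inaccuracy: the restriction $j_L^*\ell_{C_i}$ is not the face of $C_i$ in the direction of $L$; by Lemma~\ref{lemma:nc-res} it is $\ell_{C_i^L}$ for the polytope $C_i^L\subseteq L^\perp$ whose surface area measure is obtained by projecting the facet normals of $C_i$ and reweighting the facet volumes, which is precisely why membership in $\Kc$ is preserved under restriction.
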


The above theorem directly implies  the Dowling--Wilson conjecture for $k=1$. A notable feature of our proof of Theorem~\ref{mthm:1}  is that it uses a lemma by Motzkin on  points  in the real projective plane that appears already in his work \cite{Motzkin:Lines} on a special case of the Dowling--Wilson conjecture.

\section{Preliminaries on convex geometry}

We collect here for later use  important definitions and results from convex geometry. For more information we refer the reader to the monograph by Schneider \cite{Schneider:BM}.

\subsection{Convex bodies} 

\label{sec:convex}

In this paper,  a convex body is a nonempty compact convex subset of $\RR^n$. We denote the set all convex bodies in $\RR^n$  by $\calK(\RR^n)$. The Hausdorff distance $\delta(K,L)$ turns this set into a locally compact metric spaces.
The intersection of convex bodies is in general not continuous. However, under the assumption that $K$ and $L$ cannot be separated by a hyperplane, one has the following result. Recall that two convex bodies can be separated by a hyperplane if there exists a linear functional $f\colon \RR^n\to \RR$ and a number $\alpha$ such that 
$K\subseteq \{ x\colon f(x)\leq \alpha \}$ and $L\subseteq  \{x\colon f(x)\geq \alpha\}$.

\begin{lemma}\label{lemma:convergence-intersection}Let $K, L$ be convex bodies in $\RR^n$ that cannot be separated
	by a hyperplane. If $K_i , L_i$ ($i\in \NN$) are convex bodies  with $K_i \to  K$ and
	$L_i \to L$ for $i \to \infty $, then $K_i \cap  L_i  \neq\emptyset$  for almost all $i$ and $K_i \cap  L_i \to K \cap  L$ for
	$i\to \infty$.
	\end{lemma}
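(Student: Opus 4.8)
The plan is to prove the non-emptiness claim first, then establish the Hausdorff convergence, using in both steps the separation hypothesis as the crucial structural input.

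\textbf{Step 1: Eventually nonempty intersections.} Suppose for contradiction that $K_i \cap L_i = \emptyset$ for infinitely many $i$. Passing to a subsequence, assume $K_i \cap L_i = \emptyset$ for all $i$. Since $K_i$ and $L_i$ are disjoint compact convex sets, they can be strictly separated: there exist unit vectors $u_i \in S^{n-1}$ and reals $\alpha_i$ with $K_i \subseteq \{x : \langle x, u_i\rangle \le \alpha_i\}$ and $L_i \subseteq \{x : \langle x, u_i\rangle \ge \alpha_i\}$. The sequences $(u_i)$ lies in the compact sphere, and since $K_i \to K$, $L_i \to L$ the bodies are uniformly bounded, so the $\alpha_i$ lie in a bounded interval (we may take $\alpha_i = \tfrac12(\max_{K_i}\langle\cdot,u_i\rangle + \min_{L_i}\langle\cdot,u_i\rangle)$). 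Passing to a further subsequence, $u_i \to u$ and $\alpha_i \to \alpha$. Using the Hausdorff convergence $K_i \to K$: for any $x \in K$ pick $x_i \in K_i$ with $x_i \to x$, then $\langle x, u\rangle = \lim \langle x_i, u_i\rangle \le \lim \alpha_i = \alpha$, so $K \subseteq \{x : \langle x, u\rangle \le \alpha\}$; symmetrically $L \subseteq \{x : \langle x, u\rangle \ge \alpha\}$. This exhibits a hyperplane separating $K$ and $L$, contradicting the hypothesis. Hence $K_i \cap L_i \neq \emptyset$ for all but finitely many $i$.

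\textbf{Step 2: Convergence $K_i \cap L_i \to K \cap L$.} First note $K \cap L \neq \emptyset$: this follows from the same argument as Step 1 applied to the constant sequences (disjoint compact convex sets are separated by a hyperplane), or simply by applying Step 1's conclusion and taking a limit point. It remains to show $\delta(K_i \cap L_i, K \cap L) \to 0$. One inclusion is the standard upper semicontinuity of intersection: any accumulation point of points $z_i \in K_i \cap L_i$ lies in $K \cap L$, so for every $\epsilon > 0$ eventually $K_i \cap L_i \subseteq (K\cap L) + \epsilon B$ where $B$ is the unit ball (otherwise a diagonal argument produces $z_i \in K_i \cap L_i$ with $\mathrm{dist}(z_i, K\cap L) \ge \epsilon$, and a convergent subsequence $z_i \to z \in K \cap L$ gives a contradiction). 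The reverse inclusion — that $K \cap L \subseteq (K_i \cap L_i) + \epsilon B$ eventually — is the delicate part: it fails in general precisely when $K$ and $L$ touch without crossing. The key is that the inseparability hypothesis forces $\relint K \cap \relint L \neq \emptyset$ or at least that $K \cap L$ is approximable from inside. I would argue as follows: fix an interior-like point. Since $K$ and $L$ cannot be separated by a hyperplane, there is a point $p \in (K \cap L)$ that lies in the relative interior of $K\cap L$; more usefully, one shows that for a suitable $p \in K \cap L$ and every $q \in K \cap L$, the segment from $p$ toward $q$ can be pushed slightly into both $K_i$ and $L_i$ for large $i$. Concretely, pick $p$ in $\relint(\mathrm{conv})$; for each $q \in K\cap L$ and $\lambda \in (0,1)$, the point $(1-\lambda)p + \lambda q$ lies in... here one invokes the structure: since $K,L$ are not separated, $K \cap L$ has the same affine hull as forces points of $K \cap L$ to be limits of points in $\mathrm{int}\,K \cap \mathrm{int}\,L$ relative to $\aff(K\cap L)$, and such points are stable under Hausdorff perturbation. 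Combining both inclusions yields $\delta(K_i\cap L_i, K\cap L)\to 0$.

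\textbf{Main obstacle.} The genuine difficulty is the lower inclusion in Step 2, i.e.\ showing no point of $K \cap L$ gets "left behind." The separation hypothesis is exactly what rules out the bad tangential configurations, and translating "not separable by a hyperplane" into a usable interior-approximation statement for $K \cap L$ is the technical heart of the argument. I expect the cleanest route is: reduce to the case $\aff(K) \cap \aff(L)$ behaves well, use that inseparability implies $\relint K \cap \relint L \neq \emptyset$ (a standard fact from convex analysis, e.g.\ via the separating hyperplane theorem for relative interiors), fix $p \in \relint K \cap \relint L$, and for any $q \in K \cap L$ and small $\eta>0$ observe that $(1-\eta)q + \eta p$ lies in $\mathrm{int}$ of both bodies relative to the relevant affine hulls, hence in $K_i \cap L_i$ for $i$ large by Hausdorff convergence; letting $\eta \to 0$ after $i \to \infty$ closes the gap. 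I would cite Schneider's monograph \cite{Schneider:BM} for the relative-interior separation fact and for the basic properties of the Hausdorff metric used throughout.
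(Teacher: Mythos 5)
First, note that the paper itself does not prove this lemma: it is quoted as a known preliminary from Schneider's monograph (it is Theorem~1.8.10 there), so there is no in-paper proof to compare against. Judged on its own, your Step~1 (eventual non-emptiness via a limit of separating hyperplanes) and the upper inclusion in Step~2 are complete and correct. The lower inclusion, however, which you rightly identify as the heart of the matter, contains a genuine gap.

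The gap is the final step ``$(1-\eta)q+\eta p$ lies in the interior of both bodies relative to the relevant affine hulls, hence in $K_i\cap L_i$ for $i$ large by Hausdorff convergence.'' Relative-interior points of a lower-dimensional body are \emph{not} stable under Hausdorff perturbation: for $K=[-1,1]\times\{0\}$ and $K_i=[-1,1]\times\{1/i\}$ one has $K_i\to K$, yet no point of $\relint K$ lies in any $K_i$. Only genuine interior points enjoy the stability you invoke (via uniform convergence of support functions), and the hypothesis does not force $K$ and $L$ to be full-dimensional: the two orthogonal segments $K=[-1,1]\times\{0\}$ and $L=\{0\}\times[-1,1]$ in $\RR^2$ cannot be separated by a line (their relative interiors meet at the origin and their union spans $\RR^2$), yet both have empty interior, so your argument gives nothing for them. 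This case genuinely occurs where the paper applies the lemma, namely to faces $F\cap(y+F')$ of transversally intersecting polytopes, which may have complementary dimensions. The standard repair is to replace ``$\relint K\cap\relint L\neq\emptyset$'' by the equivalent reformulation of inseparability as $0\in\operatorname{int}(K-L)$, where $K-L=K+(-L)$: one checks that $K,L$ admit a separating hyperplane with normal $u$ if and only if $h_{K-L}(u)\le 0$, so inseparability means exactly that the difference body contains a ball $\rho B$ around the origin. This condition is full-dimensional and stable under Hausdorff perturbation regardless of $\dim K$ and $\dim L$ (it gives Step~1 in one line, since $K_i-L_i\to K-L$), and it supports a quantitative estimate $\operatorname{dist}(x,K_i\cap L_i)\le C(\rho)\,\epsilon$ for $x\in K\cap L$ when $\delta(K,K_i),\delta(L,L_i)\le\epsilon$, which is what the lower inclusion actually requires. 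Without some such quantitative input, the passage ``$i\to\infty$ then $\eta\to 0$'' does not close.
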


A convex body $K\subseteq\RR^n$ is uniquely determined by its support function $h_K(x)= \sup\{\langle x,y\rangle\colon y\in K\}$, $x\in \RR^n$. Here $\langle x,y\rangle $ denotes the euclidean inner product. 
	
For any convex body $K\subseteq \RR^n$ there exists a convex body $\Proj K\subseteq \RR^n$ such that 
$$ h_{\Proj K} (u)= \vol(P_{u^\perp} K),  \quad u \in S^{n-1}.$$
Here $\vol$ denotes the Lebesgue measure in $u^\perp$,   $S^{n-1}\subseteq \RR^n$ is the euclidean unit sphere,  and $P_{u^\perp}$ is the orthogonal projection onto $u^\perp$. 
 $\Proj K$ is called the projection body of $K$.
 
The surface area measure $S_K$ of a convex body $K\subseteq \RR^n$ is a finite  Borel measure on the unit sphere such that 
$ S_{K}(\omega)$ is for any  Borel set $\omega\subseteq S^{n-1}$ the  $(n-1)$-dimensional Hausdorff measure of 
the set of all boundary points of $K$ at which there exists a
normal vector of $K$ belonging to $\omega$.  If $P$ is a polytope, then 
$$ S_{P}= \sum_{i=1}^m \vol_{n-1}(F_i) \delta_{u_i}$$
where $F_1,\ldots, F_m$ are the facets of $P$, $u_1,\ldots, u_m$ are the corresponding facet normals, and  $\delta_u$ denotes the Dirac measure.
 
Necessary and sufficient conditions for a finite  Borel measure to  be the surface area measure of a convex body are given by Minkowski's existence theorem: 
\begin{theorem}\label{thm:Minkowski}
A finite Borel measure $\mu$ on $S^{n-1}$ is the surface area measure of a convex body with non-empty interior if and only if 
$$ \int_{S^{n-1}} u\, d\mu(u)=0$$
and $\mu$ is not concentrated on an equator. Moreover, the equation $\mu= S_K$ determines convex bodies with nonempty interior uniquely up to translations.
\end{theorem}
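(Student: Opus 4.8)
The plan is to prove necessity, sufficiency, and uniqueness separately, with sufficiency being the substantial part.

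\emph{Necessity.} Suppose $\mu = S_K$ with $K$ of nonempty interior. The identity $\int_{S^{n-1}} u \, d\mu(u) = 0$ is first verified for polytopes, where it becomes $\sum_i \vol_{n-1}(F_i) u_i = 0$; this follows from the divergence theorem applied to a constant vector field on $K$, and the general case follows by polytopal approximation together with weak continuity of the surface area measure. If $\mu = S_K$ were concentrated on the equator $u^\perp \cap S^{n-1}$, then Cauchy's projection formula would give $\vol_{n-1}(P_{u^\perp}K) = \tfrac12\int_{S^{n-1}} |\langle v, u\rangle|\, dS_K(v) = 0$, forcing $\dim K \leq n-1$, a contradiction.

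\emph{Sufficiency.} I would use the classical two-step approach. First the polytopal case: given $\mu = \sum_{i=1}^m \alpha_i \delta_{u_i}$ with $\alpha_i > 0$, the $u_i$ spanning $\RR^n$, and $\sum_i \alpha_i u_i = 0$, consider $P(h) = \{x : \langle x, u_i\rangle \leq h_i \ \forall i\}$ for $h \in \RR^m$ and minimize $h \mapsto \sum_i \alpha_i h_i$ over $\{h : \vol(P(h)) \geq 1\}$. One shows (using $\sum_i\alpha_i u_i = 0$ to control coercivity and the spanning condition to keep $P(h)$ bounded) that a minimizer $h^*$ exists with $P(h^*)$ full-dimensional, and a variational analysis at $h^*$---based on the one-sided derivatives $\tfrac{\partial}{\partial h_i}\vol(P(h)) = \vol_{n-1}(F_i(h))$---forces each $u_i$ to support a genuine facet of $P(h^*)$ with $\vol_{n-1}(F_i(h^*))$ proportional to $\alpha_i$; a dilation then yields a polytope with surface area measure $\mu$. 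For general admissible $\mu$, choose discrete $\mu_k \to \mu$ weakly, each satisfying the two conditions, obtain polytopes $P_k$ with $S_{P_k} = \mu_k$, normalize by translating the Steiner point to the origin, deduce a uniform bound on the $P_k$ (here the nondegeneracy of $\mu$ is used to prevent collapse), pass to a convergent subsequence $P_k \to K$, and conclude $S_K = \mu$ by weak continuity of surface area measures.

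\emph{Uniqueness.} If $S_K = S_L$ with both bodies full-dimensional, then the integral representation of mixed volumes gives $V(K,\ldots,K,L) = \tfrac1n\int h_L\, dS_K = \tfrac1n\int h_L\, dS_L = \vol(L)$ and, symmetrically, $V(L,\ldots,L,K) = \vol(K)$. Minkowski's first inequality $V(K,\ldots,K,L)^n \geq \vol(K)^{n-1}\vol(L)$ then forces $\vol(L) \geq \vol(K)$, hence $\vol(K) = \vol(L)$ by symmetry, and the equality case of that inequality shows $K$ and $L$ are homothetic, thus translates.

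The main obstacle is the variational argument in the polytopal case: one must rule out degeneracies (some $u_i$ not supporting a facet of $P(h^*)$, or $P(h^*)$ being lower-dimensional or unbounded) and extract the Lagrange condition $\alpha_i = \lambda\,\vol_{n-1}(F_i(h^*))$ despite the merely piecewise-smooth dependence of $\vol(P(h))$ on $h$. The remaining ingredients---weak continuity of $S_K$, the mixed-volume integral formula, and Minkowski's inequality with its equality case---are standard and may be cited from Schneider's monograph.
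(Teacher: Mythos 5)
This is Minkowski's existence theorem, a classical preliminary which the paper states without proof and refers to Schneider's monograph. Your outline is the standard textbook argument---necessity via the divergence theorem and Cauchy's projection formula, existence via the variational construction for discrete measures followed by weak approximation and a compactness argument, uniqueness via the integral representation of mixed volumes together with Minkowski's first inequality and its equality case---and it is correct.
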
 
 
The Blaschke sum of two  convex bodies $K,L$ with nonempty interior is defined as the convex body $K\# L$  with centroid at the origin such that 
$ S_{K\# L}  = S_K  + S_L$. The existence and uniqueness  are a consequence of Theorem~\ref{thm:Minkowski}.
The Blaschke sum  is continuous with respect to the Hausdorff metric:
\begin{lemma}\label{lemma:cont-B} Let $K,L\subseteq \RR^n$ be convex bodies  with nonempty interior. If $K_i,L_i$, $i\in \NN$, are convex bodies such that  $K_i\to K$ and $L_i\to L$ for $i\to \infty$,  then $ K_{i}\# L_{i}$ exists for almost all $i$ and  $ K_{i}\# L_{i}\to  K\# L$ for $i\to \infty$. 
\end{lemma}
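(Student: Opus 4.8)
The plan is to deduce the lemma from the continuity of the surface area measure together with the existence and uniqueness assertions of Minkowski's theorem (Theorem~\ref{thm:Minkowski}), the only non-formal ingredient being a boundedness estimate which I will phrase in terms of projection bodies.

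First I would record that $S_{K_i}\to S_K$ and $S_{L_i}\to S_L$ weakly, so that $\mu_i:=S_{K_i}+S_{L_i}$ converges weakly to $\mu:=S_K+S_L$, while $\int_{S^{n-1}}v\,d\mu_i(v)=0$ holds automatically. The main point is that the Cauchy projection formula $h_{\Proj K}(u)=\vol_{n-1}(P_{u^\perp}K)=\tfrac12\int_{S^{n-1}}|\langle u,v\rangle|\,dS_K(v)$ turns additivity of surface area measures into additivity of projection bodies: any convex body $M$ with $S_M=\mu_i$ satisfies $\Proj M=\Proj K_i+\Proj L_i$. Since the projection body depends continuously on the body and $\Proj K$ is full-dimensional (because $K$ is), there exist constants $0<\rho\le R_0$ and an index $N$ with $\rho\le \tfrac12\int_{S^{n-1}}|\langle u,v\rangle|\,d\mu_i(v)\le R_0$ for all $u\in S^{n-1}$ and all $i\ge N$. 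Using the moment condition, $\int_{\{\langle u,\cdot\rangle<0\}}|\langle u,v\rangle|\,d\mu_i(v)$ equals half of the last integral and is therefore $\ge\rho>0$; in particular $\mu_i$ is not concentrated on an equator. By Theorem~\ref{thm:Minkowski} there is a convex body $M_i$, unique once its centroid is placed at the origin, with $S_{M_i}=\mu_i$; that is, $M_i=K_i\# L_i$ exists for all $i\ge N$.

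Next I would show that $\{M_i\}_{i\ge N}$ is uniformly bounded. By the inequalities above, $B(0,\rho)\subseteq\Proj M_i\subseteq B(0,R_0)$, and a convex body whose projection body is squeezed between two fixed balls lies, up to translation, in a fixed ball; one way to see this is to pass to the John ellipsoid $E(a_1\le\cdots\le a_n)$ of $M_i$, since control of its largest shadow bounds $a_2\cdots a_n$ from above, control of its smallest shadow bounds $a_1\cdots a_{n-1}$ from below, and these two facts together force $a_n$ to be bounded. Because each $M_i$ has centroid at the origin and the centroid of a body lies in the body, this also controls the required translations, so all $M_i$ with $i\ge N$ lie in one ball. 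Now the Blaschke selection theorem applies: every subsequence of $(M_i)$ has a further subsequence converging in the Hausdorff metric to a convex body $M'$, and continuity of the surface area measure gives $S_{M'}=\lim\mu_i=\mu=S_{K\# L}$, while the centroid of $M'$ is the limit of the vanishing centroids of the $M_i$, hence $0$; by the uniqueness clause of Theorem~\ref{thm:Minkowski}, $M'=K\# L$. Since every subsequence has a sub-subsequence converging to the same limit, $M_i\to K\# L$.

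The step I expect to be the main obstacle is the boundedness of $\{M_i\}$: weak convergence of $\mu_i$ by itself neither controls the elongation of the $M_i$ nor prevents, a priori, their surface area measures from escaping towards an equator as the bodies degenerate. It is exactly the hypothesis that $K$ (and hence $\Proj K$) is full-dimensional that rules this out, through the uniform lower bound on $\tfrac12\int_{S^{n-1}}|\langle u,v\rangle|\,d\mu_i(v)$. If one prefers, the John-ellipsoid estimate can be replaced by a reference to the relevant properties of projection bodies in Schneider's book \cite{Schneider:BM}.
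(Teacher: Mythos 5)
The paper does not prove this lemma at all: it is stated in the preliminaries as a known fact from convex geometry, with Schneider's monograph as the implicit reference. So there is no in-paper argument to compare against; what matters is whether your proof stands on its own, and it does. Your strategy is the standard one for continuity statements attached to Minkowski's existence theorem: weak convergence of $\mu_i=S_{K_i}+S_{L_i}$ to $S_{K\#L}$, a uniform non-degeneracy bound guaranteeing existence, uniform boundedness of the solutions, Blaschke selection, and the uniqueness clause of Theorem~\ref{thm:Minkowski} to identify every subsequential limit. Two small remarks. First, for the mere existence of $K_i\#L_i$ for large $i$ you do not need the projection-body lower bound: since $K_i\to K$ with $K$ full-dimensional, $K_i$ has nonempty interior for large $i$, so $S_{K_i}$ alone is not concentrated on an equator and neither is $\mu_i$; the uniform lower bound $\rho\le h_{\Proj M_i}(u)$ is, however, exactly what you need for the boundedness step, and you correctly identify this as the crux. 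Your John-ellipsoid argument there is sound: the upper bound on the largest shadow controls $a_2\cdots a_n$, the lower bound on the smallest shadow controls $a_1\cdots a_{n-1}$, hence $a_n/a_1$ is bounded, and combined with $a_2\cdots a_n\le C$ this forces $a_n^{n-1}\le C\,(a_n/a_1)^{n-2}$ to be bounded. (The classical route in Schneider's book instead bounds the diameter directly by an isoperimetric-type estimate relating a long segment in $M_i$ to the total mass and non-degeneracy of $S_{M_i}$, but your version is equally valid.) Second, when you pass to the subsequential limit $M'$ you should note explicitly that $M'$ has nonempty interior before invoking the uniqueness clause; this follows because $S_{M'}=\mu$ is not concentrated on an equator, whereas a lower-dimensional limit would have surface area measure either zero or supported on a pair of antipodal points. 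With that observation your argument is complete and correct.
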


The support function of a projection body satisfies 
\begin{equation}\label{eq:projection-body} h_{\Proj K}(x) = \frac 1 2 \int_{S^{n-1}} |\langle u,x\rangle |\, dS_K(u).\end{equation}
 One consequence is this formula is  that $\Proj (K\# L) = \Proj K + \Proj L$ for convex bodies with nonempty interior.

The mixed volume of $n$ convex bodies $K_1,\ldots K_n$ in $\RR^n$ is denoted by $\V(K_1,\ldots, K_n)$. 
 The volume of a Minkowski linear combination of $m$ convex bodies is a homogeneous polynomial of degree $n$, 
$$  \vol(x_1K_1+ \cdots + x_m K_m) = \sum_{\substack{\alpha\in \NN^m\\\alpha_1+\cdots +\alpha_m=n}}  \binom{n}{\alpha} \V(K_1[\alpha_1], \ldots, K_m[\alpha_m])x^\alpha   ,$$
where  $x_1,\ldots, x_m$ are nonnegative numbers, 
$\binom{n}{\alpha}$ is the multinomial coefficient, 
$$\V(K_1[\alpha_1], \ldots, K_m[\alpha_m]) = \V(\underbrace{K_1,\ldots, K_1}_{\alpha_1 \text{ times}}, \ldots,\underbrace{K_m,\ldots,K_m}_{\alpha_m \text{ times}}),$$
and $x^\alpha= x_1^{\alpha_1}\cdots x_m^{\alpha_m}$.

\subsection{Polytopes}
A polytope in $\RR^n$ or, more generally, in a finite-dimensional real vector space $V$ is the convex hull of finitely many points. We denote by $\calP(V)$ the set of all polytopes in $V$.

Let $P\in \calP(V)$ be a polytope. A subset  $F\subseteq P$ is called a face of the  polytope $P$ if there exists a nonzero linear functional $f\colon V\to \RR$ such that $$P\subseteq \{ x\in V\colon f(x)\leq \alpha\}  \text{ and } F=P\cap \{x\in V\colon f(x)=\alpha\}.$$ 
If $F$ is a facet, then $f$ is called a facet conormal.
Moreover, $P$ itself is considered to be a (non-proper) face of $P$.
If $F$ is a face, then $\bar F$ denotes a linear subspace, namely the direction of the affine hull of $F$. 

 The conormal cone of a polytope $P\subseteq V$ at a nonempty face $F$ is 
 $$ N(F,P)= \{ \xi\in V^* \colon \langle \xi,v\rangle = h_P(\xi) \text{ for every } v\in F\}.$$
 Here $h_P\colon V^*\to \RR$ denotes the invariant version of the support function of $P$.
 The conormal cone of $P$ at a point $x\in P$ is 
 $$ N(x,P)= \{\xi\in V^* \colon \langle \xi,x\rangle = h_P(\xi)\}.$$

 We denote by $\relint A$ the relative interior of a convex set $A$, i.e., the interior of $A$ relative to affine hull of $A$. 

The following  lemma describes the faces and conormal cones of the intersection of two polytopes.

\begin{lemma}\label{lemma:faces-intersection}Let $P,P'\subseteq V$ be polytopes and let $G$  be a nonempty face of $P\cap P'$. Then the following properties hold: 
	\begin{enuma}
		\item If $F$ is a face of $P$ and $F'$ is a face of $P'$, then $F\cap F'$ is a face of $P\cap P'$. 
		\item If $G$ is a nonempty  face of $P\cap P'$, then there exist   faces $F\subseteq P$ and $F'\subseteq P'$ such that  $G= F\cap F'$. 
	
		\item  Let  $F$ be a face of $P$ and $F'$ be a face of $P'$. If $\relint F \cap \relint F'\neq \emptyset$, then conormal cones satisfy
		$$ N(F\cap F',P\cap P')= N(F,P)+ N(F',P').$$
	\end{enuma}
\end{lemma}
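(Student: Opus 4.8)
The plan is to prove the three assertions of Lemma~\ref{lemma:faces-intersection} by reducing everything to the description of faces via linear functionals and to the basic fact that for a polytope $P$ and $\xi \in V^*$, the face $P^\xi := P \cap \{x : \langle\xi,x\rangle = h_P(\xi)\}$ is the face ``exposed'' by $\xi$, and that $N(F,P) = \{\xi : F \subseteq P^\xi\}$. I would begin by recording two elementary observations: (i) for any $\xi$, $(P\cap P')^\xi = P^\xi \cap (P')^\xi$, since maximizing $\xi$ over $P\cap P'$ is achieved exactly at points that maximize it over both $P$ and $P'$ simultaneously (here one uses that $h_{P\cap P'}(\xi) = \max\{\langle\xi,x\rangle : x \in P \cap P'\}$ and that a point of $P\cap P'$ attaining the max over $P\cap P'$ need not a priori attain it over $P$ — so this identity actually needs the slightly more careful argument below, and is really the content of part (a)); (ii) every face of a polytope $P$ is of the form $P^\xi$ for some $\xi \in V^*$ (including $\xi = 0$, giving $P$ itself).

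For part (a): given $F = P^\eta$ and $F' = (P')^{\eta'}$, I claim $F \cap F' = (P\cap P')^{\eta + \eta'}$, which is a face of $P\cap P'$ by definition. To see this, note $h_{P\cap P'}(\eta+\eta') \le h_P(\eta) + h_{P'}(\eta')$, with equality precisely when $F \cap F' \neq \emptyset$; if $F \cap F' = \emptyset$ there is nothing to prove since $\emptyset$ is (or is treated as) trivially handled, and otherwise any $x \in F\cap F'$ achieves $\langle\eta+\eta', x\rangle = h_P(\eta)+h_{P'}(\eta')$, so $h_{P\cap P'}(\eta+\eta')$ equals this value and is attained exactly on $F \cap F'$. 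Conversely, a point of $P \cap P'$ where $\eta+\eta'$ attains this sum must individually attain $h_P(\eta)$ and $h_{P'}(\eta')$, so lies in $F\cap F'$.

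For part (b): let $G$ be a nonempty face of $P\cap P'$, so $G = (P\cap P')^\xi$ for some $\xi$. Take $F = P^\xi$ and $F' = (P')^\xi$; these are nonempty faces, and by the equality-case analysis in part (a) (with $\eta = \eta' = \xi$, after rescaling — or more directly, by a compactness/separation argument showing the maximizers of $\xi$ over $P\cap P'$ are exactly the intersection of the maximizers over $P$ and over $P'$ when $G$ is nonempty) we get $G = F \cap F'$. The one subtlety is that a priori the set where $\xi$ is maximized over $P\cap P'$ could be strictly larger than $P^\xi \cap (P')^\xi$; this is where I would invoke that $G\neq\emptyset$ to pin down $h_{P\cap P'}(\xi) = h_P(\xi) = h_{P'}(\xi)$ need not hold — instead the correct statement is that there exist \emph{some} faces doing the job, and the cleanest route is: pick any relative interior point $x_0$ of $G$, let $F$ be the smallest face of $P$ containing $x_0$ and $F'$ the smallest face of $P'$ containing $x_0$; then $F\cap F'$ is a face of $P\cap P'$ containing $x_0$ in its relative interior, hence equals $G$ (two faces of a polytope sharing a relative interior point coincide).

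For part (c): assume $\relint F \cap \relint F' \neq \emptyset$ and pick $x_0$ in this intersection. The inclusion $N(F,P) + N(F',P') \subseteq N(F\cap F', P\cap P')$ is immediate: if $\xi \in N(F,P)$ and $\xi' \in N(F',P')$ then $\langle\xi,v\rangle = h_P(\xi)$ and $\langle\xi',v\rangle = h_{P'}(\xi')$ for all $v \in F\cap F'$, and summing gives $\xi+\xi' \in N(F\cap F', P\cap P')$ using that $h_{P\cap P'}(\xi+\xi') \le h_P(\xi) + h_{P'}(\xi')$ with equality witnessed at any point of $F\cap F'$. For the reverse inclusion, take $\xi \in N(F\cap F', P\cap P')$; I want to split $\xi = \eta + \eta'$ with $\eta$ constant on $F$ equal to $h_P(\eta)$ and $\eta'$ constant on $F'$ equal to $h_{P'}(\eta')$. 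Since $\xi$ is constant on $F\cap F'$, it is constant on the affine hull of $F \cap F'$; the key point, using $x_0 \in \relint F \cap \relint F'$, is that the normal cone $N(x_0, P) = N(F,P)$ and $N(x_0,P') = N(F',P')$, and $N(x_0, P\cap P')$ should be shown to equal $N(x_0,P) + N(x_0,P')$ — this is a local statement at the single point $x_0$ about the tangent cones $T(x_0,P)$, $T(x_0,P')$ satisfying $T(x_0, P\cap P') = T(x_0,P) \cap T(x_0,P')$ (true because $x_0$ is relatively interior to both $F$ and $F'$, so locally $P\cap P'$ looks like the intersection of the two tangent cones), and then dualizing: the polar of an intersection of cones is the (closed) sum of the polars, which is automatically closed here since we are dealing with polyhedral cones. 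Combining this local identity at $x_0$ with the observations $N(F,P) = N(x_0,P)$, $N(F',P') = N(x_0,P')$, and $N(F\cap F', P\cap P') = N(x_0, P\cap P')$ (again because $x_0 \in \relint(F\cap F')$, which follows from $\relint F \cap \relint F' \subseteq \relint(F\cap F')$, a standard fact) yields the claim.

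The main obstacle I anticipate is part (c), specifically the passage $T(x_0, P\cap P') = T(x_0,P)\cap T(x_0,P')$ and its dual $N(x_0,P\cap P') = N(x_0,P) + N(x_0,P')$: one must be careful that $x_0$ being in the \emph{relative} interiors of $F$ and $F'$ (not the interiors of $P$, $P'$) is exactly what guarantees there is no ``hidden'' extra facet of $P\cap P'$ through $x_0$ coming from the other polytope, so that the local cone structure is just the intersection. The polytopality (hence polyhedrality of all cones involved) is what makes the sum of polar cones closed and the duality clean, avoiding any closure issues. Parts (a) and (b) are routine once the ``smallest face containing a point'' / ``faces sharing a relative interior point coincide'' lemma is in hand.
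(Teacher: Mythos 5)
The paper states Lemma~\ref{lemma:faces-intersection} without proof, as a standard preliminary fact deferred to Schneider's monograph, so there is no in-paper argument to compare against; judged on its own, your proof is correct. Your self-diagnosis in (b) is right: the naive choice $F=P^\xi$, $F'=(P')^\xi$ fails (e.g.\ $P=[0,2]$, $P'=[0,1]$, $\xi=1$), and the correct route is exactly the one you settle on, namely taking $F$ and $F'$ to be the unique faces containing a point $x_0\in\relint G$ in their relative interiors and invoking $\relint F\cap\relint F'\subseteq\relint(F\cap F')$ together with the fact that the relative interiors of the faces partition the polytope. Part (c) via $N(F,P)=N(x_0,P)$ for $x_0\in\relint F$, the local identity $T(x_0,P\cap P')=T(x_0,P)\cap T(x_0,P')$, and polar duality for polyhedral cones (where the sum of polars is already closed) is the standard argument and is complete as written.
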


 We will also need an analogous result for the intersection of a polytope  with a linear subspace. 

\begin{lemma}\label{lemma:faces-intersection-subspace}
	Let $V$ and $W$ be finite-dimensional vector spaces and let $f\colon W\to V$ be a linear injection. Let $P\subseteq V$ be a polytope. Then the following properties hold:
	
	\begin{enuma}
		\item If $F$ is a face of $P$, then $f^{-1}(F)$ is a face of $f^{-1}(P)$.
		\item If $G$ is a nonempty face of $f^{-1}(P)$, then there exists a face $F$ of $P$ such that $G= f^{-1}(F)$.
		\item Let $F$ be a face of $P$. If $f^{-1}(\relint F)\neq \emptyset$, then the conormal cones satisfy
		$$ N(f^{-1}(F), f^{-1}(P))= f^*(N(F,P)).$$ 
	\end{enuma}	
\end{lemma}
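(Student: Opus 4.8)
The plan is to prove Lemma~\ref{lemma:faces-intersection-subspace} by reducing it to the already-established Lemma~\ref{lemma:faces-intersection} about intersections of two polytopes. The key observation is that intersecting with the linear subspace $f(W)$ behaves exactly like intersecting with an extremely large polytope aligned with that subspace, and the preimage under the injection $f$ is just a change of coordinates.

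\smallskip
\noindent\textbf{Step 1: Reduce to the case $W \subseteq V$ a linear subspace.} Since $f$ is injective, it induces a linear isomorphism $W \to f(W) =: U \subseteq V$. Faces, relative interiors, and conormal cones all transform compatibly under linear isomorphisms (for conormal cones one uses that the dual of an isomorphism carries $N(F,P)$ to $N(\phi(F),\phi(P))$). Thus it suffices to prove the three claims in the situation where $W = U$ is a linear subspace of $V$ and $f$ is the inclusion $\iota\colon U \hookrightarrow V$, so that $f^{-1}(P) = P \cap U$ and $f^* = $ restriction of functionals $V^* \to U^*$.

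\smallskip
\noindent\textbf{Step 2: Exhaust $U$ by large polytopes and apply Lemma~\ref{lemma:faces-intersection}.} For parts (a) and (b): if $F$ is a face of $P$, cut out by a functional $g$ on $V$, then $F \cap U = F \cap (P \cap U)$, and restricting $g$ to $U$ (extended arbitrarily, or viewing $P \cap U$ inside $U$) exhibits $F \cap U$ as a face of $P \cap U$; this gives (a). For (b), given a nonempty face $G$ of $P \cap U$, choose a polytope $Q \subseteq U$ with $P \cap U \subseteq \relint_U Q$ (relative to $U$), large enough that $P \cap Q = P \cap U$ and $G$ is a face of $P \cap Q$. By Lemma~\ref{lemma:faces-intersection}(b) there are faces $F$ of $P$ and $F''$ of $Q$ with $G = F \cap F''$; since $G \subseteq U$ and $F'' \subseteq Q \subseteq U$, and $Q$ is chosen so that its only relevant face meeting $P$ is $Q$ itself (as $P \cap U$ lies in the relative interior of $Q$), one gets $G = F \cap U = \iota^{-1}(F)$. (Here one must be slightly careful to argue that $G$, being a face of $P\cap Q$ contained in the relative interior of $Q$, must arise from the improper face $Q$ of $Q$; this follows because any proper face of $Q$ is disjoint from $\relint_U Q \supseteq P\cap U \supseteq G$.)

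\smallskip
\noindent\textbf{Step 3: The conormal cone identity.} For part (c), with $F$ a face of $P$ such that $\iota^{-1}(\relint F) = \relint F \cap U \neq \emptyset$: choose again $Q \subseteq U$ as above with $P \cap U \subseteq \relint_U Q$, so $P \cap Q = P \cap U$ and $\relint F \cap \relint_U Q \supseteq \relint F \cap U \neq \emptyset$. Apply Lemma~\ref{lemma:faces-intersection}(c) with $P' = Q$ to get
\[
N(F \cap U,\, P \cap U) = N(F \cap Q,\, P \cap Q) = N(F,P) + N(Q, P \cap Q).
\]
Now $N(Q,Q) = U^\perp := \{\xi \in V^* : \xi|_U = 0\}$ because $Q$ is full-dimensional in $U$, and a short argument (again using $P \cap U \subseteq \relint_U Q$, so $Q$'s supporting behavior on $U$ is trivial) identifies $N(Q, P\cap Q)$ with $U^\perp$ as well. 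Hence $N(F \cap U, P \cap U) = N(F,P) + U^\perp$. On the other hand $\iota^*(N(F,P))$ is precisely the image of $N(F,P)$ under $V^* \to V^*/U^\perp = U^*$, which as a subset of $U^*$ equals $(N(F,P) + U^\perp)/U^\perp$; unwinding the identification $\iota^* = (\text{quotient by } U^\perp)$ this is exactly the claim. Transporting back along the isomorphism $W \cong U$ from Step 1 completes the proof.

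\smallskip
\noindent\textbf{Main obstacle.} The substantive point is the choice of the auxiliary polytope $Q$ and verifying that it genuinely reproduces intersection-with-$U$ at the level of faces and conormal cones --- in particular that $N(Q, P \cap Q) = U^\perp$ and that the improper face $Q$ is the one picked out in Lemma~\ref{lemma:faces-intersection}(b),(c). Everything else is bookkeeping with linear isomorphisms and the translation between $\iota^*$ and the quotient map $V^* \to U^*$. One should double-check the edge cases where $F \cap U$ or $P \cap U$ fails to be full-dimensional in $U$, but the relative-interior hypothesis in (c) is exactly what makes the conormal-cone computation go through cleanly.
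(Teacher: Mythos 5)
The paper states Lemma~\ref{lemma:faces-intersection-subspace} in the preliminaries without proof (it is treated as standard convex geometry, alongside Lemma~\ref{lemma:faces-intersection}), so there is no argument of the author's to compare against; your proposal supplies a proof, and it is essentially correct. The reduction to an inclusion $U=f(W)\hookrightarrow V$ and the replacement of $U$ by a polytope $Q\subseteq U$ with $P\cap U\subseteq\relint Q$ (so that $P\cap Q=P\cap U$, the only face of $Q$ meeting $P\cap U$ is the improper face $Q$, and $N(Q,Q)=U^\perp$) does correctly deliver (b) and (c) from Lemma~\ref{lemma:faces-intersection}. Three small points to tidy up. First, in Step~3 the term produced by Lemma~\ref{lemma:faces-intersection}(c) with $F'=Q$, $P'=Q$ is $N(Q,Q)$, not $N(Q,P\cap Q)$; you conflate the two, but since both equal $U^\perp$ under your choice of $Q$ this is harmless. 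Second, the conormal cone $N(F\cap U,P\cap U)$ furnished by Lemma~\ref{lemma:faces-intersection}(c) lives in $V^*$, whereas the cone in the statement of Lemma~\ref{lemma:faces-intersection-subspace}(c) lives in $W^*\cong U^*$; you do address this via the identification of $\iota^*$ with the quotient $V^*\to V^*/U^\perp$, and the computation $r(N(F,P)+U^\perp)=\iota^*(N(F,P))$ closes the loop, but it is worth stating explicitly that for a polytope contained in $U$ the ambient conormal cone is the full preimage under restriction of the intrinsic one. Third, in (a) there is the degenerate case where the cutting functional $g$ of $F$ restricts to zero on $U$, in which case $F\cap U$ is either all of $P\cap U$ (the improper face) or empty; this is consistent with the statement but should be mentioned rather than absorbed into ``extended arbitrarily.'' None of these affects the validity of the argument.
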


\section{Preliminaries on the polytope algebra}
\label{sec:polyalg}
Throughout this paper  $V$ denotes an $n$-dimensional real vector space.

\subsection{Definitions and basic properties}

 The polytope algebra $\Pi_*(V)$, which was introduced by McMullen in \cite{McMullen:PolytopeAlgebra}, is the free abelian group formally generated by  the polytopes in $V$ quotiented by the  ideal generated by the relations 
\begin{equation}\label{eq:rel1}P\cup Q +P\cap Q - P -Q,\end{equation}
whenever $P,Q\in \calP(V)$ are so that $P\cup Q$ is convex, and 
\begin{equation}\label{eq:rel2} (x+P) - P, \quad x\in V.\end{equation}  The equivalence class of a polytope $P$ in $\Pi_*(V)$ denoted by $[P]$. Moreover, we define $[\emptyset ] =0$. 

The polytope algebra enjoys a universal property: it linearizes valuations. Here a valuation with values in an abelian group $A$ is by definition  a function $\phi\colon \calP(V)\to A$ such that for all $P,Q\in \calP(V)$ 
$$ \phi(P\cup Q)= \phi(P)+\phi(Q)-\phi(P\cap Q),$$
provided $P\cup Q$ is convex. A valuation is called translation-invariant if $\phi(x+P)= \phi(P)$ holds for all $x\in V$ and all polytopes $P$. 

The universal property of the polytope algebra is an immediate consequence of a classical result of Groemer concerning the extension of valuations to convex chains (see, e.g., \cite[Theorem 2.2.1]{KlainRota}). 

\begin{proposition}[Universal property of $\Pi_*(V)$]\label{prop:universal}
	Let $\phi\colon \calP(V)\to A$ be a translation-invariant valuation with values in an abelian group $A$. Then there exists a unique morphism of abelian groups $\bar\phi \colon \Pi_*(V)\to A$ such that the following diagram commutes:
	\begin{center}\begin{tikzpicture}
			\matrix (m) [matrix of math nodes,row sep=3em,column sep=5em,minimum width=2em]
			{
				\calP(V)  &  A  \\
				\Pi_*(V)  &    \\ };
			\path[-stealth]
			(m-1-1) edge node [above] {$\phi$}  (m-1-2)
			(m-1-1) edge  (m-2-1)
			(m-2-1) edge node [below right] {$\bar \phi$} (m-1-2);
		\end{tikzpicture}
	\end{center}
	
\end{proposition}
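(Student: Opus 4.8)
The plan is to construct $\bar\phi$ directly from the presentation of $\Pi_*(V)$ as a quotient of a free abelian group, and to obtain uniqueness from the fact that the classes $[P]$ generate. I would write $F$ for the free abelian group on the set $\calP(V)$, with free generators $e_P$, and let $R\subseteq F$ be the subgroup generated by all elements of the form $e_{P\cup Q}+e_{P\cap Q}-e_P-e_Q$ (for $P,Q\in\calP(V)$ with $P\cup Q$ convex) together with all $e_{x+P}-e_P$ (for $P\in\calP(V)$, $x\in V$), so that $\Pi_*(V)=F/R$ and $[P]=\pi(e_P)$ for the quotient map $\pi\colon F\to\Pi_*(V)$.

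First I would use the universal property of free abelian groups to extend the set map $P\mapsto\phi(P)$ to a group homomorphism $\psi\colon F\to A$ with $\psi(e_P)=\phi(P)$. The key point is then that $\psi$ vanishes on $R$, and it suffices to verify this on the generating elements of $R$: when $P\cup Q$ is convex,
$$\psi\bigl(e_{P\cup Q}+e_{P\cap Q}-e_P-e_Q\bigr)=\phi(P\cup Q)+\phi(P\cap Q)-\phi(P)-\phi(Q)=0$$
by the valuation property of $\phi$, while $\psi(e_{x+P}-e_P)=\phi(x+P)-\phi(P)=0$ by translation invariance. Since $A$ is abelian and $\psi$ is a homomorphism, vanishing on a generating set of $R$ forces $\psi(R)=0$, so $\psi$ factors through $F/R$: there is a unique group homomorphism $\bar\phi\colon\Pi_*(V)\to A$ with $\bar\phi\circ\pi=\psi$. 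Evaluating on generators, $\bar\phi([P])=\psi(e_P)=\phi(P)$, which is exactly the commutativity of the triangle in the statement.

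Uniqueness would then be immediate: $\Pi_*(V)$ is generated as an abelian group by the classes $[P]$, so any morphism making the diagram commute must send $[P]\mapsto\phi(P)$, hence agrees with $\bar\phi$ on a generating set and equals it. I do not expect any real obstacle here --- the entire content of the assertion is built into the chosen presentation of $\Pi_*(V)$ --- so the only things to keep an eye on are small bookkeeping matters: consistency with the convention $[\emptyset]=0$ (equivalently $\phi(\emptyset)=0$, which one either builds into the definition of a valuation or deduces from the valuation identity), and the observation that no relation \eqref{eq:rel1} actually mentions the empty set, because $P\cup Q$ convex with $P,Q\in\calP(V)$ forces $P\cap Q\neq\emptyset$ (a segment joining a point of $P$ to a point of $Q$ lies in $P\cup Q$ and cannot be split into two disjoint nonempty relatively closed pieces). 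A completely equivalent route, closer to the slogan that $\Pi_*(V)$ linearizes valuations, is to first invoke Groemer's extension theorem to extend $\phi$ to a $\ZZ$-linear functional on the group of convex chains $\sum a_i\mathbf 1_{P_i}$ and then quotient by translations; this is the same argument repackaged, with Groemer's theorem playing the role of identifying $F$ modulo the relations \eqref{eq:rel1} with the group of convex chains.
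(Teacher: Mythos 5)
Your argument is correct. Given the definition of $\Pi_*(V)$ adopted in Section~3 of the paper --- the free abelian group on $\calP(V)$ modulo the subgroup generated by the relations \eqref{eq:rel1} and \eqref{eq:rel2} --- the universal property is indeed the formal statement that a homomorphism out of a quotient is the same as a homomorphism out of the free group that kills the defining relations, and your verification that $\psi$ vanishes on the generators of $R$ is exactly the valuation identity and translation invariance. Your two bookkeeping remarks are also correct and worth having on record: no relation \eqref{eq:rel1} involves the empty set, since $P\cup Q$ convex with $P,Q$ nonempty forces $P\cap Q\neq\emptyset$ by the segment argument, and the convention $[\emptyset]=0$ therefore causes no friction. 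The paper takes a different (and terser) route: it simply declares the proposition an immediate consequence of Groemer's extension theorem. The real content of Groemer's theorem is the identification of the presented group $F/R$ with the group of polytopal chains $\sum a_i\mathbf 1_{P_i}$ used in the introduction --- i.e., that the kernel of $F\to\{\text{indicator functions}\}$ is generated by the inclusion--exclusion relations --- so the paper's citation simultaneously establishes the universal property \emph{and} reconciles the two definitions of $\Pi_*(V)$, whereas your direct argument proves the proposition for the presentation definition without addressing that reconciliation. Both are valid; yours is more self-contained for the statement as literally given, while the paper's buys the compatibility with the indicator-function picture at the same time.
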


 Minkowski addition endows $\Pi_*(V)$ with the structure of a commutative ring
\begin{equation}\label{eq:convolution} [P] * [Q]= [P+Q].\end{equation}
To distinguish this multiplication from the one we introduce in this paper, we will refer to \eqref{eq:convolution} as the \emph{convolution} in the polytope algebra.

Dilation of polytopes by nonzero numbers $\alpha\in \RR$ descends to a ring endomorphism of $\Pi_*(V)$ that is uniquely determined by
$$\Delta(\alpha)[P] = [\alpha P].$$

The polytope algebra admits a natural grading that is compatible with dilations. We summarize the fundamental properties  of polytope algebra in the following theorem.

\begin{theorem}[{\cite[Theorem 1]{McMullen:PolytopeAlgebra}}]
As an abelian group, the polytope algebra admits a decomposition 
$$ \Pi_*(V)= \bigoplus_{k=0}^n \Pi_k(V)$$
 that satisfies the following properties:
 \begin{enuma}
 	\item The grading is compatible with convolution:
 	$$ \Pi_k(V) * \Pi_l(V) \subseteq \Pi_{k+l}(V),$$
 	where $\Pi_k(V)=\{0\}$ for $k>n$. 
 	\item $\Pi_0(V)\cong \ZZ$ is generated by the points $\{x\}$, $x\in V$. For $k\in \{1,\ldots n\}$, $\Pi_k(V)$ is naturally a vector space over $\RR$.
 	\item For every positive real number $\alpha$ and $x\in \Pi_k(V)$
 	$$ \Delta(\alpha) x = \alpha^k x.$$
 	\item If $x,y\in \bigoplus_{k=1}^n\Pi_k(V)$ and $\alpha\in \RR$, then $(\alpha x)*y=x*(\alpha y)= \alpha(x*y)$. 
 \end{enuma}
\end{theorem}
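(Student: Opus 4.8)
The plan is to follow McMullen's original argument \cite{McMullen:PolytopeAlgebra}. Write $\mathbf 1=[\{0\}]$ for the convolution unit. First I would set up the formal scaffolding. The map $[P]\mapsto1$, $[\emptyset]\mapsto0$ is a translation-invariant valuation, so by Proposition~\ref{prop:universal} it induces a (multiplicative) augmentation $\varepsilon\colon\Pi_*(V)\to\ZZ$; its kernel $\Pi_+:=\ker\varepsilon$ is an ideal, spanned as an abelian group by the elements $Z_P:=[P]-\mathbf 1$ (since $[P]-[Q]=Z_P-Z_Q$). Scaling $[P]\mapsto[\lambda P]$ respects the relations \eqref{eq:rel1} and \eqref{eq:rel2}, hence descends, for every $\lambda\in\RR$, to a ring endomorphism $\Delta(\lambda)$ with $\Delta(\lambda)\mathbf 1=\mathbf 1$, $\Delta(\lambda)\Delta(\mu)=\Delta(\lambda\mu)$, and $\Delta(\lambda)\Pi_+\subseteq\Pi_+$. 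Note that $Z_P*Z_Q=Z_{P+Q}-Z_P-Z_Q$, so convolution maps $\Pi_+\times\Pi_+$ into $\Pi_+$.

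The technical heart is a polynomiality statement: for every polytope $Q$, the function $m\mapsto[mQ]$ on $\ZZ_{\ge0}$ is the restriction of a $\Pi_*(V)$-valued polynomial of degree $\le\dim Q$; equivalently, $Z_Q^{*(\dim Q+1)}=0$. I would prove it by induction on $d=\dim Q$, the case $d=1$ being $[mQ]=\mathbf 1+mZ_Q$, obtained by iterating~\eqref{eq:rel1} along a subdivision of $[0,mv]$. For the inductive step, fix a triangulation of $Q$; then $mQ$ dissects into translates of finitely many fixed simplices whose positive-dimensional mutual intersections are translates of finitely many fixed polytopes of dimension $<d$. Because $\Pi_*(V)$ is, by Groemer's theorem (the fact underlying Proposition~\ref{prop:universal}), the group of polytopal chains modulo translations, the inclusion--exclusion identity for this dissection descends to $\Pi_*(V)$, and it expresses $[mQ]$ as a combination of the fixed cell classes (with polynomial-in-$m$ coefficients of degree $\le d$) together with the lower-dimensional intersection classes (polynomial of degree $<d$ in $m$, by the inductive hypothesis); counting cells gives the degree bound $\le d$.

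From polynomiality I would then extract the remaining structure. \emph{Nilpotency.} A spanning element $Z_{P_1}*\cdots*Z_{P_{n+1}}$ of $\Pi_+^{n+1}$ equals $\sum_{T\subseteq\{1,\dots,n+1\}}(-1)^{n+1-|T|}\bigl[\sum_{i\in T}P_i\bigr]$, which vanishes since $(m_1,\dots,m_{n+1})\mapsto\bigl[\sum_i m_iP_i\bigr]$ is polynomial of total degree $\le\dim(P_1+\cdots+P_{n+1})\le n$ and hence is killed by $n+1$ distinct first differences; thus $\Pi_+^{n+1}=0$. \emph{Degree via dilations.} On $\Pi_+^r/\Pi_+^{r+1}$ the endomorphism $\Delta(m)$ is multiplication by $m^r$ for positive integers $m$ (expand $Z_{mP}=mZ_P+\binom m2 Z_P^{*2}+\cdots$), hence so is $\Delta(q)$ for $q\in\QQ_{>0}$. \emph{Linear structure.} Working over $\QQ$, nilpotency makes $\log(\mathbf 1+z)=\sum_{k\ge1}(-1)^{k-1}z^{*k}/k$ and its inverse $\exp$ well defined; from $\log[mP]=m\log[P]$ and $\Delta(q)\log[P]=\log[qP]$ one obtains $\Delta(q)\log[P]=q\log[P]$, so $\log[P]$ lies in the $\Delta$-weight-one part. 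Here I would also invoke the other substantial input of \cite{McMullen:PolytopeAlgebra}: via a sufficiently rich family of translation-invariant valuations built from mixed volumes, one shows that $\Pi_+$ is torsion-free and divisible, hence a $\QQ$-vector space, and that its ``Minkowski-linear'' quotient $\Pi_+/\Pi_+^2$ --- the target of the translation-invariant, Minkowski-additive valuation $P\mapsto\bar Z_P$ --- is canonically the real vector space of differences of support functions modulo linear functionals.

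Finally, the grading falls out by splitting the finite filtration $\Pi_+\supseteq\Pi_+^2\supseteq\cdots\supseteq\Pi_+^{n+1}=0$ into eigenspaces of $\Delta(2)$: since $\Delta(2)$ acts as the distinct scalars $2^0,2^1,\dots,2^n$ on the successive quotients, the polynomial $\prod_{r=0}^n(\Delta(2)-2^r)$ annihilates $\Pi_*(V)\otimes\QQ$, so $\Delta(2)$ is diagonalizable and one sets, for $k\ge1$,
$$\Pi_k(V)=\{x\in\Pi_*(V):\Delta(\lambda)x=\lambda^k x\text{ for all }\lambda>0\},\qquad\Pi_0(V)=\ZZ\mathbf 1.$$
Property (c) is then the definition; $\Pi_0(V)\cong\ZZ$ and $\Pi_k(V)=0$ for $k>n$ by nilpotency; since $\Pi_+$ is a $\QQ$-vector space, the projection $\Pi_k(V)\to\Pi_+^k/\Pi_+^{k+1}$ is an isomorphism, which both gives the direct-sum decomposition $\Pi_*(V)=\bigoplus_k\Pi_k(V)$ and exhibits $\Pi_k(V)$ as spanned by $k$-fold convolution products of elements of $\Pi_1(V)$, so that the real vector-space structure of $\Pi_1(V)$ propagates and yields the bilinearity (d); and (a), $\Pi_k(V)*\Pi_l(V)\subseteq\Pi_{k+l}(V)$, holds because $\Delta(2)$ is a ring homomorphism, so the convolution of a $2^k$-eigenvector and a $2^l$-eigenvector is a $2^{k+l}$-eigenvector. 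I expect the main obstacle to be the two geometric inputs of the third paragraph --- the dissection argument behind polynomiality and, above all, the torsion-freeness of $\Pi_+$, which genuinely requires constructing a separating family of valuations --- after which the eigenspace decomposition producing the grading is essentially formal.
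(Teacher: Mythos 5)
The paper does not prove this statement---it is quoted verbatim from McMullen's work \cite{McMullen:PolytopeAlgebra} as Theorem~1---so the only benchmark is McMullen's original argument, and your proposal is a faithful outline of exactly that argument: augmentation ideal, polynomiality of $m\mapsto[mP]$ via dissection, nilpotency $\Pi_+^{n+1}=0$, the $\log/\exp$ calculus, and the eigenspace decomposition of the dilations. The outline is correct, and you rightly identify the two steps that carry the real weight (the multivariable polynomiality/degree bound for $[\,\sum_i m_iP_i\,]$, and the torsion-freeness and real vector-space structure of $\Pi_+$ via a separating family of valuations), which in McMullen's paper occupy most of the proof and are here only sketched or invoked.
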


\begin{remark}Given the natural vector space structure on $\bigoplus_{k=1}^n \Pi_k(V)$, one may seek to characterize  its linear functionals.   A translation-invariant valuation $\phi\colon \calP(V)\to \RR$ is called dilation continuous if for any polytope $P$ the function $\lambda \mapsto \phi(\lambda P)$, $\lambda\geq 0$, is continuous. Given a translation-invariant valuation $\phi$, let $\bar\phi$ denote its extension to $\Pi_*(V)$. As observed by McMullen in  \cite[Theorem 6.2]{McMullen:Separation}, one can show that  $\phi$ is dilation continuous if and only if the restriction of $\bar \phi$ to $\bigoplus_{k=1}^n \Pi_k(V)$ is a linear functional.
\end{remark}

\subsection{The normal cycle embedding}
The polytope algebra can be embedded  into a vector space that seems easier to visualize than $\Pi_*(V)$. This embedding was first discovered by McMullen~\cite[Theorem 5]{McMullen:PolytopeAlgebra}, and below we give an invariant description of it. Under this embedding, the image of $[P]$ is essentially the normal cycle of $P$, a concept from geometric measure theory that  plays an important role in Alesker's theory of smooth valuations \cite{Alesker:VMfdsIII,Alesker:ValMfds,AleskerBernig:Product,Alesker:VMfdsI,Alesker:VMfdsII}. We therefore propose to call this embedding the normal cycle embedding.

We call a subset $C\subseteq V$ a  polyhedral cone if it is the intersection of finitely many halfspaces of the form  $\{v\in V\colon f(v)\leq 0\}$, where $f\colon V\to \RR$ is linear functional. The cone group $\widehat\Sigma(V) $ the free abelian group formally generated by polyhedral cones in $V$ quotiented by the ideal generated by the relations 
$$C\cup C' +C\cap C' - C -C',$$
whenever  $C\cup C'$ is convex, and 
$$ C, \quad \text{if} \dim C<\dim V.$$  The equivalence class of a polyhedral  cone  $C$ in $\widehat\Sigma(V)$ denoted by $[C]$.

Recall that  a left Haar measure on a locally compact group is unique  up to normalization. 
 The one-dimensional vector space   $\Dens(V)$ of densities on $V$ consists of all (including negative) Haar measures on $V$. Alternatively,  $\Dens(V)$ can be defined as the vector space  of all functions 
 $\mu\colon \Lambda^n V\to \RR$ on the $n$-th exterior power of $V$ satisfying 
 $$  \mu(\alpha w)= |\alpha|\mu(w) \text{ for all } \alpha\in \RR\text{ and } w\in \Lambda^n V.$$
A density is called positive if $\mu(w)>0$ for all nonzero $w$. 

Recall that if $F$ is a nonempty face of a polytope $P$, then $\bar F$ denotes the direction of the affine hull of $F$. We  denote by $\epsilon_F$ the element of 
$\Dens(\bar  F)^*$ defined by 
$\mu\mapsto \mu(F)$.

Let us take the opportunity to comment on a canonical isomorphism $\Dens(V^*)\to \Dens(V)^*$. Since $V\times V^*$ carries a canonical symplectic structure, there is a canonical density $\mu_\omega$ on $V\times V^*$ called the Liouville measure. For any densities $\mu$ on $V$ and $\nu$ on $V^*$, the product measure $\mu\times \nu$ is proportional to $\mu_\omega$. This factor of proportionality defines  a canonical non-degenerate pairing $\Dens(V)\times \Dens(V^*)\to \RR$, which in turn defines the isomorphism 
\begin{equation} \label{eq:dual-dens-iso}\Dens(V^*)\to \Dens(V)^*.\end{equation}

We denote by 
$$L^\perp=\{ \xi\in V^*\colon \langle \xi,x\rangle =0 \text{ for all } x\in L\}$$ the  annihilator of a linear subspace $L\subseteq V$.

\begin{definition} We define
	$$ \Sigma(V)= \bigoplus_{k=0}^n \bigoplus_{L\in \Grass_k(V)}
 \Dens(L)^*\otimes \widehat\Sigma(L^\perp),$$
	where the inner sum extends over all $k$-dimensional linear subspaces of $V$. For every nonempty polytope $P$, we call 
	$$ \nc(P)= \sum_F \epsilon_F \otimes [N(F,P)]\in \Sigma(V),$$
	where the summation extends over all nonempty faces of $P$, the normal cycle of $P$.
\end{definition}

Unless specified otherwise, all tensor products in this paper  are over $\ZZ$.
 If $A$ is an abelian group and $U$ is a real vector space, then $U\otimes A$ is vector space by declaring $\lambda \cdot (v \otimes a)= (\lambda v)\otimes a$ for $\lambda\in \RR$.  In this paper, $A$ will sometimes happen to be a real vector space as well, hence one can  define $\lambda \cdot (v \otimes a)= v\otimes (\lambda a)$.  Note that these two---a priori different---vector space structures coincide.

\begin{remark} For every polytope, $ \nc(P)$ can be regarded as a linear functional on translation-invariant smooth $(n-1)$-forms on the sphere bundle of  $\RR^n$.  Indeed, assuming  $V=\RR^n$ in what follows,  translation-invariant smooth $(n-1)$-forms on $\RR^n\times S^{n-1}$ can be identified with the elements of $$\bigoplus_{k=0}^{n-1} \Lambda^{k} (\RR^n)^*\otimes \Omega^{n-1-k}(S^{n-1}),$$
	where $\Omega^*(S^{n-1})$ denotes the space of smooth differential forms on the euclidean unit sphere.    For any  oriented linear subspace $L\subseteq \RR^n$, the elements of $\Dens(L)^*$ pair with  forms on  $L$ and therefore, after restriction to $L$, also with forms on $V$. 
	Consequently, for every face $F$ of  $P$ and every translation-invariant $(n-1)$-form $\omega$, one obtains a form $\langle \epsilon_F, \omega\rangle\in \Omega^{n-1-\dim F}(S^{n-1})$. Integration of these forms defines the linear functional
$$\nc(P)(\omega)= \sum_{F} \int_{N(F,P)\cap S^{n-1} } \langle \epsilon_F, \omega\rangle,$$
 where the sum extends over all proper faces of $F$ and the orientation of $\bar F^\perp$ is chosen so that  $\bar F\oplus \bar F^\perp= \RR^n$ has the standard orientation.    
It follows that $\omega\mapsto \nc(P)(\omega)$  coincides with the normal cycle of $P$  introduced by Fu \cite{Fu:Subanalytic,Fu:IGRegularity} when the latter is integrated against translation invariant forms.
\end{remark}

\begin{theorem}[{\cite[Theorem 5]{McMullen:PolytopeAlgebra}}]\label{thm:nc-embedding}
	The map $\nc\colon  \calP(V)\to \Sigma(V)$ extends to an injective map of abelian groups $ \nc\colon \Pi_*(V) \to    \Sigma(V)$. Moreover, for $k>0$ the restriction of $\nc$ to 
	$\Pi_k(V)$ is linear.
\end{theorem}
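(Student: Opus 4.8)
The plan is to obtain the extension from the universal property, then use dilations to pin down the grading and the linearity, and only afterwards deal with injectivity, which is the one substantive point. First I would check that $\nc\colon\calP(V)\to\Sigma(V)$ is a translation-invariant valuation. Translation invariance is immediate: for $x\in V$ the nonempty faces of $x+P$ are the sets $x+F$ with $F$ a face of $P$, one has $N(x+F,x+P)=N(F,P)$, and $\epsilon_{x+F}=\epsilon_F$ since densities are translation invariant. For the valuation property, assume $P\cup Q$ is convex; I would compare the components of $\nc(P\cup Q)+\nc(P\cap Q)$ and $\nc(P)+\nc(Q)$ in $\Dens(L)^*\otimes\widehat\Sigma(L^\perp)$ for each linear subspace $L$. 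The faces of $P\cup Q$ are among those of $P$ and of $Q$; the nonempty faces of $P\cap Q$ are the sets $F\cap F'$ with $F$ a face of $P$, $F'$ a face of $Q$ and $\relint F\cap\relint F'\neq\emptyset$, and for these Lemma~\ref{lemma:faces-intersection}(c) gives $N(F\cap F',P\cap Q)=N(F,P)+N(F',Q)$. Unwinding the defining relations of $\widehat\Sigma(L^\perp)$ then yields the identity. With the valuation property established, Proposition~\ref{prop:universal} produces the unique morphism of abelian groups $\nc\colon\Pi_*(V)\to\Sigma(V)$ extending it.

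For the grading and linearity the tool is dilation. Since conormal cones are invariant under positive dilations and $\epsilon_{\lambda F}=\lambda^{\dim F}\epsilon_F$, one has $\nc([\lambda P])=\sum_k\lambda^k p_k(\nc([P]))$ for every $\lambda>0$, where $p_k$ is the projection of $\Sigma(V)$ onto its summand $\bigoplus_{L\in\Grass_k(V)}\Dens(L)^*\otimes\widehat\Sigma(L^\perp)$; as both sides are group homomorphisms agreeing on the generators $[P]$, $\nc\circ\Delta(\lambda)=\sum_k\lambda^k\,p_k\circ\nc$ on all of $\Pi_*(V)$. Let $x\in\Pi_k(V)$ with $k>0$. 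By McMullen's structure theorem \cite[Theorem~1]{McMullen:PolytopeAlgebra} one has $\Delta(\lambda)x=\lambda^k x$, so $\nc(\lambda^k x)=\sum_j\lambda^j p_j(\nc(x))$; evaluating at the infinitely many $\lambda>0$ with $\lambda^k\in\NN$ and using additivity of $\nc$ shows that the polynomials $\lambda\mapsto\lambda^k\nc(x)$ and $\lambda\mapsto\sum_j\lambda^j p_j(\nc(x))$ coincide. Comparing coefficients gives $p_j(\nc(x))=0$ for $j\neq k$ and $\nc(tx)=t\,\nc(x)$ for all $t>0$; together with additivity this proves that $\nc$ maps $\Pi_k(V)$ into the $k$-th summand of $\Sigma(V)$ and that $\nc|_{\Pi_k(V)}$ is $\RR$-linear.

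It remains to prove injectivity, which I regard as the main obstacle. Since $\Pi_*(V)=\bigoplus_k\Pi_k(V)$ and, by the previous paragraph, $\nc$ is compatible with this decomposition and the corresponding one of $\Sigma(V)$, it suffices to show that $\nc|_{\Pi_k(V)}$ is injective for each $k$. For $k=0$ this is clear: the image of $\Pi_0(V)\cong\ZZ$ is generated by $\epsilon_{\{0\}}\otimes[V^*]$, which has infinite order because $[V^*]$ is not a torsion element of $\widehat\Sigma(V^*)$ --- the solid angle is a nonzero $\RR$-valued valuation on polyhedral cones detecting it. For $k\geq1$ one has to reconstruct the class of a virtual polytope from the degree-$k$ part of its normal cycle. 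That part records, for each $L\in\Grass_k(V)$, a $\Dens(L)^*$-weighted signed family of conormal cones in $L^\perp$; for honest polytopes this is exactly the volume-and-normal data of the $L$-parallel $k$-faces, i.e. a surface-area-measure-type datum collected within each affine $k$-plane. I would carry out the reconstruction by induction on $\dim V$: the top-dimensional part of the datum, via Minkowski's theorem (Theorem~\ref{thm:Minkowski}), determines the $n$-dimensional shape up to translation and lets one peel it off; the isomorphism $\Pi_n(V)\cong\RR$ given by the volume anchors the induction; and the inductive step transports the reconstruction to the lower-dimensional skeleta. Equivalently, after identifying $\nc$ and $\Sigma(V)$ with McMullen's frame map and its target via the canonical isomorphisms used above (in particular \eqref{eq:dual-dens-iso}), the statement reduces to \cite[Theorem~5]{McMullen:PolytopeAlgebra}. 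In either route the delicate point is to verify that this face-by-face reconstruction descends to the quotient $\Pi_*(V)$, i.e. is compatible with the inclusion--exclusion relations \eqref{eq:rel1}.
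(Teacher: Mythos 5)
The paper does not actually prove this statement: it is imported verbatim from McMullen, as the bracketed citation in the theorem header indicates, and your final sentence --- that after the canonical identifications the claim reduces to \cite[Theorem~5]{McMullen:PolytopeAlgebra} --- is exactly the paper's (implicit) argument. Your preliminary steps are sound and would be a reasonable supplement if one wanted to spell out the reduction: translation invariance of $\nc$, the valuation property via Lemma~\ref{lemma:faces-intersection} and the defining relations of $\widehat\Sigma(L^\perp)$ (though ``unwinding the relations'' hides a genuine bookkeeping argument matching faces of $P\cup Q$ and $P\cap Q$ against those of $P$ and $Q$), and the dilation argument for the grading and $\RR$-linearity, which correctly combines $\Delta(\lambda)x=\lambda^k x$ with a polynomial-identity comparison.

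The one place where you attempt to go beyond the citation --- the injectivity --- is not a proof. Your ``peel off the top-dimensional datum via Minkowski's theorem and induct'' outline never engages with the actual difficulty, which you yourself flag in the last sentence: one must show that an element $x\in\Pi_k(V)$ with $\nc(x)=0$ is zero \emph{in the quotient group} $\Pi_*(V)$, i.e.\ that the face-volume/conormal data separates points of the polytope algebra. That separation statement \emph{is} McMullen's Theorem~5; it is one of the main technical results of his paper and rests on the full weight/moment structure of $\Pi_*(V)$, not on a face-by-face reconstruction of a single polytope. So as written, your independent route is circular at the decisive step, and the theorem should simply be attributed to McMullen, as the paper does. (Your $k=0$ case via the solid-angle functional is fine and genuinely elementary.)
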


As an important corollary we obtain:
\begin{corollary}
$\Pi_n(V)$ is  canonically isomorphic to $\Dens(V)^*$. 
\end{corollary}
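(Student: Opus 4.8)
The plan is to deduce the corollary directly from Theorem~\ref{thm:nc-embedding} by examining the structure of $\Sigma(V)$ in the top degree $k=n$. First I would observe that the normal cycle embedding $\nc$ respects the grading, so it restricts to an injective linear map $\Pi_n(V)\to \Sigma(V)$; the relevant summand of $\Sigma(V)$ in which the top-degree part lives is $\bigoplus_{L\in \Grass_n(V)}\Dens(L)^*\otimes\widehat\Sigma(L^\perp)$, and since the only $n$-dimensional subspace of $V$ is $V$ itself, this is just $\Dens(V)^*\otimes\widehat\Sigma(\{0\})$. Because $\widehat\Sigma(\{0\})\cong \ZZ$ is generated by the class of the zero cone $[\{0\}]$ (the only polyhedral cone in the zero vector space, and it is not killed by the dimension relation since $\dim\{0\}=0=\dim\{0\}$), this summand is canonically isomorphic to $\Dens(V)^*$.

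Next I would make the isomorphism concrete and check surjectivity. For a polytope $P$ with nonempty interior, the only face $F$ whose conormal cone $N(F,P)$ is full-dimensional in $V^*$ — equivalently whose direction $\bar F$ equals $V$ — is $P$ itself, with $N(P,P)=\{0\}$; all other faces $F$ have $\dim \bar F<n$ and hence contribute to lower-degree summands. Thus the degree-$n$ component of $\nc(P)$ equals $\epsilon_P\otimes[\{0\}]$, where $\epsilon_P\in\Dens(V)^*$ is the functional $\mu\mapsto\mu(P)=\vol_n(P)$. Since the map $P\mapsto \epsilon_P\in\Dens(V)^*$ already hits a nonzero element, and $\Dens(V)^*$ is one-dimensional, and the degree-$n$ part of $\Pi_*(V)$ is an $\RR$-vector space on which $\nc$ is linear and injective, I conclude that $\nc$ maps $\Pi_n(V)$ isomorphically onto $\Dens(V)^*\otimes[\{0\}]\cong \Dens(V)^*$. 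Concretely, the isomorphism sends $[P]_n$ (the degree-$n$ component of $[P]$) to the density functional $\mu\mapsto\vol_n(P)$.

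The only genuinely substantive points to verify are that $\Pi_n(V)$ is at most one-dimensional — which follows because $\nc$ is injective and linear on $\Pi_n(V)$ and its image lands in the one-dimensional space $\Dens(V)^*\otimes\widehat\Sigma(\{0\})$ — and that it is not zero, for which it suffices to exhibit a single polytope $P$ with $\vol_n(P)\neq 0$, e.g.\ any simplex with nonempty interior, and to know that the degree-$n$ component $[P]_n$ is nonzero, which is exactly what the nonvanishing of $\epsilon_P$ under the injective map $\nc$ gives. I expect the main (minor) obstacle to be purely bookkeeping: confirming that no lower-dimensional face of a polytope contributes to the degree-$n$ summand and that $\widehat\Sigma(\{0\})$ is genuinely $\ZZ$ rather than $0$, i.e.\ that the ``$\dim C<\dim V$'' relation does not collapse the zero cone in the zero-dimensional ambient space. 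Everything else is immediate from Theorem~\ref{thm:nc-embedding} and the definition of $\Sigma(V)$.
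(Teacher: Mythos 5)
Your argument is correct and is exactly the (unwritten) derivation the paper intends: the corollary is stated as an immediate consequence of Theorem~\ref{thm:nc-embedding}, obtained by noting that the degree-$n$ part of $\Sigma(V)$ collapses to $\Dens(V)^*\otimes\widehat\Sigma(\{0\})\cong\Dens(V)^*$ and that $[P]_n\mapsto(\mu\mapsto\mu(P))$ is nonzero for full-dimensional $P$. The only ingredient you invoke beyond the literal statement of Theorem~\ref{thm:nc-embedding} is that $\nc$ carries $\Pi_k(V)$ into the summand indexed by $k$-dimensional subspaces; this is part of McMullen's Theorem~5 (it follows from the dilation covariance $\epsilon_{\lambda F}=\lambda^{\dim F}\epsilon_F$, $N(\lambda F,\lambda P)=N(F,P)$) and is used tacitly throughout the paper, so flagging it as you did is appropriate.
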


\section{Construction of the intersection product}
At first glance, the definition of the intersection product via the integral in \eqref{eq:def-Alesker} might seem problematic, as it appears to require a topology on the infinite-dimensional vector space $\Pi^*(V)$.
However, since every finite-dimensional real vector space carries a unique topology that makes it into a topological vector space, we need not concern ourselves with the topology on $\Pi^*(V)$---as long as all functions we consider take values in finite-dimensional subspaces of $\Pi^*(V)$.

The first few lemmas of this section take care of these technical issues. 
Recall that throughout this paper $V$ denotes an $n$-dimensional real vector space.

\begin{lemma}\label{lemma:generic}
	Let $P,P'\subseteq V$ be polytopes.  There exists an open set 
$U\subseteq V$ whose complement has measure zero and  such that  for  each  $x\in U$ and each pair of nonempty faces  $F$ of $P$ and $F'$ of  $P'$  the following property holds: If $F\cap (x+ F')\neq \emptyset$,  then
$$\relint F \cap (x+ \relint F') \neq \emptyset,$$ and 
$$\dim F + \dim F' \geq n.$$

\end{lemma}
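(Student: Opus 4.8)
The plan is to reduce the statement to a finite union of "bad" conditions, each of which carves out a lower-dimensional (hence measure-zero) subset of the translation space $V$, and to take $U$ to be the complement of their closure. Fix once and for all the finitely many nonempty faces $F_1,\dots,F_r$ of $P$ and $F'_1,\dots,F'_s$ of $P'$. For each pair $(F,F')$ I want to understand, as a function of $x\in V$, when $F\cap(x+F')\neq\emptyset$ but the intersection fails to meet the relative interiors, and when $\dim F+\dim F'<n$ yet the intersection is nonempty.

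**The key geometric observation.**

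The set $D(F,F')=\{x\in V: F\cap(x+F')\neq\emptyset\}=F-F'$ is itself a polytope (a Minkowski difference of polytopes), with affine hull a translate of $\bar F+\bar{F'}$. The crucial point is: if $x\in\relint(F-F')$, then $F\cap(x+F')$ meets $\relint F\cap(x+\relint F')$, i.e. $\relint F\cap(x+\relint F')\neq\emptyset$; this is a standard fact about relative interiors of Minkowski sums/differences (for $x=a-b$ with $a\in\relint F$, $b\in\relint F'$ one gets a point of the required intersection, and every point of $\relint(F-F')$ arises this way). So the only $x$ with $F\cap(x+F')\neq\emptyset$ but $\relint F\cap(x+\relint F')=\emptyset$ lie in the relative boundary $\partial_{\mathrm{rel}}(F-F')$, which is a finite union of proper faces of the polytope $F-F'$, each of dimension at most $\dim\bar F+\dim\bar{F'}-1\le(\dim F+\dim F')-1<n$ whenever the first bad condition can even occur together with the second. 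More carefully: if $\dim F+\dim F'\ge n$ this relative boundary still has dimension $<n$ (it is a proper face of a polytope whose affine hull is at most $n$-dimensional, or is all of $V$ only when $\dim(\bar F+\bar{F'})=n$, in which case the relative boundary is genuinely $(n-1)$-dimensional), so it is Lebesgue-null. Meanwhile, if $\dim F+\dim F'<n$, then $\dim(\bar F+\bar{F'})<n$, so the entire set $F-F'$ — not just its boundary — is contained in a proper affine subspace of $V$ and hence is itself measure zero. This simultaneously handles both failure modes.

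**Assembling $U$.**

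Let $N$ be the union, over all pairs $(F_i,F'_j)$, of: the relative boundary of $F_i-F'_j$ when $\dim\bar F_i+\dim\bar{F'_j}=n$ (is this forces $\bar F_i+\bar{F'_j}=V$), together with the full set $F_i-F'_j$ when $\dim(\bar F_i+\bar{F'_j})<n$. Each piece is contained in a finite union of affine subspaces of $V$ of dimension $\le n-1$, so $N$ is closed and Lebesgue-null, and I set $U=V\setminus N$. For $x\in U$ and a pair $(F,F')$ with $F\cap(x+F')\neq\emptyset$: first, $\dim(\bar F+\bar{F'})=n$ is forced, since otherwise $x\in F-F'\subseteq N$, contradiction; this gives $\dim F+\dim F'\ge\dim(\bar F+\bar{F'})=n$. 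Second, since $x\notin N$ it lies in $\relint(F-F')$ (it is in $F-F'$ but not in its relative boundary, the latter having been thrown into $N$), and by the relative-interior fact above we conclude $\relint F\cap(x+\relint F')\neq\emptyset$. This is exactly the asserted property, so the proof is complete.

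**Main obstacle.**

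The only genuine point requiring care is the relative-interior lemma: that $x\in\relint(F-F')$ implies $\relint F\cap(x+\relint F')\neq\emptyset$, and conversely that points of $F\cap(x+F')$ touching both relative interiors correspond precisely to $x\in\relint(F-F')$. This is elementary convexity — it follows from the general identity $\relint(A-B)=\relint A-\relint B$ for convex sets $A,B$ — but one must state it cleanly and make sure the dimension bookkeeping ($\dim(F-F')=\dim(\bar F+\bar{F'})$, and its boundary strata have dimension one less) is airtight so that $N$ really is a null set. Everything else is routine finiteness and the fact that a proper affine subspace of $V$ has Lebesgue measure zero.
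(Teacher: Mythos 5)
The paper itself omits the proof of this lemma (``standard reasoning''), so there is no argument to compare against; your job is therefore to supply a complete one, and your approach --- identifying $\{x: F\cap(x+F')\neq\emptyset\}$ with the Minkowski difference $F-F'$, using $\relint(F-F')=\relint F-\relint F'$, and throwing the lower-dimensional bad strata into a closed null set $N$ --- is exactly the standard route and is sound in substance.

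There is, however, one slip in the final assembly of $N$ that, as literally written, leaves a gap. You include the relative boundary of $F_i-F'_j$ only ``when $\dim\bar F_i+\dim\bar F'_j=n$'' and claim parenthetically that this forces $\bar F_i+\bar F'_j=V$; it does not (the two subspaces may overlap), and, worse, the case split $\{\dim\bar F_i+\dim\bar F'_j=n\}\cup\{\dim(\bar F_i+\bar F'_j)<n\}$ is not exhaustive. A pair with $\dim\bar F_i+\dim\bar F'_j>n$ and $\dim(\bar F_i+\bar F'_j)=n$ contributes nothing to $N$, so a point $x$ in the $(n-1)$-dimensional relative boundary of $F_i-F'_j$ can land in $U$, where $F_i\cap(x+F'_j)\neq\emptyset$ but the relative interiors do not meet --- violating the first conclusion. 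The fix is the one already implicit in your ``key geometric observation'': the case distinction should be on $\dim(\bar F+\bar F')$, not on $\dim\bar F+\dim\bar F'$. Concretely, for every pair put the relative boundary of $F-F'$ into $N$ (it is always a finite union of faces of dimension at most $n-1$, hence null), and additionally put in all of $F-F'$ whenever $\dim(\bar F+\bar F')<n$. Then for $x\in U$ with $F\cap(x+F')\neq\emptyset$ one gets $\dim(\bar F+\bar F')=n$, hence $\dim F+\dim F'\geq n$, and $x\in\relint(F-F')$, hence the relative interiors meet. With that correction the proof is complete.
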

\begin{proof} This follows from a standard reasoning, and we omit the proof for brevity.
\end{proof}

\begin{lemma}\label{lemma:finite-dim}Let $E=\{\xi_1,\ldots, \xi_m\}\subseteq V^*$ be  a finite set of nonzero linear functionals. Let $S_E\subseteq \Pi^*(V)$ denote  the subspace  spanned by all elements $[P]\otimes \mu$  with polytopes of the form 
	\begin{equation}\label{eq:P-conormal} P=\{ x\in V\colon \langle \xi_i, x\rangle \leq c_i \text{ for } i=1,\ldots, m\},\end{equation}
	where $c_1,\ldots, c_m$ are certain real numbers. Then $S_E$ is  finite-dimensional. 
\end{lemma}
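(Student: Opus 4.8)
The plan is to route everything through the normal cycle embedding $\nc$ of Theorem~\ref{thm:nc-embedding}. The point is that, although the polytopes in \eqref{eq:P-conormal} form an infinite family parametrized by the constants $c_1,\dots,c_m$, their normal cycles are assembled from only finitely many building blocks depending on $E$ alone; so these normal cycles span a finite-dimensional subspace of $\Sigma(V)$, and the injectivity of $\nc$ then forces $S_E$ to be finite-dimensional.

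The combinatorial core is to describe the faces of a polytope $P$ of the form \eqref{eq:P-conormal}. For a nonempty face $F$ of $P$, let $I(F)=\{\,i:\langle\xi_i,x\rangle=c_i\text{ for all }x\in F\,\}$ be its set of active constraints. A routine argument shows that $\aff F=\{\,x\in V:\langle\xi_i,x\rangle=c_i\text{ for }i\in I(F)\,\}$, so that the direction of $F$ is the fixed subspace $\bar F=\{\xi_i:i\in I(F)\}^\perp$, and that the conormal cone is $N(F,P)=\operatorname{cone}\{\xi_i:i\in I(F)\}$. Hence, as $P$ ranges over all polytopes of the form \eqref{eq:P-conormal} and $F$ over their nonempty faces, the subspaces $\bar F$ stay within the finite set $\{\,L_I:=\{\xi_i:i\in I\}^\perp\mid I\subseteq\{1,\dots,m\}\,\}$ and the cones $N(F,P)$ within the finite set $\{\,\operatorname{cone}\{\xi_i:i\in I\}\mid I\subseteq\{1,\dots,m\}\,\}$; neither collection depends on the $c_i$. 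Grouping the faces of a given $P$ by their active sets, we get
\[\nc(P)=\sum_{I}\Bigl(\,\sum_{F:\,I(F)=I}\epsilon_F\Bigr)\otimes\bigl[\operatorname{cone}\{\xi_i:i\in I\}\bigr],\]
and since each inner sum lies in the one-dimensional space $\Dens(L_I)^*$, every such $\nc(P)$ lies in the fixed finite-dimensional subspace
\[\mathcal S:=\sum_{I\subseteq\{1,\dots,m\}}\Dens(L_I)^*\otimes\bigl(\ZZ\cdot[\operatorname{cone}\{\xi_i:i\in I\}]\bigr)\subseteq\Sigma(V).\]

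To finish, I would transport this back to $\Pi^*(V)$. The map $\nc\otimes\operatorname{id}_{\Dens(V)}\colon\Pi^*(V)=\Pi_*(V)\otimes\Dens(V)\to\Sigma(V)\otimes\Dens(V)$ is injective, since $\nc$ is injective and $\Dens(V)$ is flat over $\ZZ$, and it is linear for the vector space structures carried by the $\Dens(V)$ factor. It sends every generator $[P]\otimes\mu$ of $S_E$ into $\mathcal S\otimes\Dens(V)$, which is finite-dimensional because $\mathcal S$ is finite-dimensional and $\Dens(V)$ is one-dimensional; hence $S_E$ is finite-dimensional. The only step requiring genuine care is reading off $\aff F$ and $N(F,P)$ from $I(F)$; this is standard polyhedral combinatorics, and one may alternatively deduce the needed finiteness from the fact that only finitely many normal fans refine the complete fan generated by $\xi_1,\dots,\xi_m$.
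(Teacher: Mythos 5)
Your proof is correct and follows the same route as the paper: observe that every conormal cone of a polytope of the form \eqref{eq:P-conormal} is generated by a subset of $\{\xi_1,\dots,\xi_m\}$, conclude that all the normal cycles lie in a fixed finite-dimensional subspace of $\Sigma(V)$ (tensored with the one-dimensional $\Dens(V)$), and invoke the injectivity of $\nc$ from Theorem~\ref{thm:nc-embedding}. The paper states this in three lines; you have simply supplied the routine polyhedral details about active constraint sets that the paper leaves implicit.
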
 
\begin{proof}If $P$ is of the form \eqref{eq:P-conormal}, then each normal cone $N(F,P)$ is generated by a subset of $E$. Consequently, the normal cycle of $[P]$ is contained in a finite-dimensional subspace of $\Sigma(V)$. Since $\nc\colon  \Pi^*(V)\to \Sigma(V)\otimes \Dens(V)$ is injective by Theorem~\ref{thm:nc-embedding}, the claim follows.
\end{proof}

\begin{lemma} \label{lemma:measurable}
	
	Let $E=\{\xi_1,\ldots, \xi_m\}\subseteq V^*$ be a finite set of nonzero linear functionals and let $S_E$ be defined as in Lemma~\ref{lemma:finite-dim}.  For all polytopes   $P=\{ x\in V\colon \langle \xi_i,x\rangle \leq c_i  \text{ for } i=1,\ldots, m\}$ and $P'=\{x\in V\colon \langle \xi_i,x\rangle \leq  c_i'  \text{ for } i=1,\ldots, m\}$ and all densities $\mu\in \Dens(V)$, the following properties hold:
	\begin{enuma}
		\item $[P\cap (x+P')]\otimes \mu\in S_E$ for all $x\in V$.
		\item The function $V\to S_E$, $x\mapsto   [P\cap (x+P')]\otimes \mu$, is measurable and essentially bounded.
		\item For each $\mu'\in \Dens(V)$, the integral
		$$ \int_V [P\cap (x+P')] \otimes \mu \,d\mu'(x) \in S_E $$ 
		is well defined. 
	\end{enuma}
	
\end{lemma}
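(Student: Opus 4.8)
The plan is to prove the three assertions in turn, using the fact that $S_E$ is finite-dimensional (Lemma~\ref{lemma:finite-dim}) so that measurability, boundedness, and integration may be understood with respect to the unique Hausdorff topological vector space structure on $S_E$.

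For part (a), I would observe that for any two $c, c' \in \RR^m$ and any $x \in V$, the set $P \cap (x + P')$ is again cut out by inequalities of the form $\langle \xi_i, \cdot \rangle \le d_i$: indeed $x + P' = \{ y : \langle \xi_i, y \rangle \le c_i' + \langle \xi_i, x \rangle \}$, so $P \cap (x+P')$ is defined by the inequalities $\langle \xi_i, \cdot \rangle \le \min\{c_i, c_i' + \langle \xi_i, x\rangle\}$. When this intersection is nonempty, it is therefore a polytope of the form \eqref{eq:P-conormal}, hence $[P \cap (x+P')] \otimes \mu \in S_E$ by definition of $S_E$; when the intersection is empty, $[P \cap (x+P')] \otimes \mu = [\emptyset] \otimes \mu = 0 \in S_E$. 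Either way the claim holds.

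For part (b), I would first note that the map $x \mapsto [P \cap (x+P')] \otimes \mu$ takes values in the finite-dimensional space $S_E$, so measurability can be checked coordinatewise after choosing a basis; equivalently, it suffices to show that $x \mapsto \langle \Phi, [P \cap (x+P')] \otimes \mu \rangle$ is measurable for every linear functional $\Phi$ on $S_E$. Using the normal cycle embedding, $[P \cap (x+P')]$ is a $\ZZ$-linear (or, in positive degrees, $\RR$-linear) combination of classes of faces whose normal cones are spanned by subsets of $E$; the combinatorial type of $P \cap (x+P')$---which faces of $P$ meet which translated faces of $P'$, and in what dimensions---is constant on each of the finitely many open regions into which $V$ is partitioned by the hyperplanes $\{\langle \xi_i, x \rangle = c_i - c_i'\}$, $i = 1, \dots, m$. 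On each such region the function is given by an affine-in-$x$ (in fact, after pairing with $\epsilon_F$, polynomial-in-$x$ of degree $\le n$) expression via the volumes $\epsilon_{F \cap (x+F')}$, which are piecewise polynomial and bounded on bounded sets; the complement of the union of these open regions has measure zero by Lemma~\ref{lemma:generic}. Since $P$ and $P'$ are bounded, $P \cap (x+P')$ is empty for all $x$ outside a bounded set, so the function vanishes outside a compact set and is bounded there, giving that it is measurable and essentially bounded. I would then invoke Lemma~\ref{lemma:convergence-intersection} to make the continuity/measurability argument clean: off the bad set of Lemma~\ref{lemma:generic} and the separating hyperplanes, $x \mapsto P \cap (x+P')$ is continuous in the Hausdorff metric, hence $x \mapsto \nc(P \cap (x+P'))$ is continuous into the relevant finite-dimensional subspace of $\Sigma(V)$ by continuity of the mixed-volume/Hausdorff-measure data, which is more than enough.

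For part (c), given parts (a) and (b), the integrand $x \mapsto [P \cap (x+P')] \otimes \mu$ is a measurable, essentially bounded, compactly supported function with values in the finite-dimensional vector space $S_E$; any (signed) Radon measure $\mu'$ on $V$ is finite on compact sets, so the vector-valued integral $\int_V [P \cap (x+P')] \otimes \mu \, d\mu'(x)$ exists as a Bochner integral and lies in $S_E$ because $S_E$ is a closed subspace containing the integrand's essential range (the integral of an $S_E$-valued function lies in the closed convex hull of scalar multiples of its values, hence in $S_E$). I expect the main obstacle to be the bookkeeping in part (b): one must argue carefully that the combinatorial structure of $P \cap (x+P')$ is locally constant away from a measure-zero set and that, on each cell, pairing with a functional on $S_E$ yields an honest measurable (piecewise polynomial) function of $x$. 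This is where Lemmas~\ref{lemma:generic}, \ref{lemma:convergence-intersection}, and \ref{lemma:faces-intersection} do the real work; everything else is formal.
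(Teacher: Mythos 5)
Your argument is essentially the paper's: (a) by inspecting the defining inequalities of $P\cap(x+P')$, (b) via local constancy of the incidence pattern of faces off the null set of Lemma~\ref{lemma:generic} combined with Lemmas~\ref{lemma:faces-intersection} and \ref{lemma:convergence-intersection} and the normal cycle embedding, and (c) formally from (a) and (b); this all goes through. One detail is wrong, though not fatally: the combinatorial type of $P\cap(x+P')$ is \emph{not} constant on the cells of the arrangement of the $m$ hyperplanes $\{\langle\xi_i,x\rangle=c_i-c_i'\}$ --- already for two translated unit squares the type changes where the squares stop overlapping, not on those hyperplanes; the correct walls are the boundaries of the polytopes $F-F'$ over all pairs of faces. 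Your proof survives because the local-constancy-plus-Hausdorff-continuity argument you give at the end of (b), which is what the paper actually uses, does not depend on identifying that subdivision explicitly.
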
 
\begin{proof}
(a) is clear from the definition of $S_E$. 

(b) Let $U$ be as in  Lemma~\ref{lemma:generic} and let $x\in U$. By Lemma~\ref{lemma:faces-intersection}, the faces of  $P\cap(x+P')$ can be represented in the form $F\cap(x+F')$, where $F$ is a face of $P$ and $F'$ is a face of $P'$.  There is an open  neighborhood of $U'\subseteq U$ of $x$ such that for all $y\in U'$ and all faces $F$ and $F'$ the following property holds: 
$$ F \cap (y+ F') \neq  \emptyset\quad \Leftrightarrow \quad F \cap (x+ F') \neq  \emptyset.$$ 
 By Lemma~\ref{lemma:faces-intersection}, the conormal cones 
$N(F \cap (y+ F'),P\cap(y+P'))$  coincide for all $y\in U'$. Since by Lemma~\ref{lemma:convergence-intersection} the map $y\mapsto F \cap (y+ F')$ is continuous in the Hausdorff metric, we conclude that 
$$ U'\ni y\mapsto \nc(P\cap (y+P'))\in \nc(S_E)\subseteq \Sigma(V)\otimes \Dens(V)$$
is continuous. Consequently, as the complement of $U$ has measure zero, the map
 $V\to S_E$, $x\mapsto\nc(P\cap (x+P'))$, is measurable and essentially bounded.    Using that $\nc\colon \Pi^*(V)\to \Sigma(V)\otimes \Dens(V)$ is a linear injection, this finishes the proof of (b).

(c) is an immediate consequence of (a) and (b).
\end{proof}

The following proposition shows the integral in Lemma~\ref{lemma:measurable} is independent of the choice of subspace $S_E$. 
\begin{proposition}\label{prop:existence} Let $[P]\otimes \mu$ and $[P']\otimes \mu\in \Pi^*(V)$. There exists a unique element $I\in \Pi^*(V)$ such that for every linear functional $\phi\colon \Pi^*(V)\to \RR$ the identity
	\begin{equation}\label{eq:integral} \langle \phi, I\rangle = \int_V \langle \phi,[P\cap (x+P')] \otimes \mu \rangle \,d\mu'(x)\end{equation}
	holds.
We denote this element by $\int_V [P\cap (x+P')] \otimes \mu \,d\mu'(x)$.
\end{proposition}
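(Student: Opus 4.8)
The plan is to reduce the existence and uniqueness of $I$ to the finite-dimensional integral already constructed in Lemma~\ref{lemma:measurable}(c). First I would choose a finite set $E=\{\xi_1,\dots,\xi_m\}\subseteq V^*$ of nonzero linear functionals large enough that both $P$ and $P'$ can be written in the form \eqref{eq:P-conormal} with respect to $E$; concretely, take $E$ to be the union of the facet conormals of $P$ and of $P'$. With this choice, $[P]\otimes\mu,\ [P']\otimes\mu\in S_E$, and by Lemma~\ref{lemma:measurable} the element $I_E:=\int_V [P\cap(x+P')]\otimes\mu\,d\mu'(x)\in S_E$ is well defined. I then set $I:=I_E$, viewed as an element of $\Pi^*(V)$ via the inclusion $S_E\hookrightarrow\Pi^*(V)$.

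Next I would verify the defining identity \eqref{eq:integral}. Let $\phi\colon\Pi^*(V)\to\RR$ be any linear functional. Its restriction $\phi|_{S_E}$ is a linear functional on the finite-dimensional space $S_E$, hence continuous, so it commutes with the vector-valued integral: from Lemma~\ref{lemma:measurable}(b) the map $x\mapsto[P\cap(x+P')]\otimes\mu$ is a measurable, essentially bounded $S_E$-valued function, and for such functions the Bochner (equivalently, coordinatewise) integral satisfies $\langle\phi|_{S_E},\int_V g\,d\mu'\rangle=\int_V\langle\phi|_{S_E},g(x)\rangle\,d\mu'(x)$. Since $\langle\phi|_{S_E},g(x)\rangle=\langle\phi,[P\cap(x+P')]\otimes\mu\rangle$, this is exactly \eqref{eq:integral}.

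Uniqueness is immediate from the fact that linear functionals separate points: if $I_1,I_2\in\Pi^*(V)$ both satisfy \eqref{eq:integral}, then $\langle\phi,I_1-I_2\rangle=0$ for every linear functional $\phi$, whence $I_1=I_2$. This also shows, a posteriori, that $I$ does not depend on the particular choice of $E$ (any two admissible choices are both contained in a common larger $S_{E'}$, and uniqueness forces the two integrals to agree there). The only point requiring a little care—and the mild obstacle in an otherwise routine argument—is the interchange of $\phi$ with the integral, which one should phrase cleanly by noting that on the finite-dimensional space $S_E$ every linear functional is continuous, so after fixing a basis the statement reduces to the elementary linearity of the scalar Lebesgue integral applied to the (essentially bounded, measurable, compactly supported in the relevant sense) coordinate functions of $x\mapsto[P\cap(x+P')]\otimes\mu$.
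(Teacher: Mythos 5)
Your proposal is correct and follows essentially the same route as the paper: define $I$ as the finite-dimensional integral in $S_E$ from Lemma~\ref{lemma:measurable}(c), verify \eqref{eq:integral} by commuting the (automatically continuous) linear functional past the vector-valued integral on the finite-dimensional space, and get uniqueness because linear functionals separate points. The paper leaves the interchange step as "a routine argument," which you have simply spelled out.
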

\begin{proof}
For every linear functional $\phi$, it follows from Lemma~\ref{lemma:measurable} that $$V\to \RR, \quad x\mapsto  \langle \phi,[P\cap (x+P')] \otimes \mu \rangle$$
is measurable and essentially bounded.  Consequently, the integral in \eqref{eq:integral} is well-defined. If $I$ exists, then it must be unique. To establish existence, choose any $E\subseteq V^*$ as in Lemma~\ref{lemma:measurable} and define $I:=\int_V [P\cap (x+P')] \otimes \mu \,d\mu'(x)\in S_E$.  A routine argument shows that $I$ satisfies \eqref{eq:integral}.
\end{proof}

Before stating the main theorem of this section, we first describe the action of  $\GL(V)$ on the polytope algebra. This action is the standard one, where the general linear group acts on polytopes, extended to the polytope algebra.
 More precisely, by the universal property of the polytope algebra  (Proposition~\ref{prop:universal}),  there exists for every $g\in \GL(V)$ a unique linear transformation 
$L_g\in \GL(\Pi^*(V))$ such that 
	$$ L_g ([P]\otimes \mu)= [g P] \otimes g_{*}\mu$$ holds for every $ [P]\otimes \mu \in \Pi^*(V)$.
Here  $g_*\mu$ denotes the pushforward of the measure $\mu$ under the linear transformation $g$. For nonzero $\alpha$, we define dilation on $\Pi^*(V)$ by $\Delta(\alpha)= L_{\alpha \id}$.

\begin{theorem}\label{thm:Alesker-prod}
There exists a multiplication that endows $\Pi^*(V)$ with the structure of a commutative algebra, uniquely determined by the equation \eqref{eq:def-Alesker}.
This operation is called the \emph{intersection product} and  satisfies the following additional properties:
\begin{enuma}
	\item \label{item:grading} It is compatible with the grading: \[\Pi^{k}(V)\cdot \Pi^{l}(V)\subseteq \Pi^{k+l}(V).\]
	\item  The canonical element $\mu^* \otimes \mu \in \Dens(V)^*\otimes  \Dens(V)=\Pi^0(V)$, which we denote by $e_V$, is the identity.	
	\item \label{item:GL}  It is equivariant under the natural action of the general linear group.

\end{enuma}
\end{theorem}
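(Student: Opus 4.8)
The plan is to define the intersection product on generators, using Proposition~\ref{prop:existence} to make sense of the integral in \eqref{eq:def-Alesker}, and then to propagate well-definedness, bilinearity, associativity, commutativity, and the remaining properties using the linearizing nature of the polytope algebra together with the $\GL(V)$-equivariance. Concretely, first I would fix a finite set $E\subseteq V^*$ and work inside the finite-dimensional subspace $S_E$ of Lemma~\ref{lemma:finite-dim}; on pairs of generators $[P]\otimes\mu$, $[P']\otimes\mu'$ with $P,P'$ of the form \eqref{eq:P-conormal} the product is the element $\int_V [P\cap(x+P')]\otimes\mu'\,d\mu(x)\in S_E$ furnished by Proposition~\ref{prop:existence}, which is independent of the choice of $E$. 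To see this descends to a bilinear map on all of $\Pi^*(V)$, I would verify that, for fixed $[P']\otimes\mu'$, the assignment $\mathbf 1_P\mapsto \int_V [P\cap(x+P')]\otimes\mu'\,d\mu(x)$ is a translation-invariant valuation in $P$ with values in the finite-dimensional (hence ``safe'') space $S_E$: the valuation property passes through the integral because $P\mapsto \mathbf 1_{P\cap(x+P')}$ is a valuation pointwise in $x$ (indeed $\mathbf 1_{(P_1\cup P_2)\cap Q}+\mathbf 1_{(P_1\cap P_2)\cap Q}=\mathbf 1_{P_1\cap Q}+\mathbf 1_{P_2\cap Q}$), and translation-invariance in $P$ follows by the change of variables $x\mapsto x+v$ in the integral together with translation-invariance of $\mu$; the universal property (Proposition~\ref{prop:universal}) then yields a unique linear extension in the first variable, and symmetrically in the second, giving a well-defined $\ZZ$-bilinear --- hence $\RR$-bilinear on the graded pieces of positive degree --- multiplication satisfying \eqref{eq:def-Alesker}.

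Once bilinearity is in hand, the algebraic identities reduce to checking them on generators. \emph{Commutativity}: $\int_V[P\cap(x+P')]\otimes\mu'\,d\mu(x)=\int_V[P'\cap(x+P)]\otimes\mu\,d\mu'(x)$ follows because $P\cap(x+P')$ is a translate of $(-x+P)\cap P'$, classes are translation-invariant, and the substitution $x\mapsto -x$ together with Fubini (legitimate since everything is valued in the finite-dimensional $S_E$ and the integrands are measurable and essentially bounded by Lemma~\ref{lemma:measurable}) exchanges the two iterated integrals. \emph{Associativity} is the analogous but slightly longer computation: expanding $([P]\otimes\mu\cdot[P']\otimes\mu')\cdot[P'']\otimes\mu''$ gives a double integral $\int_V\int_V[(P\cap(x+P'))\cap(y+P'')]\otimes\mu''\,d\mu'(x)\,d\mu(y)$, wait --- one must track the densities carefully; after the shift $y\mapsto y$ and $x\mapsto x$ the associativity of $\cap$ on indicator functions plus Fubini shows this equals the expansion of $[P]\otimes\mu\cdot([P']\otimes\mu'\cdot[P'']\otimes\mu'')$, with the densities matching because in \eqref{eq:def-Alesker} the ``outgoing'' density is always the second factor's. \emph{Grading} \ref{item:grading}: if $P$ has dimension $n-k$ and $P'$ dimension $n-l$, then by Lemma~\ref{lemma:generic} for generic $x$ the nonempty intersections $F\cap(x+F')$ of faces satisfy $\dim F+\dim F'\ge n$, so $\dim(F\cap F')=\dim F+\dim F'-n$, and a degree count on the normal cycle --- using the description of $\Pi_j(V)$ via $\nc$ and that $[P]\otimes\mu\in\Pi^k(V)$ corresponds to $P$ of dimension $\le n-k$, together with Lemma~\ref{lemma:faces-intersection}(c) for the conormal cones --- shows the product lands in $\Pi^{k+l}(V)$; I would phrase this via the dilation operator, proving $\Delta(\alpha)$ acts by $\alpha^{k+l}$ on the product using the change of variables $x\mapsto\alpha x$ in \eqref{eq:def-Alesker} and $\Delta(\alpha)[P]=[\alpha P]$, $(\alpha\id)_*\mu=|\alpha|^{-n}\mu$ --- wait, the scalings must be checked to combine to $\alpha^{k+l}$, which they do precisely because of the $\Dens$ twist in the definition of $\Pi^k$. \emph{Identity}: for $P'=\{0\}$ one has $P\cap(x+\{0\})=\{x\}$ if $x\in P$ and $\emptyset$ otherwise, so $\int_V[P\cap(x+\{0\})]\otimes\mu'\,d\mu(x)=\left(\int_P d\mu(x)\right)[\{0\}]\otimes\mu'=\mu(P)\,[\{0\}]\otimes\mu'$; under the identification $\Pi^0(V)=\Dens(V)^*\otimes\Dens(V)$ with $[\{0\}]\otimes\mu=\mu^*\otimes\mu$ suitably normalized, the element $e_V=\mu^*\otimes\mu$ acts by $([P]\otimes\mu')\cdot e_V=\mu^*(\mu_0)\,\mu_0(P)\,[\{0\}]\otimes\mu'$ --- one checks the normalization so this equals $[P]\otimes\mu'$. \emph{$\GL(V)$-equivariance} \ref{item:GL}: apply $L_g$ to \eqref{eq:def-Alesker} and change variables $x\mapsto g x$ in the integral, using $L_g[P]=[gP]$, $g_*\mu$, and that the Jacobian factor $|\det g|$ is exactly absorbed by the transformation of the density $\mu$ under $g_*$.

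The main obstacle I anticipate is not any single identity but the bookkeeping that makes the integrals rigorous: ensuring at every step that the integrand lands in a fixed finite-dimensional subspace $S_E$ (so that Fubini, change of variables, and ``commuting linear functionals with the integral'' are all elementary), and tracking the density twists in $\Pi^k=\Pi_{n-k}\otimes\Dens(V)$ through the change-of-variables computations for the grading and for $\GL(V)$-equivariance. The valuation/universal-property argument for well-definedness of the bilinear extension is the conceptual heart; everything else is a careful but routine unwinding of \eqref{eq:def-Alesker} combined with Lemmas~\ref{lemma:generic}, \ref{lemma:finite-dim}, \ref{lemma:measurable}, \ref{lemma:faces-intersection}, and Theorem~\ref{thm:nc-embedding}.
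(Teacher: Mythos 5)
Your overall strategy coincides with the paper's: define the product on generators via Proposition~\ref{prop:existence}, extend by the universal property (Proposition~\ref{prop:universal}) applied in each variable separately, read off commutativity and associativity from \eqref{eq:def-Alesker} by translation/substitution, obtain the grading from the homogeneity of the product under the dilation $\Delta(\lambda)$, and check $\GL(V)$-equivariance on generators by a change of variables. All of that is sound (modulo a harmless sign slip: on $\Pi^k(V)=\Pi_{n-k}(V)\otimes\Dens(V)$ the dilation acts by $\lambda^{-k}$, not $\lambda^{k}$, because of the density twist; the argument is unaffected).

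There is, however, a genuine error in your verification of the identity element (b). You take $e_V$ to be a normalization of $[\{0\}]\otimes\mu$. But $[\{0\}]$ generates $\Pi_0(V)\cong\ZZ$, so $[\{0\}]\otimes\mu$ lies in $\Pi^n(V)=\Pi_0(V)\otimes\Dens(V)$, not in $\Pi^0(V)=\Pi_n(V)\otimes\Dens(V)$; the isomorphism $\Pi^0(V)\cong\Dens(V)^*\otimes\Dens(V)$ comes from $\Pi_n(V)\cong\Dens(V)^*$, and the element of $\Pi_n(V)$ representing $\mu^*$ is the \emph{top-degree component} $[P]_n$ of a full-dimensional polytope with $\mu(P)=1$, not the class of a point. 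Your own computation exposes the problem: it yields $([P]\otimes\mu)\cdot([\{0\}]\otimes\mu')=\mu(P)\,[\{0\}]\otimes\mu'$, which is a multiple of a fixed element of $\Pi^n(V)$ and retains no information about $[P]$ beyond its volume, so no normalization can make it equal to $[P]\otimes\mu'$. (What you have actually computed is, correctly, that multiplication by the generator of $\Pi^n(V)$ realizes the Poincar\'e pairing with the Euler characteristic, cf.\ Lemma~\ref{lemma:pairing}.) The correct argument writes $e_V=[P]_n\otimes\mu$ with $\mu(P)=1$, realizes $[P]_n\otimes\mu$ as $\lim_{\lambda\to\infty}\lambda^{-n}[\lambda P]\otimes\mu$ inside a finite-dimensional subspace, and then shows $([P]_n\otimes\mu)\cdot([P']\otimes\mu')=\mu(P)\,([P']\otimes\mu')$ from \eqref{eq:def-Alesker}: for large $\lambda$ the intersection $\lambda P\cap(x+P')$ equals $x+P'$ on a set of $\mu$-measure $\lambda^n\mu(P)+O(\lambda^{n-1})$ and the boundary contribution is killed by the factor $\lambda^{-n}$. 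Everything else in your proposal is essentially the paper's proof.
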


\begin{proof}
We first establish existence. Fix $\mu'\in \Dens(V)$ and a polytope $P'$. By Proposition~\ref{prop:existence}, the map 
$\calP(V)\times \Dens(V)\to \Pi^*(V)$,
$$ \phi_{P',\mu'}(P,\mu) =\int_{V} [P\cap (x+ P')]\otimes \mu' \, d\mu (x)$$
is well defined. Since it is a translation-invariant valuation as a function of $P$, by the universal property of the polytope algebra (Proposition~\ref{prop:universal}), we obtain a map $\phi_{P',\mu'}\colon \Pi_*(V)\times \Dens(V)\to \Pi^*(V)$. 
Consequently, as the latter map is $\ZZ$-balanced, there is an extension $$\phi_{P',\mu'}\colon \Pi^*(V)\to \Pi^*(V).$$

Let  $x=\sum_{i=1}^m  [P_i]\otimes \mu_i$ be fixed. One immediately verifies that 
$$P'\mapsto \phi_{P',\mu'}(x)= \sum_{i=1}^m \phi_{P',\mu'}(P_i,\mu_i)$$ is a translation-invariant valuation. Again by the universal properties of the polytope algebra and the tensor product, we obtain an extension to $\Pi^*(V)$. We thus have constructed a map
$$ \Pi^*(V)\otimes \Pi^*(V)\to \Pi^*(V)$$ 
that extends \eqref{eq:def-Alesker}. This finishes the construction of the intersection product. Notice that equation \eqref{eq:def-Alesker} implies that the intersection product is  associative and commutative.

 We claim that for any $\lambda\neq  0$ and $x,y\in \Pi^*(V)$
 \begin{equation}\label{eq:dilation} \Delta(\lambda)x \cdot \Delta(\lambda)y=\Delta(\lambda) (x\cdot y).\end{equation}
Indeed, it suffices to prove this for generators, where it is straightforward to verify using \eqref{eq:def-Alesker}.
Let $x\in \Pi^k(V)$ and $y\in \Pi^l(V)$ and $\lambda>0$. Then $\Delta(\lambda)x =\lambda^{-k} x$ and $\Delta(\lambda)y =\lambda^{-l} y$. In combination with  equation \eqref{eq:dilation}, we obtain
$$ \Delta(\lambda)(x\cdot y)=  \lambda^{-(k+l)} \cdot (x\cdot y).$$
Hence $x\cdot y\in \Pi^{k+l}(V)$, as desired. This proves (a).

For (b), observe that $[\lambda P]\otimes \mu$, $\lambda>0$, lies in a finite-dimensional subspace of $\Pi^*(V)$. Hence we can obtain $[P]_n\otimes \mu$ as the limit of $\lambda^{-n} [\lambda P]\otimes \mu$ as $\lambda\to\infty$. A straightforward computation using \eqref{eq:def-Alesker}   shows that 
$$ ([P]_n\otimes  \mu)\cdot  ([P']\otimes \mu')=  \mu(P)\cdot ( [P']\otimes \mu')
$$ 
Choosing $P$ and $\mu$ so that $\mu(P)=1$, we obtain $e_V\cdot ([P']\otimes \mu')=  [P']\otimes \mu'$.

Finally, we consider the natural action of $\GL(V)$ on $\Pi^*(V)$.  It suffices to verify 
$ (L_g x)\cdot (L_g y)= L_g(x\cdot y)$  on the generators. In this case, the desired equation
 is an immediate consequence of \eqref{eq:def-Alesker}.
\end{proof}

\begin{remark}\label{rmk:Alesker-product} 
The intersection product is closely related to the product of smooth translation-invariant valuations, introduced by Alesker in \cite{Alesker:Product}. Indeed, if $A, A'$ are convex bodies in $\RR^n$ with a smooth   and strictly positively curved boundary, then 
$$\phi(K)=\vol(K+ A) \quad \text{and}\quad  \phi'(K)=\vol(K+A')$$
are smooth translation-invariant valuations. An application of Fubini's theorem shows that 
their Alesker product is the valuation
$$ K\mapsto  \int_{\RR^n} \vol(K+ A\cap (x+A')) dx.$$

\end{remark}

\begin{definition}The Euler--Verdier involution on $\Pi^*(V)$ is defined by   
	$$ \sigma|_{\Pi^k(V)}  = (-1)^k \Delta(-1)  . $$    
\end{definition}

\begin{remark}
The Euler--Verdier involution was introduced for smooth valuations  by Alesker \cite{Alesker:VMfdsII}, and it is closely  related to the Verdier duality of constructible functions. In the context of the polytope algebra, the Euler--Verdier  involution coincides with McMullen's Euler map up to  the factor $(-1)^n$. 
As a consequence of this modification, the Euler--Verdier involution commutes with the pullback. Moreover, this choice of sign is convenient for the formulation of the Hodge--Riemann relations in Conjecture~\ref{conj:Kahler}.
\end{remark}

\begin{corollary}
$\sigma$ is an algebra  automorphism of  $\Pi^*(V)$. 
\end{corollary}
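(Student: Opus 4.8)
The plan is to show that $\sigma$ is a ring homomorphism and then note that it squares to the identity. Since $\sigma$ restricted to $\Pi^k(V)$ equals $(-1)^k\Delta(-1)$, and the intersection product respects the grading by Theorem~\ref{thm:Alesker-prod}\ref{item:grading}, it suffices to check multiplicativity on homogeneous elements. So let $x\in\Pi^k(V)$ and $y\in\Pi^l(V)$, so that $x\cdot y\in\Pi^{k+l}(V)$. The key ingredient is the identity \eqref{eq:dilation}, established inside the proof of Theorem~\ref{thm:Alesker-prod}, which says $\Delta(\lambda)x\cdot\Delta(\lambda)y=\Delta(\lambda)(x\cdot y)$ for $\lambda\neq 0$; indeed $\Delta(-1)=L_{-\id}$ is a special case of the $\GL(V)$-equivariance in Theorem~\ref{thm:Alesker-prod}\ref{item:GL}. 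From this,
$$\sigma(x)\cdot\sigma(y)=(-1)^k\Delta(-1)x\cdot(-1)^l\Delta(-1)y=(-1)^{k+l}\Delta(-1)(x\cdot y)=\sigma(x\cdot y),$$
which is exactly what we want.

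Next I would verify that $\sigma$ preserves the unit: since $e_V\in\Pi^0(V)$ and $\Delta(-1)$ fixes $\Pi^0(V)\cong\Dens(V)^*\otimes\Dens(V)$ pointwise (because $(-1)_*\mu=\mu$ for every density, as densities are translation-invariant and $x\mapsto -x$ preserves Haar measure), we get $\sigma(e_V)=(-1)^0 e_V=e_V$. Hence $\sigma$ is a unital ring homomorphism.

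Finally, $\sigma$ is an involution: on $\Pi^k(V)$ we have $\sigma^2=\big((-1)^k\big)^2\Delta(-1)^2=\Delta(-1)\circ\Delta(-1)=\Delta(1)=\id$, using $\Delta(-1)\circ\Delta(-1)=L_{-\id}\circ L_{-\id}=L_{\id}=\id$, which is immediate from the functoriality of $g\mapsto L_g$ (again a consequence of the universal property, Proposition~\ref{prop:universal}). A unital ring homomorphism that is its own inverse is an algebra automorphism, so $\sigma$ is an algebra automorphism of $\Pi^*(V)$.

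There is no real obstacle here — the statement is a formal corollary. The only point requiring a moment's care is confirming that \eqref{eq:dilation} applies with $\lambda=-1$ and that $\Delta(-1)$ acts as the linear map $L_{-\id}$ coming from the $\GL(V)$-action (so that equivariance in Theorem~\ref{thm:Alesker-prod}\ref{item:GL} is exactly the needed compatibility); once that identification is in place, everything follows by bookkeeping with signs and degrees.
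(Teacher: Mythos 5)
Your proof is correct and follows essentially the same route as the paper, which simply cites Theorem~\ref{thm:Alesker-prod}\ref{item:grading} and \ref{item:GL}; your argument is just the explicit unwinding of those two facts (using $\Delta(-1)=L_{-\id}$ and the grading to handle the sign $(-1)^k$), together with the routine checks of unitality and involutivity.
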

\begin{proof}
This follows immediately from Theorem~\ref{thm:Alesker-prod}\ref{item:grading} and \ref{item:GL}.
\end{proof}

If $x$ and $y$ have complementary degrees,  intersection product and McMullen convolution are closely related.
\begin{lemma}\label{lemma:pairing}
Fix a euclidean inner product on $V$ to identify $V\cong \RR^n$ and $\Dens(V)\cong \Dens(V)^*\cong \RR$. 
Then 
$$ (x\cdot y)_0 = ( x * \Delta(-1)y)_n$$ holds
for all 	 $x,y\in \Pi^*(V)$.

\end{lemma}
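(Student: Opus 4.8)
The plan is to reduce the identity to a statement about generators and then verify it by a direct computation using the defining equation \eqref{eq:def-Alesker}. By bilinearity of both sides — the intersection product is bilinear, McMullen convolution is bilinear on the positively graded part, and $\Delta(-1)$ is linear — it suffices to prove the identity when $x = [P]\otimes \mu$ and $y = [P']\otimes \mu'$, and moreover, after fixing the euclidean structure, we may take $\mu = \mu' = \vol$ and absorb scalars. So I would first establish that both sides are bilinear in $(x,y)$ and reduce to generators.

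Next I would compute the left-hand side. By \eqref{eq:def-Alesker}, $x\cdot y = \int_V [P\cap(x+P')]\otimes \vol\, d\vol(x)$, an element of $\Pi^*(V)\otimes\Dens(V) \cong \Pi^*(V)$ under our identifications. Taking its degree-zero component amounts to applying the canonical functional $\Pi_n(V)\cong\Dens(V)^*\to\RR$, i.e. to the map $[R]\mapsto \vol(R)$ on polytopes (extended by the normal cycle embedding, Theorem~\ref{thm:nc-embedding}, to be a genuine valuation/linear functional on the top piece). Since integration against $\phi$ commutes with this functional by Proposition~\ref{prop:existence}, we get
$$(x\cdot y)_0 = \int_V \vol\bigl(P\cap(z+P')\bigr)\, d\vol(z).$$
By Fubini (or the elementary fact that $\int_V \vol(P\cap(z+P'))\,dz = \vol(P)\vol(P')$, which is really just $\int \mathbf 1_P(w)\mathbf 1_{P'}(w-z)\,dw\,dz$), this integral equals $\vol(P)\,\vol(P')$. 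The key point is that the degree-zero component is the valuation "volume" applied to the intersection product, and Proposition~\ref{prop:existence} lets us pull the functional inside the integral.

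Then I would compute the right-hand side. We have $\Delta(-1)y = \Delta(-1)([P']\otimes\vol) = [-P']\otimes\vol$ (here $\Delta(-1)=L_{-\id}$ and $(-\id)_*\vol = \vol$), so $x * \Delta(-1)y = [P + (-P')]\otimes\vol = [P - P']\otimes\vol$ by \eqref{eq:convolution}. Its degree-$n$ component is extracted by the standard fact that $([R]_n) \mapsto$ the top homogeneous coefficient: for a polytope $R$, $([R])_n = ([R]*[R]*\cdots)$-type limit, and concretely $(x*\Delta(-1)y)_n$ corresponds under $\Pi_n(V)\cong\RR$ to the leading coefficient in the Minkowski-volume polynomial, which for $[P-P']$ is $\vol(P)\vol(P')$ if one is careful — more precisely, the degree-$n$ part of $[R]$ in $\Pi_*(V)$ maps to $\vol(R)$ under the canonical isomorphism $\Pi_n(V)\cong\Dens(V)^*$ composed with our normalization, but for the convolution $[P]*[Q]$ one uses that taking the top-degree component is a ring homomorphism for $*$ onto $(\RR,\cdot)$... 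Actually the cleanest route: the functional $x\mapsto (x*\Delta(-1)y)_n$ evaluated on $x=[P]\otimes\vol$, $y=[P']\otimes\vol$ equals the mixed volume pairing, and by multiplicativity of the top-degree projection under $*$ together with $([P])_n \mapsto \vol(P)$, it equals $\vol(P)\cdot\vol(-P') = \vol(P)\vol(P')$. Comparing with the left-hand side finishes the proof.

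The main obstacle I expect is making the bookkeeping of the two canonical isomorphisms ($\Pi_n(V)\cong\Dens(V)^*$ and $\Dens(V)\cong\Dens(V)^*\cong\RR$ from the inner product) precise enough that "degree-zero component of $x\cdot y$" really does equal $\int_V\vol(P\cap(z+P'))\,dz$ with the correct normalization, and similarly that "degree-$n$ component of $x*\Delta(-1)y$" is genuinely $\vol(P)\vol(P')$ rather than off by a combinatorial or sign factor; the sign is where $\Delta(-1)$ (equivalently, replacing $P'$ by $-P'$) is essential, since $\vol(P \cap (z+P'))$ involves $P'$ itself whereas the convolution naturally produces $P + P'$, and only after the reflection do the two agree. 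Checking that $\int_V \vol(P\cap(z+P'))\,dz = \vol(P)\vol(P')$ is the one genuine computation, but it is the elementary Fubini identity and I would state it as such rather than grinding through it.
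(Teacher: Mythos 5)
There is a genuine gap, and in fact two errors that happen to cancel numerically on the wrong identity. First, you misread which graded component the subscript $0$ denotes. In this lemma $(x\cdot y)_0$ is the $\Pi_0(V)$-component (the Euler-characteristic direction, i.e.\ the top degree $\Pi^n(V)=\Pi_0(V)\otimes\Dens(V)$ for the intersection product), not the $\Pi_n(V)$-component extracted by the volume functional. This is forced by consistency with the right-hand side $(x*\Delta(-1)y)_n$, which is the $\Pi_n$-component, and by how the lemma is used later: in the proof of Poincar\'e duality one needs $(x\otimes\mu)\cdot\Delta(-1)(y\otimes\mu)\in\Pi^n(V)=\Pi_0(V)\otimes\Dens(V)$ to be nonzero, and in Lemma~\ref{lemma:dim2} the lemma is invoked to give $\sigma(\ell_{P_1})\cdot\ell_{P_2}=-\V(P_1,P_2)$, a mixed volume, not a product of volumes. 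The correct computation on generators is
\begin{equation*}
\chi([P]\cdot[Q])=\int_V\chi\bigl(P\cap(z+Q)\bigr)\,dz=\vol\bigl(P+(-Q)\bigr),
\end{equation*}
because $\chi(P\cap(z+Q))=\mathbf{1}_{P-Q}(z)$; this matches $\vol([P]*\Delta(-1)[Q])=\vol(P-Q)$ on the nose. Your Fubini identity $\int_V\vol(P\cap(z+P'))\,dz=\vol(P)\vol(P')$ is true, but it computes the $\Pi_n$-component of $x\cdot y$, which is not what the lemma asserts anything about.

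Second, and independently, your evaluation of the right-hand side rests on a false claim: the degree-$n$ projection of $\Pi_*(V)$ is \emph{not} a ring homomorphism for $*$ onto $(\RR,\cdot)$. One has $\vol([P]*[Q])=\vol(P+Q)$, which expands into the full mixed-volume polynomial $\sum_k\binom{n}{k}\V(P[k],Q[n-k])$ and is not $\vol(P)\vol(Q)$ (indeed $[P]_n*[Q]_n=0$ since $\Pi_{2n}(V)=0$; the degree-$n$ part of $[P]*[Q]$ is $\sum_{k+l=n}[P]_k*[Q]_l$). So the value $\vol(P)\vol(P')$ you obtained for the right-hand side is wrong, and your argument only appears to close because both of your (incorrect) computations produced the same number. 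The role of $\Delta(-1)$ is also different from what you suggest: it is needed because $\{z:P\cap(z+Q)\neq\emptyset\}=P-Q$ while the convolution produces $P+Q$, not because of any factorization of volumes.
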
 
\begin{proof}
Again, it suffices to prove the desired identity on the generators. Let $\chi\colon \Pi_0(V)\to \RR$ denote the  Euler characteristic and let $\vol \colon \Pi_n(V)\to \RR$ be the restriction of the Lebesgue measure to polytopes. With this notation, the lemma follows from
\begin{align*} \chi([P]\cdot [Q]) & = \int_V \chi( P\cap(x+Q)) dx = \vol(P+ (-Q))\\
	&= \vol([P]* \Delta(-1)[Q]).
	\end{align*}
\end{proof}

\begin{theorem}[Poincar\'e duality] \label{thm:Poincare} For every nonzero $x\in \Pi^k(V)$ there exists $y\in \Pi^{n-k}(V)$ such that 
	$ x\cdot y \neq 0$. 
\end{theorem}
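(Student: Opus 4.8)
The plan is to reduce Poincaré duality for the intersection product to the nondegeneracy of the pairing
$(x,y)\mapsto (x\cdot y)_n\in\Pi^n(V)\cong\Dens(V)^*$
together with the structural facts about $\Pi_*(V)$ recorded in the preliminaries. By Lemma~\ref{lemma:pairing}, after fixing a euclidean structure the degree-zero component of $x\cdot y$ equals the top-degree component of the McMullen convolution $x*\Delta(-1)y$. Since $\Delta(-1)$ is a ring automorphism and an isomorphism of graded groups, it therefore suffices to prove: for every nonzero homogeneous $x\in\Pi_{n-k}(V)$ there is $y\in\Pi_k(V)$ with $(x*y)_n\neq0$, i.e.\ that the convolution pairing $\Pi_{n-k}(V)\times\Pi_k(V)\to\Pi_n(V)\cong\RR$ is nondegenerate in the first variable. (One has to be slightly careful in degree $k=0$ or $k=n$, where $\Pi_0(V)\cong\ZZ$ rather than an $\RR$-vector space; here the claim is the essentially trivial statement that $\chi\neq0$ on nonzero elements of $\Pi_0(V)$, paired against the top class, and that $\Pi_n(V)\cong\Dens(V)^*$ pairs nontrivially with a point.)

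First I would dispose of the two boundary cases $k=0$ and $k=n$ directly. For $k=n$: $x\in\Pi_0(V)\cong\ZZ$ is a nonzero integer multiple of $[\{0\}]$, and $e_V=\mu^*\otimes\mu\in\Pi^0(V)$ is the identity, so $x\cdot e_V=x\neq0$ lands in $\Pi^n(V)$ after the identification $\Pi^0(V)=\Pi_n(V)\otimes\Dens(V)$; more honestly, pair $x$ with a suitable $[P]\otimes\mu$ of degree $n$ and use the computation $([P]_n\otimes\mu)\cdot(x)=\mu(P)\,x$-type identity from the proof of Theorem~\ref{thm:Alesker-prod}(b). For $k=0$: a nonzero $x\in\Pi^0(V)$ is a nonzero scalar multiple of $e_V$ (since $\Pi_n(V)\cong\Dens(V)^*$ is one-dimensional over $\RR$), and $e_V\cdot e_V=e_V\neq0$ won't do — instead pair $x$ with any top-degree class, again via the same Theorem~\ref{thm:Alesker-prod}(b) computation, to get a nonzero element of $\Pi^n(V)$.

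For the main range $1\leq k\leq n-1$ I would argue as follows. Let $0\neq x\in\Pi_{n-k}(V)$; this is an $\RR$-vector space, so write $x=\sum_i a_i[P_i]$ with the $P_i$ polytopes. Since $\nc$ is injective (Theorem~\ref{thm:nc-embedding}), $\nc(x)\neq0$; and the polytopes occurring in $x$ together involve only finitely many facet conormal directions, so $x$ lies in the finite-dimensional subspace $S_E$ of Lemma~\ref{lemma:finite-dim} for a suitable finite $E\subseteq V^*$. The pairing $(u,v)\mapsto(u*\Delta(-1)v)_n$ restricted to the corresponding $S_E$-type subspaces can be computed explicitly: on generators $[P]*\Delta(-1)[Q]=[P-Q]$, and its degree-$n$ part, identified with a real number via $\vol$, is the mixed-volume-type functional $u,v\mapsto$ the coefficient of the appropriate monomial in $\vol(sP+t(-Q))$. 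Nondegeneracy then follows from a separation/positivity argument: for $x\neq0$ one can find a convex body $C$ (e.g.\ a ball, or a polytope whose conormals dominate those appearing in $x$) such that the mixed-volume functional $\bar\phi_C(u)=\big(u*\Delta(-1)[\lambda C]^{\,k}$-type class$\big)_n$ does not vanish on $x$; concretely, $\bar\phi_C$ restricted to $\bigoplus_{j\geq1}\Pi_j(V)$ is the extension of a dilation-continuous translation-invariant valuation, and McMullen's separation theorem (\cite{McMullen:Separation}, as cited in the remark after the grading theorem) guarantees that such valuations separate points of $\Pi_*(V)$. Taking $y$ to be the (homogeneous degree-$k$) polytopal class representing this functional under Lemma~\ref{lemma:pairing} gives $(x\cdot y)_0\neq0$, hence $x\cdot y\neq0$.

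The main obstacle is the nondegeneracy input in the last step: one must know that the McMullen convolution pairing $\Pi_{n-k}(V)\times\Pi_k(V)\to\Pi_n(V)$ is nondegenerate — equivalently, that dilation-continuous translation-invariant valuations separate the points of the graded pieces $\Pi_k(V)$. This is exactly the content of McMullen's separation result referenced in the excerpt, so the real work is to package it correctly: verify that the functionals $y\mapsto(x*y)_n$ arising here are precisely (restrictions of extensions of) such valuations, handle the $\ZZ$- versus $\RR$-structure bookkeeping at the ends of the grading, and confirm the finite-dimensional reduction via $S_E$ so that the separation statement applies. Everything else — Lemma~\ref{lemma:pairing}, the identity $[P]*\Delta(-1)[Q]=[P-Q]$, and the behavior of $\Delta(-1)$ — is routine.
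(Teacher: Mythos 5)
Your overall reduction coincides with the paper's: Lemma~\ref{lemma:pairing} converts the claim into nondegeneracy of the McMullen convolution pairing $\Pi_{n-k}(V)\times\Pi_k(V)\to\Pi_n(V)$, and your treatment of the boundary degrees $k=0,n$ is fine. The gap sits in the only step that carries real content: you never actually establish that this pairing is nondegenerate. The functionals available to you are exactly those of the form $y\mapsto(x*y)_n$ with $y\in\Pi_k(V)$, i.e.\ linear combinations of mixed-volume functionals $u\mapsto\V(u[n-k],Q_1,\ldots,Q_k)$, and you need this particular family to separate the points of $\Pi_{n-k}(V)$. The separation statement you invoke --- Theorem~6.2 of \cite{McMullen:Separation}, as quoted in the remark after the grading theorem --- says only that the extensions of dilation-continuous translation-invariant valuations are precisely the linear functionals on $\bigoplus_{k\geq1}\Pi_k(V)$. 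That \emph{all} linear functionals separate points of a vector space is automatic; what is not automatic, and what your argument requires, is that the much smaller subfamily of mixed-volume functionals already separates. Your fallback of choosing a single body $C$ ``whose conormals dominate those appearing in $x$'' does not close this: a single functional $u\mapsto\V(u[n-k],C[k])$ can certainly vanish on a nonzero $x$, and you give no reason why the whole family cannot vanish simultaneously on some nonzero element.

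The missing input is precisely Theorem~11 of \cite{McMullen:PolytopeAlgebra}, which the paper cites at this point: for every nonzero $x\in\Pi_k(V)$ with $k\leq n-1$ there exists $y_1\in\Pi_1(V)$ with $x*y_1\neq0$. Iterating this produces $y\in\Pi_{n-k}(V)$ with $x*y\neq0$, and Lemma~\ref{lemma:pairing} then concludes exactly as you intend. This is a genuinely nontrivial structural theorem about the polytope algebra and is not recoverable from the dilation-continuity characterization; once it is cited, the remainder of your argument (the finite-dimensional reduction via $S_E$ is then not even needed) goes through.
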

\begin{proof}
Theorem~11 of  \cite{McMullen:PolytopeAlgebra} asserts  that for every $x\in \Pi_k(V)$ with  $k\in \{0,\ldots, n-1\}$,  there exists $y_1\in \Pi_{1}(V)$	such that $x*y_1\neq 0$. Applying this result repeatedly shows the existence of an element $y\in\Pi_{n-k}(V)$ such that 
$x*y\neq 0$. From Lemma~\ref{lemma:pairing}  we deduce that $(x\otimes \mu) \cdot \Delta(-1) (y\otimes \mu)\neq 0$ for any $\mu\neq 0$.

\end{proof}

\section{Construction of the pullback along linear injections}

We construct the  pullback along a linear map $f\colon W\to V$ in two stages. In this section, we consider the special case where $f$ is a linear injection. We address the  general case in Section~\ref{sec:pull-gen}, once we have established the fundamental properties of the exterior product.  Throughout  this section,  $W$ denotes  a  finite-dimensional real vector space.

\begin{lemma}\label{lemma:generic-subspace}Let $f\colon W\to V$ be a linear injection and let $P\subseteq V$ be a polytope. There exists an open set $U\subseteq  V/f(W)$ whose complement has measure zero such that  for each  face of $F$ of $P$ and each $[x] \in U$ the following properties hold: If 
$f^{-1}(x+F)\neq \emptyset$, then
$$ f^{-1}(x+\relint F)  \neq \emptyset$$  and  $$\dim F\geq \dim V-\dim W.$$

\end{lemma}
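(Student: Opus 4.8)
The plan is to reduce the whole statement to an assertion about the finitely many images of the faces of $P$ in the quotient $V/f(W)$. Write $\pi\colon V\to V/f(W)$ for the quotient map and set $m=\dim W$, so that $\dim\bigl(V/f(W)\bigr)=n-m$. The starting observation is that for any subset $A\subseteq V$ and any $x\in V$ one has $f^{-1}(x+A)\neq\emptyset$ precisely when $(x+A)\cap f(W)\neq\emptyset$, and this happens if and only if $\pi(x)\in -\pi(A)$. Applying this with $A=F$ and with $A=\relint F$, and invoking the standard fact that a surjective linear map commutes with relative interiors, $\pi(\relint F)=\relint\pi(F)$, I get two things at once: the hypotheses and conclusions in the statement depend only on the class $[x]=\pi(x)$, so they are well posed on $V/f(W)$; and the whole assertion becomes a statement about the polytopes $\pi(F)\subseteq V/f(W)$, as $F$ ranges over the (finitely many) nonempty faces of $P$.

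Next I would define the exceptional set $Z\subseteq V/f(W)$ to be the union of the sets $-\pi(F)$ over those nonempty faces $F$ of $P$ with $\dim\pi(F)<n-m$, together with the union of the relative boundaries $-\bigl(\pi(F)\setminus\relint\pi(F)\bigr)$ over those nonempty faces $F$ with $\dim\pi(F)=n-m$. Since $P$ has finitely many faces, $Z$ is a finite union of compact sets, each of dimension strictly smaller than $n-m$ (a lower-dimensional polytope in the first case, the relative boundary of a full-dimensional one in the second); hence $Z$ is closed and has measure zero. I then take $U=\bigl(V/f(W)\bigr)\setminus Z$, which is open with complement of measure zero.

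Finally I would verify that $U$ works. Let $[x]\in U$ and let $F$ be a nonempty face of $P$ with $f^{-1}(x+F)\neq\emptyset$, i.e.\ $[x]\in -\pi(F)$. The case $\dim\pi(F)<n-m$ is impossible, since it would put $[x]$ into $Z$; hence $\dim\pi(F)=n-m$, and therefore $\dim F\geq\dim\pi(F)=n-m=\dim V-\dim W$, giving the dimension bound. Moreover $[x]$ avoids $-\bigl(\pi(F)\setminus\relint\pi(F)\bigr)$, so $[x]\in -\relint\pi(F)=-\pi(\relint F)$, which is exactly the statement $f^{-1}(x+\relint F)\neq\emptyset$. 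I do not expect a genuine obstacle here: this is a routine general-position argument, entirely parallel to (and slightly simpler than) Lemma~\ref{lemma:generic}. The only points needing a little care are the equivalence $f^{-1}(x+A)\neq\emptyset\Leftrightarrow\pi(x)\in -\pi(A)$, which is what makes the problem descend to the quotient, and the bookkeeping that each face contributes a null set to $Z$ either because $\pi(F)$ is already lower-dimensional or because only its relative boundary is discarded.
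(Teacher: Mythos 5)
Your proof is correct, and since the paper omits this proof as ``straightforward,'' your argument supplies exactly the standard general-position reasoning the author had in mind: reduce everything to the images $\pi(F)$ in $V/f(W)$ via the equivalence $f^{-1}(x+A)\neq\emptyset \Leftrightarrow \pi(x)\in-\pi(A)$ together with $\pi(\relint F)=\relint\pi(F)$, and discard the finitely many null sets coming from lower-dimensional images and relative boundaries. No gaps; the bookkeeping and the dimension estimate $\dim F\geq\dim\pi(F)$ are all in order.
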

\begin{proof}
As was the case for Lemma~\ref{lemma:generic}, the proof is straightforward and is therefore omitted.
\end{proof}

\begin{lemma}\label{lemma:measurable-subspace}
	
	Let $E=\{\xi_1,\ldots, \xi_m\}\subseteq V^*$ be a finite set of nonzero linear functionals and let $S_{f^*E}$ be defined as in Lemma~\ref{lemma:finite-dim} with $$f^*E=\{ f^*\xi_i\colon i=1,\ldots,m\}\setminus\{0\}.$$   For all polytopes of the form   $P=\{ x\in V\colon \langle \xi_i,x\rangle \leq c_i \text{ for }i=1,\ldots, m\}$ and every $\mu_W\in \Dens(W)$ the following properties hold:
	\begin{enuma}
		\item $[f^{-1}(x+P)]\otimes \mu_W \in S_{f^*E}$ for all $x\in V$.
		\item The function $V/f(W)\to S_{f^*E}$, $[x]\mapsto   [f^{-1}(x+P)]\otimes \mu_W$, is measurable and essentially bounded.
		\item For each density $\mu_{V/f(W)}\in \Dens(V/f(W))$, the integral
		$$ \int_{V/f(W)} [f^{-1}(x+P)]\otimes \mu_W \, d\mu_{V/f(W)} ([x])\in S_{f^*E} $$ 
		is well-defined. 
	\end{enuma}
	
\end{lemma}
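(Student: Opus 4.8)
\textbf{Proof plan for Lemma~\ref{lemma:measurable-subspace}.}

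The plan is to mirror the proof of Lemma~\ref{lemma:measurable} almost verbatim, replacing the role of the one-parameter family of intersections $P\cap(x+P')$ by the family of fibers $f^{-1}(x+P)$ parametrized by $[x]\in V/f(W)$, and replacing Lemma~\ref{lemma:faces-intersection} and Lemma~\ref{lemma:convergence-intersection} by their subspace counterparts Lemma~\ref{lemma:faces-intersection-subspace} and Lemma~\ref{lemma:generic-subspace}. Part (a) is immediate from the definition of $S_{f^*E}$: if $P=\{x\colon\langle\xi_i,x\rangle\le c_i\}$, then $x+P=\{y\colon\langle\xi_i,y\rangle\le c_i+\langle\xi_i,x\rangle\}$, and hence $f^{-1}(x+P)=\{w\in W\colon\langle f^*\xi_i,w\rangle\le c_i+\langle\xi_i,x\rangle\}$ is cut out by the functionals $f^*\xi_i$, i.e. is of the form \eqref{eq:P-conormal} for the set $f^*E$; those indices $i$ with $f^*\xi_i=0$ either impose no constraint or make the polytope empty, and in the latter case $[f^{-1}(x+P)]\otimes\mu_W=0\in S_{f^*E}$. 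So in all cases $[f^{-1}(x+P)]\otimes\mu_W\in S_{f^*E}$.

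For part (b), I would argue as in Lemma~\ref{lemma:measurable}(b) via the normal cycle embedding: it suffices to show that $[x]\mapsto\nc(f^{-1}(x+P))$ is continuous on the measure-zero-complement open set $U\subseteq V/f(W)$ supplied by Lemma~\ref{lemma:generic-subspace}. Fix $[x]\in U$. By Lemma~\ref{lemma:faces-intersection-subspace}, every nonempty face of $f^{-1}(x+P)$ is of the form $f^{-1}(x+F)$ for a face $F$ of $P$, and because $[x]\in U$ we may assume $f^{-1}(x+\relint F)\neq\emptyset$ whenever $f^{-1}(x+F)\neq\emptyset$. Then there is an open neighbourhood $U'\subseteq U$ of $[x]$ on which the pattern of which faces $F$ satisfy $f^{-1}(y+F)\neq\emptyset$ is constant; on $U'$ the conormal cones $N(f^{-1}(y+F),f^{-1}(y+P))=f^*(N(F,P))$ are constant by Lemma~\ref{lemma:faces-intersection-subspace}(c). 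By Lemma~\ref{lemma:convergence-intersection} (applied to the bodies $x+F$ and $f(W)$, which cannot be separated by a hyperplane precisely because $f^{-1}(x+\relint F)\ne\emptyset$, after identifying $f(W)$ with $W$) the map $y\mapsto f^{-1}(y+F)$ is Hausdorff-continuous, so the coefficient $\epsilon_{f^{-1}(y+F)}$ varies continuously as well. Hence $\nc(f^{-1}(\,\cdot\,+P))$ is continuous on $U'$, therefore on all of $U$, and since the complement of $U$ has measure zero and the family lies in the finite-dimensional space $\nc(S_{f^*E})$, the map $V/f(W)\to S_{f^*E}$ is measurable and essentially bounded; injectivity and linearity of $\nc$ on $\Pi^*$ then transfer this to $[x]\mapsto[f^{-1}(x+P)]\otimes\mu_W$. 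Part (c) is then immediate from (a) and (b), exactly as in Lemma~\ref{lemma:measurable}(c).

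I expect the only genuinely delicate point to be the continuity statement in part (b): one must be slightly careful that the separation hypothesis in Lemma~\ref{lemma:convergence-intersection} is exactly what the relative-interior condition in Lemma~\ref{lemma:generic-subspace} guarantees, and that shrinking to a neighbourhood $U'$ on which the nonemptiness pattern of the faces is locally constant is legitimate (this uses that $P$ has finitely many faces and that each condition $f^{-1}(x+\relint F)\neq\emptyset$ is an open condition on the complement of a measure-zero set). Everything else is bookkeeping parallel to the already-treated case of Lemma~\ref{lemma:measurable}, so I would keep the write-up short and simply point to that proof for the routine parts.
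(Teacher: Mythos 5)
Your proposal is correct and is exactly the argument the paper has in mind: the paper omits the proof of this lemma with the remark that it is parallel to that of Lemma~\ref{lemma:measurable}, and your write-up carries out precisely that parallel, substituting Lemma~\ref{lemma:generic-subspace} and Lemma~\ref{lemma:faces-intersection-subspace} for their intersection counterparts. The only point worth a sentence in a final write-up is the one you already flag, namely that the Hausdorff-continuity of $y\mapsto f^{-1}(y+F)$ requires a small adaptation of Lemma~\ref{lemma:convergence-intersection} since $f(W)$ is unbounded (e.g.\ by intersecting with a large ball), but this is routine and consistent with the paper's treatment.
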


\begin{proof} We omit the proof, because it is parallel to  the proof of Lemma~\ref{lemma:measurable}.
\end{proof}

As in Proposition~\ref{prop:existence},   the integral 
$$\int_{V/f(W)} [f^{-1}(x+P)]\otimes \mu_W \, d\mu_{V/f(W)} ([x])$$ can be defined in a way that is independent of the choice of subset $E\subseteq V^*$. 

 Before we state the main result of this section, we remind the reader of a useful isomorphism for densities.

\begin{lemma}\label{lemma:iso-dens} Let $0 \rightarrow X\stackrel{i}{\rightarrow}Y\stackrel{\pi}{\rightarrow}Z\rightarrow0$ be an exact sequence of finite-dimensional real vector spaces.  There exists a canonical isomorphism $$ \Dens(Y)\to \Dens(X)\otimes \Dens(Z)$$ so that $\mu$ is mapped to an element $\mu_X\otimes \mu_{Z}$ satisfying 
	\begin{equation}\label{eq:change-of-variables}\int_Y h\, d\mu= \int_Z \int_{X} h(i(x)+ y) d\mu_X(x)d\mu_Z(z), \quad y\in \pi^{-1}(z),\end{equation}
	for compactly supported continuous functions $h\colon Y\to \RR$. 
\end{lemma}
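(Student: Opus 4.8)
The plan is to reduce the statement to the standard canonical isomorphism of one-dimensional determinant spaces attached to a short exact sequence, and then to transport it to densities. Write $p=\dim X$ and $q=\dim Z$, so $\dim Y=p+q$. First I would construct the canonical isomorphism
$$\Lambda^p X\otimes \Lambda^q Z\to \Lambda^{p+q}Y,\qquad (x_1\wedge\cdots\wedge x_p)\otimes(z_1\wedge\cdots\wedge z_q)\mapsto i(x_1)\wedge\cdots\wedge i(x_p)\wedge \tilde z_1\wedge\cdots\wedge\tilde z_q,$$
where $\tilde z_j\in Y$ is any element with $\pi(\tilde z_j)=z_j$. This is well defined because two lifts of $z_j$ differ by an element of $i(X)$ and wedging with $i(x_1)\wedge\cdots\wedge i(x_p)$ kills $i(X)$; it is an isomorphism since both sides are one-dimensional and the map sends a basis of $X$ together with lifts of a basis of $Z$ to a nonzero vector.

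Next I would descend this to densities. Recall that a density on a $d$-dimensional space $U$ is a function $\mu\colon\Lambda^d U\to\RR$ with $\mu(\alpha w)=|\alpha|\mu(w)$. Given $\mu_X\in\Dens(X)$ and $\mu_Z\in\Dens(Z)$, I would define $\mu_Y\in\Dens(Y)$ by choosing bases $e_1,\dots,e_p$ of $X$ and $f_1,\dots,f_q$ of $Z$, writing an arbitrary nonzero $w\in\Lambda^{p+q}Y$ as $w=c\,\bigl(i(e_1)\wedge\cdots\wedge i(e_p)\wedge\tilde f_1\wedge\cdots\wedge\tilde f_q\bigr)$ with $c\in\RR$ and $\tilde f_j$ lifts of $f_j$, and setting $\mu_Y(w)=|c|\,\mu_X(e_1\wedge\cdots\wedge e_p)\,\mu_Z(f_1\wedge\cdots\wedge f_q)$. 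A change of basis in $X$ (resp. $Z$) scales $e_1\wedge\cdots\wedge e_p$ by a determinant $\delta$, hence scales $\mu_X(e_1\wedge\cdots\wedge e_p)$ by $|\delta|$, while scaling $c$ by $\delta^{-1}$; these factors cancel, so $\mu_Y$ is well defined, and it is independent of the lifts $\tilde f_j$ by the previous paragraph. The assignment $(\mu_X,\mu_Z)\mapsto\mu_Y$ is $\RR$-bilinear, so it factors through a linear map $\Dens(X)\otimes\Dens(Z)\to\Dens(Y)$, which is an isomorphism because it is nonzero and both sides are one-dimensional; its inverse is the claimed canonical isomorphism $\mu\mapsto\mu_X\otimes\mu_Z$.

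Finally, for the change-of-variables identity \eqref{eq:change-of-variables}, I would fix a linear splitting $s\colon Z\to Y$ of $\pi$, identifying $Y$ with $X\times Z$ via $(x,z)\mapsto i(x)+s(z)$. Evaluating both on the parallelepiped spanned by $i(e_1),\dots,i(e_p),s(f_1),\dots,s(f_q)$ shows that under this identification $\mu$ corresponds exactly to the product measure $\mu_X\times\mu_Z$, so Fubini's theorem gives $\int_Y h\,d\mu=\int_Z\int_X h\bigl(i(x)+s(z)\bigr)\,d\mu_X(x)\,d\mu_Z(z)$ for compactly supported continuous $h$. For any $y\in\pi^{-1}(z)$ one has $y-s(z)\in i(X)$, so translation invariance of $\mu_X$ lets one replace $s(z)$ by $y$ in the inner integral, which is \eqref{eq:change-of-variables}. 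The only genuinely delicate point is the absolute-value bookkeeping in the second step—checking that $\mu_Y$ is independent of all choices, i.e.\ that the determinant-line isomorphism really does descend to densities; once that is settled, the identification of $\mu$ with $\mu_X\times\mu_Z$ under a splitting and the appeal to Fubini are routine.
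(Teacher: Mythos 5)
Your argument is correct, but it runs in the opposite direction from the paper's. The paper simply observes that, for fixed $\sigma\in\Dens(X)$ and $\tau\in\Dens(Z)$, the right-hand side of \eqref{eq:change-of-variables} already defines a density on $Y$ (after checking independence of the choice of $y\in\pi^{-1}(z)$ via translation invariance of $\sigma$); this gives a bilinear, hence linear on the tensor product, map $\Dens(X)\otimes\Dens(Z)\to\Dens(Y)$ which is nonzero between one-dimensional spaces and therefore an isomorphism, and the integral identity holds by construction. You instead build the isomorphism algebraically from the canonical determinant-line isomorphism $\Lambda^pX\otimes\Lambda^qZ\cong\Lambda^{p+q}Y$, descend it to densities with the absolute-value bookkeeping, and only then verify \eqref{eq:change-of-variables} by choosing a splitting and invoking Fubini plus translation invariance. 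Both routes are sound and yield the same map; yours makes the canonicity transparent at the level of linear algebra but pays for it with the extra Fubini verification, while the paper's definition makes the integral formula tautological but leaves the (routine) checks that the double integral is well defined and genuinely a density to the reader. The only caveat worth noting in your write-up is that the isomorphism $\Dens(X)\otimes\Dens(Z)\to\Dens(Y)$ must of course be read as a tensor product of real vector spaces over $\RR$ (despite the paper's blanket convention that unadorned tensor products are over $\ZZ$), exactly as the statement of the lemma itself implicitly does.
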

\begin{proof}To see the existence of an isomorphism with the desired properties just note that 
	$$ \int_Z \int_{X} h(i(x)+ y) d\sigma(x)d\tau(z), \quad y\in \pi^{-1}(z),$$
	defines for all densities $\sigma\in \Dens(X)$ and $\tau\in \Dens(Z)$ a density on $\Dens(Y)$. 	
\end{proof}

The following result  collects the fundamental  properties of the pullback along a linear injection.

\begin{theorem} \label{thm:pullback}
	Let $f\colon W\to V$ be a linear injection. There exists a linear map 
	$f^*\colon \Pi^*(V)\to \Pi^*(W)$, called the \emph{pullback} along $f$, that is uniquely determined by the equation
	\begin{equation}\label{eq:def-pullback} f^*([P]\otimes \mu) = \int_{V/f(W)} [f^{-1}(x+P )]\otimes \mu_W\, d\mu_{V/f(W)}([x]) ,\end{equation}
	where $\mu=\mu_W \otimes \mu_{V/f(W)}$ under the canonical isomorphism of Lemma~\ref{lemma:iso-dens}.
	The pullback satisfies the following additional properties:
	\begin{enuma}
		\item $f^*$ is a morphism of algebras when $ \Pi^*(V)$ and $\Pi^*(W)$ are equipped with the intersection product.
		\item \label{item:pull-grad} $f^*(\Pi^k(V))\subseteq \Pi^{k}(W)$.  
		\item \label{item:contra} If $g\colon U\to W$ is another linear injection, then $(f\circ g)^*= g^*\circ f^*$. 
		\item $f^*$ commutes with the Euler--Verdier involution.
	\end{enuma}	
\end{theorem}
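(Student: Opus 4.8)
The plan is to establish the existence and uniqueness of $f^*$ first, and then verify the four listed properties by reducing everything to the generators $[P] \otimes \mu$ and using the defining equation \eqref{eq:def-pullback}. For existence and uniqueness: the right-hand side of \eqref{eq:def-pullback} is well defined by the analogue of Proposition~\ref{prop:existence} (stated in the paragraph after Lemma~\ref{lemma:measurable-subspace}), and as a function of $P$ it is a translation-invariant valuation into a fixed finite-dimensional subspace $S_{f^*E}$ of $\Pi^*(W)$. Hence the universal property of the polytope algebra (Proposition~\ref{prop:universal}) extends it to $\Pi_*(V)$; since the assignment is also $\ZZ$-bilinear in $[P]$ and $\mu$ and the canonical splitting $\mu = \mu_W \otimes \mu_{V/f(W)}$ of Lemma~\ref{lemma:iso-dens} is linear, it factors through the tensor product to give $f^*\colon \Pi^*(V) \to \Pi^*(W)$. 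Uniqueness is automatic since the generators span.

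For property \ref{item:pull-grad}, the cleanest route is via the dilation operators: one checks $f^*(\Delta_V(\lambda) x) = \Delta_W(\lambda)(f^* x)$ on generators directly from \eqref{eq:def-pullback} (dilating $P$ by $\lambda$ rescales both the fibers $f^{-1}(x+P)$ and the quotient measure $\mu_{V/f(W)}$ appropriately, using Lemma~\ref{lemma:iso-dens}), and then, since $x \in \Pi^k(V)$ is characterized by $\Delta_V(\lambda) x = \lambda^{-k} x$ and similarly on $W$, this intertwining forces $f^*(\Pi^k(V)) \subseteq \Pi^k(W)$. Property (d), commutation with the Euler--Verdier involution, then follows immediately: $\sigma$ on degree $k$ is $(-1)^k \Delta(-1)$, and $f^*$ commutes with both $\Delta(-1)$ (the $\lambda = -1$ case just proven) and with the grading (property \ref{item:pull-grad}), so $\sigma_W \circ f^* = f^* \circ \sigma_V$. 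Property \ref{item:contra}, functoriality $(f \circ g)^* = g^* \circ f^*$, is checked on generators by combining the defining integrals: one has $g^*(f^*([P]\otimes\mu))$ unwinding to an iterated integral over $W/g(U)$ and $V/f(W)$ of fibers of fibers, which equals the single integral over $V/(f\circ g)(U)$ of $(f\circ g)^{-1}(x+P)$ after identifying $(f\circ g)^{-1}(x+P) = g^{-1}(f^{-1}(x+P))$ and applying the compatibility of the density splittings of Lemma~\ref{lemma:iso-dens} across the tower $0 \to U \to W \to V/(f\circ g)(U) \to \cdots$; this is essentially Fubini together with functoriality of the canonical density isomorphisms.

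Property (a), that $f^*$ is a morphism of algebras, is the main obstacle. One must show $f^*((\,[P]\otimes\mu)\cdot([P']\otimes\mu')) = f^*([P]\otimes\mu) \cdot f^*([P']\otimes\mu')$. Expanding the left side via \eqref{eq:def-Alesker} and then \eqref{eq:def-pullback} gives an integral of $[f^{-1}(x + P \cap (y + P'))]$ over the appropriate space; expanding the right side gives a product of two averaged-fiber classes, which by \eqref{eq:def-Alesker} is itself an integral of $[f^{-1}(x+P) \cap (z + f^{-1}(y+P'))]$. The key pointwise identity to exploit is $f^{-1}(x+P) \cap (z + f^{-1}(y+P')) = f^{-1}\big((x+P) \cap (f(z) + y + P')\big)$ for $z \in W$, reducing the claim to an equality of integrals; the matching of the measures (disintegrating the Lebesgue measure on $V$ along $f(W)$ and $V/f(W)$, and likewise on $W$) is again governed by Lemma~\ref{lemma:iso-dens}. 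The genuine technical care is needed because these are integrals of polytope-algebra-valued functions and one must stay inside a fixed finite-dimensional subspace (so that Fubini applies coordinatewise against all linear functionals $\phi$, as in Proposition~\ref{prop:existence}); one should fix a single finite $E \subseteq V^*$ adapted to both $P$ and $P'$, work inside $S_{f^*E} \subseteq \Pi^*(W)$, verify the identity of scalar integrals $\langle \phi, \cdot \rangle$ for every linear functional $\phi$, and invoke the injectivity of the normal cycle embedding (Theorem~\ref{thm:nc-embedding}) to conclude.
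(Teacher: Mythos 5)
Your overall strategy coincides with the paper's: existence via the universal property and $\ZZ$-bilinearity, the grading via the dilation intertwining $f^*\circ\Delta(\lambda)=\Delta(\lambda)\circ f^*$ checked on generators, functoriality via Fubini and the compatibility of the density splittings of Lemma~\ref{lemma:iso-dens}, and multiplicativity via the pointwise identity $f^{-1}(x+P)\cap\bigl(z+f^{-1}(y+P')\bigr)=f^{-1}\bigl((x+P)\cap(f(z)+y+P')\bigr)$ together with disintegration of $\mu'$ along $0\to W\to V\to V/f(W)\to 0$; all of that is correct and is exactly the paper's computation. Your route to (d) via the $\lambda=-1$ case of the dilation intertwining is a harmless variant of the paper's derivation from (b) and (c).

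There is, however, one omission in your treatment of (a): you only verify multiplicativity on generators and never address preservation of the identity element, i.e.\ $f^*(e_V)=e_W$. Since the intersection product makes $\Pi^*(V)$ and $\Pi^*(W)$ unital algebras, this is part of the claim that $f^*$ is a morphism of algebras, and it does not follow formally from multiplicativity alone. The paper devotes a separate computation to it, writing $e_V=[P]_n\otimes\mu$ with $\mu(P)=1$, expressing $[P]_n$ as $\lim_{\lambda\to\infty}\lambda^{-n}[\lambda P]$, pulling the limit through the integral, and using \eqref{eq:change-of-variables} to see that the resulting class is $[f^{-1}(z+P)]_m\otimes\mu_W$ averaged to total mass $1$. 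A shortcut is available if you prefer: $\Pi^0(W)\cong\Dens(W)^*\otimes\Dens(W)$ is one-dimensional and spanned by the idempotent $e_W$, so $f^*(e_V)=c\,e_W$ with $c^2=c$ by multiplicativity, and $c=0$ is excluded because $f^*(e_V)\cdot f^*(x)=f^*(x)$ for all $x$ while $f^*$ is visibly nonzero. Either way, the step needs to be stated; as written your proof of (a) is incomplete.
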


\begin{proof}[Proof of Theorem~\ref{thm:pullback}] Since uniqueness is clear, it suffices to prove existence. For any $\mu=\mu_W \otimes\mu_{V/f(W)}  \in \Dens(V) 
	$ and any polytope $P\subseteq V$ define 
	$$ \phi(P,\mu)= \int_{V/f(W)} [f^{-1}(x+P)]\otimes \mu_W d\mu_{V/f(W)} ([x]).$$
	Lemma~\ref{lemma:measurable-subspace} guarantees that this is well defined. Observe that the map $\calP(V)\to \Pi^*(W)$, $P\mapsto \phi(P,\mu)$ is a valuation. By the universal property of polytope algebra, there exists a unique extension $\Pi_*(V)\to \Pi^*(W)$. Since $(x,\mu)\mapsto \phi(x,\mu)$ is $\ZZ$-balanced, we have constructed a group homomorphism $f^*\colon \Pi^*(V)\to \Pi^*(W)$ with the desired property \eqref{eq:def-pullback}. Moreover, it is clear that $f^*$ depends linearly on the density $\mu$.
	
	To prove (a) we  verify $f^*(x\cdot y) = f^*x \cdot f^*y$ for generators and then show that the identity is mapped to the the idenity. Using Lemma~\ref{lemma:iso-dens} applied to $\mu'$ and $0\to W\to V\to V/W\to 0$  
	in the fourth equality, we compute 
	\begin{align*} 
	 &f^*( ([P] \otimes  \mu) \cdot ([P']\otimes \mu'))\\ &  =  f^* \left( \int_{V} [ P \cap (x + P')] \otimes \mu \; d\mu'(x)\right)		\\
	& =  \int_{V} f^* ([ P \cap (x + P')] \otimes \mu)  d\mu'(x)		\\
	& =  \int_{V}  \left( \int_{V/f(W)}  [ f^{-1}(y+P) \cap f^{-1}(x+y + P')]\otimes \mu_W\, d\mu_{V/f(W)}([y])  \right)  d\mu'(x)		\\
	& =  \int_W \int_{V/f(W)} \left( \int_{V/f(W)}  [ f^{-1}(y+P) \cap f^{-1}(f(w)+y' + P')] \otimes \mu_W\, d\mu_{V/f(W)}([y])\right)
	\\ & \quad d\mu'_{V/f(W)}([y'])  d\mu_W'(w)		\\
	& = \int_{V/f(W)}\int_{V/f(W)}\left( \int_W   [ f^{-1}(y+P) \cap (w+f^{-1}(y' + P'))]\otimes \mu_W\,d \mu'_W(w) \right)  \\
	& \quad d\mu_{V/f(W)}([y]) d\mu'_{V/f(W)}([y']) \\
	 & =\int_{V/f(W)}\int_{V/f(W)}   ([ f^{-1}(y+P)\otimes  \mu_W)\cdot ([f^{-1}(y' + P')]\otimes \mu'_W) \\
	 & \quad   d\mu_{V/f(W)}([y]) d\mu'_{V/f(W)}([y']) \\ 
	 & = f^*([P]\otimes \mu) \cdot f^*([P']\otimes \mu'). 
	\end{align*}

Let us write $n=\dim V$ and $m=\dim W$. Recall that the identity element may be expressed as $e_V=[P]_n \otimes \mu$ with $\mu(P)=1$. Also recall that $[P]_n = \lim_{\lambda \to \infty} \lambda^{-n} [\lambda P]$. Using this, we obtain
\begin{align*}
f^*(e_V) & = \lim_{\lambda\to \infty} \lambda^{-n} \int_{V/f(W)} [f^{-1}(y+\lambda P)]\otimes \mu_W\, d\mu_{V/f(W)}([y])\\
& = \lim_{\lambda\to \infty} \lambda^{-m} \int_{V/f(W)} [\lambda f^{-1}(z+P)]\otimes \mu_W\, d\mu_{V/f(W)}([z])\\
& =\int_{V/f(W)} [ f^{-1}(z+P)]_m \otimes \mu_W \,d\mu_{V/f(W)}([z]).
\end{align*}
Observe that 
$$ 
\int_{V/f(W)} \mu_W( f^{-1}(z+P)) d\mu_{V/f(W)}([z]) = \mu(P)=1$$
by  \eqref{eq:change-of-variables}. Hence $f^*(e_V) = e_U$ is the identity in $\Pi^*(W)$. 

To prove \ref{item:pull-grad}, observe that for any $\lambda>0$ and any $x\in \Pi^*(V)$
$$ f^*(\Delta(\lambda) x)=  \Delta(\lambda) f^*(x).$$
Indeed, this identity is obvious in the case of generators $x= [P]\otimes \mu$. If $x\in \Pi^k(V)$, it follows that $\Delta(\lambda) f^*(x)= \lambda^{-k} f^*(x)$. Thus $f^*(x)\in \Pi^{k}(W)$, as  required.

To prove \ref{item:contra} observe that by the definition of the pullback 
\begin{align*}  g^*(f^*([P]\otimes & \mu)))=\\& 
	\int_{V/f(W)} \int_{W/g(U)}  [ g^{-1}(f^{-1}(P +x +f(y))] \otimes \mu_U \, d\mu_{W/g(U)}([y]) d\mu_{V/f(W)}([x]).
	\end{align*}
Let $h\colon V\to \RR$ be a compactly supported continuous function, and define 
$$ h_U([v])= \int_{U} h(f(g(u)) + v) d\mu_U(u), \quad [v]\in V/(f(g(U)).$$
On the one hand,  applying 
Lemma~\ref{lemma:iso-dens} to $0\to W/g(U)\to V/f(g(U)) \to V/f(W)\to 0$ yields a  density $\wt \mu_{V/f(g(U))}$ such that 
\begin{align*} \int_{V/f(W)} \int_{W/g(U)} & h_U([f(y) + x])  d\mu_{W/g(U)}([y]) d\mu_{V/f(W)}([x])\\
& = \int_{V/f(g(U))}  h_U([v]) d\wt \mu_{V/f(g(U))}([v]).\end{align*}
On the other hand, applying Lemma~\ref{lemma:iso-dens} to $0\to U\to V\to V/f(g(U))\to 0$  yields 
\begin{align*}  \int_{V/f(W)} \int_{W/g(U)} &  h_U([f(y) + x]) d\mu_{W/g(U)}([y]) d\mu_{V/f(W)}([x]) \\ & =\int_V h d\mu \\
	& = \int_{V/f(g(U))} h_U([v]) d\mu_{V/f(g(U))}([v]) .\end{align*}
 Hence $\wt \mu_{V/f(g(U))} =  \mu_{V/f(g(U))}$ and we conclude that 
\begin{align*} g^*(f^*([P]\otimes \mu))) & = \int_{V/f(g(U))} \int_U [ (f\circ g)^{-1}(P +v)] \otimes \mu_U \,   d\mu_{V/f(g(U))}([v]) \\ 
	& =
(f\circ g)^*([P]\otimes \mu).
\end{align*}
This finishes the proof of \ref{item:contra}. 

Finally, that the pullback commutes with the Euler--Verdier involution is an immediate consequence of \ref{item:pull-grad} and \ref{item:contra}.
\end{proof}

\begin{remark}
	\label{rmk:fiber}
	There is a similarly looking, but essentially different construction known as the fiber polytope. For comparison, let $W$ be a linear subspace of $V$. Let $f\colon W\to V$ denote the inclusion  and let $\pi\colon V\to V/W$ denote the canonical projection.  Let $P$ be a polytope in $V$ and put $Q= \pi(P)$.  In this situation, the fiber polytope  of $\pi\colon P\to Q$  introduced in \cite{BilleraSturmfels:FiberPolytopes} is the polytope in $V$ defined by 
	$$\Sigma_\pi(P,Q)=  \int_{Q}  P\cap \pi^{-1}(y)\;  dy,$$
	where $dy$ is some positive density on $V/W$.
This integral can be interpreted as  the Minkowski sum of the fibers $P\cap \pi^{-1}(y)$, see \cite[Proposition 1.2]{BilleraSturmfels:FiberPolytopes}.  For every continuous choice $x\in \pi^{-1}(y)$, the integral $$ \int_{Q} f^{-1}(x+P)\,dy,$$
is a translate of $\Sigma_\pi(P,Q)$.  
While this integral may resemble \eqref{eq:def-pullback}, the two are fundamentally different: the latter represents an average of the classes $[f^{-1}(x+P)]$
 taken with respect to the addition in the polytope algebra.

\end{remark}

\begin{remark}\label{rem:pullback}
In the paper \cite{Alesker:Fourier}, Alesker introduced a pushforward of continuous translation-invariant valuations along linear injections. More precisely, if $\Val(V)$ denotes the Banach space  of translation-invariant continuous valuations on convex bodies in $V$, then the pushforward along a linear injection $f\colon W\to V$ is the continuous linear map 
$$ f_* \colon \Val(W)\otimes \Dens(W)^* \to \Val(V)\otimes \Dens(V)^*$$
defined by 
$$ f_* (\phi\otimes \epsilon  )(K)=  \int_{V/f(W)} \phi( f^{-1} (  x+K)) \otimes \epsilon_V\,  d\nu_{V/f(W)}(x),$$
where $K\subseteq V$ is a convex body and  $\epsilon\in \Dens(W)^*$, $\epsilon_V\in \Dens(V)^*$, and $\nu_{V/f(W)}\in \Dens(V/f(W))$ are related via canonical isomorphism of Lemma~\ref{lemma:iso-dens}.
If  $P$ is a polytope  in $V$, then 
$$ \langle f_* (\phi\otimes \epsilon), [P]\otimes \mu) \rangle  =    \langle \phi\otimes \epsilon  , f^*([P]\otimes \mu)\rangle.$$
In this sense, the pushforward $f_*$ of translation-invariant continuous valuations can be regarded as dual to the pullback $f^*$ in the polytope algebra. 

\end{remark}

\section{Exterior product}

The exterior product, which we introduce in this section, will play an important role in the construction of the pullback along general linear maps. Moreover, it will facilitate the  evaluation of  the intersection product in certain special situations, see Theorem~\ref{thm:special-elem} and Proposition~\ref{prop:prop-wtV} below.

\begin{theorem}\label{thm:ext} There exists a bilinear map $$\boxtimes \colon \Pi^*(V)\times \Pi^*(W)\to  \Pi^*(V\times W),$$ called the \emph{exterior product}, that is uniquely determined by 
	$$ ([P] \otimes \mu ) \boxtimes  ([Q] \otimes \nu ) = [P\times Q] \otimes (\mu\times \nu).$$
	Moreover, it has the following additional properties:
	\begin{enuma}
	\item It is compatible with the grading: \label{item:ext-grad} $$\Pi^k(V)\boxtimes \Pi^l(W)\subseteq \Pi^{k+l}(V\times W).$$	
	\item \label{item:ext-ass} If $U$ is another vector space and $z\in \Pi^*(U)$, then 
	$$ (x \boxtimes y)\boxtimes z= x \boxtimes (y\boxtimes z).$$
		\item  \label{item:ext-pull} If $f\colon U_1\times U_2\to V\times W$, $f=f_1 \times f_2$, is a linear injection, then
	$$f^*(x\boxtimes y)= f_1^* x \boxtimes f_2^* y$$
	\item \label{item:ext-e}
	The identity elements for the intersection product satisfy $e_V\boxtimes e_W= e_{V\times W}$.

	 \item \label{item:ext-prod} Let $\diag\colon V\to V\times V$ denote the diagonal embedding. For all $x,y\in \Pi^*(V)$
	 $$  x\cdot y = \diag^* (x\boxtimes y).$$
	 
	\item \label{item:ext-prods} For all $x,y\in \Pi^*(V)$ and $x',y'\in \Pi^*(W)$, 
	$$(x\cdot y)\boxtimes (x'\cdot y')= (x\boxtimes x') \cdot (y\boxtimes y').$$ 
	
	\end{enuma}

\end{theorem}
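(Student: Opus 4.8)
The plan is to construct $\boxtimes$ via the universal properties already at our disposal, and then to deduce every listed property by reducing it to generators, where all identities become elementary statements about products of polytopes and of measures.

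First I would construct the map. Fix $[Q]\otimes\nu\in\Pi^*(W)$. The assignment $P\mapsto [P\times Q]\otimes(\mu\times\nu)$ is a translation-invariant valuation in $P$ (since $P\mapsto P\times Q$ sends unions/intersections of polytopes whose union is convex to unions/intersections of polytopes whose union is convex, and translations to translations), so by the universal property of the polytope algebra (Proposition~\ref{prop:universal}) it extends to $\Pi_*(V)$; since it is also $\ZZ$-balanced in $(P,\mu)$ it extends to a linear map $\Pi^*(V)\to\Pi^*(V\times W)$, using the canonical identification of densities on $V\times W$ with $\Dens(V)\otimes\Dens(W)$ (the splitting $0\to V\to V\times W\to W\to 0$ via Lemma~\ref{lemma:iso-dens}). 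Repeating the same argument in the second variable, and invoking the universal property of the tensor product once more, yields the bilinear map $\boxtimes$, uniquely determined on generators by the stated formula. The rest of the proof is a sequence of verifications on generators, where by bilinearity and uniqueness it suffices to check each claim.

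For \ref{item:ext-grad}, I would again use the dilation trick from the proof of Theorem~\ref{thm:Alesker-prod}: $\Delta(\lambda)$ acts by the scalar $\lambda^{-k}$ on $\Pi^k$, and on generators one checks directly from the defining formula that $\Delta_V(\lambda)x\boxtimes\Delta_W(\lambda)y=\Delta_{V\times W}(\lambda)(x\boxtimes y)$ (since $\lambda(P\times Q)=\lambda P\times\lambda Q$ and pushforward of densities is multiplicative under the product splitting); comparing homogeneity degrees gives $\Pi^k(V)\boxtimes\Pi^l(W)\subseteq\Pi^{k+l}(V\times W)$. Property \ref{item:ext-ass} is immediate on generators from associativity of the cartesian product of polytopes and of the product of densities, together with the obvious compatibility of the density identifications for $(V\times W)\times U$ and $V\times(W\times U)$. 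For \ref{item:ext-pull}, with $f=f_1\times f_2$ a linear injection, I would compute $f^*$ of $[P\times Q]\otimes(\mu\times\nu)$ using the defining integral \eqref{eq:def-pullback} over $(V\times W)/f(U_1\times U_2)\cong (V/f_1(U_1))\times(W/f_2(U_2))$ and the fact that $f^{-1}(x\times y+(P\times Q))=f_1^{-1}(x+P)\times f_2^{-1}(y+Q)$; Fubini on the product of quotients, plus Lemma~\ref{lemma:iso-dens} to match the density decompositions, splits the integral into the product of the two defining integrals for $f_1^*$ and $f_2^*$. Property \ref{item:ext-e} follows from \ref{item:ext-grad} (it forces $e_V\boxtimes e_W$ into degree $0$) together with the explicit description $e_V=[P]_n\otimes\mu$ with $\mu(P)=1$: one has $[P]_n\boxtimes[Q]_m=\lim\lambda^{-n-m}[\lambda P\times\lambda Q]=[P\times Q]_{n+m}$ and $(\mu\times\nu)(P\times Q)=\mu(P)\nu(Q)$, so the product of the normalizing conditions is again a normalizing condition. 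Property \ref{item:ext-prod} is then a direct application of \ref{item:ext-pull} with $f=\diag$ (a linear injection $V\to V\times V$, $f_1=f_2=\id$): indeed for generators $\diag^*([P\times Q]\otimes(\mu\times\nu))=\int_{(V\times V)/\diag(V)}[\diag^{-1}(x+(P\times Q))]\otimes\mu'\,d(\ldots)$, and identifying $(V\times V)/\diag(V)\cong V$ via $(a,b)\mapsto a-b$ turns $\diag^{-1}((a,b)+(P\times Q))=(a+P)\cap(b+Q)$ into $P\cap((b-a)+Q)$, reproducing exactly the defining integral \eqref{eq:def-Alesker} for $x\cdot y$ after checking that the density on the quotient matches $\mu'$ under Lemma~\ref{lemma:iso-dens} applied to $0\to\diag(V)\to V\times V\to V\to 0$. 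Finally \ref{item:ext-prods}: write $x\cdot y=\diag_V^*(x\boxtimes y)$ and $x'\cdot y'=\diag_W^*(x'\boxtimes y')$ by \ref{item:ext-prod}, so $(x\cdot y)\boxtimes(x'\cdot y')=\diag_V^*(x\boxtimes y)\boxtimes\diag_W^*(x'\boxtimes y')$; by \ref{item:ext-pull} this equals $(\diag_V\times\diag_W)^*\bigl((x\boxtimes y)\boxtimes(x'\boxtimes y')\bigr)$, and after reordering factors with \ref{item:ext-ass} and the obvious isomorphism $V\times V\times W\times W\cong (V\times W)\times(V\times W)$ (under which $\diag_V\times\diag_W$ becomes $\diag_{V\times W}$ up to this reshuffling), another application of \ref{item:ext-prod} identifies the result with $(x\boxtimes x')\cdot(y\boxtimes y')$.

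The main obstacle is bookkeeping rather than conceptual depth: one must be scrupulous about the canonical density identifications — every time a product, a quotient, or a reshuffling of factors appears, the decomposition of the ambient density has to be tracked through Lemma~\ref{lemma:iso-dens} and shown to be compatible, and the Fubini interchanges in \ref{item:ext-pull} and \ref{item:ext-prods} must be justified using the measurability and essential boundedness established in Lemma~\ref{lemma:measurable-subspace}. I would isolate the compatibility of the density isomorphisms under products and under the various short exact sequences as a preliminary observation (essentially the associativity and naturality of the isomorphism in Lemma~\ref{lemma:iso-dens}) so that the six verifications above each become one-line computations on generators. The reshuffling isomorphism needed in \ref{item:ext-prods}, and the verification that it intertwines $\diag_V\times\diag_W$ with $\diag_{V\times W}$, is the one spot where a careful diagram is worth drawing, but it presents no real difficulty once \ref{item:ext-pull} and \ref{item:ext-ass} are in hand.
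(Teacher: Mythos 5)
Your proposal is correct and follows essentially the same route as the paper: construction via the universal property, verification of (a)--(d) on generators using the dilation action and the identity $f^{-1}((v,w)+P\times Q)=f_1^{-1}(v+P)\times f_2^{-1}(w+Q)$, the direct computation of $\diag^*$ with the quotient identification $(a,b)\mapsto b-a$ and the density matching for (e), and the formal derivation of (f) from (c), (b) and (e) via the reshuffling isomorphism intertwining $\diag_V\times\diag_W$ with $\diag_{V\times W}$. One minor slip in wording: in (e) you call the step ``a direct application of (c) with $f=\diag$'', but $\diag\colon V\to V\times V$ is not of the form $f_1\times f_2$, so item (c) does not apply there --- what you actually carry out (correctly, and exactly as the paper does) is the defining computation \eqref{eq:def-pullback} for the pullback along the linear injection $\diag$.
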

\begin{proof}
The existence part follows immediately from the universal property of the polytope algebra. Item \ref{item:ext-grad} follows from $\Delta(\lambda)( x\boxtimes y) = \Delta(\lambda) x \boxtimes \Delta(\lambda) y$, $\lambda>0$, which is straightforward to verify for generators.  Likewise,  \ref{item:ext-ass} is satisfied for generators and hence true in general. By the same token,   \ref{item:ext-pull} follows from 
$$ f^{-1}((v,w) + P\times Q ) = f_1^{-1}( v+ P)\times f_2^{-1}(w+  Q).$$
Property \ref{item:ext-e} concerning the identity elements is also straightforward.

To prove \ref{item:ext-prod} note that 
\begin{align*}  \diag^*([P] \otimes \mu )   \boxtimes  ([P'] \otimes \mu' )) &= \int_{V^2/\diag(V)}   [ P\cap (x'-x+ P')]\otimes \mu_V \, d\mu_{V^2/\diag(V)}([(x,x')])\\
& = \int_V   [ P\cap (y+ P')]\otimes \mu_V \, d(g_*\mu_{V^2/\diag(V)})(y)\end{align*} 
where $\mu\times \mu' = \mu_V \otimes  \mu_{V^2/\diag(V)}$ under the isomorphism of Lemma~\ref{lemma:iso-dens} and 
$g\colon V^2/\diag(V)\to V$ denotes the isomorphism $g([(x,x')])=  x'-x$. 

Put $T\colon V^2\to V^2$, $T(x,y)= (x,y-x)$. On the one hand, since $\det T =1$,
\begin{align*}\int_{V} \int_{V^2/\diag(V)} &(h\circ T)(\diag(u)+ (x,x')) d\mu_V(u) d\mu_{V^2/\diag(V)}([(x,x')])\\ &= \int_{V\times V} h(u,v) d(\mu\times \mu')(u,v).\end{align*}
On the other hand, 
 \begin{align*}\int_{V} \int_{V^2/\diag(V)}  & (h\circ T)(\diag(u)+ (x,x')) d\mu_V(u) d\mu_{V^2/\diag(V)}([(x,x')]) \\&= 
 \int_{V\times V} h(u,v) d(\mu_V \times  g_* \mu_{V^2/\diag(V)})(u,v).\end{align*}
We conclude that $\mu\times \mu' = \mu_V \times  g_* \mu_{V^2/\diag(V)}$. 
This shows that \ref{item:ext-prod} holds for generators, which suffices to finish the proof.

Finally, item \ref{item:ext-prods} is a formal consequence of \ref{item:ext-pull} and \ref{item:ext-prod}. Indeed, if $\operatorname{diag}_V$, $\operatorname{diag}_W$, and $\diag_{V\times W}$ denote the diagonal embeddings and $\rho \colon (V\times W)^2\to V^2\times W^2$, $\rho(v_1,w_1,v_2,w_2)= (v_1,v_2,w_1,w_2)$,  then  using $\rho \circ \diag_{V\times W}= \diag_V\times \diag_W$, we obtain
\begin{align*} (x\cdot y)\boxtimes (x'\cdot y') & = \diag_V^* (x\boxtimes y) \boxtimes \diag_W^* (x'\boxtimes y')\\
	& = (\diag_V\times \diag_W)^* (x\boxtimes y \boxtimes x'\boxtimes y')\\
	& = (\diag_{V\times W})^* \rho^*(x\boxtimes y \boxtimes x'\boxtimes y')\\
	& = 	(x\boxtimes x') \cdot (y\boxtimes y')
\end{align*} 
\end{proof}

\begin{corollary} \label{cor:ext-prod}
	Let $m\geq 2$ be an integer, let $x_1,\ldots, x_m\in \Pi^*(V)$, and let $\diag_m\colon V\to V^m$ denote the diagonal embedding. Then
	$$ x_1\cdots x_m= (\diag_m)^*(x_1\boxtimes \cdots \boxtimes x_m).$$
\end{corollary}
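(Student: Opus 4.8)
The plan is to prove Corollary~\ref{cor:ext-prod} by induction on $m\geq 2$, using Theorem~\ref{thm:ext}\ref{item:ext-prod} as the base case and Theorem~\ref{thm:ext}\ref{item:ext-pull} together with \ref{item:ext-ass} to carry out the inductive step. The key point is to relate the diagonal embedding $\diag_m\colon V\to V^m$ to the diagonal embedding $\diag_{m-1}\colon V\to V^{m-1}$ and the basic diagonal $\diag\colon V\to V\times V$ via an appropriate composition of linear injections.

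For the base case $m=2$, the statement is exactly Theorem~\ref{thm:ext}\ref{item:ext-prod}. For the inductive step, assume the identity holds for $m-1$ factors, i.e.\ $x_1\cdots x_{m-1}= (\diag_{m-1})^*(x_1\boxtimes\cdots\boxtimes x_{m-1})$. Consider the linear injection $\phi= \diag_{m-1}\times \id_V\colon V\times V\to V^{m-1}\times V= V^m$ and observe that $\phi\circ \diag = \diag_m$, where $\diag\colon V\to V\times V$ is the basic diagonal. Then, using Theorem~\ref{thm:pullback}\ref{item:contra} (contravariance) and Theorem~\ref{thm:ext}\ref{item:ext-pull} (compatibility of exterior product with pullbacks along product maps, applied to $\phi=\diag_{m-1}\times\id_V$), we compute
\begin{align*}
(\diag_m)^*(x_1\boxtimes\cdots\boxtimes x_m)
&= \diag^*\bigl(\phi^*((x_1\boxtimes\cdots\boxtimes x_{m-1})\boxtimes x_m)\bigr)\\
&= \diag^*\bigl((\diag_{m-1})^*(x_1\boxtimes\cdots\boxtimes x_{m-1})\boxtimes \id_V^* x_m\bigr)\\
&= \diag^*\bigl((x_1\cdots x_{m-1})\boxtimes x_m\bigr)\\
&= (x_1\cdots x_{m-1})\cdot x_m\\
&= x_1\cdots x_m,
\end{align*}
where the second equality uses \ref{item:ext-pull}, the third uses the inductive hypothesis (and $\id_V^* = \id$), the fourth is Theorem~\ref{thm:ext}\ref{item:ext-prod}, and the last is just associativity of the intersection product.

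The one subtlety to check is that Theorem~\ref{thm:ext}\ref{item:ext-ass} is implicitly needed to make sense of the iterated exterior product $x_1\boxtimes\cdots\boxtimes x_m$ as a well-defined element of $\Pi^*(V^m)$ independent of bracketing, and that the identification $(V^{m-1})\times V = V^m$ is the obvious one so that $\phi\circ\diag=\diag_m$ genuinely holds on the nose; both are routine. I do not anticipate any real obstacle here—the corollary is a formal consequence of the properties already established in Theorem~\ref{thm:ext} and Theorem~\ref{thm:pullback}, and the only thing requiring minor care is bookkeeping of the canonical identifications of the various product vector spaces.
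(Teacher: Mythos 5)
Your proof is correct and follows exactly the paper's argument: induction on $m$ with base case Theorem~\ref{thm:ext}\ref{item:ext-prod}, and the inductive step via the factorization $(\diag_{m-1}\times\id_V)\circ\diag_2=\diag_m$ combined with contravariance of the pullback and Theorem~\ref{thm:ext}\ref{item:ext-pull}. You have merely written out the details that the paper leaves implicit.
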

\begin{proof}
	For $m=2$, the statement follows directly from Theorem~\ref{thm:ext}\ref{item:ext-prod}. 
The case $m>2$ follows by induction, using $ (\diag_{m-1}\times \id_V)\circ \diag_2= \diag_m$.
\end{proof}

\section{Construction of the pullback in the general case}

\label{sec:pull-gen}

Equipped with the properties of the exterior product, we are now ready to complete the construction of the pullback.

\begin{definition}
Let $f\colon V\to W$ be a linear map. Let $X$ be a complement of $\ker f$ in $V$. Let $e_{\ker f}$ denote the identity element in $\Pi^*(\ker f)$. The pullback along $f$ is defined by 
$$ f^*(x) = e_{\ker f} \boxtimes (f|_X)^*x, \quad x\in \Pi^*(W).$$
\end{definition}
\begin{theorem}Let $f\colon V\to W$ be a linear map. 
	The definition of the  pull-back $f^*$  does not depend on the choice of complement to $\ker f$. Moreover, the pullback has the following additional properties:
	
	\begin{enuma}
		\item It is a morphism of algebras when $ \Pi^*(V)$ and $\Pi^*(W)$ are equipped with the intersection product.
		\item It is compatible with the grading, $f^*(\Pi^k(W))\subseteq \Pi^k(V)$.  	
\item If $g\colon U\to V$ is another linear map, then $(f\circ g)^*= g^* \circ f^*$. 
\item The pullback commutes with the Euler--Verdier involution.
\end{enuma}
\end{theorem}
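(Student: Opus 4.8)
The plan is to reduce every assertion to the corresponding fact for linear injections (Theorem~\ref{thm:pullback}) together with the properties of the exterior product (Theorem~\ref{thm:ext}), exploiting the factorization $f = (f|_X)\circ \pi_X$, where $\pi_X\colon V\to X$ is the projection along $\ker f$ onto a chosen complement $X$. The central structural observation is that, for the trivial map $0\colon X\to 0$ (or more generally for a surjection with kernel $K$), the pullback should be ``tensoring with $e_K$'', and that this is exactly an instance of the exterior product identity $e_{V\times W}= e_V\boxtimes e_W$ from Theorem~\ref{thm:ext}\ref{item:ext-e}.

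First I would prove independence of the choice of complement. Given two complements $X, X'$ of $\ker f$ in $V$, there is a linear automorphism $g\in\GL(V)$ fixing $\ker f$ pointwise and mapping $X$ to $X'$, compatibly with the identifications $f|_X, f|_{X'}$. The key point is that $e_{\ker f}\boxtimes(f|_X)^*x$ is sent to $e_{\ker f}\boxtimes(f|_{X'})^*x$ under the $\GL(V)$-action, which is trivial on the image since both sides are built from identity elements and pullbacks that are $\GL$-equivariant (Theorem~\ref{thm:Alesker-prod}\ref{item:GL}); alternatively, write $V = \ker f\oplus X$ as an internal direct sum, use that $\Pi^*(V)\cong\Pi^*(\ker f\times X)$ canonically via the addition map, and observe the definition only depends on this decomposition up to the canonical isomorphism. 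For the grading (b): $(f|_X)^*$ preserves degree by Theorem~\ref{thm:pullback}\ref{item:pull-grad}, $e_{\ker f}\in\Pi^0(\ker f)$, and $\boxtimes$ is additive in degree by Theorem~\ref{thm:ext}\ref{item:ext-grad}, so $f^*(\Pi^k(W))\subseteq\Pi^0(\ker f)\boxtimes\Pi^k(X)\subseteq\Pi^k(V)$.

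For the algebra morphism property (a), I would use $f^*(x\cdot y) = e_{\ker f}\boxtimes(f|_X)^*(x\cdot y) = e_{\ker f}\boxtimes\bigl((f|_X)^*x\cdot(f|_X)^*y\bigr)$ by Theorem~\ref{thm:pullback}(a), then apply Theorem~\ref{thm:ext}\ref{item:ext-prods} together with $e_{\ker f} = e_{\ker f}\cdot e_{\ker f}$ to split this as $(e_{\ker f}\boxtimes(f|_X)^*x)\cdot(e_{\ker f}\boxtimes(f|_X)^*y) = f^*x\cdot f^*y$; that $f^*$ sends $e_W$ to $e_V$ is Theorem~\ref{thm:ext}\ref{item:ext-e} combined with $(f|_X)^*e_W = e_X$. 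The Euler--Verdier claim (d) then follows formally from (b) and (c) exactly as in the injective case, once (c) is established.

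The main obstacle will be functoriality (c), $(f\circ g)^* = g^*\circ f^*$, since the complements of $\ker(f\circ g)$ in $U$, of $\ker g$ in $U$, and of $\ker f$ in $V$ must be chosen coherently and the two sides reconciled via several applications of Theorem~\ref{thm:ext}\ref{item:ext-pull} and \ref{item:ext-ass}. I would proceed by choosing a complement $Y$ of $\ker(fg)$ in $U$, noting $g(Y)$ meets $\ker f$ trivially and extending $g(Y)$ to a complement $X$ of $\ker f$ in $V$; then $g|_Y$ factors as an injection $U\supseteq Y\to X\subseteq V$ followed by nothing, while $\ker g\subseteq\ker(fg)$ gives a further splitting. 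Unwinding $g^*(f^*x) = e_{\ker g}\boxtimes(g|_Z)^*(e_{\ker f}\boxtimes(f|_X)^*x)$ for an appropriate complement $Z$, one pushes the inner $(g|_Z)^*$ through the box product using \ref{item:ext-pull} (after writing $g|_Z$ as a product map on a compatible decomposition $Z\cong Z_1\times Z_2$ with $Z_1\to\ker f$, $Z_2\to X$), invokes the injective functoriality from Theorem~\ref{thm:pullback}\ref{item:contra} on the $X$-component, and collects the various identity factors $e_{\ker g}\boxtimes e_{(g|_Z)^{-1}(\ker f)}$ into $e_{\ker(fg)}$ using \ref{item:ext-ass} and \ref{item:ext-e}. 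This bookkeeping of complements and of the isomorphisms of Lemma~\ref{lemma:iso-dens} carrying the densities along is the delicate part; everything else is formal manipulation of the established identities.
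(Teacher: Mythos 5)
Your overall architecture coincides with the paper's: parts (a), (b) and (d) are handled exactly as in the paper, and your sketch of (c) --- choosing the complements coherently so that the restriction of $g$ to a complement splits as a product map into $\ker f\times Y$, then applying Theorem~\ref{thm:ext}\ref{item:ext-pull}, functoriality for injections, and collecting the identity factors via Theorem~\ref{thm:ext}\ref{item:ext-ass} and \ref{item:ext-e} --- is the same computation the paper carries out.

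The gap is in the independence of the choice of complement, which is the substantive content of the theorem and the part on which the paper spends most of its proof. Your $\GL(V)$ argument shows, at best, that the two candidate definitions are related by $L_g$, where $g$ is the shear fixing $\ker f$ pointwise and carrying $X$ to $X'$: from $f|_{X'}\circ g|_X = f|_X$ one gets $L_{g|_X}(f|_X)^*y = (f|_{X'})^*y$, and hence $L_g\bigl(e_{\ker f}\boxtimes_X (f|_X)^*y\bigr) = e_{\ker f}\boxtimes_{X'}(f|_{X'})^*y$, where the subscript records which internal decomposition of $V$ is used to interpret the exterior product inside $\Pi^*(V)$. To conclude that the two definitions agree you would additionally need $L_g$ to \emph{fix} the element $e_{\ker f}\boxtimes_X(f|_X)^*y$, and $L_g$ is not the identity on $\Pi^*(V)$: a shear genuinely moves classes $[P]$. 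The assertion that the action ``is trivial on the image'' is precisely what has to be proven, and equivariance of the product and of the pullback does not deliver it. Your alternative phrasing (``the definition only depends on the decomposition up to the canonical isomorphism'') begs the same question, since the identification $\Pi^*(\ker f\times X)\cong\Pi^*(V)$ via the addition map depends on $X$. The paper closes this gap by computing $\nc(e_{\ker f}\boxtimes(f|_X)^*x)$ explicitly and checking that each tensor factor is independent of $X$: the cone factor because $f\circ\mathrm{pr}_X=f\circ\mathrm{pr}_{X'}$, the density factor by Lemma~\ref{lemma:cartesian}, and the dual-density factor $\epsilon_{Q+(f|_X)^{-1}F}$ by a change of variables; injectivity of $\nc$ then finishes the argument. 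Some computation of this kind --- equivalently, a proof that the shear $L_g$ acts as the identity on all elements of the form $e_{\ker f}\boxtimes z$ --- is needed to complete your proof.
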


For the proof, we will use the following lemma.

\begin{lemma}\label{lemma:cartesian}
Let $f\colon V\to W$ be a linear surjection, and let $X$ be a complement to $\ker f$. For all densities  $\mu\in \Dens(\ker f)$ and  $\nu\in \Dens(W)$, 
$$ \rho = \mu  \times   (f|_X)^{-1}_*\nu \in \Dens(V)$$
is independent of the choice of complement $X$.
\end{lemma}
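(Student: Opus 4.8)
The plan is to show that the density $\rho = \mu \times (f|_X)^{-1}_*\nu$ on $V$ is unchanged when the complement $X$ to $\ker f$ is replaced by another complement $X'$. The key observation is that $\rho$ should in fact coincide with the density obtained from the canonical isomorphism of Lemma~\ref{lemma:iso-dens} applied to the exact sequence
$$ 0 \to \ker f \to V \xrightarrow{f} W \to 0, $$
which is intrinsically defined and involves no choice of complement. So the real content is to verify that the concrete recipe ``take $\mu$ on $\ker f$ and push $\nu$ forward along the section-inverse $(f|_X)^{-1}$, then take the product'' reproduces exactly the density $\mu \otimes \nu$ under that canonical isomorphism.

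First I would recall the characterization of the canonical isomorphism: the density $\lambda \in \Dens(V)$ corresponding to $\mu \otimes \nu$ is the unique one satisfying, for compactly supported continuous $h\colon V \to \RR$,
$$ \int_V h\, d\lambda = \int_W \int_{\ker f} h(x + v)\, d\mu(x)\, d\nu(w), \qquad v \in f^{-1}(w), $$
where the inner integral is independent of the choice of $v \in f^{-1}(w)$ because $\mu$ is translation-invariant on $\ker f$. Now fix a complement $X$ and let $s = (f|_X)^{-1}\colon W \to X \subseteq V$ be the corresponding linear section of $f$. Then for $w \in W$ we may take $v = s(w)$ in the formula above, and since $V = \ker f \oplus X$ with the product density $\rho = \mu \times s_*\nu$, Fubini gives
$$ \int_V h\, d\rho = \int_X \int_{\ker f} h(x + y)\, d\mu(x)\, d(s_*\nu)(y) = \int_W \int_{\ker f} h(x + s(w))\, d\mu(x)\, d\nu(w). $$
This is exactly the defining equation for $\lambda$, so $\rho = \lambda$; since $\lambda$ does not depend on $X$, neither does $\rho$.

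The main obstacle is mostly bookkeeping: making sure that the product density $\mu \times s_*\nu$ on $V \cong \ker f \oplus X$ is correctly matched with the iterated integral in Lemma~\ref{lemma:iso-dens}, i.e. that pushing $\nu$ forward along the section $s$ and then integrating over $X$ is the same as integrating over $W$ against $\nu$ after translating by $s(w)$ — this is just the change-of-variables formula for the linear isomorphism $s\colon W \to X$. A minor subtlety worth stating explicitly is that the inner integral $\int_{\ker f} h(x+v)\, d\mu(x)$ genuinely does not depend on which $v \in f^{-1}(w)$ is chosen, which is what lets us substitute $v = s(w)$; this is immediate from translation-invariance of $\mu$. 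Once these points are in place, the independence of $\rho$ from $X$ follows at once from the intrinsic nature of the canonical isomorphism.
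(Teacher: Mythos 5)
Your proof is correct, and it takes a different route from the paper's. The paper argues directly: it fixes a second complement $X'$, writes down the transition maps $g_1\colon X\to\ker f$ and $g_2\colon X\to X'$ determined by $x=g_1(x)+g_2(x)$, and computes $\rho'(A\times B)=\mu(A)\,\nu(f|_{X'}(g_2(B)))=\mu(A)\,\nu(f|_X(B))=\rho(A\times B)$ on product sets, using translation invariance of $\mu$ to absorb the $g_1$-shear. You instead identify $\rho$ with the intrinsically defined density: namely, you show that $\mu\times(f|_X)^{-1}_*\nu$ satisfies the characterizing iterated-integral identity \eqref{eq:change-of-variables} of Lemma~\ref{lemma:iso-dens} for the sequence $0\to\ker f\to V\to W\to 0$, whose right-hand side involves no complement, so independence of $X$ is automatic. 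Both arguments ultimately rest on the same two facts (change of variables along the linear section and translation invariance of $\mu$ on $\ker f$), but your version is more conceptual and yields slightly more: it shows $\rho$ is exactly the preimage of $\mu\otimes\nu$ under the canonical isomorphism, a fact the paper uses implicitly elsewhere (e.g.\ in the proof of Theorem~\ref{thm:pullback}). The paper's computation is more self-contained in that it never invokes Lemma~\ref{lemma:iso-dens}. The one point you should make sure is fully explicit is the uniqueness step: a density on $V$ is determined by its integrals against compactly supported continuous functions, so satisfying the characterizing identity really does pin $\rho$ down; you state this implicitly and it is unproblematic.
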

\begin{proof}
Let $X'$ be another complement to $\ker f$, and define $\rho'= 	\mu  \times   (f|_{X'})^{-1}_*\nu$. 
Let $g_1\colon X\to \ker f$ and $g_2\colon 
X\to X'$ be the linear maps, uniquely determined by $v+x= v+g_1(x)+g_2(x)$ for all $v\in \ker f$ and $x\in X$. For all bounded Borel sets $A\subseteq \ker f$ and $B\subseteq X$, one has 
\begin{align*}
\rho'(A\times B) &=  \int_{X'} \int_{\ker f} \mathbf{1}_{A}(v- g_1(g_2^{-1} (x'))) \mathbf{1}_{B}(g_2^{-1}(x')) dv dx' \\
&= \mu(A)\nu( (f|_{X'}\circ g_2(B))\\
&= \mu(A)\nu( (f|_{X}(B))\\
&= \rho(A\times B).
\end{align*} 
This implies $\rho=\rho'$, as claimed. \end{proof}

\begin{proof}  To see that the definition of the pullback is independent of the choice of complement,   define $m=\dim \ker f$ and choose a polytope $Q\subseteq \ker f$ and a density $\mu\in \Dens(\ker f)$ such that $e_{\ker f} = [Q]_m \otimes \mu$. Since $f\colon V\to W$ can be factored as $V\to \im(f)\hookrightarrow W$, we may assume in what follows that $f$ is surjective.
Choose a complement $X$ of $\ker f$ in $V$. 	
For  $x=[P]\otimes \nu\in \Pi^*(W)$, one has
	$$ \nc(e_{\ker f} \boxtimes (f|_X)^* x) =  \sum_{F}  \epsilon_{Q + (f|_{X})^{-1}F} \otimes  (\mathrm{pr}_X)^*  (f|_X)^*N( F,P) \otimes (\mu\times (f|_X)^{-1}_* \nu) ,$$
	where the sum extends over all faces of $P$ and   $\mathrm{pr}_X\colon V\to X$ denotes the projection onto $X$. 	Let $X'$ be another complement to $\ker f$,  and let $\mathrm{pr}_{X'}\colon V\to X'$ denote the projection onto $X'$.
		We claim that each factor in 
		$$\epsilon_{Q + (f|_{X})^{-1}F} \otimes  (\mathrm{pr}_X)^*  (f|_X)^*N( F,P) \otimes (\mu\times (f|_X)^{-1}_* \nu)$$ 
		is independent of the choice of complement $X$ to $\ker f$. Indeed, for the second factor this is a consequence of
		$f\circ \mathrm{pr}_X= f\circ \mathrm{pr}_{X'}$ and for the third one this follows from Lemma~\ref{lemma:cartesian}.

For each face $F$, we will show that $\epsilon_{Q + (f|_{X})^{-1}F}$ and $\epsilon_{Q + (f|_{X'})^{-1}F}$ define the same dual density on $f^{-1}(\bar F)$. Consider  the linear subspaces $L=(f|_{X})^{-1}\bar F$ and $L'= (f|_{X'})^{-1}\bar F$ and the density $\mu$ on 
 $\ker f+ L = f^{-1}(\bar F)= \ker f+L'$ defined by 
 $$  \int_{L}\int_{\ker f } h(v,x) dv dx,$$
 for some positive densities $dv$ and $dx$. Let $g_1\colon L\to \ker f$ and $g_2\colon 
 L\to L'$ be the linear maps uniquely determined by $v+x= v+g_1(x)+g_2(x)$ for all $v\in \ker f$ and $x\in L$. Then, since 
 $f|_X=  f|_{X'} \circ g_2$,  
 \begin{align*} \mu(Q + (f|_{X'})^{-1}F) & = \int_{L}\int_{\ker f }  \mathbf{1}_{Q}(v+g_1(x))\mathbf{1}_{(f|_{X'})^{-1}F} (g_2(x))  dv dx\\
 	&= 
 \mu(Q + (f|_{X})^{-1}F)\end{align*}
 and, in turn, $ \epsilon_{Q + (f|_{X})^{-1}F} =\epsilon_{Q + (f|_{X'})^{-1}F}$.

		We conclude that 
		$$\nc(e_{\ker f} \boxtimes (f|_X)^* x)= \nc(e_{\ker f} \boxtimes (f|_{X'})^* x).$$
		Since $\nc$ is injective, this shows that the pullback does not depend on the choice of complement.

	To prove (a), observe that 
	$$ f^*(x\cdot y) = e_{\ker f} \boxtimes(  (f|_X)^*x \cdot (f|_X)^*y)=  
	(e_{\ker f}  \cdot e_{\ker f} ) \boxtimes(  (f|_X)^*x \cdot (f|_X)^*y)$$
	and use Theorem~\ref{thm:ext}\ref{item:ext-prods}. The equation $f^*(e_V)=e_W$ follows immediately from $(f|_{X})^* e_V= e_X$ and $e_{\ker f} \boxtimes e_X= e_W$, see Theorem~\ref{thm:ext}\ref{item:ext-e}.
	
	(b) is a straightforward consequence of Theorem~\ref{thm:ext}\ref{item:ext-grad} and the corresponding property of the pullback along linear injections.
	
	For the proof of (c) choose a complement $\ker g \oplus X= U$. Put $X_1= \ker(f\circ g|_X)$ and choose complements $X_1\oplus X_2 = X$ and $\ker f \oplus Y = V$ with $Y\supseteq g(X_2)$.
	Since  $g(X_1)\subseteq \ker f$ and $g(X_2)\subseteq Y$ by construction, we may write $g|_X =g_1\times g_2\colon X_1\times X_2 \to \ker f \times Y$. 
	 Using the properties of the pullback along linear injections and  Theorem~\ref{thm:ext}, we compute
	\begin{align*} g^* (f^* (x)) & = e_{\ker g} \boxtimes (g|_X)^* ( e_{\ker f} \boxtimes (f|_Y)^* x)\\
		& = e_{\ker g} \boxtimes (g^*_1( e_{\ker f})  \boxtimes g_2^*(f|_Y)^* x)\\
		& = e_{\ker g} \boxtimes  (e_{X_1}  \boxtimes (f\circ g_2)^* x)\\
			& = (e_{\ker g} \boxtimes  e_{X_1})  \boxtimes (f\circ g_2)^* x\\
& =e_{\ker (f\circ g)}  \boxtimes (f\circ g|_{X_2})^* x\\
&  = (f\circ g)^*x. 		
	\end{align*}
	
Finally, (d) follows immediately from (b) and (c).
	
\end{proof}

\begin{remark} \label{rmk:pullback2}
Alesker introduced in \cite{Alesker:Fourier}  the pushforward of continuous translation-invariant valuations along general linear maps. If $f\colon W\to V$ is a linear surjection, then this operation can be described as follows. We keep the notation of Remark~\ref{rem:pullback}.  Choose a complement  $\ker f\oplus X= W$ to the kernel of $f$. Under the canonical isomorphism $\Dens(W)^*\cong \Dens(\ker f)^* \otimes \Dens(X)^*$,  any dual density $\epsilon\in \Dens(W)^*$ is represented as $\epsilon_{\ker f} \otimes  \epsilon_X$. Choose a convex body $E\subseteq \ker f$ so that $\epsilon_{\ker f}(\mu)= \mu(E)$ for all densities $\mu\in \Dens(\ker f)$. The pushforward 
$$ f_* \colon \Val(W)\otimes \Dens(W)^* \to \Val(V)\otimes \Dens(V)^*$$
is defined by 
$$ f_*(\phi\otimes \epsilon )(K) = \frac{1}{m!} \left. \frac{d^m}{dt^m}\right|_{t=0} \phi(t E \times (f|_X)^{-1}(K) )\otimes (f|_X)_*\epsilon_X,$$
where $m=\dim X$ and $K$ is a convex body in $V$.
If  $P$ is a polytope  in $V$, then one readily verifies the identity
$$ \langle f_* (\phi\otimes  \epsilon ), [P]\otimes \mu) \rangle  =    \langle \phi\otimes \epsilon  , f^*([P]\otimes \mu)\rangle.$$
Hence also in the case of linear surjections,  the pushforward  of continuous translation-invariant valuations can be regarded as dual to the pullback in the polytope algebra.   Since every linear map $f$ can be expressed as $f= j \circ p$, where $j$ is injective and $p$ is surjective, this relationship extends to all linear maps.
\end{remark}

\section{Identities for special elements of the polytope algebra}

In this section, we are concerned with special elements of $\Pi^*(V)$, namely those  proportional to $[P]_{n-k}\otimes \mu$ for $(n-k)$-dimensional polytopes  $P$. Viewed appropriately, the operations of intersection product, pullback, and exterior product of these elements translate into linear algebraic operations on linear  subspaces. 

 Let $L\subseteq V^*$ be a linear subspace. Inclusion and restriction yield an exact sequence
$0 \to L^\perp \to V\to L^*\to 0$. Hence, given  elements of $\Dens(V)$ and $\Dens(L^\perp )^*$, Lemma~\ref{lemma:iso-dens} yields a density on $L^*$.  Combined with \eqref{eq:dual-dens-iso}, this construction describes an isomorphism
\begin{equation}
	\label{eq:dens-perp}   \Dens(L^\perp)^* \otimes \Dens(V) \cong \Dens(L)^*
\end{equation}
that we will frequently use in the following.

\begin{definition}
	Let $L\subseteq V^*$ be a $k$-dimensional linear subspace and let $\epsilon\in \Dens(L)^*$ be a dual density. We denote by $x_{L,\epsilon}$ the unique element in $\Pi^k(V)$ such that $$\nc(x_{L,\epsilon}) = \epsilon\otimes [L]\in \Sigma(V)\otimes \Dens(V)$$ 
	under the isomorphism \eqref{eq:dens-perp}.
\end{definition}

\begin{theorem}\label{thm:special-elem} Let $L\subseteq V^*$  and $L'\subseteq V'^*$ be  linear subspaces  and let $\epsilon\in \Dens(L)^*$ and $\epsilon'\in \Dens(L')^*$ be dual densities. The following statements hold:
	\begin{enuma}
		\item The exterior product satisfies $$x_{L,\epsilon} \boxtimes x_{L', \epsilon'} = x_{L\times L',\epsilon\otimes \epsilon'}.$$
		\item \label{prop:mult flats}
		For any linear map  $f\colon W\to V$,
		$$f^*x_{L,\epsilon} = \begin{cases} x_{f^*L,(f^*)_*\epsilon} & \text{if } f(W)+L^\perp=V,\\
			0 & \text{otherwise}.\end{cases}$$ In the first case,  $f^*L= \{f^* \xi\colon \xi \in L\}$ and
		$f^*\colon L\to f^* L$ is an isomorphism.
	
		\item If $V=V'$, then 
		$$ x_{L,\epsilon} \cdot x_{L',\epsilon'}=\begin{cases}  x_{L+L',a_*(\epsilon\otimes \epsilon')} & \text{if } L\cap L'=\{0\},\\
			0 & \text{otherwise}, \end{cases}$$
		where  $a\colon V^*\times V^*\to V^*$ denotes the vector space addition.
	\end{enuma}

\end{theorem}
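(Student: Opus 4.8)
The plan is to reduce everything to the normal cycle embedding, exploiting that $\nc\colon \Pi^*(V) \to \Sigma(V)\otimes \Dens(V)$ is injective (Theorem~\ref{thm:nc-embedding}) and that the elements $x_{L,\epsilon}$ are by definition those whose normal cycle equals $\epsilon \otimes [L]$ under the identification \eqref{eq:dens-perp}. Part (a) should be routine: the exterior product is uniquely determined on generators $[P]\otimes\mu$, and $x_{L,\epsilon}$ can be obtained as a limit $\lim_{\lambda\to\infty}\lambda^{-(n-k)}[\lambda P]\otimes\mu$ where $P$ is an $(n-k)$-dimensional polytope with $\bar P = L^\perp$; since $\boxtimes$ sends $[P\times Q]\otimes(\mu\times\nu)$ to the product and $(\lambda P)\times(\lambda Q) = \lambda(P\times Q)$, taking the product of the two limits and matching gradings gives $x_{L,\epsilon}\boxtimes x_{L',\epsilon'} = x_{L\times L', \epsilon\otimes\epsilon'}$, with the density bookkeeping handled by Lemma~\ref{lemma:iso-dens} applied to $0\to L^\perp\times L'^\perp \to V\times V' \to L^*\times L'^* \to 0$.

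For part (b), I would first treat the two basic cases separately. If $f\colon W\to V$ is a linear injection, apply Theorem~\ref{thm:pullback}: represent $x_{L,\epsilon}$ by (a limit of) $[P]\otimes\mu$ with $\bar P = L^\perp$, and compute $f^{-1}(x+P)$. The affine subspace $x+P$ has direction $L^\perp$, and $f^{-1}(x+P)$ is nonempty for a positive-measure set of $[x]\in V/f(W)$ precisely when $f(W)+L^\perp = V$; when it is nonempty, $f^{-1}(x+P)$ is a polytope with direction $f^{-1}(L^\perp) = (f^*L)^\perp$ inside $W$ (using $f^{-1}(L^\perp) = (f^*L)^\perp$, which holds because $f^*$ is injective on $L$ exactly when $f(W)+L^\perp = V$), and the integral over $V/f(W)$ produces the correct dual density $(f^*)_*\epsilon$ via \eqref{eq:change-of-variables}. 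If instead $f$ is the inclusion of a complement $X$ of $\ker f$, or more generally for a general linear map, use the definition $f^*(x) = e_{\ker f}\boxtimes (f|_X)^*x$ together with part (a) and Theorem~\ref{thm:special-elem}(a): $e_{\ker f} = x_{\{0\},\,\cdot\,}$ in $\Pi^*(\ker f)$, and $x_{\{0\}}\boxtimes x_{M,\delta} = x_{\{0\}\times M,\,\cdot\,}$, which under the identification $\ker f\oplus X = V$ recovers the claimed $x_{f^*L, (f^*)_*\epsilon}$; the vanishing case $f(W)+L^\perp \ne V$ comes from the injective-case analysis applied to $f|_X$. The cleanest route overall is probably to prove (b) first for linear injections directly from Theorem~\ref{thm:pullback}, then bootstrap to general $f$ via the definition of the general pullback and part (a).

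For part (c), I would invoke Theorem~\ref{thm:ext}\ref{item:ext-prod}: $x_{L,\epsilon}\cdot x_{L',\epsilon'} = \diag^*(x_{L,\epsilon}\boxtimes x_{L',\epsilon'}) = \diag^*(x_{L\times L',\epsilon\otimes\epsilon'})$ by part (a), where $\diag\colon V\to V\times V$. Now apply part (b) with $f = \diag$. The condition $\diag(V) + (L\times L')^\perp = V\times V$ translates, after noting $(L\times L')^\perp = L^\perp\times L'^\perp$ and computing the sum, into the condition $L\cap L' = \{0\}$ (equivalently $L^\perp + L'^\perp = V^*$); the pullback of $L\times L'$ under $\diag^*\colon V^*\times V^* \to V^*$ is $\{\xi+\xi' : \xi\in L,\ \xi'\in L'\} = L+L'$, i.e. $\diag^*(L\times L') = a(L\times L')$; and since $\diag^*$ restricted to $L\times L'$ is exactly the addition map $a$ (which is an isomorphism onto $L+L'$ precisely when $L\cap L' = \{0\}$), the pushforward of densities gives $a_*(\epsilon\otimes\epsilon')$. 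This yields the stated formula.

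The main obstacle I anticipate is the density/dual-density bookkeeping in part (b): keeping straight the three identifications in play—the isomorphism \eqref{eq:dens-perp} relating $\Dens(L^\perp)^*\otimes\Dens(V)$ with $\Dens(L)^*$, the splitting isomorphism of Lemma~\ref{lemma:iso-dens} for $0\to f(W)\cap L^\perp$-type sequences, and the pushforward $(f^*)_*$ on dual densities—and verifying that the integral $\int_{V/f(W)}$ in \eqref{eq:def-pullback} produces exactly $(f^*)_*\epsilon$ rather than a scalar multiple. This is the sort of computation that is conceptually clear but where a constant can easily slip; I would pin it down by testing against a single positive density using the change-of-variables formula \eqref{eq:change-of-variables}. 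Everything else is a formal consequence of the structural results on $\boxtimes$, $f^*$, and the intersection product already established.
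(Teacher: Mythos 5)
Your proposal is correct and follows essentially the same route as the paper: part (a) via the limit representation $[P]_{n-k}=\lim_\lambda \lambda^{-(n-k)}[\lambda P]$ and the normal cycle embedding, part (b) by treating linear injections first and then reducing the general case to $e_{\ker f}\boxtimes (f|_X)^*$, and part (c) by combining (a), (b), and $\diag^*$ exactly as in the paper. The density bookkeeping you flag as the main risk is handled in the paper by Lemma~\ref{lemma:dual-dens}, whose proof is precisely your suggested device of pairing against a reference body and invoking the change-of-variables formula \eqref{eq:change-of-variables}.
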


For the proof of the theorem,  the following description of the isomorphism \eqref{eq:dens-perp} will be helpful. Recall that $\mu_\omega$ denotes the Liouville measure on $V\times V^*$. If $K\subseteq V$ is a convex body containing the origin in its interior, then $K^\circ \subseteq V^*$ denotes the polar body of $K$.
 
\begin{lemma} \label{lemma:dual-dens} Let $L\subseteq V^*$ be a linear subspace and let $X\subseteq V$ be a complement to $L^\perp$. Suppose the following:
\begin{itemize}
	\item $\nu\in \Dens(V)$ and $\epsilon \in \Dens(L)^*$;
	\item  $P\subseteq L^\perp$ is a convex body with nonempty interior and $\epsilon_P\in \Dens(L^\perp)^*$ denotes the corresponding dual density;
	\item $Q\subseteq X$ is a convex body containing the origin in its interior  such that $\epsilon= \frac{1}{\svol(\pi(Q)^\circ\times \pi(Q))} \epsilon_{\pi(Q)^\circ}$, where $\pi\colon V\to L^*$ is the canonical map.
\end{itemize}	
In this situation, 
	$$ \epsilon_P\otimes \nu \mapsto  \epsilon \quad \text{under the isomorphism \eqref{eq:dens-perp}}$$
if and only if 
$$ \nu(P+Q)=1.$$	
\end{lemma}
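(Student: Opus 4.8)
The statement relates two quantities on the one-dimensional spaces $\Dens(L^\perp)^*\otimes\Dens(V)$ and $\Dens(L)^*$, and since both sides of the claimed equivalence are homogeneous of the same degree under simultaneous rescaling of $P$, $Q$, and $\nu$, it suffices to verify the assertion after a single convenient normalization. The natural strategy is to unwind the two isomorphisms involved --- the one from Lemma~\ref{lemma:iso-dens} applied to $0\to L^\perp\to V\to L^*\to 0$, and the one from \eqref{eq:dual-dens-iso} coming from the Liouville measure on $L^*\times L$ --- and to compute both sides explicitly in coordinates adapted to the splitting $V=L^\perp\oplus X$.

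\smallskip

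First I would choose bases: a basis of $L^\perp$, a basis of $X$, and the dual bases, so that $\pi\colon V\to L^*$ restricts to an isomorphism $X\to L^*$ and $L\subseteq V^*$ is identified with $X^*$. Under Lemma~\ref{lemma:iso-dens}, writing $\nu=\nu_{L^\perp}\otimes\nu_{L^*}$, the image of $\epsilon_P\otimes\nu$ in $\Dens(L^*)$ is the density $\nu_{L^*}$ rescaled by $\epsilon_P(\nu_{L^\perp})=\nu_{L^\perp}(P)^{-1}$; more precisely $\epsilon_P\otimes\nu\mapsto \nu_{L^\perp}(P)^{-1}\,\nu_{L^*}\in\Dens(L^*)$, which \eqref{eq:dual-dens-iso} then sends to the element of $\Dens(L)^*$ pairing to $1$ with the dual of $\nu_{L^*}$. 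Independently, the hypothesis expresses $\epsilon$ in terms of the polar body $\pi(Q)^\circ\subseteq L$ via $\epsilon=\svol(\pi(Q)^\circ\times\pi(Q))^{-1}\epsilon_{\pi(Q)^\circ}$; here $\epsilon_{\pi(Q)^\circ}\in\Dens(L)^*$ is $\lambda\mapsto\lambda(\pi(Q)^\circ)$ and the Liouville factor $\svol(\pi(Q)^\circ\times\pi(Q))$ is exactly the proportionality constant in the canonical pairing $\Dens(L)\times\Dens(L^*)\to\RR$ evaluated on the volume densities of $\pi(Q)^\circ$ and $\pi(Q)$. Equating the two descriptions of $\epsilon$ reduces, after the dust settles, to the single scalar identity $\nu_{L^\perp}(P)\cdot\nu_{L^*}(\pi(Q))=1$, i.e.\ $\nu(P+Q)=1$, using that $\nu=\nu_{L^\perp}\otimes\nu_{L^*}$ and that $P\subseteq L^\perp$, $Q\subseteq X$ sit in complementary subspaces whose product volume computes the volume of the direct sum. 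The change-of-variables formula \eqref{eq:change-of-variables} for the product density is what licenses the last step.

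\smallskip

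The main obstacle is purely bookkeeping: one must track carefully which of the two canonical isomorphisms contributes which factor, and in particular reconcile the Liouville normalization $\svol(\pi(Q)^\circ\times\pi(Q))$ appearing in the hypothesis with the proportionality constant built into \eqref{eq:dual-dens-iso}. The cleanest way to avoid a sign or reciprocal error is to test everything on a single explicit pair $(P,Q)$ with $\nu(P+Q)=1$ --- for instance taking $P$ and $Q$ to be the unit cubes in the chosen bases of $L^\perp$ and $X$ and $\nu$ the corresponding Lebesgue density --- verifying the equivalence there, and then invoking the homogeneity remark to conclude in general. Once the normalization is pinned down, the equivalence follows because both sides of ``$\epsilon_P\otimes\nu\mapsto\epsilon$'' and ``$\nu(P+Q)=1$'' are, after the identifications above, literally the same equation between real numbers.
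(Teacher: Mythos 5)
Your proposal follows essentially the same route as the paper's proof: decompose $\nu=\nu_{L^\perp}\otimes\nu_{L^*}$ via Lemma~\ref{lemma:iso-dens}, unwind the Liouville normalization in \eqref{eq:dual-dens-iso} to see that $\epsilon_P\otimes\nu\mapsto\epsilon$ amounts to the scalar identity $\nu_{L^\perp}(P)\cdot\nu_{L^*}(\pi(Q))=1$, and recognize the left-hand side as $\nu(P+Q)$. One internal slip to fix: the contraction is $\epsilon_P(\nu_{L^\perp})=\nu_{L^\perp}(P)$, not its reciprocal, so the intermediate claim that the image in $\Dens(L^*)$ is $\nu_{L^\perp}(P)^{-1}\,\nu_{L^*}$ is inconsistent with your (correct) final identity and should read $\nu_{L^\perp}(P)\,\nu_{L^*}$.
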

\begin{proof}Lemma~\ref{lemma:iso-dens} applied to $0\to L^\perp\to V\to L^*\to 0$ yields $\nu= \nu_{L^\perp} \otimes \nu_{L^*}$ and 
	$$ \nu(P+Q)= \nu_{L^\perp}(P)\nu_{L^*}(\pi(Q)).$$
	Consequently, $\nu(P+Q)=1$ if and only if $\nu_{L^*}(\pi(Q))=1$. By the definition of the isomorphism \eqref{eq:dual-dens-iso}, the latter is equivalent to the statement that the dual density corresponding to $\nu_{L^*}$ is $ \frac{1}{\svol(\pi(Q)^\circ\times \pi(Q))} \epsilon_{\pi(Q)^\circ}$.
\end{proof}	
	
\begin{proof}[Proof of Theorem~\ref{thm:special-elem}]
Choose polytopes $P\subseteq L^\perp$, $P'\subseteq (L')^\perp$  of dimensions $n-k$ and $n-k'$, containing the origin in their relative interior, together with suitable densities $\nu\in \Dens(V)$, $\nu'\in \Dens(V')$ such that 
$	\epsilon_{P}\otimes \nu = \epsilon$ and $	\epsilon_{P'}\otimes \nu' = \epsilon'$ under  the isomorphism \eqref{eq:dens-perp}.
Using that $\lambda^{-(n-k)}[\lambda P] \to [P]_{n-k}$ as $\lambda\to \infty$, we obtain 
\begin{align*} 
	\nc(x_{L,\epsilon} \boxtimes x_{L', \epsilon'})  &=  \lim_{\lambda \to \infty}  \lim_{\lambda' \to \infty}   \lambda^{-(n-k)}
	\lambda'^{-(n-k')}  \nc([\lambda P \times \lambda' P'] \otimes \nu\times \nu')\\
	&= 	\epsilon_{P\times P'} \otimes \nu\times \nu' \otimes [ L\times L']\\
	& = \epsilon \otimes \epsilon' \otimes [ L\times L'].
\end{align*}
 This proves (a).
 
To show (b), let as before $P\subseteq L^\perp$ be an $(n-k)$-dimensional polytope and $\nu\in \Dens(V)$ a density. 

We first consider the case where $f$ is a linear injection. 
It follows from the definition of the pullback that $f^* x_{L,\epsilon}=0$, if $f(W)+L^\perp \neq V$. We therefore assume from now on that $f(W)+L^\perp =V$. Under this assumption, $f^*\colon L\to f^*L$ is clearly injective. Unraveling the definitions and using Lemma~\ref{lemma:faces-intersection-subspace}, we obtain
\begin{align*} 
	\nc(f^* x_{L,\epsilon})  &=  \lim_{\lambda \to \infty}     \lambda^{-(n-k)}
	\int_{V/f(W)}   \nc([f^{-1}(x+ \lambda P)])\otimes \nu_W\, d\nu_{V/f(W)}([x]) \\
	&= 	 \lim_{\lambda \to \infty}     \lambda^{-(m-k)} 
	\int_{V/f(W)}   \nc([\lambda f^{-1}(y+ P ) ])\otimes \nu_W\, d\nu_{V/f(W)}([y]) \\
	& = \int_{V/f(W)}  \epsilon _{f^{-1}(y+P)} d\nu_{V/f(W)}[y] \otimes \nu_W \otimes [ f^* L]\\
	&=: \epsilon' \otimes \nu_W \otimes [f^* L].
\end{align*}
Choose a complement $L^\perp \oplus X= V$ such that $X\subseteq f(W)$. This is possible since $f(W)+L^\perp =V$ by assumption.
Choose  a convex body $Q\subseteq X$ containing the origin in its interior such that    $\nu(P+Q)=1$. Note that by   Lemma~\ref{lemma:dual-dens}, 
\begin{equation}\label{eq:dens_Q}
\epsilon= \frac{1}{\svol(\pi(Q)^\circ\times \pi(Q))} \epsilon_{\pi(Q)^\circ}.
\end{equation}
As $(f^*L)^\perp \oplus f^{-1}(X)= W$ and $f^{-1}(y+P)+f^{-1}(Q)= f^{-1}(y+P+Q)$ for $y\in L^\perp$, we have   
\begin{align*}   \langle \epsilon'\otimes \epsilon_{f^{-1}( Q)}, \nu_W\rangle & = \int_{V/f(W)} \nu_W(f^{-1}(y+P) +f^{-1}(Q)) d\nu_{V/f(W)}([y])\\
	& =  \nu(P+Q) =1. 
\end{align*} 
Consequently,  Lemma~\ref{lemma:dual-dens} implies that $\epsilon'\otimes \nu_W$ is mapped to 
\begin{equation}\label{eq:dens_fQ} \frac{1}{\svol( \pi (Q)^\circ \times \pi(Q)) )} \epsilon_{f^*(\pi(Q)^\circ) }\end{equation}
	under the isomorphism \eqref{eq:dens-perp}. Comparing this expression with \eqref{eq:dens_Q}, shows that \eqref{eq:dens_fQ} equals $(f^*)_* \epsilon$, as claimed.

Assume next that $f\colon W\to V$ is a linear surjection. Choose a complement $X\subseteq W$ to the kernel of $f$ and put $m=\dim \ker f$. Put $P' = (f|_X)^{-1}(P)$ and $\nu'= ((f|_X)^{-1})_* \nu$.  Represent $e_{\ker f}$ as $[R]_m\otimes \rho$. One has
\begin{align*}
\nc(f^* x) & =  \nc( ([R]_m\otimes \rho) \boxtimes (f|_X)^{*}([P]_{n-k} \otimes \nu) )  \\
& =  \nc( [R \times P'  ]_{m+n-k}\otimes \rho \times \nu' )\\
&=  \epsilon_{R\times P'} \otimes \rho \times \nu' \otimes [f^* L].
\end{align*} 
Choose a complement $L^\perp \oplus Y= V$  and  a convex body $Q\subseteq Y$ containing the origin in its interior such that    $\nu(P+Q)=1$. Put $Q'= (f|_X)^{-1} Q$ and $Y'= (f|_X)^{-1}(Y)$. Notice that 
$ (f^*L)^\perp \oplus Y' = W$ and 
$$ \rho \times \nu'( R \times P' + Q') =1.$$
If $\pi_W\colon W\to (f^*L)^*$ denotes the canonical map, then Lemma~\ref{lemma:dual-dens} implies that $\epsilon_{R\times P'} \otimes \rho \times \nu'$ is mapped to 
$$ \frac{1}{\svol( \pi_W (Q')^\circ \times \pi_W(Q')) )} \epsilon_{\pi_W(Q')^\circ }= (f^*)_*\epsilon.$$
	under the isomorphism \eqref{eq:dens-perp}. This finishes the proof of (b), since every linear map $f$ can be expressed as the composition of a surjection and an injection. 

Finally, we prove (c). We will apply the description of the pullback to the diagonal embedding $\diag\colon V\to V\times V$. Note  that $(L\times L')^\perp= L^\perp \times L'^\perp$  and that $\diag(V)+ L^\perp \times L'^\perp = V\times V$ is equivalent to the statement that the image of $L^\perp \times L'^\perp$ under the canonical projection $\pi \colon V\times V\to V\times V/\diag(V)$ equals  $V\times V/\diag(V)$. Observe that $V\times V\to V$, $(x,y)\mapsto x-y$, descends to an isomorphism $T\colon V\times V/\diag(V)\to V$. Since $T(\pi(L^\perp \times L'^\perp)) = L^\perp + L'^\perp$, we conclude that $\diag(V)+ L^\perp \times L'^\perp = V\times V$ is equivalent  to $ V= L^\perp + L'^\perp = (L\cap L')^\perp$.

Let $a\colon V^*\times V^*\to V^*$ denote the vector space addition, $a(\xi,\eta)= \xi+\eta$. It is straightforward to check that $a= \diag^*$. We conclude that 
$ \diag^*(L\times L')= L+L'$ and $(\diag^*)_* = a_*$.  
\end{proof}

If $V= \RR^n$, then the above expression for the product can be made even more explicit. Since there is a canonical positive density on $\RR^n$ and all of its subspaces, namely  the Lebesgue measure $\vol$,  we introduce the following notation.
\begin{definition}
If $V= \RR^n$, then we define $ x_L:= x_{L,\vol}\in \Pi^*(\RR^n)$ for each linear subspace $L\subseteq \RR^n \cong (\RR^n)^*$. Moreover, we abbreviate our notation to $\alpha [P] := [P]\otimes \alpha \vol \in \Pi^*(\RR^n)$ for polytopes $P$ and real numbers $\alpha$. 
\end{definition}

If $L,L'\subseteq \RR^n$ are such that $L\cap L'=\{0\}$, define 
\begin{equation}\label{eq:subspace-det}\sin(L, L')  =  \frac{ \vol(B +B')}{\vol(B) \vol(B')},\end{equation}
where $B,B'$ are the euclidean unit balls in $L$ and $L'$.
In terms of the  principal angles $0\leq \theta_1\leq \cdots\leq  \theta_m\leq \pi/2$, $m=\min(\dim L,\dim L')$,   between $L$ and $L'$, one has
$$ \sin(L,L') = \sin \theta_1\cdots \sin \theta_m,$$
see, e.g., \cite{MiaoBen-Israel:Angles}.
We call $\sin(L,L')$ the sine of the principal angles between $L$ and $L'$. Sometimes this quantity is also called the subspace determinant of $L$ and $L'$, see \cite[Section 14.1]{SchneiderWeil:Stochastic}.

With the above definitions, the  description of the product immediately translates into the following 
\begin{corollary}\label{cor:mult flats}
	Let $L,L'$ be linear subspaces in $\RR^n$. Then 
	$$ x_L\cdot x_{L'} = 	\begin{cases}  \sin(L,L')\, x_{L+L'} & \text{if } L\cap L'=\{0\},\\
		0  & \text{otherwise}.
		\end{cases}$$
\end{corollary}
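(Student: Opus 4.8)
The plan is to derive Corollary~\ref{cor:mult flats} directly from Theorem~\ref{thm:special-elem}(c) by unwinding the densities in the special case $V = \RR^n$ with the Euclidean structure. By definition $x_L = x_{L,\vol}$, where here $\vol$ denotes the canonical density on $L \subseteq \RR^n \cong (\RR^n)^*$, or rather the dual density on $L$ induced by the Euclidean structure via the isomorphism \eqref{eq:dens-perp}. Theorem~\ref{thm:special-elem}(c) already tells us that $x_{L,\vol} \cdot x_{L',\vol} = 0$ when $L \cap L' \neq \{0\}$ and equals $x_{L+L', a_*(\vol \otimes \vol)}$ when $L \cap L' = \{0\}$, where $a \colon (\RR^n)^* \times (\RR^n)^* \to (\RR^n)^*$ is vector space addition. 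So the entire content of the corollary is the identification $a_*(\vol_L \otimes \vol_{L'}) = \sin(L,L')\, \vol_{L+L'}$ as dual densities on $L + L'$.

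First I would translate everything into honest densities (not dual densities) to avoid sign and normalization confusion: a dual density $\epsilon$ on a subspace $M$ together with the ambient Euclidean density determines, and is determined by, an ordinary density on $M$; concretely $x_{M,\epsilon}$ is the limit $\lambda^{-\dim M^\perp}[\lambda P]$ for a polytope $P \subseteq M^\perp$ with the normalization dictated by $\epsilon$, so the relevant computation reduces to comparing volumes. Using the geometric description already employed in the proof of Theorem~\ref{thm:special-elem} (choosing unit balls $B \subseteq L$, $B' \subseteq L'$ as the reference convex bodies), the pushforward $a_*$ under addition restricted to $L \times L' \to L + L'$ has Jacobian equal to the volume ratio $\vol(B+B')/(\vol(B)\vol(B'))$ — this is exactly the definition of $\sin(L,L')$ in \eqref{eq:subspace-det}. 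Hence $a_*(\vol_L \otimes \vol_{L'}) = \sin(L,L')\,\vol_{L+L'}$, and substituting into Theorem~\ref{thm:special-elem}(c) gives the claim.

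The one point requiring care — and the main obstacle — is bookkeeping the interplay between the dual-density isomorphism \eqref{eq:dens-perp} and the identification $x_L = x_{L,\vol}$: I must check that the Euclidean density $\vol$ on the subspace $L$ (viewed as a subspace of $(\RR^n)^*$) corresponds under \eqref{eq:dens-perp} and \eqref{eq:dual-dens-iso} to the dual density $\epsilon_L$ for which $x_{L,\epsilon_L}$ genuinely equals the naive ``$x_L$'', i.e.\ the one whose normal cycle is $\vol \otimes [L]$ with $\vol$ the Lebesgue measure. Because all the relevant spaces carry the single canonical Euclidean density and the Liouville measure on $\RR^n \times (\RR^n)^*$ is the product of the two copies of Lebesgue measure, these identifications are compatible, and the constant that pops out is precisely the subspace determinant; the equivalent formula $\sin(L,L') = \prod_i \sin\theta_i$ in terms of principal angles is then just the standard linear-algebra fact cited from \cite{MiaoBen-Israel:Angles}, requiring nothing further. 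So the proof is short: invoke Theorem~\ref{thm:special-elem}(c), compute the Jacobian of addition via unit balls, recognize it as $\sin(L,L')$, done.
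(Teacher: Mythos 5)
Your proposal is correct and matches the paper's intended derivation: the paper states that Corollary~\ref{cor:mult flats} follows ``immediately'' from Theorem~\ref{thm:special-elem}(c) together with the definition \eqref{eq:subspace-det}, and the content you supply — that the pushforward $a_*(\epsilon_L\otimes\epsilon_{L'})$ under addition $L\times L'\to L+L'$ picks up exactly the factor $\vol(B+B')/(\vol(B)\vol(B'))=\sin(L,L')$, computed on the product of unit balls — is precisely the omitted bookkeeping. Nothing further is needed.
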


There is a particularly simple description of the elements $x_L$ in terms of the pullback along orthogonal projections.

\begin{corollary} Let $L\subseteq \RR^n$ be a linear subspace. Let $p\colon \RR^n \to L$ denote the orthogonal projection. Then $x_L= p^* [\{0\}]$. 
	
\end{corollary}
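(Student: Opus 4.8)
The plan is to identify $p^*[\{0\}]$ as one of the special elements $x_{L',\epsilon'}$ and then show that the resulting subspace and dual density are exactly those defining $x_L$. First I would observe that $[\{0\}]$ is an element of $\Pi^*(L)$ that sits in the top degree component $\Pi^{\dim L}(L)$; more precisely, under the identification of $\Pi^{\dim L}(L)$ with $\Dens(L)^*$, the class $[\{0\}]\otimes\vol$ corresponds to the dual density $\epsilon_L$ given by evaluation on the standard Lebesgue measure of $L$. In the notation of the paper, $[\{0\}]\otimes\vol = x_{L^*, \epsilon}$ where now the ambient space is $L$, its dual $L^*$ is the whole space, and $\epsilon$ is the canonical dual density; working with the Euclidean identification $L\cong L^*$ this is simply $x_L$ viewed inside $\Pi^*(L)$, i.e. the element $x_{L,\vol}$ of $\Pi^{\dim L}(L)$ (note $L^\perp = \{0\}$ inside $L$, so $[\{0\}]_{\dim L - \dim L}\otimes\vol$ is exactly the canonical generator).

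Next I would apply Theorem~\ref{thm:special-elem}\ref{prop:mult flats} to the orthogonal projection $p\colon\RR^n\to L$. Here $W=\RR^n$ in the role of the source and $V=L$ in the role of the target of the map $f=p$; since $p$ is surjective, $p(\RR^n)=L$, and the condition $p(\RR^n)+(L')^\perp = L$ in Theorem~\ref{thm:special-elem} is automatically satisfied (the first term is already all of $L$). Therefore $p^*(x_{L,\vol}) = x_{p^*L,\, (p^*)_*\vol}$, where $p^*L\subseteq(\RR^n)^*$ and $p^*\colon L\to p^*L$ is an isomorphism. Under the Euclidean identification $(\RR^n)^*\cong\RR^n$, the adjoint $p^*$ of the orthogonal projection onto $L$ is just the inclusion $L\hookrightarrow\RR^n$, so $p^*L = L$ as a subspace of $\RR^n$, and $p^*\colon L\to L$ is the identity. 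Hence $p^*[\{0\}] = p^*(x_{L,\vol}) = x_{L,(p^*)_*\vol} = x_{L,\vol} = x_L$, which is the claim.

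The main point requiring care — and the step I expect to be the only real obstacle — is the bookkeeping of the densities under the two Euclidean identifications (of $\RR^n$ with its dual, and of $L$ with its dual) together with the isomorphism \eqref{eq:dens-perp}. One must check that, after identifying everything with Euclidean structures, the pushforward density $(p^*)_*\vol$ on $p^*L = L$ is again the Lebesgue measure $\vol$ of $L$, i.e. that the adjoint of an orthogonal projection is an isometry onto its image. This is immediate once one recalls that $p^*$ restricted to $L$ is the identity inclusion, but it does require spelling out that the isomorphism \eqref{eq:dens-perp} is compatible with these identifications. Concretely, one can verify the density identity directly from Lemma~\ref{lemma:dual-dens}: taking $P=\{0\}$ (a degenerate point) is not quite allowed, so instead one works with $\lambda^{-(n-k)}[\lambda P]\to[\{0\}]_{n-k}$ for a genuine $(n-k)$-dimensional $P\subseteq L^\perp$ inside the full space, but here since we work entirely inside $L$ with $L^\perp=\{0\}$, the point $\{0\}$ is already top-dimensional and no limit is needed; one simply checks $\vol_L(\{0\}+Q) = \vol_L(Q)$ for $Q\subseteq L$, which is a tautology. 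I would close by remarking that the identity $p^*[\{0\}] = x_L$ gives, via Corollary~\ref{cor:mult flats}, another proof of the multiplicativity $x_L\cdot x_{L'} = \sin(L,L')\,x_{L+L'}$ when $L\cap L'=\{0\}$, since $\diag^*\circ(p_L\times p_{L'})^* = (\ ?\ )$ relates the product to a pullback along the orthogonal projection onto $L+L'$ composed with the projections onto $L$ and $L'$.
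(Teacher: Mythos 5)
Your argument is correct and is exactly the derivation the paper intends (it states this corollary without proof as a consequence of Theorem~\ref{thm:special-elem}\ref{prop:mult flats}): identify $[\{0\}]\otimes\vol_L$ as the top-degree element $x_{L^*,\epsilon}$ of $\Pi^*(L)$, note that surjectivity of $p$ makes the transversality condition automatic, and use that the adjoint of the orthogonal projection is the isometric inclusion $L\hookrightarrow\RR^n$, so the subspace and the dual density both come out as those defining $x_L$. The closing remark with the unresolved placeholder is not needed for the corollary and can be dropped.
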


We return now to the general case of an $n$-dimensional real vector space.
The following proposition shows that multiplication by the element $x_{L,\epsilon}$ is essentially the same as the pullback  along the inclusion $L^\perp \to V$. This fact will be important for us  as it allows us prove to certain statements  by induction on the dimension of $V$.

\begin{proposition}\label{prop:mult-res}
	Let $L\subseteq V^*$ be a line through the origin. Denote $H=L^\perp$ and write $i\colon H\to V$ for the inclusion. Choose a complement $X$ to the hyperplane $H\subseteq V$ and let $\mu_X\in \Dens(X)$ correspond to $\epsilon$ under the isomorphism $X\to L^*$. For every  $y\in \Pi^*(V)$
	$$ x_{L,\epsilon}  \cdot y = ([\{0\}]\otimes \mu_X ) \boxtimes  i^* y.$$
\end{proposition}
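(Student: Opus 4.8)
<br>

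The plan is to reduce the statement to the previously established identity for the exterior product (Theorem~\ref{thm:ext}\ref{item:ext-prod}) combined with the explicit computation of $f^*$ on the special elements $x_{L,\epsilon}$ provided by Theorem~\ref{thm:special-elem}\ref{prop:mult flats}. Write $V = H \oplus X$, so that $V\times V \supseteq H\times X$ and the diagonal embedding $\diag\colon V\to V\times V$ can be compared with the product embedding $j = i \times \mathrm{id}_X \colon H\times X \to V\times V$. The key observation is that $x_{L,\epsilon}$ pulls back to something very simple under the projection $V\to X$: by Theorem~\ref{thm:special-elem}\ref{prop:mult flats}, applied to the first projection $\pi_1\colon V\times V\to V$ restricted appropriately, one has $\pi_1^* x_{L,\epsilon} = x_{L',\epsilon'}$ for an appropriate subspace $L'\subseteq (V\times V)^*$, provided the relevant transversality holds; and since $L$ is a line with $L^\perp = H$ a hyperplane, the transversality conditions are easy to check here.

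Concretely, I would proceed as follows. First, by Theorem~\ref{thm:ext}\ref{item:ext-prod} we have $x_{L,\epsilon}\cdot y = \diag^*(x_{L,\epsilon}\boxtimes y)$. Next, I would rewrite $x_{L,\epsilon}\boxtimes y$ using the fact that $x_{L,\epsilon}$ is (up to the density identification \eqref{eq:dens-perp}) a limit of $\lambda^{-(n-1)}[\lambda P]$ for an $(n-1)$-dimensional polytope $P\subseteq H$; alternatively, and more cleanly, I would use Theorem~\ref{thm:special-elem}\ref{prop:mult flats} in the form: the pullback along the projection $q\colon V\to X$ satisfies $q^*([\{0\}]\otimes\mu_X) = x_{L,\epsilon}$, because $\ker q = H = L^\perp$ and the chosen complement is exactly $X$. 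Then the factorization $\diag = $ (something through $H\times X$) lets me transport the computation. The cleanest route: consider the linear isomorphism $V\times V \to V\times V$ swapping to coordinates adapted to $H\oplus X$, and observe that $\diag^*(x_{L,\epsilon}\boxtimes y)$ can be computed by first applying the pullback along the inclusion-type map whose effect on the $x_{L,\epsilon}$ factor is governed by \ref{prop:mult flats}. Using that $x_{L,\epsilon}\boxtimes y$ has, in its first $V$-factor, only "normal directions in $L$", the diagonal pullback sees only the component of $y$ transverse to $H$ in a controlled way; Theorem~\ref{thm:special-elem}\ref{prop:mult flats} then yields $x_{L,\epsilon}\cdot y = x_{L,\epsilon}\cdot y$ with the $x_{L,\epsilon}$ playing the role of restricting to $H$ and re-inserting the one-dimensional density $\mu_X$, which is precisely $([\{0\}]\otimes\mu_X)\boxtimes i^* y$.

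To make this rigorous I would verify the identity on generators $y = [P']\otimes\mu'$. Using Theorem~\ref{thm:ext}\ref{item:ext-prod} and the integral formula \eqref{eq:def-Alesker} we get
\[
x_{L,\epsilon}\cdot([P']\otimes\mu') = \lim_{\lambda\to\infty}\lambda^{-(n-1)}\int_V [\lambda P\cap(x+P')]\otimes\mu'\,d\nu(x),
\]
where $P\subseteq H$ realizes $\epsilon$ and $\nu\in\Dens(V)$ is a suitable normalizing density. Splitting $\nu$ along $0\to H\to V\to L^*\to 0$ via Lemma~\ref{lemma:iso-dens}, and letting $\lambda\to\infty$ so that $\lambda P$ "fills out" the hyperplane direction, the inner slices $\lambda P\cap(x+P')$ converge (after rescaling) to the fibers $i^{-1}(x+P')$ of $P'$ over $L^*$; the $L^*$-integral collapses to $\mu_X$ acting on the projection of $P'$ to $X$, producing exactly the factor $[\{0\}]\otimes\mu_X$ on the $X$-side and $i^*([P']\otimes\mu')$ on the $H$-side. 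This is morally the same limiting computation already carried out in the proof of Theorem~\ref{thm:special-elem}\ref{prop:mult flats}, specialized to $\diag$, so I would invoke that argument rather than redo it.

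The main obstacle is bookkeeping the density identifications: one must carefully track how $\mu'\in\Dens(V)$ decomposes under $V = H\oplus X$ versus under $0\to H\to V\to L^*\to 0$, confirm via Lemma~\ref{lemma:dual-dens} that the correspondence $X\to L^*$ sending $\mu_X\mapsto$ (the density matching $\epsilon$) is the right one, and check that no stray Jacobian factor $\sin(H,X)$ or $\svol(\pi(Q)^\circ\times\pi(Q))$ survives. Everything else (the valuation/universal-property extension from generators, linearity in $y$) is routine given the earlier results.
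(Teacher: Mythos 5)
Your third paragraph is essentially the paper's own proof: verify the identity on generators $y=[Q]\otimes\rho$, write $x_{L,\epsilon}=\lim_{\lambda\to\infty}\lambda^{-(n-1)}[\lambda P]\otimes\nu$ for an $(n-1)$-dimensional $P\subseteq H$, split the integral over $V$ along $0\to H\to V\to V/H\to 0$ via Lemma~\ref{lemma:iso-dens}, observe that the inner $H$-integral collapses to the constant $\rho_H(P)$ while the outer $V/H$-integral survives as the defining integral of $i^*y$, and close the density bookkeeping with $\mu_X\times\rho_H=\rho_H(P)\nu$. The only step you wave at, the interchange of limit and integral, is handled in the paper not by invoking the computation in Theorem~\ref{thm:special-elem}(b) (whose rescaling does not transfer directly, since here only one of the two intersected bodies is dilated) but by restricting the inner integration to $(-\lambda P)+(t+Q)$ and substituting $z=\lambda^{-1}x$ so that the domain stabilizes; your preliminary ``clean route'' through $\diag^*$ and Theorem~\ref{thm:special-elem}(b) is not needed and, as sketched, would be circular for general $y$.
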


\begin{proof}
Let us write $x_{L,\epsilon} = [P]_{n-1} \otimes \nu$ for a suitable $(n-1)$-dimensional polytope $P$ and a density $\nu\in \Dens(V)$. It suffices to prove the statement for elements of the form $y= [Q]\otimes \rho$. In this case, 
we find 
\begin{align*}
	x_{L,\epsilon} \cdot y & = \lim_{\lambda\to \infty} \lambda^{-(n-1)} ([\lambda P]\otimes \nu)\cdot ([Q]\otimes \rho)\\
	& = \lim_{\lambda\to \infty}  \lambda^{-(n-1)} \int_V [ \lambda P \cap (x +  Q)]\otimes \nu \,d\rho(x) \\
	&= \lim_{\lambda\to \infty}  \int_{V/H}  \left( \lambda^{-(n-1)}\int_H  [(x + \lambda P) \cap ( t+Q)] \otimes \nu\, d\rho_{H}(x) \right) d\rho_{V/H}([t]).
\end{align*}
After replacing integration over $H$ by integration over $(-\lambda P)+ (t+Q)$  and the change of variables $\lambda^{-1} x= z$, we find that the inner integral satisfies
\begin{align*}
 \lambda^{-(n-1)}\int_H  &[ (x+\lambda P) \cap (t+ Q)] \otimes \nu \, d\rho_{H}(x)\\ & =
 \lambda^{-(n-1)}\int_{ (-\lambda P)+(t+Q)}  [ (x+\lambda P) \cap (t+ Q)] \otimes \nu \, d\rho_{H}(x)\\ 
 & = \int_{ (- P)+\lambda^{-1} (t+Q)}  [ \lambda(z+ P) \cap (t+ Q)] \otimes \nu \, d\rho_{H}(z)\\
\end{align*}
For this last integral, it is clear that one can interchange limit  and integral and so
$$\lim_{\lambda\to \infty} \lambda^{-(n-1)}\int_H  [ (x+\lambda P) \cap (t+ Q)] \otimes \nu\, d\rho_{H}(x) =  [ H \cap (t+ Q)] \otimes \rho_H(P) \nu.$$
We conclude that 
$$	x_{L,\epsilon} \cdot y = \int_{V/H}  [ H \cap (t+ Q)] \otimes \rho_H(P) \nu \,d\rho_{V/H}([t]).$$
This last expression resembles the definition of the pullback. The difference is that  here the bracket 
$[ H \cap ( t+Q)]$ is understood  inside $\Pi(V)$ and  not inside $\Pi(H)$. Denoting the latter bracket by $[ H \cap ( t+Q)]_{\Pi(H)}$, we have 
$$  ([\{0\}]\otimes \mu_X ) \boxtimes ([ H \cap ( t+Q)]_{\Pi(H)} \otimes \rho_H) =  [ H \cap ( t+Q)] \otimes (\mu_X \times \rho_H). $$
Thus 
$$([\{0\}]\otimes \mu_X ) \boxtimes  i^* y = \int_{V/H}  [ H \cap ( t+Q)] \otimes  (\mu_X \times \rho_H) \,d\rho_{V/H}([t]).$$
Since 
$ \mu_X \times \rho_H = \rho_H(P) \nu$, this finishes the proof of the proposition.
\end{proof}

\section{An Alexandrov--Fenchel inequality}

The main goal of this section is to prove  Theorem~\ref{mthm:AF}. In fact, we will establish a version of \eqref{eq:HR-inequality} for general convex bodies. 
 The following quantities are a particular instance of a larger  family of higher-rank mixed volumes, introduced in \cite{KotrbatyWannerer:MixedHR}.  
\begin{definition}
	Let $K_1,\ldots, K_{n}$ be convex bodies in $\RR^n$. Let $\iota_i$ denote the inclusion of $\RR^n$ into $(\RR^n)^n = \RR^n \oplus \cdots \oplus \RR^n$ as the $i$-th summand and let $\pi\colon (\RR^n)^n\to \Delta^\perp$ denote the orthogonal projection onto the orthogonal complement of the diagonal $\Delta=\{(x,\ldots,x)\colon x\in\RR^n\}$ in $(\RR^n)^n$. We define
\begin{equation}
	\label{eq:wtV}\wt \V(K_1,\ldots, K_n)=  \frac{(n(n-1))!}{(n-1)!^n} \V( \pi\circ \iota_1 K_1[n-1],\ldots, \pi\circ\iota_n K_n[n-1])
\end{equation} 
	where the mixed volume on the right-hand side is to be understood inside $\Delta^\perp$. 
\end{definition}

Note that the normalization in \eqref{eq:wtV} is different from \cite{KotrbatyWannerer:MixedHR}, but  more convenient for the purposes of the present paper.

We will deduce Theorem~\ref{mthm:AF} from the following Alexandrov--Fenchel-type inequality for the higher-rank mixed volume.

\begin{theorem} \label{thm:AF}
	Let $\mathbf C= (C_1,\ldots C_{n-2})$ be a tuple of centrally symmetric convex bodies in $\RR^n$. For any convex bodies $K$ and $L$   
	$$\wt \V(K  ,- L ,\mathbf C)^2 \geq  \wt \V(K  ,-K ,\mathbf C)   \wt \V( L,- L ,\mathbf C).$$
	
\end{theorem}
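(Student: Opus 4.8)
## Proof plan for Theorem~\ref{thm:AF}

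The plan is to reduce the higher-rank Alexandrov--Fenchel inequality in $\Delta^\perp \subseteq (\RR^n)^n$ to the classical Alexandrov--Fenchel inequality in $\RR^n$ applied to a cleverly chosen tuple of convex bodies. The key observation is that the projection $\pi\colon (\RR^n)^n \to \Delta^\perp$ composed with the inclusions $\iota_i$ turns a mixed volume in $\Delta^\perp$ of "product-like" bodies into a mixed volume in $\RR^n$. Concretely, I would first show that for convex bodies $K_1,\dots,K_n$ in $\RR^n$ the quantity $\wt\V(K_1,\dots,K_n)$ can be rewritten, via a linear change of coordinates identifying $\Delta^\perp \cong (\RR^n)^{n-1}$ and Fubini/the multilinearity of mixed volumes, as a genuine mixed volume $\V(K_1,\dots,K_n)$ in $\RR^n$ up to the stated normalization constant; this is presumably where the factor $\frac{(n(n-1))!}{(n-1)!^n}$ comes from. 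More precisely, the $(n-1)$-fold repetitions $\pi\circ\iota_i K_i[n-1]$ and the combinatorics of which of the $n(n-1)$ slots in $\Delta^\perp$ get which body should collapse, after integrating out, to the ordinary mixed volume of $K_1,\dots,K_n$.

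Once $\wt\V(K_1,\dots,K_n) = c_n \V(K_1,\dots,K_n)$ is established (with $c_n>0$ the normalization constant), Theorem~\ref{thm:AF} becomes
$$\V(K,-L,\mathbf C)^2 \ge \V(K,-K,\mathbf C)\,\V(L,-L,\mathbf C),$$
where $\mathbf C = (C_1,\dots,C_{n-2})$ consists of centrally symmetric bodies, so $-C_i = C_i$ and the notation is consistent. This is now almost the classical Alexandrov--Fenchel inequality $\V(A,B,\mathbf C)^2 \ge \V(A,A,\mathbf C)\,\V(B,B,\mathbf C)$ — except that the two "variable" slots carry $K$ and $-L$ rather than $A$ and $B$ with matching signs on the quadratic terms. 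The fix is to apply Alexandrov--Fenchel with $A = K$ and $B = -L$, giving $\V(K,-L,\mathbf C)^2 \ge \V(K,K,\mathbf C)\,\V(-L,-L,\mathbf C)$, and then to use that $\V(K,K,\mathbf C) = \V(K,-K,\mathbf C)$ is false in general — so instead I would symmetrize: note that by central symmetry of the $C_i$ and the reflection invariance of mixed volume under $x\mapsto -x$, one has $\V(-L,-L,\mathbf C) = \V(L,L,\mathbf C)$, but to match the left side I actually want the quadratic forms evaluated at $K$ and at $L$ with a sign. The cleanest route is to observe that the bilinear form $(A,B)\mapsto \V(A,-B,\mathbf C)$ on (differences of) support functions is symmetric precisely because the $C_i$ are centrally symmetric, and then Theorem~\ref{thm:AF} is exactly the statement that this symmetric form satisfies the Alexandrov--Fenchel (reverse Cauchy--Schwarz) inequality, which follows from the classical one by the standard argument of restricting to the span of $h_K - h_{\text{pt}}$ and $h_L - h_{\text{pt}}$ and diagonalizing.

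I expect the main obstacle to be the first step: carefully carrying out the change of variables on $\Delta^\perp$ and tracking the normalization constant, i.e.\ verifying that the sum over all ways of distributing the $(n-1)$-fold repeated bodies among the $n-1$ "coordinate copies" of $\RR^n$ inside $\Delta^\perp$ reassembles into $\V(K_1,\dots,K_n)$ with exactly the constant $\frac{(n-1)!^n}{(n(n-1))!}$ cancelling the prefactor. This is a bookkeeping computation with mixed volumes under linear maps (using $\V(\phi A_1,\dots,\phi A_n) = |\det\phi|\,\V(A_1,\dots,A_n)$ and multilinearity), and the centrally symmetric hypothesis on $\mathbf C$ may already be needed here to make the symmetrization identities go through cleanly. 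A secondary point requiring care is that $K$ and $L$ are arbitrary convex bodies, not necessarily full-dimensional or containing the origin, so the reduction and the diagonalization argument must be phrased in terms of support functions / mixed volumes directly rather than via strictly convex smooth approximation — though a standard approximation argument handles any degeneracy, since mixed volumes are continuous in the Hausdorff metric.
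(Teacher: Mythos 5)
Your plan has a fatal flaw at its very first step: the identity $\wt\V(K_1,\ldots,K_n)=c_n\V(K_1,\ldots,K_n)$ cannot hold for $n\geq 3$, for reasons of homogeneity alone. In the definition \eqref{eq:wtV} each body $\pi\circ\iota_i K_i$ appears $n-1$ times in a mixed volume taken inside the $n(n-1)$-dimensional space $\Delta^\perp$, so $\wt\V$ is homogeneous of degree $n-1$ in each argument, whereas the ordinary mixed volume $\V(K_1,\ldots,K_n)$ is homogeneous of degree $1$ in each argument. The two quantities therefore cannot be proportional once $n\geq 3$. This is confirmed by Proposition~\ref{prop:prop-wtV}(d): for centrally symmetric bodies, $2^n\wt\V(C_1,\ldots,C_n)=\V(\Proj C_1,\ldots,\Proj C_n)$, a mixed volume of \emph{projection bodies}, which is not a constant multiple of $\V(C_1,\ldots,C_n)$. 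So the entire reduction to the classical Alexandrov--Fenchel inequality in $\RR^n$ collapses.

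Even setting this aside, the final step of your argument is also not a formal consequence of the classical inequality. The bilinear form $(K,L)\mapsto \V(K,-L,\mathbf C)$ is indeed symmetric when the $C_i$ are centrally symmetric, but the reverse Cauchy--Schwarz inequality for a symmetric form is equivalent to it having Lorentzian signature on the relevant space, and precomposing one slot with the reflection $K\mapsto -K$ can change the signature; ``restricting to a two-dimensional span and diagonalizing'' proves nothing without controlling the signature of the twisted form. This signature statement is precisely the hard content here. The paper's route is entirely different: it takes as input the case of centrally symmetric $L$, established in \cite{KotrbatyWannerer:MixedHR} via the Fourier transform of smooth translation-invariant valuations (not via a reduction to classical Alexandrov--Fenchel), and the new work in this paper is to remove the symmetry assumption on $L$. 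This is done by reformulating the inequality in the three equivalent forms of Lemma~\ref{lemma:equivalent-forms} (passing to signed measures $\mu=S_K-S_{K'}$ via Minkowski's existence theorem) and then replacing $L$ by its Blaschke symmetrization $L'=L\#(-L)$, using the additivity $\wt\V(K\#K',\ldots)=\wt\V(K,\ldots)+\wt\V(K',\ldots)$ from Proposition~\ref{prop:prop-wtV}(c). None of these ingredients appears in your proposal, and the gap is not repairable within the strategy you outline.
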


With the added assumption of central symmetry of $L$, Theorem~\ref{thm:AF} was proved in \cite[Theorem 1.4]{KotrbatyWannerer:MixedHR}. The main idea in that paper was to apply  the Fourier transform of smooth translation-invariant valuations  (see \cite{Alesker:Fourier, FaifmanWannerer:Fourier}) to deduce  the desired inequality from the classical Alexandrov--Fenchel inequality.  This approach required the bodies to have a $C^\infty$-smooth and strictly positively curved boundary and $L$ to be centrally symmetric.  It remains to  show 
 that the assumption of centrally symmetry of $L$  can be omitted. 

Recall from  Section~\ref{sec:convex} that we  denote by $K\# L$  the Blaschke sum of convex bodies in $\RR^n$ with nonempty interior and by $\Proj K$ the projection body of $K$.

\begin{proposition}\label{prop:prop-wtV}
	Let $C_1,\ldots, C_{n}$ be convex bodies in $\RR^n$. The following properties hold:
	\begin{enuma}
		\item If the  bodies $C_1,\ldots, C_{n}$ are polytopes, then  $$ \ell_{C_1} \cdots \ell_{C_n} =   \wt \V(C_1,\ldots, C_n).$$
		\item $\wt \V$ is a continuous function on $\mathscr{K}^n\times \cdots \times  \mathscr{K}^n$. 
		\item \label{item:B-sum} For all convex bodies $K,L\subseteq \RR^n$ with nonempty interior,
		$$ \wt \V(K\# L , C_1,\ldots, C_{n-1}) = \wt \V(K , C_1,\ldots, C_{n-1}) + \wt \V(L , C_1,\ldots, C_{n-1}).$$
		
		\item If the bodies $C_1,\ldots, C_{n-1}$ are centrally symmetric, then 
		$$ 2^n \wt \V(C_1,\ldots, C_n) =  \V(\Proj C_1,\ldots, \Proj C_n).$$
	
	\end{enuma}
\end{proposition}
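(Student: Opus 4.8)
The four statements are best treated in the order (b), (a), (c), (d), since (c) will use (a) and (d) will use (c). \emph{Part (b)} is immediate: for each $i$ the assignment $K\mapsto\pi\circ\iota_i K$ is the image of $K$ under a fixed linear map $\RR^n\to\Delta^\perp$, hence a continuous map $\calK(\RR^n)\to\calK(\Delta^\perp)$, and the mixed volume inside $\Delta^\perp$ is continuous in each argument; so $\wt\V$ is continuous.

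For \emph{part (a)} the plan is to realize the intersection product as a fibre integral matching the polarization of the volume defining $\wt\V$. By Corollary~\ref{cor:ext-prod}, $\ell_{C_1}\cdots\ell_{C_n}=(\diag_n)^{*}(\ell_{C_1}\boxtimes\cdots\boxtimes\ell_{C_n})$ with $\diag_n\colon\RR^n\to(\RR^n)^n$; by Theorem~\ref{thm:ext} the exterior product $\ell_{C_1}\boxtimes\cdots\boxtimes\ell_{C_n}$ is the degree-$n(n-1)$ component of $[C_1\times\cdots\times C_n]$ tensored with the product density. I would then unwind $(\diag_n)^{*}$ from the definition of the pullback along an injection: for $y=(y_1,\dots,y_n)\in(\RR^n)^n$ one has $\diag_n^{-1}(y+C_1\times\cdots\times C_n)=\bigcap_i(y_i+C_i)$, so the pullback is an integral over $(\RR^n)^n/\diag_n(\RR^n)\cong\Delta^\perp$ whose top-degree component records $\chi(\bigcap_i(y_i+C_i))$. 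After the change of variables of Lemma~\ref{lemma:iso-dens} this integral equals the coefficient of $\lambda_1^{n-1}\cdots\lambda_n^{n-1}$ in $\vol_{\Delta^\perp}\big(\pi(\lambda_1C_1\times\cdots\times\lambda_nC_n)\big)$, which is precisely $\wt\V(C_1,\dots,C_n)$; the multinomial factor $\tfrac{(n(n-1))!}{(n-1)!^n}$ in \eqref{eq:wtV} is the one produced by this polarization. I expect the main obstacle to be the density bookkeeping: one must carry along the density on $\Delta^\perp$ coming from the canonical splitting of the Lebesgue density on $(\RR^n)^n$ relative to the (non-isometric) embedding $\diag_n$ and check it against the normalization implicit in \eqref{eq:wtV}.

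For \emph{part (c)} I would first prove $\ell_{K\#L}=\ell_K+\ell_L$ for polytopes $K,L$ with nonempty interior. This follows from the normal-cycle embedding: the degree-$(n-1)$ part $[P]_{n-1}$ is determined by the surface area measure of $P$ (its normal cycle being $\sum_{F}\vol_{n-1}(F)\,\epsilon_F\otimes[N(F,P)]$, the sum over facets), and $S_{K\#L}=S_K+S_L$, so $\nc([K\#L]_{n-1})=\nc([K]_{n-1})+\nc([L]_{n-1})$ and the claim follows from injectivity of $\nc$ (Theorem~\ref{thm:nc-embedding}). Together with part (a) and bilinearity of the intersection product this gives (c) for polytopes; the general case follows by polytopal approximation in the Hausdorff metric, using continuity of $\wt\V$ (part (b)) and of the Blaschke sum (Lemma~\ref{lemma:cont-B}).

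For \emph{part (d)} the plan is to show that both sides are additive, continuous functionals of $S_{C_i}$ and then reduce to a one-line computation. Regarded as functions of $C_i$ with the other bodies fixed, both sides are additive under Blaschke sum and positively homogeneous of degree $n-1$ in $C_i$ — the left side by (c) and homogeneity of $\ell$, the right side because $\Proj(K\#L)=\Proj K+\Proj L$ and the mixed volume is Minkowski-linear and homogeneous in each argument — and both are continuous (by (b), by continuity of $\Proj$, and by continuity of $\V$). Hence each side factors through $S_{C_i}$ as a continuous linear functional, so it suffices to verify the identity when every $C_i$ is $(n-1)$-dimensional; by continuity one may assume in addition that the unit normals $\hat u_1,\dots,\hat u_n$ of the hyperplanes containing the $C_i$ are linearly independent and $\vol_{n-1}(C_i)=1$. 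Then $S_{C_i}=\delta_{\hat u_i}+\delta_{-\hat u_i}$, so $\Proj C_i=[-\hat u_i,\hat u_i]$ and the right side equals $\tfrac{2^n}{n!}\,\lvert\det(\hat u_1,\dots,\hat u_n)\rvert$, while $\ell_{C_i}=x_{\RR\hat u_i,\epsilon_i}$ and iterating the product formula of Theorem~\ref{thm:special-elem} (equivalently Corollary~\ref{cor:mult flats}) evaluates $2^n\ell_{C_1}\cdots\ell_{C_n}$ to the same quantity. The main obstacles are making the reduction rigorous — in particular that both sides extend continuously to the degenerate (equatorial) surface area measures — and matching the numerical constants in this final computation.
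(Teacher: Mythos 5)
Parts (a), (b), and (c) of your proposal follow the paper's own argument essentially verbatim: (a) is proved by exactly your diagonal-pullback computation, namely that $\chi([\lambda_1 C_1]\cdots[\lambda_n C_n])=\vol(\lambda_1\pi\circ\iota_1C_1+\cdots+\lambda_n\pi\circ\iota_nC_n)$, followed by comparing the coefficients of $\lambda_1^{n-1}\cdots\lambda_n^{n-1}$; and (c) is obtained precisely from $\ell_{K\# L}=\ell_K+\ell_L$ (the paper's Lemma~\ref{lemma:ell}, proved as you do via $S_{K\#L}=S_K+S_L$ and injectivity of $\nc$), together with (a), (b), and Lemma~\ref{lemma:cont-B}. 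One small imprecision in (a): $\ell_{C_1}\boxtimes\cdots\boxtimes\ell_{C_n}$ is not the full degree-$n(n-1)$ component of $[C_1\times\cdots\times C_n]$ — that component is the sum over all multidegrees adding to $n(n-1)$ — but your extraction of the coefficient of $\lambda_1^{n-1}\cdots\lambda_n^{n-1}$ is what isolates the correct term anyway, as in the paper.

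Part (d) is where you diverge, and your route has a genuine gap. You propose to linearize both sides over the surface area measures $S_{C_i}$ and reduce to $(n-1)$-dimensional bodies with $S_{C_i}=\delta_{\hat u_i}+\delta_{-\hat u_i}$. Two problems. First, the Blaschke sum and the additivity in (c) are only available for bodies with nonempty interior, so decomposing $S_{C_i}$ into delta pairs forces you onto degenerate bodies, for which neither the decomposition nor the additivity of $\wt\V$ has been established; making this rigorous essentially requires a representation $\wt\V(\cdot,\mathbf C)=\int_{S^{n-1}} f\,dS_{(\cdot)}$ valid for all convex bodies (McMullen's theorem, which the paper invokes only later and for a different purpose), and you flag this obstacle without resolving it. Second, statement (d) does not assume $C_n$ centrally symmetric; since $S_{C_n}$ for a non-symmetric body is a sum of single Dirac masses rather than delta pairs, your reduction to $(n-1)$-dimensional bodies silently symmetrizes the last argument and proves only the fully symmetric case. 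The paper sidesteps both issues by decomposing inside the polytope algebra rather than at the level of bodies: for a centrally symmetric polytope, $\ell_C=\sum_i\vol_{n-1}(F_i)\,x_{\RR u_i}$, and Corollary~\ref{cor:mult flats} evaluates $\ell_{C_1}\cdots\ell_{C_n}$ directly as a sum of determinants, which is compared term by term with $\V(\Proj C_1,\ldots,\Proj C_n)$ expanded over the segments $[-u_i,u_i]$; the non-symmetric $C_n$ is then handled by the symmetrization $C_n\#(-C_n)$ together with $\Proj(K\# L)=\Proj K+\Proj L$ and part (c). To salvage your approach you would need both to justify the passage to equatorial surface area measures and to supply the symmetrization step (or an independent proof that $\wt\V(C_1,\ldots,C_{n-1},\cdot)$ is even when $C_1,\ldots,C_{n-1}$ are centrally symmetric).
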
 
With a different proof, item (d) has appeared in already in \cite[Proposition 5.6]{KotrbatyWannerer:MixedHR}.
For the proof of Proposition~\ref{prop:prop-wtV}, we need the following simple lemma. 
\begin{lemma} \label{lemma:ell}Let $P,P'\subseteq \RR^n$ be polytopes with nonempty interior. Then 
	$$ \ell_P+ \ell_{P'}= \ell_{P\# P'}$$	 
\end{lemma}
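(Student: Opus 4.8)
The statement to prove is $\ell_P + \ell_{P'} = \ell_{P\#P'}$ for polytopes $P,P'$ with nonempty interior, where $\ell_P = [P]_{n-1}\in\Pi^1(V)$ (with $V=\RR^n$ and the Lebesgue density fixed). The natural tool is the normal cycle embedding $\nc\colon \Pi^*(V)\to \Sigma(V)\otimes\Dens(V)$, which is injective (Theorem~\ref{thm:nc-embedding}), so it suffices to compare $\nc(\ell_P)+\nc(\ell_{P'})$ with $\nc(\ell_{P\#P'})$.

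First I would identify $\nc(\ell_P)$ explicitly. Since $[P]_{n-1} = \lim_{\lambda\to\infty}\lambda^{-(n-1)}[\lambda P]$ and $\nc$ is continuous on finite-dimensional subspaces and linear on $\Pi_{n-1}(V)$, only the top-codimension part of $\nc(P)$ survives the rescaling: the contributions of faces $F$ with $\dim F < n-1$ are killed by the factor $\lambda^{-(n-1)}$ applied to $\epsilon_{\lambda F} = \lambda^{\dim F}\epsilon_F$, while the $n$-dimensional face $P$ itself also drops out because its normal cone is $\{0\}$, which is lower-dimensional and hence zero in $\widehat\Sigma$. Thus $\nc(\ell_P)$ is the sum over facets $F_i$ of $P$, with facet normal $u_i$, of a term of the form $\epsilon_{F_i}\otimes [N(F_i,P)]$, and the density-valued content of $\epsilon_{F_i}$ is precisely $\vol_{n-1}(F_i)$ — the $(n-1)$-volume of the facet — paired with the ray $\RR_{\geq 0}u_i$. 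In other words, $\nc(\ell_P)$ is, up to the canonical identifications, encoded by the surface area measure $S_P = \sum_i \vol_{n-1}(F_i)\,\delta_{u_i}$.

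Then the result follows from the defining property of the Blaschke sum: $S_{P\#P'} = S_P + S_{P'}$. Since $\nc(\ell_P)$ depends on $P$ only through $S_P$ in an additive way — the facet normals of $P\#P'$ are among the $u_i$'s of $P$ and $P'$, and the facet $(n-1)$-volumes add over coinciding normals — we get $\nc(\ell_{P\#P'}) = \nc(\ell_P) + \nc(\ell_{P'})$, and injectivity of $\nc$ finishes the proof. I would need to be a little careful about the translation ambiguity (the Blaschke sum is normalized to have centroid at the origin, but $\ell_P$ is translation-invariant anyway, so this is harmless) and about the bookkeeping of the canonical isomorphisms $\Dens(\bar F)^*$ versus the ambient density on $V$ — this is where the identification \eqref{eq:dens-perp} enters, and making the matching of densities precise is the only genuinely fiddly point.

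\textbf{Main obstacle.} The conceptual content is easy; the work is entirely in the normalization. The one step requiring care is verifying that under the canonical isomorphisms relating $\epsilon_F\in\Dens(\bar F)^*$, the ambient density $\vol$ on $V$, and the line $\RR u\subseteq V^*$ spanned by the unit normal, the coefficient attached to the ray $\RR_{\geq 0}u_i$ in $\nc(\ell_P)$ is exactly $\vol_{n-1}(F_i)$ with no spurious Jacobian factor — equivalently, that $\ell_P$ corresponds under \eqref{eq:dens-perp} to the surface area measure $S_P$ on the nose. Once this normalization lemma is in hand (it is essentially the statement that $x_{L,\epsilon}$ for $L = \RR u$ records the $(n-1)$-volume in the hyperplane $u^\perp$), additivity of $\nc(\ell_\bullet)$ in the Blaschke sum is immediate from $S_{P\#P'} = S_P + S_{P'}$.
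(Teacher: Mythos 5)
Your proposal is correct and follows essentially the same route as the paper: the paper's proof likewise computes $\nc(\ell_P)=\sum_i \vol_{n-1}(F_i)\,[N(F_i,P)]$, identifies this with the surface area measure $S_P$, and concludes from $S_{P\#P'}=S_P+S_{P'}$ together with the injectivity of $\nc$. The normalization issue you flag is harmless here, since the same identification is applied uniformly to $P$, $P'$, and $P\#P'$, so only additivity of the facet data is needed.
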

\begin{proof}
On the one hand, the area measure of a polytope is the discrete measure
$$ S_P= \sum_{i=1}^m \vol_{n-1}(F_i) \delta_{u_i}$$
where $F_1,\ldots, F_m$ are the facets of $P$ with facet normals $u_1,\ldots u_m$. 
On the other hand, applying the normal cycle embedding yields
$$ \nc (\ell_P) =  \sum_{i=1}^{m} \vol_{n-1}(F_i) [N(F_i,P)].$$
Since by definition $S_{P\# P'}= S_P+ S_{P'}$, the claim follows  from the injectivity of $\nc$. 
\end{proof}

\begin{proof}[Proof of Proposition~\ref{prop:prop-wtV}]
To prove (a) let $\chi\colon \Pi^n(\RR^n) \to \RR $ denote the Euler characteristic, $\diag_n\colon \RR^n\to (\RR^n)^n$ the diagonal embedding, and $\iota_i$ the inclusion of $\RR^n$ into $(\RR^n)^n$. Let $\Delta= \diag_n(\RR^n)$ denote the diagonal. By Corollary~\ref{cor:ext-prod}, for any positive numbers $\lambda_1,\ldots, \lambda_n$ we have 
\begin{align*} 
\chi ( [\lambda_1 P_1] \cdots [\lambda_n P_n] )  & =  \chi ( \diag_n^* ([\lambda_1 \iota_1 P_1 + \cdots + \lambda_n \iota_n P_n])\\
& = \int_{\Delta^\perp} \chi( [ (\lambda_1 \iota_1 P_1 + \cdots + \lambda_n \iota_n P_n)\cap (x+\Delta)])  dx\\
&= \vol ( \lambda_1 \pi\circ  \iota_1 P_1 + \cdots+ \lambda_n \pi \circ \iota_n P_n)
\end{align*} 
Expanding  both the first and the last expression into a polynomial in the $\lambda_i$ and comparing the coefficients of the 
monomial $\lambda_1^{n-1}\cdots \lambda_n^{n-1}$, the claim follows.

(b) is an immediate consequence of the definition of $\wt \V$ and the continuity of the mixed volume.

Since $\wt \V$  and, by Lemma~\ref{lemma:cont-B}, Blaschke addition  are continuous in the Hausdorff metric, (c) follows  via approximation by polytopes from (a) and Lemma~\ref{lemma:ell}.

To prove (d) assume first that the bodies $C_1,\ldots, C_n$ are centrally symmetric polytopes. If $C$ is a centrally symmetric polytope, then  
$$ \ell_{C} = \sum_{i=1}^m \vol_{n-1}(F_i) x_{\RR u_i}.$$ 
Therefore, using Corollary~\ref{cor:mult flats} we obtain 
$$ \ell_{C_1} \cdots \ell_{C_n} =  \sum_{i_1=1}^{m_1} \cdots \sum_{i_n=1}^{m_n} |\det(u_{i_1},\ldots, u_{i_n})| \vol_{n-1}(F_{i_1}) \cdots   \vol_{n-1}(F_{i_n})
$$ 
For centrally symmetric bodies $C$, equation~\eqref{eq:projection-body} implies 
$$ \Proj C= \sum_{i=1}^m
 \vol_{n-1}(F_i)[-u_i, u_i], $$
and hence  $ \V(\Proj C_1,\ldots, \Proj C_n)$ equals
$$ 
  \sum_{i_1=1}^{m_1} \cdots \sum_{i_n=1}^{m_n} \V([u_{i_1},-u_{i_1}], \ldots, [u_{i_n},-u_{i_n}]) \vol_{n-1}(F_{i_1}) \cdots   \vol_{n-1}(F_{i_n})
$$
Since $ \V([u_{i_1},-u_{i_1}], \ldots, [u_{i_n},-u_{i_n}]) = 2^n  |\det(u_{i_1},\ldots, u_{i_n})|$, this proves (d) for centrally symmetric polytopes and by approximation for all centrally symmetric convex bodies. It suffices to prove the general case for convex bodies with nonempty interior.  If all bodies except perhaps the last one are centrally symmetric, then 
\begin{align*} 2 \wt \V(C_1,\ldots, C_n) & =  \wt \V( C_1,\ldots, C_{n-1}, C_n \# (-C_n))\\
	&=2^{-n} \V(\Proj C_1,\ldots, \Proj C_{n-1}, \Proj (C_n \# (-C_n))). \end{align*} 
Since $\Proj (K \# L) = \Proj K + \Proj L$, the claim follows.
\end{proof}

\begin{lemma}\label{lemma:existence-signed}
For every finite signed Borel measure  $\mu$ on $S^{n-1}$ with centroid at the origin,
 there exist convex bodies $K$ and $L$ such that 
$$ \mu = S_K- S_L.$$
Moreover, if $\mu$ is discrete, then there exist polytopes $K$ and $L$ with this property.
\end{lemma}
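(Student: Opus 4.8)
The plan is to reduce everything to Minkowski's existence theorem (Theorem~\ref{thm:Minkowski}) by writing the given signed measure as a difference of two genuine surface area measures. First I would handle the Jordan decomposition: write $\mu = \mu^+ - \mu^-$ with $\mu^+, \mu^-$ finite positive Borel measures on $S^{n-1}$. The obstacle with applying Theorem~\ref{thm:Minkowski} directly is twofold: the positive parts $\mu^\pm$ need not have centroid at the origin, and they may be concentrated on an equator (indeed $\mu$ itself may be, e.g. if $\mu$ is discrete and supported on few points). The standard fix is to add a common correction term. Choose any fixed positive measure $\nu_0$ on $S^{n-1}$ with centroid at the origin that is \emph{not} concentrated on any equator --- for instance the spherical Lebesgue measure, or, in the discrete case, $\nu_0 = \sum_{j} (\delta_{v_j} + \delta_{-v_j})$ for a symmetric set of directions $v_1,\dots,v_N$ spanning $\RR^n$, say the $2n$ vectors $\pm e_1,\dots,\pm e_n$.

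Now set $b = \int_{S^{n-1}} u \, d\mu^+(u) = \int_{S^{n-1}} u\, d\mu^-(u)$; the two integrals agree because $\mu = \mu^+ - \mu^-$ has centroid at the origin. I would like to absorb $b$ by adding to each of $\mu^\pm$ a small multiple of a measure whose centroid is $-b$ (after scaling), but it is cleaner to note the following: for $t > 0$ large enough, the measure $\mu^+ + t\nu_0$ still has a nonzero centroid in general, so instead I correct the centroid separately. Concretely, pick two unit vectors; better, use that $b$ lies in the interior of the convex hull of $\pm e_1,\dots,\pm e_n$ once we rescale, so there is a discrete positive measure $\tau$ supported on $\{\pm e_i\}$ with $\int u\, d\tau(u) = -b$ --- wait, we need the \emph{same} correction on both sides. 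Since $\mu^+$ and $\mu^-$ have the \emph{same} centroid $b$, the measures $\mu^+ + t\nu_0$ and $\mu^- + t\nu_0$ have the same centroid, namely $b$ (as $\nu_0$ has centroid $0$); this is still not $0$. The actual remedy: replace the target pair by $\mu_1 := \mu^+ + t\nu_0$ and $\mu_2 := \mu^- + t\nu_0$ and then translate the resulting bodies. But Minkowski's theorem requires centroid exactly $0$. So instead I add a centroid-correcting discrete measure: let $\rho$ be a positive measure with $\int u\, d\rho(u) = -b$ and $\rho$ not concentrated on an equator (possible by choosing $\rho$ supported on $\{\pm e_i\} \cup \{w\}$ for a suitable $w$, or simply a multiple of $\nu_0$ shifted --- more simply, since $-b$ is a finite vector, write $-b = \int u\, d\rho(u)$ for $\rho$ a suitable nonnegative combination of point masses at $\pm e_1,\dots,\pm e_n$, which exists because $\{\pm e_i\}$ positively spans $\RR^n$). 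Then $\mu_1 := \mu^+ + \rho + \nu_0$ and $\mu_2 := \mu^- + \rho + \nu_0$ each have centroid $b + (-b) + 0 = 0$ and each is not concentrated on an equator (because $\nu_0$, or $\rho$, is not). By Theorem~\ref{thm:Minkowski} there are convex bodies $K, L$ with nonempty interior and $S_K = \mu_1$, $S_L = \mu_2$; then $S_K - S_L = \mu_1 - \mu_2 = \mu$, as required.

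For the discrete case, choose $\nu_0$ and $\rho$ discrete as above (finite sums of point masses at $\pm e_1, \dots, \pm e_n$), so that $\mu_1, \mu_2$ are discrete; the bodies $K, L$ delivered by Theorem~\ref{thm:Minkowski} are then polytopes, since a convex body whose surface area measure is a finite sum of point masses is a polytope (the faces with outer normals in the support account for all the surface area, hence the boundary is a union of finitely many facets). I expect the main obstacle to be purely bookkeeping: verifying that one can choose the correction measures $\nu_0$ and $\rho$ so that (i) the centroid of $\mu^\pm + \rho + \nu_0$ is exactly $0$, using $\int u\, d\mu^+ = \int u\, d\mu^-$, and (ii) neither $\mu_1$ nor $\mu_2$ is concentrated on an equator --- both of which are elementary once one records that $\{\pm e_1,\dots,\pm e_n\}$ positively spans $\RR^n$ and is not contained in any hyperplane through the origin. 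No genuine difficulty arises; the lemma is essentially a packaging of Minkowski's existence theorem together with the elementary observation that adding a common ``background'' measure to both Jordan parts does not change their difference.
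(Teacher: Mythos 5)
Your proof is correct and follows essentially the same route as the paper: decompose $\mu$ into a difference of positive measures with common centroid, add a common correction so that both parts have centroid zero and are not concentrated on an equator, and apply Minkowski's existence theorem (with discrete corrections in the polytope case). The only difference is that you spell out explicitly the centroid-correcting measure $\rho$ supported on $\{\pm e_1,\dots,\pm e_n\}$, a step the paper's proof leaves implicit.
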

\begin{proof}
There exist nonnegative finite Borel measures such that $\mu=\mu_+ - \mu_-$ and both measures have their centroids at the origin. Adding a measure to both if necessary, we can assume that $\mu_+$ and $\mu_-$ are not concentrated on an equator. Thus $\mu_+$ and $\mu_-$ satisfy the hypothesis of Minkowski's existence theorem and hence there exist convex bodies $K,L$ as claimed.
\end{proof}

\begin{definition}
Let $C_1,\ldots, C_{n-1}$ be convex bodies in $\RR^n$ and let $\mu$ and $\nu$ be  finite signed  Borel measures  on the unit sphere with centroid at the origin. We define 
$$ \wt \V(\mu, C_1,\ldots, C_{n-1})= \wt \V(K, C_1,\ldots, C_{n-1})- \wt \V(L, C_1,\ldots, C_{n-1}),$$
where $K,L$ any convex bodies satisfying  $\mu= S_K - S_L$. The expression $ \wt \V(\mu,\nu, C_1,\ldots, C_{n-2})$ is defined analogously.
\end{definition}

\begin{lemma} The definition of  $\wt \V(\mu, C_1,\ldots, C_{n-1})$ is independent of the choice of decomposition  $\mu= S_K - S_L$. An analogous statement holds for  $\wt \V(\mu,\nu, C_1,\ldots, C_{n-2})$.  
\end{lemma}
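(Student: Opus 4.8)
The plan is to show that the expression does not change when we pass from one decomposition $\mu = S_K - S_L$ to another $\mu = S_{K'} - S_{L'}$. The key observation is that these two decompositions are related by $S_K + S_{L'} = S_{K'} + S_L$, and that the surface area measure is additive under Blaschke sums: $S_{A \# B} = S_A + S_B$. Hence $S_{K \# L'} = S_{K' \# L}$, and by the uniqueness clause of Minkowski's existence theorem (Theorem~\ref{thm:Minkowski}), $K \# L'$ and $K' \# L$ coincide up to translation. Since $\wt\V$ depends only on the convex bodies up to translation (this is inherited from the corresponding property of mixed volumes), we get $\wt\V(K \# L', C_1, \ldots, C_{n-1}) = \wt\V(K' \# L, C_1, \ldots, C_{n-1})$.

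Now I would invoke the Blaschke additivity of $\wt\V$, namely Proposition~\ref{prop:prop-wtV}\ref{item:B-sum}, to expand both sides:
$$ \wt\V(K, \mathbf C) + \wt\V(L', \mathbf C) = \wt\V(K', \mathbf C) + \wt\V(L, \mathbf C),$$
where I abbreviate $\mathbf C = (C_1, \ldots, C_{n-1})$. Rearranging gives $\wt\V(K, \mathbf C) - \wt\V(L, \mathbf C) = \wt\V(K', \mathbf C) - \wt\V(L', \mathbf C)$, which is exactly the desired independence. One minor caveat: Proposition~\ref{prop:prop-wtV}\ref{item:B-sum} and Theorem~\ref{thm:Minkowski} are stated for convex bodies with nonempty interior, so I should note at the outset that by replacing $K, L$ with $K + B, L + B$ for a small ball $B$ (which changes neither $\mu = S_K - S_L$ nor the difference $\wt\V(K,\mathbf C) - \wt\V(L,\mathbf C)$, again by Blaschke additivity applied to $S_{K+B} = S_K + S_B$), we may assume all bodies involved have nonempty interior.

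For the statement about $\wt\V(\mu, \nu, C_1, \ldots, C_{n-2})$, the argument is entirely parallel: fix $\nu = S_M - S_N$ once and for all, and apply the one-variable result with the tuple $(M, C_1, \ldots, C_{n-2})$ in place of $\mathbf C$ and then with $(N, C_1, \ldots, C_{n-2})$; subtracting the two gives independence in the first slot, and independence in the second slot follows by symmetry of $\wt\V$ in its arguments. I do not anticipate a genuine obstacle here — the only point requiring a little care is the reduction to bodies with nonempty interior and the bookkeeping that $\wt\V$ is translation-invariant, both of which are routine consequences of results already established.
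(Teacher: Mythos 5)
Your argument takes a genuinely different route from the paper's. You combine the uniqueness part of Minkowski's existence theorem with the Blaschke additivity of $\wt\V$ (Proposition~\ref{prop:prop-wtV}\ref{item:B-sum}): from $S_K+S_{L'}=S_{K'}+S_L$ you get $S_{K\#L'}=S_{K'\#L}$, hence $K\#L'$ and $K'\#L$ agree up to translation, and expanding $\wt\V(K\#L',\mathbf C)=\wt\V(K'\#L,\mathbf C)$ by additivity gives the claim. The paper instead invokes McMullen's characterization of $(n-1)$-homogeneous continuous translation-invariant valuations to write $\wt\V(K,C_1,\ldots,C_{n-1})=\int_{S^{n-1}}f\,dS_K$ for a continuous $f$, after which well-definedness is immediate since $\phi(K)-\phi(L)=\int f\,d\mu$ depends only on $\mu$. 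When all of $K,L,K',L'$ have nonempty interior, your argument is correct and complete, and the treatment of the two-measure case by fixing $\nu=S_M-S_N$ and using symmetry is fine.

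There is, however, a genuine flaw in your reduction to the nonempty-interior case. The identity $S_{K+B}=S_K+S_B$ is false for $n\geq 3$: the surface area measure is the top-order mixed area measure, so $S_{K+B}=\sum_{j=0}^{n-1}\binom{n-1}{j}S(K[j],B[n-1-j];\cdot)$, and the cross terms do not vanish (only for $n=2$ is $S_K$ Minkowski-additive). Consequently replacing $K,L$ by $K+B,L+B$ does change $\mu$ in general, since $S_{K+B}-S_{L+B}-(S_K-S_L)$ involves the lower-order area measures of $K$ and $L$, which need not agree; and the claimed invariance of $\wt\V(K,\mathbf C)-\wt\V(L,\mathbf C)$ under this replacement is likewise unjustified. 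A repair would be to use Minkowski's existence theorem to produce $K_1,L_1$ with $S_{K_1}=S_K+S_B$ and $S_{L_1}=S_L+S_B$ (so $\mu$ really is unchanged), but then you still need the additivity $\wt\V(K_1,\mathbf C)=\wt\V(K,\mathbf C)+\wt\V(B,\mathbf C)$ for a possibly lower-dimensional $K$, which Proposition~\ref{prop:prop-wtV}\ref{item:B-sum} does not provide as stated (the Blaschke sum and its continuity are only set up for bodies with nonempty interior). This degenerate case is precisely what the paper's valuation-theoretic argument sidesteps, since McMullen's integral representation holds for every convex body regardless of its dimension.
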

\begin{proof}
We present a short proof based on McMullen's characterization \cite{McMullen:Continuous} of  $(n-1)$-homogeneous continuous  translation-invariant valuations  on convex bodies in $\RR^n$. 
 Since the function $\phi(K)= \wt \V(K,C_1,\ldots, C_{n-1})$ is such a valuation, there exists  by  McMullen's theorem  a  continuous function $f\colon S^{n-1}\to \RR$ on the unit sphere such that for every  convex body $K$
\begin{equation}\label{eq:McMullen-char} \phi(K)= \int_{S^{n-1}} f(u) dS_K(u).\end{equation}

Suppose that $\mu= S_K-S_L=S_{K'}-S_{L'}$. To prove the lemma, it suffices to show that 
$$ \phi(K)-\phi(L)= \phi(K')-\phi(L')$$
This follows immediately from \eqref{eq:McMullen-char}.
\end{proof}

The classical Alexandrov--Fenchel inequality has three equivalent formulations, see \cite[Lemma 3.11]{ShenfeldHandel:Extremals}. Analogous equivalent formulations exist in the context of  Theorem~\ref{thm:AF}.
Let  $a(u)=-u$, $u\in S^{n-1}$, denote the antipodal map.

\begin{lemma}\label{lemma:equivalent-forms}
	Let $\mathbf C= (C_1,\ldots C_{n-2})$ be a tuple of centrally symmetric convex bodies in $\RR^n$. Then the following are equivalent: 
	\begin{enuma}
		\item   For all convex bodies $K$ and $L$ 
		$$\wt \V(K  ,- L ,\mathbf C)^2 \geq  \wt \V(K  ,-K ,\mathbf C)   \wt \V( L,- L ,\mathbf C).$$
		\item \label{item:equiv-2} For all finite signed  Borel measures $\mu$ with centroid at the origin  and all convex bodies $L$ 
		$$\wt \V( \mu ,- L ,\mathbf C)^2 \geq  \wt \V(\mu  ,a_* \mu ,\mathbf C)   \wt \V( L,- L ,\mathbf C) .$$
		\item \label{item:equiv-3}For all  finite signed  Borel measures $\mu$ with centroid at the origin and all convex bodies $L$ with $\wt \V(L,-L,\mathbf C)>0$, 
		$$ \wt \V(\mu, -L,\mathbf C)=0 \text{ implies }  \wt \V(\mu, a_* \mu,\mathbf C)\leq 0.$$
	\end{enuma}
	Moreover, if $L$ is throughout assumed to be centrally symmetric, then these statements are also equivalent. 
\end{lemma}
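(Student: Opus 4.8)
The plan is to establish the three-way equivalence by proving the cyclic chain of implications $(a)\Rightarrow(b)\Rightarrow(c)\Rightarrow(a)$, following closely the standard argument for the three equivalent forms of the classical Alexandrov--Fenchel inequality (as in \cite[Lemma 3.11]{ShenfeldHandel:Extremals}), but carried out in our multilinear setting. The essential point is that $\wt\V(\,\cdot\,,C_1,\ldots,C_{n-1})$, extended to finite signed Borel measures with centroid at the origin via the decomposition $\mu=S_K-S_L$, is linear in the measure slot (by the previous lemma it is well defined, and linearity is immediate from $S_{K\#K'}=S_K+S_{K'}$ together with Proposition~\ref{prop:prop-wtV}\ref{item:B-sum}). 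Likewise the bilinear form $(\mu,\nu)\mapsto \wt\V(\mu,\nu,\mathbf C)$ is symmetric, because the underlying higher-rank mixed volume is symmetric in its entries and $\mathbf C$ is centrally symmetric. I would state these linearity and symmetry facts explicitly at the start of the proof, since the whole argument is really a statement about the symmetric bilinear form $q_{\mathbf C}(\mu,\nu)=\wt\V(\mu,a_*\nu,\mathbf C)$ on the space of signed measures with vanishing centroid.

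First I would prove $(b)\Rightarrow(a)$: this is trivial, since a convex body $K$ gives the admissible measure $\mu=S_K$, for which $a_*\mu=S_{-K}$ and $\wt\V(\mu,a_*\mu,\mathbf C)=\wt\V(K,-K,\mathbf C)$, reducing $(b)$ to $(a)$. Conversely, for $(a)\Rightarrow(b)$, write $\mu=S_K-S_L$; then $\wt\V(\mu,-N,\mathbf C)=\wt\V(K,-N,\mathbf C)-\wt\V(L,-N,\mathbf C)$ is a difference of the bilinear form evaluated on bodies, and the passage from the body-form to the measure-form is the familiar density argument: the inequality $(a)$ says the symmetric bilinear form $q_{\mathbf C}$ restricted to pairs of surface area measures satisfies the reverse Cauchy--Schwarz inequality with the ``index one'' signature, and this extends by bilinearity to arbitrary differences $S_K-S_L$ exactly as in the classical case. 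The only subtlety is that one must take the convex body $L$ in $(b)$ (not a general measure) so that $\wt\V(L,-L,\mathbf C)\ge 0$ is guaranteed; I would handle this by first proving the weak form $\wt\V(M,-M,\mathbf C)\ge 0$ for all convex bodies $M$ (which follows from $(a)$ applied with $K=L=M$, or directly from the classical Alexandrov--Fenchel inequality via Proposition~\ref{prop:prop-wtV}\ref{item:B-sum}), and then running the standard polarization/continuity argument.

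The equivalence $(b)\Leftrightarrow(c)$ is the passage between a reverse Cauchy--Schwarz inequality and its ``kernel'' formulation. For $(c)\Rightarrow(b)$: given $\mu$ and $L$ with $\wt\V(L,-L,\mathbf C)>0$, set $\lambda=\wt\V(\mu,-L,\mathbf C)/\wt\V(L,-L,\mathbf C)$ and apply $(c)$ to the measure $\mu'=\mu-\lambda S_L$ (which still has centroid at the origin and satisfies $\wt\V(\mu',-L,\mathbf C)=0$ by construction); expanding $\wt\V(\mu',a_*\mu',\mathbf C)\le 0$ using bilinearity and symmetry yields $(b)$ after rearranging, in the case $\wt\V(L,-L,\mathbf C)>0$. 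The degenerate case $\wt\V(L,-L,\mathbf C)=0$ of $(b)$ is obtained by a continuity/perturbation argument, replacing $L$ by $L+\epsilon B$ or by using that both sides are continuous in the entries (Proposition~\ref{prop:prop-wtV}(b)) and letting a regularizing parameter tend to zero. For $(b)\Rightarrow(c)$: if $\wt\V(\mu,-L,\mathbf C)=0$ then the right-hand side of $(b)$ reads $0\ge \wt\V(\mu,a_*\mu,\mathbf C)\,\wt\V(L,-L,\mathbf C)$, and dividing by the positive quantity $\wt\V(L,-L,\mathbf C)$ gives $\wt\V(\mu,a_*\mu,\mathbf C)\le 0$. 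Finally, the last sentence of the lemma — that the three statements remain equivalent when $L$ is required to be centrally symmetric throughout — follows because in every implication the body $L$ that is produced can be taken centrally symmetric: in $(c)\Rightarrow(b)$ one only ever uses $S_L$ for a symmetric $L$, and in $(a)\Leftrightarrow(b)$ the symmetrization $L\#(-L)$ (together with the factor-of-$2$ identities in the proof of Proposition~\ref{prop:prop-wtV}) lets one reduce a general $L$ to a symmetric one without changing the relevant quantities up to known constants; I would spell this reduction out at the end.

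The main obstacle I anticipate is \emph{not} the algebra of the equivalences — that is routine bilinear-form manipulation — but rather the careful bookkeeping of the two points where convexity (as opposed to mere signed-measure structure) is genuinely used: namely, ensuring in $(a)\Rightarrow(b)$ that the target body $L$ has $\wt\V(L,-L,\mathbf C)\ge 0$, and handling the boundary case $\wt\V(L,-L,\mathbf C)=0$ in $(c)\Rightarrow(b)$ by approximation. Both are standard in the classical theory but require invoking continuity of $\wt\V$ and the already-established weak inequality $\wt\V(M,-M,\mathbf C)\ge 0$; I would isolate that weak inequality as the first step of the proof so that the rest proceeds purely formally.
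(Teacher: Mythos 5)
Your overall architecture matches the paper's: the implications $(b)\Rightarrow(a)$ and $(b)\Rightarrow(c)$ are trivial, $(c)\Rightarrow(b)$ is the orthogonalization trick with $\mu'=\mu-\lambda S_L$ (your treatment of this, including the reduction to $\wt\V(L,-L,\mathbf C)>0$ by approximation and the final division step for $(b)\Rightarrow(c)$, is exactly the paper's argument and is correct), and the centrally symmetric variant follows because the whole proof goes through verbatim when $L$ is restricted.

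The genuine gap is in $(a)\Rightarrow(b)$. A quadratic inequality that is known only on a cone does \emph{not} ``extend by bilinearity'' to the linear span: writing $\mu=S_{K'}-S_{K''}$ for arbitrary bodies and applying $(a)$ to the pairs $(K',L)$, $(K'',L)$, $(K',K'')$ gives three scalar inequalities that cannot be recombined into the desired inequality for $\mu$. The actual mechanism is different and specific: one must decompose $\mu=S_K-\alpha S_L$ with the \emph{same} body $L$ that appears in statement $(b)$. Then, upon expanding $\wt\V(S_K-\alpha S_L,-L,\mathbf C)^2$ and $\wt\V(S_K-\alpha S_L,a_*(S_K-\alpha S_L),\mathbf C)\,\wt\V(L,-L,\mathbf C)$, all $\alpha$-dependent terms cancel identically (using the symmetry of the form, which requires the central symmetry of $\mathbf C$), so that $(b)$ for $\mu$ reduces exactly to $(a)$ for the pair $(K,L)$. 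Producing such a decomposition is where the real work lies: the paper first approximates $K'$, $K''$, $L$ by bodies with smooth, strictly positively curved boundary so that $\mu$ and $S_L$ have continuous densities with $f_L$ bounded below, picks $\alpha>0$ large enough that $f+\alpha f_L>0$, and invokes Minkowski's existence theorem to realize $f+\alpha f_L$ as a surface area measure $S_K$. None of this is visible in your sketch, and the subtlety you do flag is not the relevant one: the inequality $\wt\V(L,-L,\mathbf C)\ge 0$ does \emph{not} follow from $(a)$ with $K=L$ (that instance of $(a)$ is the tautology $\wt\V(L,-L,\mathbf C)^2\ge\wt\V(L,-L,\mathbf C)^2$); it holds simply because $\wt\V$ on bodies is a positive multiple of a mixed volume, and when it vanishes statement $(b)$ is trivially true anyway. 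I recommend replacing the ``bilinearity'' claim with the explicit $\mu=S_K-\alpha S_L$ decomposition and the cancellation computation.
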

\begin{proof} The implications   (b)$\Rightarrow$(c)  and (b)$\Rightarrow$(a) are trivial. 
	Assume that (a) holds.
	Suppose $\mu = S_{K'} - S_{K''}$. Since $\wt \V$ is continuous by Proposition~\ref{prop:prop-wtV}, approximating $K'$,  $K''$, and $L$ by convex bodies with smooth and strictly positively curved boundary,  we may assume that $\mu$ and $S_L$  have  continuous densities with respect to the spherical Lebesgue measure, $d\mu= f(u) du$ and $dS_L= f_L(u) du$, and that   $f_L$ is strictly positive. Hence there exists a number $\alpha >0$ such that 
	$(f + \alpha f_L) - \alpha f_L$ is a decomposition into strictly positive functions. Since $$\int_{S^{n-1}} u(f + \alpha f_L)(u) du=0,$$ the hypothesis of Minkowski's existence theorem is satisfied and, consequently,  there exists a convex body $K$ such that 
	$\mu= S_K - \alpha S_L$.  Plugging this decomposition into (b) and expanding both sides, we see that the terms containing $\alpha$ cancel. This 
	shows (a)$\Rightarrow$(b).
	
	Suppose that (c) holds. To show (b) we may assume by approximation that  $\wt\V(L,-L,\mathbf C)>0$. Since $$
	\wt \V(\mu- \alpha S_L,-L,\mathbf C)=0$$ for $\alpha = \wt \V(\mu, -L,\mathbf C)/\wt \V(L,-L,\mathbf C)$, (c) yields
	$$ 0 \geq \wt \V(\mu- \alpha S_L,a_*(\mu-\alpha S_L),\mathbf C)= \wt \V(\mu,a_*\mu, \mathbf C) - \frac{\wt \V(\mu, -L,\mathbf C)^2}{\wt \V(L,-L,\mathbf C)}.$$
	This finishes the proof of (c)$\Rightarrow$(b).
	
	Finally, the above proof works without change if one assumes that $L$ is centrally symmetric.
\end{proof}

\begin{proof}[Proof of Theorem~\ref{thm:AF}] For centrally symmetric convex bodies  $L$, the statement was proved  in \cite[Theorem 1.4]{KotrbatyWannerer:MixedHR}. Therefore, all the  statements of Lemma~\ref{lemma:equivalent-forms} hold for centrally symmetric convex bodies $L$.  
	
 Let   $L$  be a convex body with nonempty interior, possibly not centrally symmetric, and suppose $\wt \V(L,-L,\mathbf C)>0$. Put $L':= L\#(-L)$. Then $L'$ is centrally symmetric and $$ \wt  \V(L,L',\mathbf C)= \wt \V(L,L,\mathbf C) + \wt \V(L,-L,\mathbf C)>0.$$ 
	
	Suppose $\wt \V(\mu, -L, \mathbf C)=0$. 	Since   $\wt \V(\mu- \alpha S_L, L',\mathbf C) =0$ for $$\alpha =   \wt \V(\mu, L',\mathbf C)/ \wt \V(L, L',\mathbf C)$$
and since $L'$ is centrally symmetric,
$$ 0 \geq \wt \V(\mu- \alpha S_L, a_*(\mu- \alpha S_L),\mathbf C)= \wt \V(\mu,a_*\mu, \mathbf C) + \alpha^2 \wt \V(L,-L,\mathbf C). $$
	We conclude that  $0\geq \wt \V(\mu,a_*\mu, \mathbf C)$. Thus we proved that  Lemma~\ref{lemma:equivalent-forms}\ref{item:equiv-3} holds for convex bodies $L$ with nonempty interior. The proof of (c)$\Rightarrow$(b) applies without change and shows that  Lemma~\ref{lemma:equivalent-forms}\ref{item:equiv-2} holds for convex bodies $L$ with non-empty interior. A standard approximation argument now concludes the proof.
\end{proof}

\begin{proof}[Proof of Theorem~\ref{mthm:AF}] Choose a euclidean inner product to identify $V$ with $\RR^n$ such that $\vol$ is the Lebesgue measure. Let $x\in \Pi^1(\RR^n)$ be given. 
	By Lemma~\ref{lemma:existence-signed}, we may write $x= \ell_P - \ell_{P'}$ for certain polytopes $P$ and $P'$.  Hence
	$$ -\sigma(x)\cdot \ell_Q \cdot  \ell_{\mathbf C}  = (\ell_P - \ell_P') \cdot \ell_{-Q} \cdot  \ell_{\mathbf C} = \wt \V(\mu, -Q, \mathbf C)$$
	with $\mu= S_P- S_{P'}$.  The assertion follows now from immediately from Theorem~\ref{thm:AF} in the formulation of  Lemma~\ref{lemma:equivalent-forms}\ref{item:equiv-2}. 
\end{proof}

\section{Finite-dimensional subalgebras}
\label{sec:finite-dim}

In this section, we take a closer look at the subalgebras $\A^*(E)$ and $\A_+^*(E)$, defined in the introduction, and their connection with algebraic combinatorics.
Throughout this section, we work with a fixed positive density $\vol$ on $V$.  Let $E$ be a finite set of lines in $V^*$  that pass through the origin and are not contained in a single hyperplane.

First of all, observe  that $$\nc(\A^1(E))\subseteq \linspan \{[L^+],[L^-]\colon L\in E\},$$ where $L^+$ and $L^-$ denote the two half lines corresponding to $L$. Hence $\A^1(E)$ and, consequently,  $\A^*(E)$ are indeed finite-dimensional.

Recall that $\sin(L,L')$ denotes the sine of the principal angles between $L$ and $L'$. 
\begin{proposition}\label{prop:dim A}	Let $E$ be a finite set of lines in $\RR^n$ that pass through the origin and are not contained in a single hyperplane. As a vector space, 
	$ \A_+^*(E)$ is spanned by the linearly independent elements $\{ x_L\colon L\in \calL_k(E)\}$. The algebra structure is determined by 
	\begin{equation}\label{eq:algA} x_L\cdot x_{L'} = 	\begin{cases}  \sin(L,L')\, x_{L+L'} & \text{if } L\cap L'=\{0\},\\
		0  & \text{otherwise}.
	\end{cases}\end{equation}
Moreover, $$\Kc(E)= \left\{ \sum_{L\in E} c_L x_L\colon c_L>0 \text{ for all }L\in E\right\}.$$
\end{proposition}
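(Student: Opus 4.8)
The plan is to reduce everything to the two facts already at our disposal: the multiplication rule for the elements $x_L$ recorded in Corollary~\ref{cor:mult flats}, and Minkowski's existence theorem (Theorem~\ref{thm:Minkowski}). I would argue one degree at a time.

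\emph{Degree one.} First I would show $\A^1_+(E)=\linspan\{x_L:L\in E\}$, with the $x_L$, $L\in E$, linearly independent. For the inclusion ``$\subseteq$'', let $P$ be a centrally symmetric polytope whose facet conormals lie on lines in $E$, and fix unit vectors $u_L\in L$. By central symmetry the facets of $P$ come in antipodal pairs of equal $(n-1)$-volume, so $S_P=\sum_{L\in E}\beta_L(\delta_{u_L}+\delta_{-u_L})$ with $\beta_L\ge0$; feeding this into the normal cycle embedding and comparing with the definition of $x_L$ gives $\ell_P=\sum_{L\in E}\gamma_L\beta_L\,x_L$ for fixed positive constants $\gamma_L$ (depending only on the chosen $u_L$ and on $\vol$). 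Since $\A^1_+(E)$ is spanned by such $\ell_P$, this gives the inclusion. Linear independence of the $x_L$ follows from injectivity of $\nc$, since (as observed just before the proposition) $\nc(\A^1(E))\subseteq\linspan\{[L^+],[L^-]:L\in E\}$ and $\nc(x_L)$ is a nonzero multiple of $[L^+]+[L^-]$. For ``$\supseteq$'': because $E$ is not contained in a hyperplane, the $u_L$ span $\RR^n$, hence every even discrete measure $\sum_{L\in E}\beta_L(\delta_{u_L}+\delta_{-u_L})$ with $\beta_L>0$ has centroid at the origin and is not concentrated on an equator; by Minkowski's existence theorem it is $S_{P(\beta)}$ for a centrally symmetric polytope $P(\beta)$, and then $\ell_{P(\beta)}=\sum_{L\in E}\gamma_L\beta_L\,x_L$. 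Subtracting two such classes whose weights differ only at one line $L_0$ exhibits a nonzero multiple of $x_{L_0}$ in $\A^1_+(E)$.

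\emph{Higher degrees and the multiplication table.} Since $\A^*_+(E)$ is the subalgebra generated by its degree-one part, $\A^k_+(E)$ is spanned by the products $x_{L_1}\cdots x_{L_k}$ with $L_i\in E$. By Corollary~\ref{cor:mult flats} such a product vanishes unless $L_1,\dots,L_k$ are in direct sum, in which case it is a positive multiple of $x_L$ for $L=L_1\oplus\cdots\oplus L_k\in\calL_k(E)$; conversely any $L\in\calL_k(E)$ is spanned by some $k$ of the lines from $E$ it is a sum of, so $x_L$ does arise. Thus $\A^k_+(E)=\linspan\{x_L:L\in\calL_k(E)\}$, and these elements are linearly independent (again by injectivity of $\nc$: for distinct $k$-dimensional $L$ the classes $\nc(x_L)$ lie in distinct summands of $\Sigma(V)$), hence form a basis and $\dim\A^k_+(E)=|\calL_k(E)|$. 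The rule \eqref{eq:algA} is now precisely Corollary~\ref{cor:mult flats} restricted to $L,L'\in\bigcup_k\calL_k(E)$, noting that $L+L'$ is again a sum of lines from $E$, hence lies in $\calL_{\dim L+\dim L'}(E)$, whenever $L\cap L'=\{0\}$.

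\emph{The cone and the main obstacle.} If $\ell_P\in\Kc(E)$, then $\ell_P\in\A^1_+(E)=\linspan\{x_L:L\in E\}$, and injectivity of $\nc$ forces every facet conormal of $P$ to lie on a line in $E$; since by definition of $\Kc(E)$ each line of $E$ also contains a facet conormal of $P$, the degree-one computation gives $\ell_P=\sum_{L\in E}\gamma_L\beta_L(P)\,x_L$ with all $\beta_L(P)>0$. Conversely, given $c_L>0$ for every $L\in E$, put $\beta_L=c_L/\gamma_L$; the polytope $P(\beta)$ from the degree-one step is centrally symmetric, satisfies $\ell_{P(\beta)}=\sum_{L\in E}c_L\,x_L$, and has a facet on each line of $E$, so $\ell_{P(\beta)}\in\Kc(E)$. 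This gives the stated description of $\Kc(E)$. The only step that is not purely formal is the surjectivity used here and in degree one---realizing an arbitrary positive combination $\sum_{L\in E}c_Lx_L$ as $\ell_P$ for an honest centrally symmetric polytope. This rests on Minkowski's existence theorem together with the translation of $\nc(\ell_P)$ back into $S_P$, and it is exactly where the hypothesis ``$E$ not contained in a hyperplane'' is needed, to guarantee the relevant measure is not concentrated on an equator. Everything above degree one, and the full algebra structure, is a formal consequence of Corollary~\ref{cor:mult flats}.
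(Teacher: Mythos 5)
Your proposal is correct and follows essentially the same route as the paper: write $\ell_P=\sum_{L\in E}c_Lx_L$ via the surface area measure and the normal cycle embedding, invoke Corollary~\ref{cor:mult flats} for the span of the higher graded pieces and the multiplication table, use injectivity of $\nc$ for linear independence, and derive the description of $\Kc(E)$ from Minkowski's existence theorem. The paper's proof is just a terser version of the same argument; your extra care with the surjectivity step (realizing an arbitrary positive combination as $\ell_P$) fills in exactly what the paper leaves implicit.
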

\begin{proof} If $\ell_P$ belongs to $\A_+^1(E)$, then   $\ell_P = \sum_{L\in E} c_L x_L$, where  $c_L$ is  the $(n-1)$-dimensional volume of the facet of $P$ perpendicular to $L$. Applying Corollary~\ref{cor:mult flats}, one obtains  $$\A_+^k(E)= \linspan\{ x_L\colon L \in \calL_k(E)\}$$ and 
	the description of the algebra structure. By the injectivity of the normal cycle embedding,  the elements $x_L$, $L \in \calL_k(E)$, are linearly independent.

The description of $\Kc(E)$ is an immediate consequence of Minkowski's existence theorem.
\end{proof} 

Note that the proposition implies 	
$$\dim \A^k_+(E) = |\calL_k(E)|.$$
Consequently, the injective hard Lefschetz property of Conjecture~\ref{conj:Kahler} directly implies  Dowling--Wilson conjecture.

 In general,  one expects  for $k\leq n/2$ $$ \dim \A^k(E)<\dim \A^{n-k}(E).$$ It follows that  Poincar\'e duality, while valid in $\Pi^*(V)$, will  not hold in this setting. 
The M\"obius algebra, which appears in the context of the Dowling--Wilson conjecture and which we will define next, displays an analogous lack of  Poincar\'e duality. 
\begin{definition}
	Let $E$ be a finite set of lines in $\RR^n$  that pass through the origin and are not contained in a single hyperplane.
	 The graded M\"obius algebra is the free  real vector space $ \Moe^*(E)$
	spanned by the elements $y_L$, $L\in \bigcup_{k=1}^n \calL(E)$ together with the product defined by
	\begin{equation}\label{eq:algM} y_L\cdot y_{L'} = \begin{cases} y_{L+L'} & \text{if } L\cap L'= \{0\},\\
		0 & \text{otherwise.}
	\end{cases}\end{equation}
	The grading compatible with the product is defined by $\Moe^k(E)= \linspan\{ y_L\in \calL_k(E)\}$.
\end{definition}

The comparison of \eqref{eq:algM} with \eqref{eq:algA} suggests that  $\A_+^*(E)$ should be regarded as a volumetric version of the graded M\"obius algebra. The  injective hard Lefschetz property and the Hodge--Riemann relations within the context of the graded  M\"obius algebra were recently established in the  \cite{HuhWang:Enumeration,BHMPW:Singular}. 

We close this section with a description of $\A^*(E)$ in the simplest case.

\begin{example}If $E\subseteq V^*$ is as  in Definition~\ref{def:A}, then $E$ consists of at least $n$ lines. Suppose that  $|E|=n$. Choose a euclidean inner product to identify $V\cong \RR^n$ so that $E$ consists of the coordinate axes and $\vol$ is the Lebesgue measure. Then, $\calL_k(E)$ is the set of all $k$-dimensional coordinate subspaces of $\RR^n$. If $\ell_P\in \A^1(E)$, then $P$ must be a box $P=[a_1,b_1]\times \cdots \times [a_n,b_n]\subseteq \RR^n$ and therefore centrally symmetric. It follows that $\A^*(E)= \A^*_+(E)\cong\Moe^*(E)$. Consequently, 
Conjecture~\ref{conj:Kahler}  in the case $\ell_{C_{0}}=\cdots =\ell_{C_{n-2k}}$ is implied by the corresponding statement for the graded M\"obius algebra.
\end{example}

\section{The degree one case}

Since the injective hard Lefschetz property is an immediate consequence of the Hodge--Riemann relations and since  inequality \eqref{eq:HR-ineq} for $k=1$ is a special case  of  Theorem~\ref{mthm:AF}, the only remaining task is to characterize when equality occurs in \eqref{eq:HR-ineq}.  Our methods allow us to prove the following slightly more general statement. 

\begin{theorem}\label{thm:equality}
Let     $\ell_{C_1},\ldots, \ell_{C_{n-2}}\in \Kc(E)$ and define $\ell_{\mathbf C}= \ell_{C_1}\cdots \ell_{C_{n-2}}$.	Let $Q$ be a polytope in $V$ satisfying  $\sigma(\ell_{Q}) \cdot \ell_{Q} \cdot \ell_{\mathbf C}>0$. Then, for all $x\in \A^1(E)$,
\begin{equation}\label{eq:equality} \sigma(x) \cdot \ell_{Q}\cdot \ell_{\mathbf C} =0 \quad \text{and} \quad \sigma(x)\cdot x \cdot  \ell_{\mathbf C}=0\end{equation}
implies $x=0$.	
\end{theorem}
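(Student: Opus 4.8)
The inequality in \eqref{eq:HR-ineq} for $k=1$ is the instance of Theorem~\ref{mthm:AF} in which $C_1,\dots,C_{n-2}\in\Kc(E)$, so only the equality statement \eqref{eq:equality} has to be proved. I would first reduce it to an injective hard Lefschetz statement. Set $q(x,y)=\sigma(x)\cdot y\cdot\ell_{\mathbf C}$, a symmetric bilinear form on $\Pi^1(V)$. Theorem~\ref{mthm:AF} says $q(x,\ell_Q)^2\ge q(x,x)\,q(\ell_Q,\ell_Q)$ for all $x\in\Pi^1(V)$, and since $q(\ell_Q,\ell_Q)>0$ this forces $q$ to be negative semidefinite on $N=\{z:q(z,\ell_Q)=0\}$, with $\Pi^1(V)=N\oplus\RR\ell_Q$. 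If $x$ satisfies \eqref{eq:equality}, then $x\in N$ and $q(x,x)=0$, so Cauchy--Schwarz for the semidefinite form $-q|_N$ gives $q(x,z)=0$ for all $z\in N$, hence for all $z\in\Pi^1(V)$. Thus $\sigma(x)\cdot\ell_{\mathbf C}\in\Pi^{n-1}(V)$ pairs to zero with all of $\Pi^1(V)$, so $\sigma(x)\cdot\ell_{\mathbf C}=0$ by Poincar\'e duality (Theorem~\ref{thm:Poincare}); applying $\sigma$ and using $\sigma(\ell_{C_i})=-\ell_{C_i}$ this becomes $x\cdot\ell_{\mathbf C}=0$. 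Hence it suffices to prove the \emph{injective hard Lefschetz property in degree one}: for $x\in\A^1(E)$, the relation $x\cdot\ell_{\mathbf C}=0$ implies $x=0$.

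I would prove this by induction on $n=\dim V$. Write $\ell_{C_1}=\sum_{L\in E}c_L\,x_L$ with all $c_L>0$ (Proposition~\ref{prop:dim A}). Since multiplication by $x_L$ equals $y\mapsto([\{0\}]\otimes\mu_{X_L})\boxtimes i_L^*y$ (Proposition~\ref{prop:mult-res}, $i_L\colon L^\perp\hookrightarrow V$ the inclusion) and $i_L^*$ is a morphism of algebras,
\[
 x\cdot\ell_{\mathbf C}=\sum_{L\in E}c_L\,\bigl([\{0\}]\otimes\mu_{X_L}\bigr)\boxtimes\bigl(i_L^*x\cdot i_L^*\ell_{C_2}\cdots i_L^*\ell_{C_{n-2}}\bigr).
\]
By Theorem~\ref{thm:special-elem}, $i_L^*x_{L'}=x_{i_L^*L'}$ for $L'\ne L$ and $i_L^*x_L=0$; hence $i_L^*$ maps $\A^*(E)$ into $\A^*(i_L^*E)$, each $i_L^*\ell_{C_j}$ lies in $\Kc(i_L^*E)$, and $i_L^*E=\{i_L^*L':L'\in E,\ L'\ne L\}$ is a finite set of lines in $(L^\perp)^*$ not contained in a hyperplane. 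A routine argument with the normal cycle embedding shows that vanishing of the displayed sum forces $i_L^*x\cdot i_L^*\ell_{C_2}\cdots i_L^*\ell_{C_{n-2}}=0$ for every $L\in E$; this is the degree-one hard Lefschetz relation for $\A^*(i_L^*E)$ in the $(n-1)$-dimensional space $L^\perp$ with the $n-3$ reference classes $i_L^*\ell_{C_2},\dots,i_L^*\ell_{C_{n-2}}\in\Kc(i_L^*E)$, so by the inductive hypothesis $i_L^*x=0$ for every $L\in E$. The induction closes once one knows that $i_L^*x=0$ for all $L\in E$ forces $x=0$.

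This last implication is the crux, and where I expect Motzkin's lemma to be essential. Under the normal cycle embedding, $i_L^*$ amounts to projecting the data of $x$ away from the line $L$, i.e.\ from the point $[L]$ in $\mathbb{P}(V^*)$, and the contributions of two lines $L',L''\in E$ coincide under this projection exactly when $[L],[L'],[L'']$ are collinear. In the decisive case $n=3$, where $E$ is a finite set of points in the real projective plane, not all collinear, Motzkin's lemma provides a line meeting $E$ in exactly two points $[L_1],[L_2]$; projecting from $[L_1]$ then isolates $[L_2]$, so $i_{L_1}^*x=0$ kills the $L_2$-component of $x$, and one concludes by induction on $|E|$ applied to $E\setminus\{[L_2]\}$ (within its linear span). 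The general case is reduced to this planar situation. I expect this interplay between the inductive reduction and the combinatorics of the configuration $E$ to be the main obstacle; the reduction in the first paragraph, the density and exterior-product bookkeeping in the second, and the separation of the summands in the displayed identity should all be routine given Theorem~\ref{thm:special-elem}, Proposition~\ref{prop:mult-res}, and the injectivity of the normal cycle embedding.
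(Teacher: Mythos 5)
Your overall architecture matches the paper's: the reduction of \eqref{eq:equality} to the degree-one injective hard Lefschetz statement $x\cdot\ell_{\mathbf C}=0\Rightarrow x=0$ via Cauchy--Schwarz and Poincar\'e duality is exactly the paper's Lemma~\ref{lemma:HLHR}, the induction runs over the restrictions to the hyperplanes $L^\perp$, $L\in E$, and the final step ($i_L^*x=0$ for all $L\in E$ forces $x=0$) is the paper's Proposition~\ref{prop:res}, proved as you sketch with Motzkin's lemma. However, there is a genuine gap at the decisive step of the inductive argument: the claim that vanishing of
$x\cdot\ell_{\mathbf C}=\sum_{L\in E}c_L\,([\{0\}]\otimes\mu_{X_L})\boxtimes\bigl(i_L^*x\cdot i_L^*\ell_{C_2}\cdots i_L^*\ell_{C_{n-2}}\bigr)$
forces each summand to vanish is not a ``routine argument with the normal cycle embedding,'' and in fact the separation fails in general. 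The summand indexed by $L$ has normal cycle supported on $(n-1)$-dimensional cones in $V^*$ containing $L$, and any $(n-1)$-dimensional subspace $N\in\calL_{n-1}(E)$ contains several lines of $E$; all of those lines contribute to the component of the normal cycle supported in $N$, so cancellation between distinct summands is possible. Concretely, already for $n=3$ and $x=x_{L_1}-\lambda x_{L_2}$ with $\lambda=c_{L_2}/c_{L_1}$, the contributions of $L_1$ and $L_2$ to the coefficient of $x_{L_1+L_2}$ cancel while neither summand is zero; the map from $\bigoplus_{L}\Pi^{n-2}(L^\perp)$ to $\Pi^{n-1}(V)$ given by summing these exterior products has a nontrivial kernel, so support considerations alone cannot isolate the summands.

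The paper's proof supplies exactly the missing positivity input. Instead of trying to separate the summands of the element $x\cdot\ell_{\mathbf C}\in\Pi^{n-1}(V)$, it first applies the algebra morphism $j_L^*$ to $x\cdot\ell_{\mathbf C}=0$ to obtain the top-degree (scalar) identity $(j_L^*x)\cdot\ell_{C_1^L}\cdots\ell_{C_{n-2}^L}=0$ in $L^\perp$; this is the primitivity condition needed to invoke the Alexandrov--Fenchel inequality (Theorem~\ref{thm:AF}) \emph{inside} $L^\perp$, which yields $\sigma(j_L^*x)\cdot(j_L^*x)\cdot\ell_{C_1^L}\cdots\ell_{C_{n-3}^L}\geq 0$ for every $L$. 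Only then does it expand the \emph{scalar} $0=\sigma(x)\cdot x\cdot\ell_{\mathbf C}=\sum_{L\in E}\alpha_L\,\sigma(j_L^*x)\cdot(j_L^*x)\cdot\ell_{C_1^L}\cdots\ell_{C_{n-3}^L}$ with $\alpha_L>0$: a sum of nonnegative numbers vanishing forces each term to vanish, and the inductive hypothesis (the full equality statement of the theorem in $L^\perp$, with reference bodies $C_1^L,\ldots,C_{n-3}^L$ and $Q'=C_{n-2}^L$) then gives $j_L^*x=0$. This is the standard ``Hodge--Riemann in dimension $n-1$ implies hard Lefschetz in dimension $n$'' mechanism, and it is the key idea your proposal replaces with an unjustified linear-algebra separation. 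To repair your argument you should keep both hypotheses of \eqref{eq:equality} in play through the induction (or equivalently induct on the full equality statement rather than on hard Lefschetz alone) and route the separation of the $L$-summands through the sign of the quadratic form rather than through the normal cycle embedding.
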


We will give a direct proof of  Theorem~\ref{thm:equality} for $n=2$ and we will treat the case  $n>2$ by induction.

\subsection{The $2$-dimensional case}

It is a classical fact, that if $L$ is a convex body in the plane with $\V(L,L)>0$,  then for any difference of support functions $f$, the equations  
$$ \V(f,L)= 0\quad  \text{and}\quad  \V(f,f)=0$$
imply that $f$ is a linear functional. See \cite[Lemma 3.12]{ShenfeldHandel:Extremals} for a simple and direct proof.  Using this fact, we readily verify Theorem~\ref{thm:equality} in dimension two. Lemma~\ref{lemma:HLHR} below provides another proof.

\begin{lemma}\label{lemma:dim2}
Theorem~\ref{thm:equality} holds for $n=2$.
\end{lemma}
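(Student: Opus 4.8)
The plan is to reduce Theorem~\ref{thm:equality} in dimension two to the classical planar fact quoted just above, using the dictionary between degree-one elements of $\Pi^*(\RR^2)$ and differences of support functions on $S^1$. First I would identify $V\cong\RR^2$ with $\vol$ the Lebesgue measure. For $n=2$ the tuple $\mathbf C$ is empty, so $\ell_{\mathbf C}=e_V$ is the identity, and the hypotheses become $\sigma(\ell_Q)\cdot\ell_Q>0$ and the two equations $\sigma(x)\cdot\ell_Q=0$, $\sigma(x)\cdot x=0$. By Lemma~\ref{lemma:pairing}, for elements of complementary degree the intersection product and the McMullen convolution agree up to the dilation $\Delta(-1)$; concretely, $(\sigma(x)\cdot y)_0=(\sigma(x)*\Delta(-1)y)_2$ for $x,y\in\Pi^1(\RR^2)$, and on $\Pi^1$ one has $\sigma=-\Delta(-1)$, so $\sigma(x)\cdot y = -(x*y)$ after identifying $\Pi^2(\RR^2)\cong\RR$ via $\vol$. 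Thus the two equations read $x*\ell_Q=0$ and $x*x=0$ in $\Pi^2(\RR^2)\cong\RR$, i.e.\ $\V(x,Q)=0$ and $\V(x,x)=0$ where $x$ is realized, via Lemma~\ref{lemma:existence-signed}, as a difference of support functions (equivalently, as $\mu=S_P-S_{P'}$ for polytopes $P,P'$, giving a difference of support functions $f=h_P-h_{P'}$). Here $Q$ need not be centrally symmetric, but that is irrelevant: the classical lemma cited (\cite[Lemma 3.12]{ShenfeldHandel:Extremals}) applies to any convex body $L$ in the plane with $\V(L,L)>0$ and any difference of support functions $f$ with $\V(f,L)=0=\V(f,f)$, and concludes $f$ is linear.

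The remaining point is translating ``$f$ is linear'' back into ``$x=0$ in $\A^1(E)$''. A linear functional $\ell$ on $\RR^2$ has $S$-image zero (its ``surface area measure'' vanishes, equivalently the corresponding polytopal chain is a difference of a segment and its translate), so $f$ linear means $S_P-S_{P'}=0$ as signed measures on $S^1$, hence $\nc(x)=0$ by the formula $\nc(\ell_P)=\sum_i\vol_{1}(F_i)[N(F_i,P)]$ and the injectivity of the normal cycle embedding (Theorem~\ref{thm:nc-embedding}); therefore $x=0$. One small care point: I must make sure the hypothesis $\sigma(\ell_Q)\cdot\ell_Q>0$ really does translate to $\V(Q,Q)>0$, i.e.\ that $Q$ has nonempty interior; this is immediate from Lemma~\ref{lemma:pairing} and the above sign bookkeeping, since $\sigma(\ell_Q)\cdot\ell_Q=-\ell_Q*\ell_Q = -\vol(Q+(-Q))\cdot(\text{sign})$ works out to a positive multiple of $\V(Q,Q)$.

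I do not expect any serious obstacle here: the entire content is the sign/identification bookkeeping connecting $(\Pi^*(\RR^2),\cdot)$ with classical mixed volumes of planar bodies, plus one invocation of a known extremal lemma. The only thing to be careful about is getting the signs in the passage $\sigma(x)\cdot y = \pm(x*y)$ correct, and confirming that the quadratic form $x\mapsto\sigma(x)\cdot x$ on $\Pi^1(\RR^2)$ matches $-\V(f,f)$ (so that the Hodge--Riemann sign in \eqref{eq:HR-ineq} is consistent); this is a direct computation using Lemma~\ref{lemma:pairing} and $\Delta(-1)$ acting as $-1$ on $\Pi^1$. The statement that Lemma~\ref{lemma:HLHR} gives an alternative proof is a signpost to the inductive machinery and need not be used here. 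Hence:

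\begin{proof}[Proof of Lemma~\ref{lemma:dim2}]
We identify $V\cong\RR^2$ with $\vol$ the Lebesgue measure and, for $n=2$, note $\ell_{\mathbf C}=e_V$. By Lemma~\ref{lemma:existence-signed} write $x=\ell_P-\ell_{P'}$ for polytopes $P,P'$, and set $\mu=S_P-S_{P'}$, a signed measure on $S^1$ with centroid at the origin; let $f=h_P-h_{P'}$ be the corresponding difference of support functions. Combining Lemma~\ref{lemma:pairing} with the fact that $\Delta(-1)$ acts as $-\mathrm{id}$ on $\Pi^1$, one computes for $y\in\Pi^1(\RR^2)$, after the identification $\Pi^2(\RR^2)\cong\RR$ via $\vol$,
\begin{equation*}
\sigma(x)\cdot y \;=\; -\,\V(\mu_f,\mu_g),
\end{equation*}
where $g$ is a difference of support functions representing $y$ and $\V(\cdot,\cdot)$ denotes the (bilinearly extended) planar mixed volume. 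In particular the hypothesis $\sigma(\ell_Q)\cdot\ell_Q>0$ becomes $\V(Q,Q)>0$, so $Q$ has nonempty interior, while \eqref{eq:equality} becomes
\begin{equation*}
\V(f,h_Q)=0\qquad\text{and}\qquad \V(f,f)=0.
\end{equation*}
By the classical planar extremal result \cite[Lemma 3.12]{ShenfeldHandel:Extremals}, $f$ must be the restriction to $S^1$ of a linear functional on $\RR^2$. Hence $S_P=S_{P'}$ as measures on $S^1$, so by the formula for the normal cycle of a polygon and the injectivity of $\nc$ (Theorem~\ref{thm:nc-embedding}) we get $\nc(x)=\nc(\ell_P)-\nc(\ell_{P'})=0$, i.e.\ $x=0$.
\end{proof}
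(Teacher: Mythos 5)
Your proposal is correct and follows essentially the same route as the paper: write $x=\ell_P-\ell_{P'}$, translate via Lemma~\ref{lemma:pairing} into $\V(f,Q)=0$ and $\V(f,f)=0$ for the difference of support functions $f=h_P-h_{P'}$, and invoke the classical planar extremal result \cite[Lemma 3.12]{ShenfeldHandel:Extremals} to conclude $f$ is linear, hence $x=0$. The only difference is that you spell out the final step ($f$ linear $\Rightarrow$ $S_P=S_{P'}$ $\Rightarrow$ $\nc(x)=0$) more explicitly than the paper does.
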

\begin{proof}
We know from   Lemma~\ref{lemma:pairing} that 
	$$ \sigma(\ell_{P_1}) \cdot \ell_{P_2} = -\V(P_1,P_2)$$ 
	for all polytopes $P_1$ and $P_2$. 
	Suppose that  $x= \ell_P-\ell_{P'}$ for certain polytopes $P$ and $P'$,   and define $f= h_P - h_{P'}$. Assume that 
	$$0= \sigma(x)\cdot \ell_{Q}  = -\V(f,Q) \quad \text{and} \quad  0=\sigma(x)\cdot x = - \V(f,f).$$
	Since $\V(-Q,Q)= \ell_{Q}\cdot \ell_{Q}>0$ is equivalent to $\V(Q,Q)>0$,  these  equalities, as just mentioned,  imply that $f$ is a linear functional. We conclude that $x=0$. 
\end{proof}

\subsection{Restrictions}

The goal of this section is to prove that every $x\in \A^1(E)$ is determined by its restrictions to hyperplanes perpendicular to the lines in $E$. This property will be a key ingredient of our inductive setup to characterize when equality occurs in the Hodge--Riemann relations.

\begin{lemma}\label{lemma:nc-res}
	Let $H\subseteq V$ be  a linear hyperplane and let $j\colon H\to V$ denote the inclusion. The following statements hold for all $x\in \A^1(E)$:
	\begin{enuma}
			
\item  $j^* x\in \A^1(j^*E)$, where $j^*E = \{ j^*L \colon L\in E\} 
	\setminus\{ 0\}$. 
	\item More precisely, if $\nc(x) = \sum_{i=1}^m {\alpha_i} [N_i]$ with nonzero coefficients $\alpha_i$,  then $$ \nc(j^*x)= \sum_{i\colon j^* N_i\neq \{0\}} \alpha'_i [j^*N_i]$$ 
	with nonzero coefficients $\alpha_i'$. 
	\item If $x\in \Kc(E)$, then $j^*x\in \Kc(j^*E)$. 
	\end{enuma} 
	
\end{lemma}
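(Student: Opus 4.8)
The strategy is to reduce all three claims to the generators $\ell_P$ of $\A^1(E)$ and to evaluate $j^*\ell_P$ by slicing $P$ with translates of $H$. Fix a complement $X$ to $H$ in $V$, so that under the isomorphism of Lemma~\ref{lemma:iso-dens} applied to $0\to H\to V\to V/H\to 0$ we have $\vol=\vol_H\otimes\vol_X$ after identifying $V/H\cong X$. Since $j^*$ preserves the grading (Theorem~\ref{thm:pullback}\ref{item:pull-grad}) and $\ell_P$ is the $\Pi^1(V)$-component of $[P]\otimes\vol$, the element $j^*\ell_P$ is the $\Pi^1(H)$-component of $j^*([P]\otimes\vol)$. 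Extracting this component from the defining formula \eqref{eq:def-pullback} — which is legitimate because taking a graded component is linear and the integral of Lemma~\ref{lemma:measurable-subspace} is determined by pairing with linear functionals — yields
\[ j^*\ell_P=\int_X \ell_{H\cap(x+P)}\, d\vol_X(x), \]
where $\ell$ on the right is formed inside $\Pi^*(H)$ with respect to $\vol_H$.

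Now suppose every facet conormal of $P$ lies on a line in $E$, and write $u_F$ for the conormal of a facet $F$ of $P$ and $N_F=\RR_{\ge0}u_F$. By Lemma~\ref{lemma:faces-intersection-subspace} applied to the injection $j$ and the polytope $x+P$, for almost every $x$ the polytope $H\cap(x+P)$ has facets exactly $H\cap(x+F)$ for those facets $F$ with $u_F\notin H^\perp$ and $H\cap(x+\relint F)\neq\emptyset$, and the conormal of $H\cap(x+F)$ in $H^*$ is $j^*u_F$. In particular every facet conormal of $H\cap(x+P)$ lies on a line $j^*L\in j^*E$, so $\ell_{H\cap(x+P)}\in\A^1(j^*E)$ for almost every $x$; since $\A^1(j^*E)$ is a finite-dimensional, hence closed, subspace of $\Pi^1(H)$, the integral $j^*\ell_P$ also lies in $\A^1(j^*E)$. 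As $\A^1(E)$ is spanned by such $\ell_P$ and $j^*$ is linear, this proves (a). For (b) we compute normal cycles: the slicing description above gives
\[ \nc\bigl(\ell_{H\cap(x+P)}\bigr)=\sum_{F:\, u_F\notin H^\perp,\ H\cap(x+\relint F)\neq\emptyset}\vol_{n-2}\bigl(H\cap(x+F)\bigr)\,\bigl[\,\RR_{\ge0}j^*u_F\,\bigr], \]
and integrating over $X$ and using that $\nc$ is linear, the coefficient of the ray $N_F$ in $\nc(\ell_P)$, namely $\vol_{n-1}(F)$, is sent to $\rho(N_F)\,\vol_{n-1}(F)$ on the ray $j^*N_F$, where
\[ \rho(N_F)=\frac{1}{\vol_{n-1}(F)}\int_X\vol_{n-2}\bigl(H\cap(x+F)\bigr)\, d\vol_X(x) \]
is, by Fubini's theorem, a strictly positive number depending only on the position of $\bar F=u_F^\perp$ relative to $H$ and $X$, hence only on the ray $N_F$; the coefficient of $N_F$ is discarded precisely when $j^*u_F=0$. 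Writing a general $x\in\A^1(E)$ as a linear combination $\sum_k d_k\ell_{P_k}$ and using that $\rho$ depends only on the ray, the coefficient $\alpha_i$ of a ray $N_i$ in $\nc(x)$ is carried to $\alpha_i'=\rho(N_i)\,\alpha_i$ on $j^*N_i$ (and annihilated if $j^*N_i=\{0\}$); this is statement (b), and $\alpha_i'\neq0$ because $\alpha_i\neq0$ and $\rho(N_i)>0$.

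For (c), let $x\in\Kc(E)$; by Proposition~\ref{prop:dim A} we may write $x=\sum_{L\in E}c_Lx_L$ with $c_L>0$ for every $L\in E$. By Theorem~\ref{thm:special-elem}\ref{prop:mult flats}, $j^*x_L=0$ when $j(H)+L^\perp\neq V$, and for a line $L\subseteq V^*$ this happens exactly when $L^\perp=H$, i.e. $L=H^\perp$, equivalently $j^*L=\{0\}$. Otherwise $j^*x_L$ is a positive multiple $\lambda_Lx_{j^*L}$ of $x_{j^*L}$; positivity of $\lambda_L$ follows from the integral formula above applied to the $(n-1)$-dimensional polytope representing $x_L$, whose slices by $H$ have strictly positive $(n-2)$-volume on a set of positive measure (equivalently, from positivity of the pushforward density $(j^*)_*\epsilon$). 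Hence $j^*x=\sum_{L'\in j^*E}\bigl(\sum_{L\in E:\, j^*L=L'}c_L\lambda_L\bigr)x_{L'}$, and each coefficient is a sum of strictly positive numbers over a nonempty index set, so positive; by Proposition~\ref{prop:dim A} this means $j^*x\in\Kc(j^*E)$.

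The routine but somewhat delicate points — which I expect to be the main obstacle — are the genericity bookkeeping for the facets of the slices $H\cap(x+P)$ (controlling the measure-zero set of $x$ where $H$ becomes tangent to $x+P$ or where lower-dimensional faces could meet $H$) and the verification, via a change of variables, that the Fubini factor $\rho(N)$ is strictly positive and depends on the ray $N$ alone; beyond Lemma~\ref{lemma:faces-intersection-subspace} and these computations no genuinely new idea is needed.
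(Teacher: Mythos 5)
Your proof is correct and follows essentially the same route as the paper: reduce to generators $\ell_P$, express $j^*\ell_P$ as an integral of the slices $H\cap(x+P)$, and extract the normal-cycle coefficients via a Fubini/coarea factor that is positive exactly when the facet is not parallel to $H$ and depends only on the normal ray (the paper computes this factor explicitly as $|\pi(v_i)|$ for unit normals, which settles the "delicate points" you flag). The only cosmetic differences are that the paper realizes $j^*\ell_P$ as a single $\ell_{P'}$ via Minkowski's existence theorem, which gives (a) and (c) at once, where you instead invoke closedness of the finite-dimensional subspace for (a) and the basis description of $\Kc$ together with Theorem~\ref{thm:special-elem} for (c); both are fine.
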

\begin{proof}
	It suffices to prove (a) and (b) for $x= \ell_P$.  	
Choose a euclidean inner product to identify $V$ with $\RR^n$ such that $\vol$ is the Lebesgue measure.  
	 Let $F_1,\ldots, F_m$ and $v_1,\ldots, v_m$ denote the facets and the corresponding facet normals of $P$.
	 
	Let $\pi\colon \RR^n\to H$ denote the orthogonal projection and let $\RR_+=\{ \alpha\in \RR \colon \alpha>0\}$. 
	Let   $u$ be a unit normal vector to $H$. One has
	\begin{align*}  \nc(j^* x) &= 
		  \int_{\RR} \nc([ P\cap (H+tu) ]_{n-2})  dt \\
		 & = \sum_{i=1}^m  \int_{\RR}  \vol_{n-2}(F_i\cap (H+tu)) dt \, [ \RR_+ \pi(v_i)]\\
		  & = \sum_{i=1}^m  |\pi(v_i)| \vol_{n-1}(F_i) [ \RR_+ \pi(v_i)].
		 \end{align*} 
	This proves (b).
The discrete measure on $S^{n-2}$
$$ \mu = \sum_{i\colon \pi(v_i)\neq 0} |\pi(v_i)| \vol_{n-1}(F_i) \delta_{ \pi(v_i)/|\pi(v_i)|}$$
has centroid at the origin and is not concentrated on an equator. Consequently, by Minkowski's existence theorem, there exists a polytope $P'\subseteq H$ such that $\mu= S_{P'}$. We conclude that 
$j^* x= \ell_{P'}\in \A^1(j^*E)$. This finishes the proof of (a). If $P$ is centrally symmetric, then so is $P'$ and (b) follows.
\end{proof}

To prove the next proposition, we need the following lemma due to Motzkin \cite{Motzkin:Lines}.
\begin{lemma}\label{lemma:projective}
Given $m$ non-collinear points in the real projective plane, there exists a line that contains exactly two of the points.
\end{lemma}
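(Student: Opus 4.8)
The statement is the projective form of the Sylvester--Gallai theorem, and the plan is to prove it by Kelly's classical extremal argument after passing to an affine chart. Write $S$ for the given set of $m$ non-collinear points. First I would choose a projective line $\ell_\infty$ passing through none of the points of $S$; this is possible because the lines through a fixed point form a one-dimensional family inside the two-dimensional space of all projective lines, so the lines meeting $S$ lie in a union of $m$ such families and cannot exhaust the plane of all lines. Deleting $\ell_\infty$ identifies its complement with $\RR^2$, which we equip with the Euclidean metric. Every projective line other than $\ell_\infty$ restricts to an affine line in $\RR^2$ and contains exactly the same points of $S$ (none of which lies on $\ell_\infty$), while $\ell_\infty$ itself contains no point of $S$. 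Hence it suffices to exhibit an affine line in $\RR^2$ containing exactly two points of $S$.

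For the affine statement I would argue by contradiction, assuming that every line joining two points of $S$ passes through a third. Consider the set of pairs $(p,\ell)$ in which $\ell$ is a line through at least two points of $S$ and $p\in S$ does not lie on $\ell$. Since $S$ is not collinear this set is non-empty, so we may pick a pair minimizing the positive distance $d(p,\ell)$. Let $F$ be the foot of the perpendicular from $p$ to $\ell$. By assumption $\ell$ contains at least three points of $S$, hence one of the two closed rays of $\ell$ issuing from $F$ contains at least two of them; let $y$ be the one of these farthest from $F$ (so $y\neq F$) and let $x$ be another, so that $x$ lies on the half-open segment $[F,y)$. Let $m$ be the line through $p$ and $y$; it joins two points of $S$, and $x\notin m$ because $x\neq y$ and $m\cap\ell=\{y\}$ (as $p\notin\ell$). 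In the right triangle with vertices $p,F,y$ and right angle at $F$, the distance from $F$ to the hypotenuse line $m$ equals $|pF|\,|Fy|/|py|<|pF|$ since $|Fy|<|py|$; as the distance to $m$ is an affine function that vanishes at $y$, this gives $d(x,m)\leq d(F,m)<|pF|=d(p,\ell)$. This contradicts the minimality of $(p,\ell)$, so an ordinary line exists.

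I do not anticipate any genuine obstacle here: the argument is entirely elementary, and the only steps that call for a little care are the choice of the affine chart $\ell_\infty$ and the estimate $d(x,m)<d(p,\ell)$ in the right triangle $pFy$, both of which are routine. The sole conceptual point is that Kelly's argument is metric rather than projective, which is exactly why the preliminary reduction to an affine chart disjoint from $S$ is needed.
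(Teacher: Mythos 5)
Your argument is correct: the lemma is the projective form of the Sylvester--Gallai theorem, and you have given the standard two-step proof, first moving $S$ into an affine chart by choosing a line at infinity missing all $m$ points (a finite union of pencils cannot cover the dual projective plane), and then running Kelly's extremal argument on the minimizing pair $(p,\ell)$. All the delicate points are handled properly: $y\neq F$, the case $x=F$ is harmless, $x\notin m$ because distinct lines meet in one point, and the strict inequality $d(x,m)\le d(F,m)=|pF|\,|Fy|/|py|<|pF|$ follows from the area identity in the right triangle. The paper itself supplies no proof of this lemma --- it is simply quoted as a result of Motzkin --- so there is no internal argument to compare against; your write-up is a legitimate self-contained substitute for the citation. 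One cosmetic remark: you use the letter $m$ both for the number of points and for the line through $p$ and $y$; rename one of them to avoid the clash.
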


\begin{proposition}\label{prop:res}

	Let $x\in \A^1(E)$. Suppose that $j^*_{L} x =0$ for all $L\in E$, where $j_{L}\colon L^\perp\to V$ denotes the inclusion. If $n\geq 3$, then $x=0$. 
\end{proposition}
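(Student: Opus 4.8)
\emph{Proof proposal.} The plan is to argue by contradiction, exploiting the explicit description of restrictions at the level of normal cycles given by Lemma~\ref{lemma:nc-res}(b). Suppose $x\neq 0$ and write $\nc(x)=\sum_i \alpha_i[N_i]$ with distinct rays $N_i\subseteq V^*$ through the origin and nonzero coefficients $\alpha_i$; since $x\in\A^1(E)$, each $N_i$ spans a line $L(i)\in E$. Let $E'\subseteq E$ be the nonempty set of lines arising this way, and set $\Pi=\linspan E'\subseteq V^*$. For any $L_0\in E$, Lemma~\ref{lemma:nc-res}(b) gives $\nc(j_{L_0}^*x)=\sum_{i:\,j_{L_0}^*N_i\neq\{0\}}\alpha_i'[j_{L_0}^*N_i]$ with every $\alpha_i'\neq 0$, where $j_{L_0}^*N_i=\{0\}$ exactly when $L(i)=L_0$. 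Thus the only way $\nc(j_{L_0}^*x)$ can vanish is through cancellation, and $[j_{L_0}^*N_i]=[j_{L_0}^*N_k]$ for surviving $i\neq k$ forces $L(i)$ and $L(k)$ both to lie in the plane $L_0+L(i)$.

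Hence it suffices to find $L_0\in E$ and $L^*\in E'$ with $L^*\neq L_0$ such that the plane $W':=L_0+L^*$ contains no line of $E'$ besides possibly $L_0$ and $L^*$. Given such a configuration, pick any ray $N_{i_0}\subseteq L^*$. A short check—using that $j_{L_0}^*$ is injective on every line $\neq L_0$ and maps the two rays of such a line to the two distinct rays of its image, together with the defining property of $W'$—shows that $N_{i_0}$ is the \emph{only} surviving ray with $j_{L_0}^*N_i=j_{L_0}^*N_{i_0}$. Since the two rays of a line are linearly independent in the cone group $\widehat\Sigma$ of a one-dimensional space, the coefficient of $[j_{L_0}^*N_{i_0}]$ in $\nc(j_{L_0}^*x)$ is exactly $\alpha_{i_0}'\neq 0$, so $\nc(j_{L_0}^*x)\neq 0$. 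By injectivity of $\nc$ this contradicts the hypothesis $j_{L_0}^*x=0$, and we conclude $x=0$.

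It remains to produce such $L_0,L^*$, for which I would split into two cases according to $\dim\Pi$. If $\dim\Pi\le 2$, then since $n\ge 3$ the subspace $\Pi$ lies in a hyperplane, so $E\not\subseteq\Pi$; choosing $L_0\in E\setminus\Pi$ and any $L^*\in E'$, the plane $W'=L_0+L^*$ meets $\Pi$ only in $L^*$ (otherwise $W'\subseteq\Pi$, contradicting $L_0\notin\Pi$), so $L^*$ is the unique line of $E'$ in $W'$. If $\dim\Pi\ge 3$, choose linearly independent $L_1,L_2,L_3\in E'$, put $W=L_1+L_2+L_3$, and let $E'_W$ denote the lines of $E'$ contained in $W$; these give at least three distinct, non-collinear points of the real projective plane of lines through the origin in $W$, so Motzkin's Lemma~\ref{lemma:projective} yields a projective line—that is, a $2$-plane $W'\subseteq W$—containing exactly two of them, and since $W'\subseteq W$ these are the only lines of $E'$ in $W'$; take $L_0,L^*$ to be this pair. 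The main obstacle throughout is precisely the cancellation phenomenon in Lemma~\ref{lemma:nc-res}(b): distinct lines of $E'$ may project onto the same line in a hyperplane, so one cannot naively assert that the restriction is again a sum over distinct rays with nonzero coefficients. Ruling this out is elementary when $\dim\Pi\le 2$, but the general case genuinely requires a combinatorial guarantee of an ``ordinary'' $2$-plane, which is exactly what Motzkin's lemma supplies—fittingly, the same lemma Motzkin used for a special case of the Dowling--Wilson conjecture.
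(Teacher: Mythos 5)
Your proof is correct, and it takes a genuinely different route from the paper's. The paper proves the proposition by induction on $n$: the base case $n=3$ is itself an induction on $|E|$, using Motzkin's lemma to find a plane containing exactly two lines of $E$ and stripping off one line of the support at a time, while the step $n>3$ restricts $x$ to generic hyperplanes $H$ with $H^\perp\notin E$, applies the inductive hypothesis inside $H$, and then recovers $x=0$ from $j_H^*x=0$ for a single well-chosen $H$. You instead give a direct, non-inductive argument valid for all $n\ge 3$ at once: assuming $x\neq 0$, you apply Motzkin's lemma not to $E$ but to the support $E'$ of $\nc(x)$ inside a three-dimensional subspace of its span (with a separate elementary argument when that span has dimension at most two), producing a plane $L_0+L^*$ in which the ray $N_{i_0}\subseteq L^*$ has no cancellation partner under $j_{L_0}^*$, so that Lemma~\ref{lemma:nc-res}(b) forces $\nc(j_{L_0}^*x)\neq 0$. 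The key verifications all check out: $j_{L_0}^*$ kills exactly the rays in $L_0$ and identifies rays of two distinct lines only when those lines are coplanar with $L_0$; in the case $\dim\Pi\ge 3$ the triple $L_1,L_2,L_3$ is indeed non-collinear in the projective plane of $W$, and any line of $E'$ in $W'$ lies in $E'_W$; in the case $\dim\Pi\le 2$ the hypothesis that $E$ spans $V^*$ with $n\ge 3$ supplies $L_0\notin\Pi$. What your approach buys is a shorter and more conceptual proof that isolates the single combinatorial input (an ``ordinary'' plane for the support of $x$) and dispenses with both inductions; what the paper's approach buys is that the only genuinely geometric step is confined to dimension three, with the higher-dimensional case reduced formally via hyperplane restrictions---a template that recurs elsewhere in the paper (e.g.\ in the proof of Theorem~\ref{thm:equality}).
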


\begin{remark} 
	\begin{enuma}
\item 		
The analogous statement for $n=2$ is wrong. Indeed, since $L^\perp$ is one-dimensional for $n=2$, it follows that $\sigma(j_L^*x)=-j^*_Lx$  for all $x\in \Pi^1(V)$. Therefore, any $x\in \A^1(E)$ with $\sigma(x)= x$ satisfies $j^*_Lx=0$. 
\item For general $x\in \Pi^1(V)$, the condition $j^*_{L} x =0$ for all $L\in E$ does not imply $x=0$. 
\end{enuma}
\end{remark}

\begin{proof}[Proof of Proposition~\ref{prop:res}]We will prove the claim be induction on $n$. First, suppose $n=3$ and $V=\RR^3$. We argue by induction on $|E|\geq 3$. By Lemma~\ref{lemma:projective} there exists a linear  hyperplane $H\subseteq \RR^3$ that contains exactly two lines from $E$, say $L$ and $L'$. Consider the orthogonal projection $\pi_L\colon \RR^3 \to L^\perp$. Since $H$ contains exactly two lines from $E$, if $L''\in E$ is a line $\neq L,L'$, then $\pi_{L} L' \neq \pi_L L''$. Hence, if $ \nc(x)= \sum_{M\in E} \alpha_{M^\pm} [M^\pm]$, then by Lemma~\ref{lemma:nc-res} the condition $j_L^* x=0$  implies  $\alpha^\pm_{L'} =0$.  If  $|E|>3$, then we  conclude that $x\in \A^1(E\setminus\{L'\})$ and apply the inductive hypothesis. If $|E|=3$, then the claim is easily seen to be true. This finishes the proof in dimension three. 
	
	Now, assume $n>3$ and that the proposition holds in dimension $n-1$. 	
	Let $H\subseteq V$ be a linear hyperplane and let $i_H\colon H\to V$ denote the inclusion. Suppose that $H^\perp \notin E$.
	Then $H\cap L^\perp$ has dimension $n-2$ for every $L\in E$. Define $E'= i_H^* E$ and $x'= i_H^* x$. Then $E'$ is a set of lines in $H^*$ and $x'\in \A^1(E')$.  For $L'= i^*_H L\in E'$ let $k_L\colon H\cap L^\perp \to L^\perp$ and $j_{L'}\colon H\cap (L')^\perp \to H$ denote the inclusion. Then $i_{H} \circ j_{L'} = j_L\circ k_L$ and thus 
	$$ (j_{L'})^* x' = k_L^* (j_L^* x)= 0$$
	for any $L'\in E'$.  By the inductive assumption $x'=0$.  Thus we conclude that $j_H^*x=0$ for all linear hyperplanes in $V$.
	
	Let $H$ be a hyperplane in $V$ and let $L$, $L'$ be two distinct lines in $V^*$. Observe that if  $i^*_H L= i^*_H L'$ then  $H^\perp\subseteq L + L'$. Hence if $H$ chosen so that $H^\perp$ is not contained in the finitely many planes $L+L'$ for distinct  $L, L'\in E$, then $i^*_H L\neq  i^*_H L'$ whenever $L\neq L'$. 	
With the help of Lemma~\ref{lemma:nc-res}, we can now deduce   $x=0$  from $j_H^* x=0$.	
\end{proof}

\subsection{Characterization of the equality case in higher dimensions} 

The positive definiteness of the  Hodge-Riemann  form immediately  implies the injective hard Lefschetz property. 
Exploiting the fact that $ \dim \Pi^n(V)=1$ and that Poincar\'e duality holds in $\Pi^*(V)$, we prove the reverse implication for $k=1$ 

\begin{lemma}  \label{lemma:HLHR}Let    $C_1,\ldots, C_{n-2}$ be centrally symmetric polytopes  in $V$, and let $Q$ be a polytope in $V$ satisfying 
	$ \sigma(\ell_Q) \cdot \ell_Q \cdot \ell_{\mathbf C}>0$. Then, for every $x\in \Pi^1(V)$, the following are equivalent:
	\begin{enuma} 
		\item $\sigma(x)\cdot \ell_Q\cdot \ell_{\mathbf C} =0$ and $\sigma(x)\cdot x \cdot  \ell_{\mathbf C} =0$.
		\item  $x\cdot \ell_\mathbf{C}=0$. 
	\end{enuma}
\end{lemma}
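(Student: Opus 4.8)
The plan is to read the statement as a fact about the symmetric bilinear form
\[
q(u,v)=\sigma(u)\cdot v\cdot \ell_{\mathbf C},\qquad u,v\in\Pi^1(V),
\]
which is symmetric precisely because the $C_i$ are centrally symmetric (this is the identity recorded right after Theorem~\ref{mthm:AF}). The two ingredients are the Alexandrov--Fenchel inequality of Theorem~\ref{mthm:AF}, read as $q(u,\ell_Q)^2\ge q(u,u)\,q(\ell_Q,\ell_Q)$ for every $u\in\Pi^1(V)$, and the fact that Poincar\'e duality (Theorem~\ref{thm:Poincare}) holds in $\Pi^*(V)$.

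The implication (b)$\Rightarrow$(a) is immediate. If $x\cdot \ell_{\mathbf C}=0$, then $\sigma(x)\cdot x\cdot \ell_{\mathbf C}=\sigma(x)\cdot(x\cdot \ell_{\mathbf C})=0$, and using symmetry of $q$ one has $\sigma(x)\cdot \ell_Q\cdot \ell_{\mathbf C}=q(x,\ell_Q)=q(\ell_Q,x)=\sigma(\ell_Q)\cdot(x\cdot \ell_{\mathbf C})=0$.

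For (a)$\Rightarrow$(b) assume $q(x,\ell_Q)=q(x,x)=0$; the hypothesis gives $q(\ell_Q,\ell_Q)=\sigma(\ell_Q)\cdot \ell_Q\cdot \ell_{\mathbf C}\neq 0$. First I would show $q(x,w)=0$ for every $w\in\Pi^1(V)$. Fix $w$ and apply Theorem~\ref{mthm:AF} to the element $x+sw\in\Pi^1(V)$ (for arbitrary $s\in\RR$) and the polytope $Q$; expanding $q(x+sw,\ell_Q)^2\ge q(x+sw,x+sw)\,q(\ell_Q,\ell_Q)$ and using $q(x,\ell_Q)=q(x,x)=0$ shows that the quadratic polynomial
\[
\bigl(q(w,\ell_Q)^2-q(w,w)q(\ell_Q,\ell_Q)\bigr)s^2-2\,q(x,w)\,q(\ell_Q,\ell_Q)\,s
\]
is nonnegative for all real $s$; a polynomial $as^2+bs$ with this property has $b=0$, so $q(x,w)\,q(\ell_Q,\ell_Q)=0$, hence $q(x,w)=0$. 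Since the classes $\ell_P$ span $\Pi^1(V)$ (equivalently, every element of $\Pi^1(V)$ is a difference $\ell_P-\ell_{P'}$, which follows e.g.\ from Lemma~\ref{lemma:existence-signed}), this says $(\sigma(x)\cdot \ell_{\mathbf C})\cdot w=0$ for all $w\in\Pi^1(V)$; as $\sigma(x)\cdot \ell_{\mathbf C}\in\Pi^{n-1}(V)$, Poincar\'e duality forces $\sigma(x)\cdot \ell_{\mathbf C}=0$. Applying the algebra automorphism $\sigma$ and using $\sigma(\ell_{\mathbf C})=\prod_i\bigl(-\ell_{-C_i}\bigr)=(-1)^n\ell_{\mathbf C}$ (once more by central symmetry of the $C_i$), we conclude $x\cdot \ell_{\mathbf C}=0$.

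The only substantive step is the perturbation argument: it is exactly the classical device by which the Hodge--Riemann relations yield the hard Lefschetz property, here made available for $\Pi^*(V)$ by the already-established inequality of Theorem~\ref{mthm:AF} and by Poincar\'e duality in $\Pi^*(V)$ (which, as emphasized in Section~\ref{sec:finite-dim}, fails in the subalgebras $\A^*(E)$). I do not anticipate a real obstacle; the one point requiring care is that Theorem~\ref{mthm:AF} is invoked with its \emph{first} argument the perturbed element $x+sw$ --- legitimate since this lies in $\Pi^1(V)$ for every $s$ --- and its second argument the fixed polytope $Q$.
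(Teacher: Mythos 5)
Your proof is correct and follows essentially the same route as the paper: both deduce from Theorem~\ref{mthm:AF} together with the hypothesis $q(\ell_Q,\ell_Q)\neq 0$ that $q(x,\cdot)$ vanishes identically on $\Pi^1(V)$, and then invoke Poincar\'e duality to conclude $x\cdot\ell_{\mathbf C}=0$. The only cosmetic difference is that you run the discriminant argument directly on $x+sw$, whereas the paper first decomposes $\Pi^1(V)=\RR\,\ell_Q\oplus H$ and applies Cauchy--Schwarz to the semi-definite form on $H$ --- the same mechanism in different packaging.
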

\begin{proof} The implication (b)$\Rightarrow$ (a) is trivial.
	
	Since $\dim \Pi^n(V)=1$ and since  $ \sigma(\ell_Q) \cdot \ell_Q \cdot \ell_{\mathbf C}>0$ by  hypothesis, it follows that the condition $\sigma(x)\cdot \ell_{Q} \cdot \ell_{C_1} \cdots \ell_{C_{n-2}}=0$ defines a hyperplane $H$ in $\Pi^1(V)$ and that one has the decomposition
	\begin{equation} \label{eq:decomp} \Pi^1(V)=  \RR \ell_{Q} \oplus H.\end{equation}
	The symmetric bilinear  form 
	$$q(x,y)=\sigma(x)\cdot y \cdot  \ell_{\mathbf C}, \quad x\in \Pi^1(V),$$
	is by Theorem~\ref{thm:AF} positive semi-definite on $H$.  Note that $x\in H$ if and only if $q(x,\ell_{Q})=0$. 
	
	Suppose that  $x\in \Pi^1(V)$ satisfies (a).
	In other words, $x\in H$ and $q(x,x)=0$. Then, the  Cauchy-Schwarz inequality implies  $q(x,y)=0$ for every $y\in H$.  Hence, for any  number $\alpha$ and $y\in H$, the equality $ q(x,\alpha \ell_{Q} + y)=0$ holds.
	In light of  the decomposition \eqref{eq:decomp}, the latter is equivalent to 
	\begin{equation}\label{eq:ker-q} 0= q(x,z) = \sigma(x\cdot \ell_\mathbf{C})\cdot z \quad \text{ for all } z\in \Pi^1(V).\end{equation}
	It follows from Poincar\'e duality (Theorem~\ref{thm:Poincare}) that $x\cdot \ell_\mathbf{C}=0$.
\end{proof}

We are now ready to prove the main result of this section.

\begin{proof}[Proof of Theorem~\ref{thm:equality}] We prove the theorem by induction on $n$. The base case $n=2$  was established in Lemma~\ref{lemma:dim2}. Now, assume $n>2$ and that the theorem holds in dimension $n-1$.
	Choose a euclidean inner product to identify $V$ with $\RR^n$ such that $\vol$ is the Lebesgue measure.  
	Suppose that  $x\in \A^1(E)$ satisfies \eqref{eq:equality}. Since  $\sigma(\ell_{Q}) \cdot \ell_{Q} \cdot \ell_{\mathbf C}>0$, Lemma~\ref{lemma:HLHR} yields $x\cdot \ell_\mathbf{C}=0$.
	
	For  $ L\in E$, let $j_L\colon L^\perp \to \RR^n$ denote the inclusion and  $\pi_L=j^*_L\colon \RR^n\to L^\perp$ the orthogonal projection.  By Lemma~\ref{lemma:nc-res}, there exist centrally symmetric polytopes $ C_i^L \subseteq L^\perp$ such  that   $$j^*_L\ell_{C_i}= \ell_{C^L_i}\in  \Kc(j^*_LE).$$
	Moreover, Lemma~\ref{lemma:nc-res} implies $j^*_Lx\in \A^1(j^*_LE)$.

	Since \begin{equation}\label{eq:res-zero}0= j^*_L( x\cdot \ell_{\mathbf{C}})=  (j^*_L x)\cdot \ell_{C_1^L} \cdots \ell_{C_{n-2}^L},\end{equation}
	 Theorem~\ref{thm:AF} applied inside $L^\perp$, yields the inequality
	\begin{equation}\label{eq:equality-L} \sigma (j^*_L x) \cdot (j^*_L x)  \cdot \ell_{C_1^L} \cdots \ell_{C_{n-3}^L} \geq 0.\end{equation}  
	Expanding 
	$\ell_{C_{n-2}}= \sum_{L\in E} \alpha_L x_L$,  using Proposition~\ref{prop:mult-res} and  the fact that the Euler--Verdier involution commutes with pullback, one obtains 
	$$ 0= \sigma(x)\cdot x\cdot \ell_{\mathbf C}=  \sum_{L\in E} \alpha_L \left(  \sigma(j^*_L x) \cdot (j^*_L x)  \cdot \ell_{C_1^L} \cdots \ell_{C_{n-3}^L}\right).$$
	Since $\alpha_L>0$, it follows that equality holds in \eqref{eq:equality-L} for each $L\in E$. Since
	$$\ell_{C_1^L} \cdots \ell_{C_{n-2}^L} \cdot \ell_{C_{n-2}^L}
	>0,$$
	  we can apply the  inductive hypothesis to obtain $j^*_L x=0$ for all $L\in E$. Using Proposition~\ref{prop:res}, we conclude that $x=0$. 
\end{proof} 

\begin{remark}The full strength of Lemma~\ref{lemma:HLHR}, which is a consequence of Poincar\'e duality, is not needed for the proof of Theorem~\ref{thm:equality}.  While Lemma~\ref{lemma:HLHR} is used to obtain \eqref{eq:res-zero},  the same conclusion can also be reached by  choosing $z=x_L$ in \eqref{eq:ker-q} and applying Proposition~\ref{prop:mult-res}.
\end{remark}

\section*{Acknowledgment} This work was supported by DGF grant WA 3510/4-1   within the priority program SPP 2458 Combinatorial Synergies. The author is grateful to Hendrik Süss for discussions on the Alesker product, which prompted the exploration of an intersection product in the polytope algebra.
The author also thanks Semyon Alesker and Ramon van Handel for comments on an earlier version of this paper.

\bibliographystyle{abbrv}
\bibliography{ref_papers,ref_books}

\end{document}